\documentclass[]{article}
\usepackage{graphicx}
\usepackage{amsfonts}
\usepackage{amsmath}
\usepackage{amssymb}
\usepackage{fancyhdr}
\usepackage{titlesec}
\usepackage{indentfirst}
\usepackage{booktabs}
\usepackage{verbatim}
\usepackage{color}
\usepackage{amsthm}
\usepackage{subfigure}

\usepackage[page,header]{appendix}
\usepackage{titletoc}

\newcommand{\rd}{\,\mathrm{d}}
\numberwithin{equation}{section}
\newtheorem{theorem}{Theorem}[section]
\newtheorem{lemma}[theorem]{Lemma}
\newtheorem{corollary}[theorem]{Corollary}
\newtheorem{proposition}[theorem]{Proposition}

\def\bu{{\bf u}}
\def\bx{{\bf x}}
\def\by{{\bf y}}

\def\bv{{\bf v}}

\def\cT{\mathcal{T}}
\def\cR{\mathcal{R}}
\def\crr{\tilde{\mathcal{R}}}
\def\cI{\mathcal{I}}
\def\cS{\mathcal{S}}

\def\tI{\tilde{I}}
\def\km{\mathfrak{m}}
\def\kc{\mathfrak{c}}

\def\supp{\textnormal{supp}}
\def\ini{\textnormal{in}}


\topmargin 0cm \oddsidemargin 0.66cm \evensidemargin 0.66cm
\textwidth 14.66cm \textheight 22.23cm

\headheight 0cm
\headsep 0cm

\begin{document}

\title{Tightness of radially-symmetric solutions to 2D aggregation-diffusion equations with weak interaction forces}
\author{Ruiwen Shu\footnote{Department of Mathematics, University of Maryland, College Park, MD 20742, USA (rshu@cscamm.umd.edu). The author's research was supported in part by NSF grants DMS-1613911 and ONR grant N00014-1812465.}}
\maketitle

\begin{abstract}
We prove the tightness of radially-symmetric solutions to 2D aggregation-diffusion equations, where the pairwise attraction force is possibly degenerate at large distance. We first reduce the problem into the finiteness of a time integral in the density on a bounded region, by introducing a new assumption on the interaction potential called the essentially radially contractive (ERC) property. Then we prove this finiteness by using the 2-Wasserstein gradient flow structure, combining with the continuous Steiner symmetrization curves and the local clustering curve. This is the first tightness result on the 2D aggregation-diffusion equations for a general class of weakly confining potentials, i.e., those $W(r)$ with $\lim_{r\rightarrow\infty}W(r)<\infty$, and serves as an important step towards the study of equilibration.
\end{abstract}

\section{Introduction}

In this paper we continue the study in~\cite{S1D} on the large time behavior of the aggregation-diffusion equation
\begin{equation}\label{eq0}
\partial_t \rho + \nabla\cdot(\rho \bu) = \Delta (\rho^m),\quad \bu(t,\bx) = -\int \nabla W(\bx-\by) \rho(t,\by)\rd{\by},\quad m>1,
\end{equation}
where $\rho(t,\bx)$ is the density distribution function of a large group of particles, $\bx\in\mathbb{R}^d$ being the spatial variable, $t\in\mathbb{R}_{\ge0}$ being the temporal variable. The term $\nabla\cdot(\rho \bu)$ describes the pairwise attraction force among particles, given by a radial interaction potential  $W=W(r),\,r=|\bx|,\,W'(r)>0$. The term $\Delta (\rho^m)$ with $m>1$ is a porous-medium type diffusion term, modeling the localized pairwise repulsion forces among particles~\cite{Oel}, making the particles less likely to concentrate. \eqref{eq0} arises naturally in the study of the collective behavior of large groups of swarming agents~\cite{MEK1,MEK2,BurCM,BCM,MCO,TBL} and the chemotaxis phenomena of bacteria~\cite{Pat,KS,JL,Hor,BDP,BCL,CC}. The 2D case ($d=2$) is of significant importance because of its application in chemotaxis: in fact, \eqref{eq0} with $m=1$ and $W$ being the Newtonian attraction is exactly the well-known Keller-Segel equation~\cite{Pat,KS}.

\eqref{eq0} is (at least formally) the 2-Wasserstein gradient flow of the total energy, as the sum of the internal energy and the interaction energy
\begin{equation}\label{E}
E[\rho] = \cS[\rho] + \cI[\rho] := \frac{1}{m-1}\int \rho(\bx)^m \rd{\bx} + \frac{1}{2}\int \int W(\bx-\by)\rho(\by)\rd{\by}\rho(\bx)\rd{\bx},
\end{equation}
in the sense that 
\begin{equation}
\frac{\rd E}{\rd{t}} = -\Big\|\frac{\delta E}{\delta \rho}\Big\|^2 = -\int 
\Big|\bu(t,\bx) - \frac{m}{m-1}\nabla(\rho(t,\bx)^{m-1})\Big|^2\rho(t,\bx)\rd{\bx},\quad E(t) := E[\rho(t,\cdot)].
\end{equation}
Therefore, it is natural to study the \emph{equilibration} of \eqref{eq0}, i.e., when $t\rightarrow\infty$, whether it is true that the solution $\rho(t,\cdot)$ converges to a steady state $\rho_\infty$, characterized by
\begin{equation}
\bu_\infty(\bx) - \frac{m}{m-1}\nabla(\rho_\infty(\bx)^{m-1}) = 0,\quad \forall \bx \in \supp \rho_\infty.
\end{equation}
We will also use the bilinear version of the interaction energy
\begin{equation}\label{bilinear}
\cI[\rho_1,\rho_2] := \int \int W(\bx-\by)\rho_1(\by)\rd{\by}\rho_2(\bx)\rd{\bx},\quad \cI[\rho] = \frac{1}{2}\cI[\rho,\rho].
\end{equation}

There has been a rich literature on the study of steady states of \eqref{eq0}, including existence, uniqueness, structures, etc., see~\cite{Bed,LY,Str,CCV,KY,CHVY,CCH,CHMV,CCH2,DYY}, and we refer to~\cite{S1D} for a discussion on them. For the case $m\ge 2$ and $W$ is no more singular than the Newtonian attraction near the origin, the results in \cite{Bed,CHVY,DYY} imply that the steady state exists, is unique, and is radially-decreasing\footnote{A density distribution $\rho(\bx)$ is radially-symmetric if $\rho$ is a function of $r=|\bx|$, and radially-decreasing if $\rho(r)$, as a function of $r$, is decreasing on $[0,\infty)$.}.
 
However, there are very few existing results on the equilibration of \eqref{eq0}. In fact, \cite{KY} proves the equilibration of radially-symmetric solutions for Newtonian attraction and its variant (to be precise, the convolution of the Newtonian potential with a radially-decreasing function with certain regularity), with exponential convergence rate. \cite{CHVY} proves the equilibration of general solutions for 2D Newtonian attraction, without an explicit convergence rate. Both results rely on the special structure of the Newtonian attraction potential.

To prove equilibration of \eqref{eq0} with general interaction potentials, the biggest difficulty is the issue of \emph{tightness}, i.e., whether there holds
\begin{equation}\label{tight}
\text{For all }\epsilon>0, \text{ there exists }R>0, \text{ such that } \int_{|\bx|> R}\rho(t,\bx)\rd{\bx} < \epsilon,\,\forall t\ge 0.
\end{equation}
Tightness could avoid a positive amount of mass from escaping to infinity, and provide compactness of the solution $\{\rho(t,\cdot)\}_{t\ge 0}$. Tightness allows one to subtract a subsequence $\{\rho(t_k,\cdot)\}_{k=1}^\infty$ with $\lim_{k\rightarrow\infty}t_k=\infty$, converging strongly, and then equilibration could follow from the energy balance \eqref{E}.

Tightness is especially hard to obtain in the case of \emph{weakly confining potentials}, defined as those $W$ with $\lim_{r\rightarrow\infty}W(r)<\infty$, because there is no hope to obtain tightness directly from the energy balance \eqref{E}.  The author's recent work \cite{S1D} gives the first result for the equilibration of \eqref{eq0} for a \emph{general class} of weakly confining potentials. In \cite{S1D}, the key assumptions include the following: the spatial dimension $d=1$; $W'(r)$ has a lower bound of the type $r^{-\alpha}$ for large $r$; $m$ is large in the sense that $m>\max\{\alpha,2\}$; the initial data is radially-symmetric and the initial energy is not too large. Under these assumptions, \cite{S1D} proved the convergence to the steady state as $t\rightarrow\infty$ with algebraic rate, in the sense that $E(t)-E[\rho_\infty] \le C(1+t)^{-\gamma},\,\gamma>0$.

The purpose of the current work is to generalize the tightness result in \cite{S1D} from 1D to 2D, which would be the key step towards the study of equilibration of \eqref{eq0} in 2D. We first recall the 1D tightness obtained in \cite{S1D}, the uniform bound of the first moment: (where $\bx$ is denoted as $x$ in 1D)
\begin{equation}\label{moment1D}
\int |x|\rho(t,x)\rd{x} < C,\quad \forall t\ge 0.
\end{equation}
Its proof includes the following major steps:
\begin{itemize}
\item Reduce \eqref{moment1D} to the finiteness of the time integral
\begin{equation}\label{int1D}
\int_0^\infty \int_{5R_1}^{6R_1}\rho(t,x)\rd{x}\rd{t},
\end{equation}
where $R_1>0$ is large. This is done by considering  the $\phi$-moment $\km_\phi(t)=\int \phi(x)\rho(t,x)\rd{x}$, where $\phi = \chi_{5R_1\le|x|\le 6R_1]}*|x|$, with $|x|$ being the fundamental solution to the Laplacian operator. In the time evolution of the $\phi$-moment, the contribution from the attraction term is always negative, due to the convexity of $\phi$, and the contribution from the diffusion term can be controlled by \eqref{int1D}, using $\Delta \phi = \chi_{5R_1\le|x|\le 6R_1}$.

\item Prove the finiteness of $\int_0^\infty \int_{5R_1}^{6R_1}\mu(t,x)\rd{x}\rd{t}$, where $\mu$ is the non-radially-decreasing part of $\rho$ (see \eqref{rhoh1} through \eqref{rhoh3} for definition). Based on the gradient flow structure of \eqref{eq0}, this is done by the \emph{continuous Steiner symmetrization} (CSS), first introduced by \cite{CHVY} (denoted as CSS1), being a curve of density distributions, along which the total energy is decreasing whenever the initial distribution $\rho$ is not radially-decreasing. A variant of it (denoted as CSS2) is designed to handle the degeneracy of $W'(r)$ at large $r$.

\item Prove the finiteness of $\int_0^\infty \int_{5R_1}^{6R_1}\rho^*(t,x)\rd{x}\rd{t}$, where $\rho^*$ is the radially-decreasing part of $\rho$. This is done by the \emph{local clustering curve}, which is a way of transporting a given density distribution $\rho$, imitating the formation of local clusters at low density regions. The latter has been numerically observed in \cite{BCH,CCWW}, and appears to be a key difficulty in the study of \eqref{eq0}. 
\end{itemize}

In the current work, we prove the tightness of 2D radially-symmetric solution\footnote{For radially-symmetric solution, one could rewrite \eqref{eq0} into a 1D PDE in the radial variable $r$. However, in this paper we still use the original PDE \eqref{eq0} because it is more convenient for most of the techniques.} to \eqref{eq0}, see Theorem \ref{thm1}, under certain assumptions which will be specified later. The proof follows the same steps as \cite{S1D}, but two major new difficulties arise:
\begin{itemize}
\item In 2D, it is natural to take the test function $\phi = \chi_{5R_1\le |\bx|\le 6R_1}*\frac{1}{2\pi}\ln|\bx|$, with $\frac{1}{2\pi}\ln|\bx|$ being the fundamental solution to the Laplacian operator, as a generalization to 1D. $\phi(\bx)\sim C\ln|\bx|$ for large $|\bx|$, and thus the uniform boundedness of the $\phi$-moment implies tightness. However, $\phi$ is \emph{no longer convex} (see Figure \ref{fig_pic10} in Section \ref{sec_main}): in this case the attraction term may give positive contribution to the time evolution of the $\phi$-moment, making it out of control. 

To handle this difficulty, we introduce a new concept called the \emph{essentially radially contractive} (ERC) property for the interaction potential $W$, see Theorem \ref{thm_erc}. The ERC property basically says that the attraction among annuli far away from the origin does not increase the $\phi$-moment, and it can be guaranteed under a clean assumption \eqref{Wassu1}. This assumption allows the attraction force $W'(r)$ to behave like $r^{-(3-\epsilon)}$, thus allowing weakly confining potentials. In fact, the Newtonian attraction is $W'(r)=r^{-1}$, and thus we allow the attraction force to be almost two orders more degenerate than the Newtonian, for large $r$.

\item In 2D, the CSS curve is ineffective in decreasing the interaction energy, for (non-radially-decreasing) level sets of the form $\{\bx: r_j\le |\bx| \le R_j\}$ with $R_j>>r_j$, see Figure \ref{fig_pic47} (left) in Section \ref{sec_finite}. Therefore the CSS curve cannot control the time integral of such `flat' level sets. 

To handle this difficulty, we manage to show that the `flat' level sets, denoted as $\rho_\flat$, behave similarly to the radially-decreasing part, in terms of its degeneracy at large radius (see \eqref{rhosmall}), as well as the potential field generated by them (see Lemma \ref{lem_ctr}). Therefore, the `sharp' non-radially-decreasing part is controlled by CSS, while the `flat' part, together with the radially-decreasing part, is controlled by the local clustering curve with an improved energy estimate.

\end{itemize}

Finally, we would like to give some intuitions about what one could expect for the tightness of general 2D solutions to \eqref{eq0}. In fact, radially-symmetric solutions are `\emph{unstable}', in the following sense: when the initial data is close to but not exactly radially-symmetric, and the attraction force is well-localized, one expects the formation of local clusters within a relatively short time scale, which breaks the radial symmetry. In other words, if (intuitively) one uses some quantity $d(t)$ to measure the distance between $\rho(t,\cdot)$ and the set of radially-symmetric functions, then $d(t) \sim d(0)e^{\lambda t},\,\lambda>0$ within some time interval $[0,T]$.

However, such `instability' should be distinguished from the concept of instability of steady states: in the former case, assuming equilibration holds, then $\rho(t,\cdot)\approx \rho_\infty$ if $t$ is large, with $\rho_\infty$ being radially-symmetric, which basically implies that $d(t)$ is uniformly small for all time if $d(0)$ is small; in the latter case, for unstable steady states, even if the initial data deviates from it by an arbitrarily small amount, this deviation will eventually grow to $O(1)$.

Therefore, we expect the general 2D solutions can be separated into two scenarios:
\begin{itemize}
\item $d(0)$ is small enough. Then $d(t)$ is small for all time, and the whole solution could be treated as a perturbation of radially-symmetric solutions studied in the current paper.
\item Otherwise, one expects the emergence of a finite number of local clusters at some time $T$. Then we are more or less back to the 1D situation, in terms of tightness: for example, if there are only two clusters, centered at $(-a,0)$ and $(a,0)$, then it is less likely to have some mass escaping to infinity towards the $x_2$-direction, and one could focus on the tightness issue for the $x_1$-direction. In other words, a test function $\phi(\bx) \approx \chi_{5R_1\le |x_1|\le 6R_1}*\frac{1}{2\pi}\ln|\bx|$, which grows like $|x_1|$ when $x_1$ is large, might work for the tightness estimates.
\end{itemize}

Therefore, despite its `instability', the study of radially-symmetric solutions still appears to be an important step towards the study of general 2D solutions to \eqref{eq0}.

The rest of this paper is organized as follows: in Section 2 we state the main result (Theorem \ref{thm1}), and give an outline of its proof, including the important intermediate results; in Section 3 we prove Theorem \ref{thm_erc} which guarantees the ERC property under the assumptions of the main result; in Section 4 we give some lemmas which will be used in later proofs; in Section 5 we use the CSS curves and the local clustering curve to prove Theorem \ref{thm_finite}, the finiteness of the time integral in $\rho$.

\section{The main result}

In this paper, all $C$ (for large constants) and $c$ (for small constants) will denote positive constants which may depend on $W$, $m$, $\rho_{\ini}$, if not stated otherwise, and they may differ from line to line.

\subsection{Assumptions}
We propose the following assumptions:
\begin{itemize}

\item {\bf (A1)} The interaction potential $W$ is radial: $W=W(r),\,r=|\bx|$. $W$ is attractive: $W'(r)>0,\,\forall r>0$, and satisfies
\begin{equation}\label{Wassu1}
-\alpha \le \frac{rW''(r)}{W'(r)} \le A,\quad \forall r>0,
\end{equation}
for some $1< \alpha< 3$ and $A> 0$.

\item {\bf (A2)} The interaction potential $W$ satisfies the following upper  bound:
\begin{equation}
W'(r) \le Cr^{-1},\quad \forall r>0.
\end{equation}

\item {\bf (A3)} $m>(\alpha+1)/2$.

\item {\bf (A4)}  The initial data is radially-symmetric: $\rho_{\ini}(\bx)=\rho_{\ini}(r)$, non-negative, having compact support, $\|\rho_{\ini}\|_{L^\infty} < \infty$, and the total mass is normalized to 1: $\int \rho_{\ini}(\bx)\rd{\bx} = 1$.

\item {\bf (A5)}  The initial data satisfies the sub-critical condition
\begin{equation}
E[\rho_{\ini}] < \frac{1}{2}\lim_{r\rightarrow\infty}W(r).
\end{equation}.

\end{itemize}

We make the following remarks on the assumptions:
\begin{itemize}
\item It is important to notice that {\bf (A1)} implies that 
\begin{equation}\label{Winc}
(W'(r)r^{\alpha})' = W''(r)r^{\alpha} + \alpha W'(r)r^{\alpha-1} \ge 0,
\end{equation}
and thus $W'(r)r^{\alpha}$ is an increasing function in $r$. In other words, there holds the lower bound
\begin{equation}\label{lambda}
W'(r) \ge c r^{-\alpha}=:\lambda(r),\quad \forall r\ge 1.
\end{equation}
One example which satisfies {\bf (A1)} and {\bf (A2)} is $W'(r) = r^{-1}(1+r)^{-(2-\epsilon)}$ for any $\epsilon>0$, which gives rise to a weakly confining potential $W(r)$, behaving like $r^{-(2-\epsilon)}$ for large $r$.

\item The strength of the assumption {\bf (A1)} is between structural and non-structural: on one hand it allows the potential $W(r)$ to change convexity as a function of $r$, but on the other hand it requires properties besides merely the size of $W'$ (the reader is invited to compare it with the assumption {\bf (A2)} in~\cite{S1D}).

In fact, the precise form of {\bf (A1)} is only used to guarantee the ERC property, as defined in Theorem \ref{thm_erc}. Other parts of the proof only rely on {\bf (A1)} through \eqref{lambda}. Therefore, the main theorem (Theorem \ref{thm1}) is still true if one replaces {\bf (A1)} by the ERC property, in case \eqref{lambda} holds. Clearly {\bf (A1)} is not a necessary condition for the ERC property, and it remains open to explore more families of ERC potentials.

\item The assumption {\bf (A2)} says that the potential $W$ is no more singular than the Newtonian attraction near the origin. In the proof, it implies Lemma \ref{lem_circle}, which is used frequently to estimate bad terms. 

\item In {\bf (A3)}, notice that it does not require $m\ge 2$, which allows situations where the steady states may not be unique, c.f.~\cite{DYY}, and the existence of steady states is not implied by~\cite{Bed}. In other words, tightness can be guaranteed without having/knowing the existence and uniqueness of steady states. This is indeed also the case in the 1D result~\cite{S1D}, but was not pointed out explicitly there.

\item The assumption {\bf (A5)} guarantees that at least a positive amount of mass always stays near the origin, see Lemma \ref{lem_crho} (similar to the assumption {\bf (A6)} in~\cite{S1D}). It looks different from the latter because the `everything runaway' situation is different in 2D and 1D: in 2D (or multi-D) radially-symmetric solutions, when mass are running away to infinity, it scatters into larger and larger annuli; while in 1D radially-symmetric solutions, it is separated into two bulks of mass moving left and right.
\end{itemize}

\subsection{The main result}\label{sec_main}

\begin{theorem}\label{thm1}
Let {\bf (A1)}-{\bf (A5)} be satisfied. Then the solution $\rho(t,\bx)$ to \eqref{eq0} has log-moment uniformly bounded in time:
\begin{equation}\label{thm1_1}
\int \rho(t,\bx) \ln(1+|\bx|)\rd{\bx} \le C,\quad \forall t\ge 0.
\end{equation}
\end{theorem}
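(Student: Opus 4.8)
The plan is to follow the blueprint laid out in the introduction, adapting the 1D argument of \cite{S1D} to the 2D radial setting. First I would set up the test function $\phi = \chi_{\{5R_1\le|\bx|\le 6R_1\}} * \frac{1}{2\pi}\ln|\bx|$ for a large parameter $R_1$, so that $\Delta\phi = \chi_{\{5R_1\le|\bx|\le 6R_1\}}$ and $\phi(\bx)$ grows like $c\ln|\bx|$ as $|\bx|\to\infty$. Differentiating the $\phi$-moment $\km_\phi(t) = \int\phi(\bx)\rho(t,\bx)\rd\bx$ along \eqref{eq0}, the diffusion term contributes $\int \rho^m \Delta\phi = \int_{5R_1\le|\bx|\le 6R_1}\rho^m$, which is $O(\int_{5R_1\le|\bx|\le 6R_1}\rho)$ by the $L^\infty$-bound on $\rho$ (propagated from {\bf (A4)}; here one invokes standard well-posedness/regularity for \eqref{eq0}). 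The attraction term contributes $-\int\int \rho(\bx)\rho(\by)\,\nabla W(\bx-\by)\cdot\nabla\phi(\bx)\rd\by\rd\bx$, which in 1D is controlled by convexity of $\phi$ but in 2D requires the ERC property of Theorem \ref{thm_erc}: the ERC property (guaranteed by {\bf (A1)} via \eqref{Wassu1}) says exactly that the interaction among far-away annuli does not increase the $\phi$-moment, so up to a controllable error this term is $\le 0$. Hence $\frac{\rd}{\rd t}\km_\phi(t) \le C\int_{5R_1\le|\bx|\le 6R_1}\rho(t,\bx)\rd\bx + C$, modulo near-origin corrections.

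The second stage is to integrate this differential inequality in time, which reduces \eqref{thm1_1} to the finiteness of the time integral $\int_0^\infty \int_{5R_1\le|\bx|\le 6R_1}\rho(t,\bx)\rd\bx\rd t$ --- this is precisely Theorem \ref{thm_finite}, which I would invoke. To apply it one first decomposes $\rho(t,\cdot)$ (via its radially-decreasing rearrangement) into a radially-decreasing part $\rho^*$, a `sharp' non-radially-decreasing part, and the `flat' level sets $\rho_\flat$. The sharp part is handled by the continuous Steiner symmetrization curves CSS1/CSS2 together with the gradient-flow energy dissipation \eqref{E}: along these curves the energy strictly decreases at a rate quantitatively bounded below by the mass sitting in the annulus, so integrating the energy balance in time gives the desired finiteness. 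The flat part $\rho_\flat$ together with $\rho^*$ is instead controlled by the local clustering curve with the improved energy estimate, using that $\rho_\flat$ has the same degeneracy at large radius as $\rho^*$ (see \eqref{rhosmall}) and generates a comparable potential field (Lemma \ref{lem_ctr}); Assumption {\bf (A3)}, $m>(\alpha+1)/2$, is what makes the local clustering energy gain beat the diffusion cost. Throughout, Lemma \ref{lem_crho} (from {\bf (A5)}) keeps a positive amount of mass near the origin, which anchors the energy comparisons, and Lemma \ref{lem_circle} (from {\bf (A2)}) absorbs the error terms coming from the non-convex near-origin part of $\phi$.

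I expect the main obstacle to be the attraction-term estimate in the first stage, i.e.\ genuinely exploiting the ERC property to show $\frac{\rd}{\rd t}\km_\phi(t)$ has no uncontrolled positive contribution. Because $\phi$ is not convex in 2D, one cannot argue pointwise; instead one must organize the double integral $\int\int \rho(\bx)\rho(\by)\nabla W(\bx-\by)\cdot\nabla\phi(\bx)$ by pairs of annular shells and use the ERC inequality shell-by-shell, carefully separating the region where both $\bx,\by$ are far from the origin (ERC applies directly) from the regions where one of them is near the origin (where $\nabla\phi$ behaves like the Newtonian field and {\bf (A2)} gives the bound). Matching the size of the resulting error terms against the good diffusion term $\int_{5R_1\le|\bx|\le 6R_1}\rho$ --- and in particular choosing $R_1$ large enough that all the error constants are small relative to the ERC gain --- is the delicate bookkeeping. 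The second stage, while technically heavy, is structurally parallel to \cite{S1D} once Theorem \ref{thm_finite} is in hand, so the real novelty and the real difficulty both lie in making the $\phi$-moment computation close in 2D.
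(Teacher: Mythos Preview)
Your proposal follows essentially the same approach as the paper's proof: define the $\phi$-moment with $\phi$ the convolution of an annular characteristic function against $\frac{1}{2\pi}\ln|\bx|$, split the attraction contribution into regions and use the ERC property (Theorem~\ref{thm_erc}) for the far-away--far-away pairs, bound the remaining pairs by an annular mass using Lemma~\ref{lem_circle}, and close by invoking Theorem~\ref{thm_finite}. Two points of precision to watch: first, the bound you write as $\frac{\rd}{\rd t}\km_\phi \le C\int_{5R_1\le|\bx|\le 6R_1}\rho + C$ cannot contain an additive constant, since integrating $+C$ in time would give linear growth and destroy the argument---in the actual computation every error term must land as $C\int_{\text{bounded annulus}}\rho$; second, the error annulus is \emph{not} $[5R_1,6R_1]$ but the larger interval $[\text{inner radius},\cR]$ where $\cR$ is the ERC threshold from Theorem~\ref{thm_erc} (the pairs with one point in $[\text{inner radius},\cR]$ and the other anywhere outside are what Lemma~\ref{lem_circle} controls), so Theorem~\ref{thm_finite} must be applied to a finite union of annuli covering this whole range.
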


This theorem gives the tightness \eqref{tight} of the solution $\rho(t,\bx)$. We first give the proof of this theorem by assuming the intermediate results (namely, Theorem \ref{thm_erc} and Theorem \ref{thm_finite}), and later turn to the discussion on them. 

We will frequently use the variables $\bx,\by\in\mathbb{R}^2$, and their polar coordinates are always denoted as
\begin{equation}\label{polar}
\bx = (r\cos\varphi,r\sin\varphi)^T,\quad \by = (s\cos\theta,s\sin\theta)^T.
\end{equation}

\begin{proof}[Proof of Theorem \ref{thm1}]

{\bf STEP 1}: the test function $\phi$.

Fix $\cR_1>0$ large enough\footnote{The curly $\cR_1$ should be distinguished from the $R_j$ appeared later in \eqref{Ij}.}. Define the radial test function $\phi(\bx)=\phi(r)$ by
\begin{equation}\label{phi}
\phi(\bx) = \chi_{7\cR_1\le |\bx|\le 8\cR_1}(\bx) * \frac{1}{2\pi}\ln |\bx| ,
\end{equation}
which satisfies 
\begin{equation}
\Delta \phi = \chi_{7\cR_1\le |\bx|\le 8\cR_1}.
\end{equation}
See Figure \ref{fig_pic10} for illustration.

\begin{figure}
\begin{center}
  \includegraphics[width=.6\linewidth]{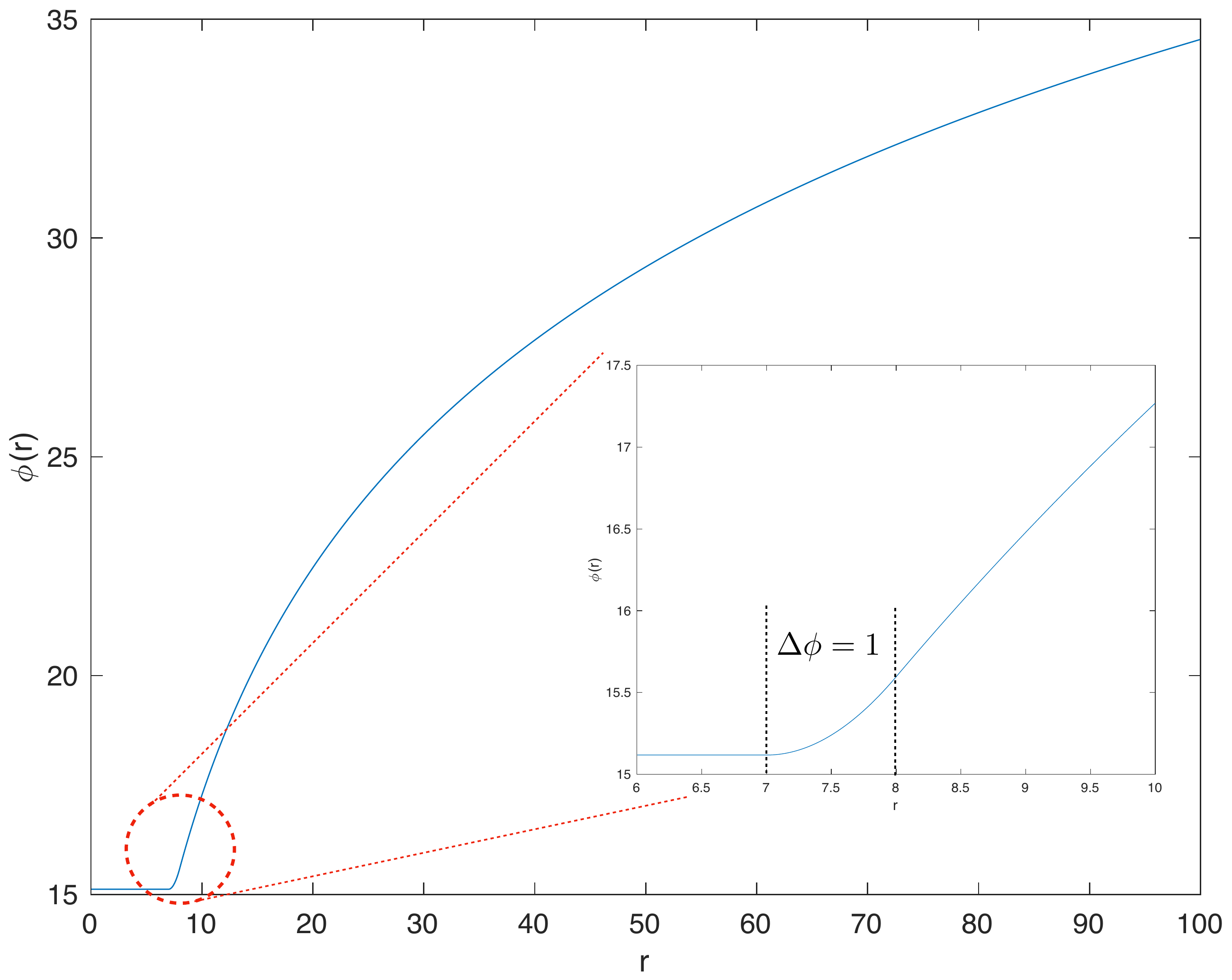}  
  \caption{The test function $\phi$ as defined in \eqref{phi}, in the radial variable. In the figure, $\cR_1=1$ is taken. Compared to \cite{S1D}, here we take the interval $[7\cR_1,8\cR_1]$ instead of $[5\cR_1,6\cR_1]$ for minor technical reasons.}
\label{fig_pic10}
\end{center}
\end{figure}

We first show that the uniform-in-time bound of 
\begin{equation}\label{mphi}
\km_\phi(t) := \int \phi(\bx)\rho(t,\bx) \rd{\bx},
\end{equation}
implies \eqref{thm1_1}.  We rewrite $\phi$ as
\begin{equation}\begin{split}
\phi(\bx) = &  \frac{1}{2\pi}\int_{7\cR_1\le |\by|\le 8\cR_1}\ln|\bx-\by|\rd{\by} 
=  \frac{1}{2\pi}\int_{7\cR_1}^{8\cR_1}\int_{-\pi}^{\pi}\ln|\bx-\by| \rd{\theta}\cdot s\rd{s}, \\
\end{split}\end{equation}
where we used the polar coordinates \eqref{polar}. Since $\phi$ is radially-symmetric, we may take $\bx = (r,0)^T,\,r>0$.

We claim that (viewing $\phi$ as a function of $r$)
\begin{equation}\label{claim0}
\phi'(r) \ge 0,\,\forall r>0;\quad \phi'(r) = 0,\,\forall 0<r\le 7\cR_1.
\end{equation}
To see this, for any $s\ge 7\cR_1$, 
\begin{equation}\begin{split}
\partial_r\int_{-\pi}^{\pi}\ln|\bx-\by| \rd{\theta} = & \frac{1}{2}\partial_r\int_{-\pi}^{\pi} \ln (r^2+s^2-2rs\cos\theta)\rd{\theta} 
=  \int_{-\pi}^{\pi} \frac{r-s\cos\theta}{r^2+s^2-2rs\cos\theta}\rd{\theta}.
\end{split}\end{equation}
For $r>s$, the RHS is clearly positive. For $r<s$, using the fact that $\int_{-\pi}^{\pi}\ln|\bx-\by| \rd{\theta}$ is harmonic and radially-symmetric in $\{\bx: |\bx|<s\}$, it is clearly constant in $\bx$. Combining the two cases and integrating in $7\cR_1\le s\le 8\cR_1$ gives \eqref{claim0}. 

Therefore we have the lower bound
\begin{equation}
\phi(\bx) \ge \phi(0),\quad \forall \bx.
\end{equation}
Also, it is clear that $\phi(\bx)$ behaves like $C\ln|\bx|$ as $|\bx|\rightarrow \infty$. Therefore it follows that the uniform-in-time bound of \eqref{mphi} implies \eqref{thm1_1}.

Next we aim to show the uniform-in-time bound of \eqref{mphi}. The time evolution of $\km_\phi$ is given by
\begin{equation}\begin{split}
\frac{\rd}{\rd{t}}\km_\phi = & -\int \phi(\bx)\nabla\cdot(\rho(\bx) \bu(\bx))\rd{\bx} + \int \phi(\bx)\Delta(\rho(\bx)^m) \rd{\bx} \\
= & \int \nabla\phi(\bx)\cdot\bu(\bx)\rho(\bx) \rd{\bx} + \int \Delta\phi(\bx)\rho(\bx)^m \rd{\bx}, \\
\end{split}\end{equation}
where the $t$-dependence is omitted. Then the two terms on the RHS are treated separately.

{\bf STEP 2}: the contribution from the interaction term. 

Take $\cR_1$ large, so that Theorem \ref{thm_erc} implies that \eqref{deferc} with $\cR_1$ replaced by $8\cR_1$ holds for any $r>s\ge \cR$ (where $F[\phi,W](r,s)$ is  defined in \eqref{ercF}, and $\cR$ is a large number given by Theorem \ref{thm_erc}, depending on $\cR_1$). Then
\begin{equation}\label{phiinter}\begin{split}
& \int \nabla\phi(\bx)\cdot\bu(\bx)\rho(\bx) \rd{\bx} \\
= & -\int\int \nabla\phi(\bx)\cdot\nabla W(\bx-\by)\rho(\by)\rd{\by}\rho(\bx) \rd{\bx} \\
= & -\frac{1}{2}\int\int (\nabla\phi(\bx)-\nabla\phi(\by))\cdot\nabla W(\bx-\by)\rho(\by)\rd{\by}\rho(\bx) \\
= & -\frac{1}{2}\iint_{(r,s)\in[0,7\cR_1]\times[0,7\cR_1]} (\nabla\phi(\bx)-\nabla\phi(\by))\cdot\nabla W(\bx-\by)\rho(\by)\rd{\by}\rho(\bx)\rd{\bx} \\
& -\frac{1}{2}\iint_{(r,s)\in[7\cR_1,\cR]\times[7\cR_1,\cR]} (\nabla\phi(\bx)-\nabla\phi(\by))\cdot\nabla W(\bx-\by)\rho(\by)\rd{\by}\rho(\bx)\rd{\bx} \\
& -\iint_{(r,s)\in[7\cR_1,\cR]\times[\cR,\infty)} (\nabla\phi(\bx)-\nabla\phi(\by))\cdot\nabla W(\bx-\by)\rho(\by)\rd{\by}\rho(\bx)\rd{\bx} \\
& -\iint_{(r,s)\in[7\cR_1,\infty)\times[0,7\cR_1]} (\nabla\phi(\bx)-\nabla\phi(\by))\cdot\nabla W(\bx-\by)\rho(\by)\rd{\by}\rho(\bx)\rd{\bx} \\
& -\frac{1}{2}\iint_{(r,s)\in[\cR,\infty)\times[\cR,\infty)} (\nabla\phi(\bx)-\nabla\phi(\by))\cdot\nabla W(\bx-\by)\rho(\by)\rd{\by}\rho(\bx)\rd{\bx}\\
=: & I_1+I_2+I_3+I_4+I_5,
\end{split}\end{equation}
where we used  the symmetry between $\bx$ and $\by$ in the second and third equalities.

We estimate $I_1,I_2,I_3,I_4,I_5$ as follows:
\begin{itemize} 

\item By \eqref{claim0}, $I_1=0$. Also,
\begin{equation}\begin{split}
I_4 = & -\iint_{(r,s)\in[7\cR_1,\infty)\times[0,7\cR_1]}\ \nabla\phi(\bx)\cdot\nabla W(\bx-\by)\rho(\by)\rd{\by}\rho(\bx)\rd{\bx} \le 0,\\
\end{split}\end{equation}
since
\begin{equation}
\nabla\phi(\bx)\cdot\nabla W(\bx-\by) = \frac{\phi'(r)W'(|\bx-\by|)}{r\cdot|\bx-\by|} \bx\cdot(\bx-\by)\ge \frac{\phi'(r)W'(|\bx-\by|)}{r\cdot|\bx-\by|} (r^2-rs) \ge 0,
\end{equation}
for any $r>s$, using $\phi'(r)\ge 0$ from \eqref{claim0}.

\item To estimate $I_2$ and $I_3$, we consider two radially-symmetric sets $A$ and $B$ inside $\{|\bx|\ge 7\cR_1\}$. By Lemma \ref{lem_circle} (which also gives the fact that $\|\nabla\phi\|_{L^\infty}<\infty$ when replacing $W$ by $\ln r$), we have
\begin{equation}\begin{split}
& \left|\int_B \nabla\phi(\bx)\cdot\nabla W(\bx-\by)\rho(\by)\rd{\by}\right| \le  C \left|\int_B \nabla W(\bx-\by)\rho(\by)\rd{\by}\right|\\
= &  C \left|\int \nabla W(\bx-\by)\int_{\tilde{B}} \rho(u)\delta(|\by|-u)\rd{u}\rd{\by}\right|
\le   C \int_{\tilde{B}}\rho(u)\rd{u}\\
= &  C \int_{\tilde{B}}\frac{1}{u}\rho(u)u\rd{u}
\le   C \int_B\rho(\by)\rd{\by},\\
\end{split}\end{equation}
where we used the notation $\tilde{B}$ to denote the radial range of $B$, and used the assumption $B\subset\{|\bx|\ge 7\cR_1\}$ in the last inequality. Then integrating in $\bx\in A$ and symmetrizing gives
\begin{equation}
\iint_{(r,s)\in A\times B} (\nabla\phi(\bx)-\nabla\phi(\by))\cdot\nabla W(\bx-\by)\rho(\by)\rd{\by}\rho(\bx)\rd{\bx} \le C\int_A\rho(\by)\rd{\by}\int_B\rho(\by)\rd{\by}.
\end{equation}
Applying this with $A=B=\{|\bx|\in[7\cR_1,\cR]\}$ and $A=\{|\bx|\in[7\cR_1,\cR]\},B=\{|\bx|\in[\cR,\infty)\}$ gives
\begin{equation}
|I_2|+|I_3| \le C\int_{r\in [7\cR_1,\cR]}\rho(\bx)\rd{\bx}.
\end{equation}

\item Finally, we rewrite
\begin{equation}\begin{split}
I_5 = & -\iint_{(r,s)\in[\cR,\infty)\times[\cR,\infty),\,r>s} (\nabla\phi(\bx)-\nabla\phi(\by))\cdot\nabla W(\bx-\by)\rho(\by)\rd{\by}\rho(\bx)\rd{\bx} \\
= & -2\pi \iint_{(r,s)\in[\cR,\infty)\times[\cR,\infty),\,r>s} F[\phi,W](r,s)s\rho(s)\rd{s}r\rho(r)\rd{r}, \\
\end{split}\end{equation}
by using the polar coordinates as in \eqref{polar}, and eliminating the $\varphi$ variable by radial symmetry. Here $F[\phi,W](r,s)$ is defined in \eqref{ercF}. Then we have $I_5\le 0$ by Theorem \ref{thm_erc} since $\phi(\bx) = \int_{7\cR_1}^{8\cR_1} \phi_\epsilon(\bx)\rd{\epsilon}$, using the notation in \eqref{deferc}.
\end{itemize}

Therefore 
\begin{equation}
\int \nabla\phi(\bx)\cdot\bu(\bx)\rho(\bx) \rd{\bx} \le C\int_{r\in [7\cR_1,\cR]}\rho(\bx)\rd{\bx}.
\end{equation}

{\bf STEP 3}: the contribution from the diffusion term.

By the construction of $\phi$ as a convolution of $\chi_{7\cR_1\le |\bx| \le 8\cR_1}$ with the fundamental solution of the Laplacian, we have
\begin{equation}
\int \Delta\phi(\bx)\rho(\bx)^m \rd{\bx} = \int_{7\cR_1\le |\bx| \le 8\cR_1}\rho(\bx)^m \rd{\bx} \le \|\rho\|_{L^\infty_{t,\bx}}^{m-1}\int_{7\cR_1\le |\bx| \le 8\cR_1}\rho(\bx) \rd{\bx},
\end{equation}
using $m\ge 1$ and the uniform $L^\infty$ bound in Lemma \ref{lem_reg1}.

Combining STEP 1 and STEP 2 gives
\begin{equation}
\frac{\rd}{\rd{t}}\km_\phi \le C\int_{|\bx|\in [7\cR_1,\cR]}\rho(\bx)\rd{\bx}.
\end{equation}
Integrating in $t$ gives
\begin{equation}
\km_\phi(t) \le \km_\phi(0) + C\int_0^t \int_{|\bx|\in [7\cR_1,\cR]}\rho(t_1,\bx)\rd{\bx} \rd{t_1}.
\end{equation}
Then the uniform bound of $\km_\phi$ follows from Theorem \ref{thm_finite} for $\cR_1$ large enough, applied to a finite union of annuli which covers $[7\cR_1,\cR]$.

\end{proof}

\subsection{The essentially-radially-contractive (ERC) property}

We define $F[\phi,W](r,s)$ by 
\begin{equation}\label{ercF}
F[\phi,W](r,s) = \int_{-\pi}^\pi \Big(\nabla\phi((r,0)^T)-\nabla\phi((s\cos\theta,s\sin\theta)^T)\Big)\cdot\nabla W\Big((r,0)^T-(s\cos\theta,s\sin\theta)^T\Big)\rd{\theta},
\end{equation}
which is the change in the $\phi$-moment coming from the interaction between two circles with  radius $r$ and $s$. The arguments $\phi$ and $W$ may be omitted when it is clear from the context.

\begin{theorem}\label{thm_erc}
{\bf (A1)} implies that $W$ is \emph{essentially-radially-contractive (ERC)}, defined as the following property: for any $\cR_1>0$, there exists $\cR>\cR_1$, such that 
\begin{equation}\label{deferc}
F[\phi_\epsilon,W](r,s) \ge 0,\quad  \phi_\epsilon(\bx):=\delta(|\bx|-\epsilon) * \frac{1}{2\pi}\ln|\bx|,
\end{equation}
for all $0<\epsilon\le \cR_1$ and $r>s\ge \cR$.
\end{theorem}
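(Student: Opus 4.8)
The plan is to make $F[\phi_\epsilon,W](r,s)$ completely explicit, strip off from $W$ the ``pure power'' piece $\rho^{-\alpha}$ via a monotone rearrangement, and dispatch the power case with one integration by parts that exposes $3-\alpha>0$ as the source of positivity. First I would record that, since $\delta(|\bx|-\epsilon)$ is a constant multiple of the uniform measure on the circle $\{|\bx|=\epsilon\}$, the mean value property of $\tfrac{1}{2\pi}\ln|\bx|$ gives $\phi_\epsilon(\bx)=\epsilon\ln\max(|\bx|,\epsilon)$, hence $\nabla\phi_\epsilon(\bx)=\epsilon\,\bx/|\bx|^2$ whenever $|\bx|>\epsilon$. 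Inserting $\bx=(r,0)^T$ and $\by=(s\cos\theta,s\sin\theta)^T$ into \eqref{ercF}, writing $d=d(\theta):=\sqrt{r^2+s^2-2rs\cos\theta}$ and $\beta:=\tfrac{r}{s}+\tfrac{s}{r}$, the dot product simplifies (using $\sin^2\theta+\cos^2\theta=1$) to $\bigl(\nabla\phi_\epsilon(\bx)-\nabla\phi_\epsilon(\by)\bigr)\cdot\nabla W(\bx-\by)=\tfrac{\epsilon W'(d)}{d}(2-\beta\cos\theta)$, so that
\begin{equation*}
F[\phi_\epsilon,W](r,s)=2\epsilon\int_0^\pi\frac{W'(d(\theta))}{d(\theta)}\,\bigl(2-\beta\cos\theta\bigr)\rd\theta .
\end{equation*}
Since $r>s$ we have $\beta>2$, so $2-\beta\cos\theta$ is negative for small $\theta$, positive near $\theta=\pi$, and vanishes exactly once, at $\theta^\ast:=\arccos(2/\beta)$; also $\theta\mapsto d(\theta)$ is increasing on $[0,\pi]$.

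Next I would reduce to the model potential $W_0$ with $W_0'(\rho)=\rho^{-\alpha}$. Write $\tfrac{W'(d)}{d}=q(\theta)\,d(\theta)^{-(\alpha+1)}$ with $q(\theta):=d(\theta)^\alpha W'(d(\theta))$; by \eqref{Winc} the map $\rho\mapsto\rho^\alpha W'(\rho)$ is nondecreasing, so $q$ is nondecreasing on $[0,\pi]$. Setting $w(\theta):=(2-\beta\cos\theta)\,d(\theta)^{-(\alpha+1)}$, the factors $q(\theta)-q(\theta^\ast)$ and $w(\theta)$ are both $\le 0$ on $[0,\theta^\ast]$ and both $\ge 0$ on $[\theta^\ast,\pi]$, so $\int_0^\pi\bigl(q(\theta)-q(\theta^\ast)\bigr)w(\theta)\rd\theta\ge 0$, i.e.
\begin{equation*}
F[\phi_\epsilon,W](r,s)=2\epsilon\int_0^\pi q\,w\,\rd\theta\ \ge\ q(\theta^\ast)\cdot 2\epsilon\int_0^\pi w\,\rd\theta\ =\ q(\theta^\ast)\,F[\phi_\epsilon,W_0](r,s),
\end{equation*}
and since $q(\theta^\ast)>0$ it suffices to show $F[\phi_\epsilon,W_0](r,s)\ge 0$.

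For the power case I would use the identity $2-\beta\cos\theta=\tfrac{1}{rs}\bigl(2rs\sin^2\theta-d^2\cos\theta\bigr)$ together with the integration by parts $\int_0^\pi\cos\theta\,d^{-(\alpha-1)}\rd\theta=(\alpha-1)\,rs\int_0^\pi\sin^2\theta\,d^{-(\alpha+1)}\rd\theta$ (the boundary terms vanish because $\sin 0=\sin\pi=0$, and there is no singularity since $d(\theta)\ge r-s>0$). These give
\begin{equation*}
F[\phi_\epsilon,W_0](r,s)=2\epsilon\int_0^\pi\frac{2-\beta\cos\theta}{d^{\alpha+1}}\rd\theta=2\epsilon\,(3-\alpha)\int_0^\pi\frac{\sin^2\theta}{d^{\alpha+1}}\rd\theta\ \ge\ 0,
\end{equation*}
the last inequality using precisely $\alpha<3$. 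Combining the three steps gives $F[\phi_\epsilon,W](r,s)\ge 0$ for all $r>s>0$ and $0<\epsilon<s$; in particular \eqref{deferc} holds with (say) $\cR=\cR_1+1$, since then $\epsilon\le\cR_1<\cR\le s<r$ and the formula for $\nabla\phi_\epsilon$ applies on both circles. Notably, this argument invokes \textbf{(A1)} only through \eqref{Winc} and the bound $\alpha<3$.

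The step I expect to be the main obstacle is the power case. A priori $\int_0^\pi(2-\beta\cos\theta)\,d^{-(\alpha+1)}\rd\theta$ is not visibly signed: its negative part ($\cos\theta$ near $1$) sits exactly where $d^{-(\alpha+1)}$ is largest, and the integral is expressible through hypergeometric functions, which suggests delicate special-function estimates. The point is that one integration by parts collapses it to the manifestly nonnegative $(3-\alpha)\int_0^\pi\sin^2\theta\,d^{-(\alpha+1)}\rd\theta$, which is also what pins down the threshold $\alpha<3$ in \textbf{(A1)}. A secondary subtlety is the rearrangement step: one must check that $q$ is genuinely monotone over the whole range $d\in[r-s,r+s]$ — which is exactly \eqref{Winc} — and that $2-\beta\cos\theta$ changes sign only once, so that $q-q(\theta^\ast)$ and $w$ remain sign-aligned.
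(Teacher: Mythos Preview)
Your proof is correct, and in fact substantially simpler than the paper's. The crucial observation you make --- that the mean value property gives $\phi_\epsilon(\bx)=\epsilon\ln|\bx|$ \emph{exactly} for $|\bx|>\epsilon$, hence $\nabla\phi_\epsilon(\bx)=\epsilon\,\bx/|\bx|^2$ --- is something the paper does not exploit. The paper instead writes $\nabla\phi_\epsilon(\bx)=\tfrac{\bx}{|\bx|^2}\eta(\epsilon/|\bx|)$ for an auxiliary function $\eta$, shows only that $|\eta(t)-1|\le Ct^2$, and then carries the resulting error term $I$ through an elaborate perturbative argument (rescaling, a case split $r\ge C_1 s$ versus $r<C_1 s$, further sub-splits at angles $\theta_1$, and appeals to the upper bound $rW''/W'\le A$ in {\bf (A1)}). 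In fact $\eta\equiv 1$ on $(-1,1)$ (write $1-t\cos\theta=\tfrac12[(1-2t\cos\theta+t^2)+(1-t^2)]$ and use the Poisson kernel), so the error term $I$ in the paper's decomposition $F=2M+I$ vanishes identically, and all of that machinery is unnecessary. What remains --- your Steps 2 and 3 --- is exactly the paper's core: the comparison with the power potential at the sign-change angle $\theta^\ast$ is the paper's decomposition $M=M_1+M_2$ with $M_2\ge 0$, and your integration-by-parts identity $(3-\alpha)\int\sin^2\theta\,d^{-(\alpha+1)}$ is the content of Proposition~\ref{prop_meow}. One bonus: your argument uses only $W'>0$, the monotonicity \eqref{Winc}, and $\alpha<3$; it never invokes the upper bound $rW''/W'\le A$ from {\bf (A1)}, nor $\alpha>1$, so it proves ERC under strictly weaker hypotheses than stated.
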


The issue of ERC is the main difference between the situation in 1D and 2D: for 1D one has the ERC property for any attractive potential $W$, in the sense that \eqref{deferc} holds with $F$ and $\phi_\epsilon$ replaced by\footnote{In 1D the unit sphere consists of two points, corresponding to $\theta=0,\pi$ in \eqref{ercF}.} $F_{1D}[\phi,W](r,s) = \phi'(r-s)W'(r-s)+\phi'(r+s)W'(r+s),\,\phi_\epsilon(x) = \delta(|x|-\epsilon)* |x|$, where $|x|$ is the fundamental solution of the Laplacian in 1D. In fact, this comes from the convexity of $\phi_\epsilon$. However, in 2D, the ERC property may fail for some attractive potential $W$ due to the non-convexity of $\phi_\epsilon(\bx)$, and one needs the extra assumption {\bf (A1)} to guarantee the ERC property. To illustrate how {\bf (A1)} helps with the ERC property, we prove the following
\begin{proposition}\label{prop_meow}
Let $\alpha> 1$. For any $r>s>0$,
\begin{equation}
F\Big[\ln|\bx|, \frac{|\bx|^{-\alpha+1}}{-\alpha+1}\Big](r,s) \ge 0 \quad\Leftrightarrow\quad \alpha\le 3.
\end{equation}
\end{proposition}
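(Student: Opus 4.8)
The plan is to compute $F[\ln|\bx|, \frac{|\bx|^{-\alpha+1}}{-\alpha+1}](r,s)$ explicitly as a one-dimensional integral over $\theta$, and then analyze the sign of the integrand (or of a symmetrized version of it) as a function of $\alpha$. First I would set $\bx=(r,0)^T$, $\by=(s\cos\theta,s\sin\theta)^T$, and record the two gradients. For $\psi(\bz)=\ln|\bz|$ we have $\nabla\psi(\bz)=\bz/|\bz|^2$, so $\nabla\phi(\bx)=(1/r,0)^T$ and $\nabla\phi(\by)=(\cos\theta/s,\sin\theta/s)^T$. For the interaction, $W(r)=\frac{r^{-\alpha+1}}{-\alpha+1}$ gives $W'(r)=r^{-\alpha}$, hence $\nabla W(\bz)=|\bz|^{-\alpha-1}\bz$. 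Writing $\bz=\bx-\by$ and $\ell=|\bx-\by|=\sqrt{r^2+s^2-2rs\cos\theta}$, the integrand of $F$ becomes
\begin{equation}
\ell^{-\alpha-1}\Big(\tfrac{1}{r}(r-s\cos\theta)-\tfrac{\cos\theta}{s}(r-s\cos\theta)-\tfrac{\sin\theta}{s}(-s\sin\theta)\Big)
=\ell^{-\alpha-1}\Big(\tfrac{r-s\cos\theta}{r}-\tfrac{r\cos\theta-s}{s}\Big).
\end{equation}
So the task reduces to understanding $\displaystyle \int_{-\pi}^{\pi}\ell^{-\alpha-1}\,g(\theta)\,\rd\theta$ where $g(\theta)=\frac{r-s\cos\theta}{r}-\frac{r\cos\theta-s}{s}=1+\frac{s}{r}-\big(\frac{s}{r}+\frac{r}{s}\big)\cos\theta$, which after multiplying through by $rs>0$ is proportional to $s(r-s\cos\theta)-r(r\cos\theta-s)=2rs-(r^2+s^2)\cos\theta$.

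The key step is the sign analysis. Note $g(\theta)$ is even in $\theta$ and can be negative (near $\theta=0$, where $\cos\theta\approx 1$, one gets $2rs-(r^2+s^2)<0$ for $r\neq s$), while it is positive near $\theta=\pi$. The weight $\ell^{-\alpha-1}$ is largest where $\ell$ is smallest, i.e. near $\theta=0$, which is exactly where $g<0$ — so a large $\alpha$ (strong concentration of the weight near $\theta=0$) tends to make the integral negative, and this is what should produce the threshold $\alpha=3$. I would substitute $u=\cos\theta\in[-1,1]$ (so $\rd\theta = -\rd u/\sqrt{1-u^2}$, picking up a factor $2$ for the two halves), write $\ell^2=r^2+s^2-2rsu=:p-qu$ with $p=r^2+s^2$, $q=2rs$, $p>q>0$, and reduce to showing
\begin{equation}
\int_{-1}^{1}\frac{2rs-(r^2+s^2)u}{(p-qu)^{(\alpha+1)/2}\sqrt{1-u^2}}\,\rd u \ge 0 \iff \alpha\le 3.
\end{equation}
The most transparent route is probably to differentiate the integral in $\alpha$ (or in the exponent), or to integrate by parts / use the substitution $u\mapsto$ a Joukowski-type variable to evaluate it in closed form, since integrals of the form $\int_{-1}^1 (p-qu)^{-\nu}(1-u^2)^{-1/2}\rd u$ and their first $u$-moment are classical (they give elementary functions in the half-integer exponent cases $\alpha=1,3$, namely $2\pi(p^2-q^2)^{-1/2}$-type expressions, and hypergeometric/Legendre-function values in general). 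I expect that after normalization the quantity equals a constant times $(\text{something})\cdot(3-\alpha)$ plus a manifestly nonnegative remainder, or that one can exhibit a single sign change at $\alpha=3$ by monotonicity in $\alpha$.

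The main obstacle I anticipate is not the qualitative picture — weight concentration near $\theta=0$ versus the sign of $g$ — but getting a clean, rigorous comparison that pins the crossover exactly at $\alpha=3$ rather than at some nearby value. The cleanest attack is likely to rescale so that $s=1$, $r=\rho>1$ and reduce to a one-variable inequality in $\rho$ and $\alpha$; at $\alpha=3$ I would aim to show the integral vanishes identically in $\rho$ (a small miracle that should fall out of the explicit antiderivative, reflecting that $\ln|\bx|$ is harmonic and $|\bx|^{-2}$ is, up to constants, the kernel whose circle-average is locally constant in $d=2$ for this exact power), and then show strict monotonicity of the integral in $\alpha$ for each fixed $\rho$, using that the log-derivative of $\ell^{-\alpha-1}$ in $\alpha$ is $-\tfrac12\ln\ell^2$, which is a monotone function of $u$, combined with the fact that $g$ has exactly one sign change in $u$; a Chebyshev-type correlation inequality then gives the required monotonicity and hence the equivalence.
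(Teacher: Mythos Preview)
Your reduction is correct and matches the paper: with $z=\tfrac12(r/s+s/r)$ one gets (up to a positive scalar)
\[
F(r,s)\;\propto\;f(z):=\int_{-\pi}^{\pi}(z-\cos\theta)^{-\beta}(1-z\cos\theta)\,\rd\theta,\qquad \beta=\tfrac{\alpha+1}{2}.
\]
(There is a harmless slip in your display: $g(\theta)=2-(\tfrac{s}{r}+\tfrac{r}{s})\cos\theta$, not $1+\tfrac{s}{r}-\cdots$; your subsequent formula $2rs-(r^2+s^2)\cos\theta$ is correct.)

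Where you diverge from the paper is in the sign analysis. The paper does not argue via monotonicity in $\alpha$; it performs a single integration by parts in $\theta$, using $\sin\theta\,\rd\theta=\rd\!\left[(z-\cos\theta)^{-\beta+1}\right]/(-\beta+1)$ on $\int(z-\cos\theta)^{-\beta}\sin^2\theta\,\rd\theta$, and after one algebraic rearrangement obtains the exact identity
\[
f(z)\;=\;(2-\beta)\int_{-\pi}^{\pi}(z-\cos\theta)^{-\beta}\sin^2\theta\,\rd\theta.
\]
Since the integral is strictly positive, $f(z)\ge 0\iff\beta\le 2\iff\alpha\le 3$, and the proof is finished in two lines. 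This also explains the ``miracle'' you anticipated at $\alpha=3$: the prefactor $2-\beta$ vanishes there.

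Your proposed route---show $I(3)=0$ and then deduce the equivalence by strict monotonicity of $I(\alpha)$ via a Chebyshev-type inequality---has a real gap. The correlation argument you sketch (monotone $-\ln\ell$ against $g$ with a single sign change) yields $I'(\alpha)\le 0$ only when it is paired with the side condition $\int g\,w=0$, i.e.\ precisely at $\alpha=3$; it does not give $I'(\alpha)<0$ for all $\alpha$, and in fact from the paper's factorization $I(\alpha)=\tfrac{3-\alpha}{2}J(\alpha)$ with $J>0$ one sees that global monotonicity in $\alpha$ would require information on $J'(\alpha)$ that your argument does not supply. A salvageable version is the weaker statement ``$I'<0$ at every zero of $I$'', which does follow from the single-sign-change trick and yields uniqueness of the zero---but at that point you are working much harder than the paper's one-line integration by parts.
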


To prove this proposition, we introduce the following notations:
\begin{equation}\label{notation}\begin{split}
& \beta=\frac{\alpha+1}{2} >1 ,\quad z = \frac{1}{2}\Big(\frac{r}{s}+\frac{s}{r}\Big)>1, \\
\end{split}\end{equation}
which will be used in this proof, as well as Section \ref{sec_erc}.

\begin{proof}

\begin{equation}
\nabla (\ln|\bx|) = \frac{\bx}{r^2},\quad \nabla \Big(\frac{|\bx|^{-\alpha+1}}{-\alpha+1}\Big) = r^{-(\alpha+1)}\bx.
\end{equation}
Thus (with the notations \eqref{notation})
\begin{equation}\label{Fsr}\begin{split}
F(r,s) = & \int_{-\pi}^\pi ((r-s\cos\theta)^2+(s\sin\theta)^2)^{-\beta} \Big(\frac{1}{r}-\frac{1}{s}\cos\theta,-\frac{1}{s}\sin\theta\Big)^T\cdot (r-s\cos\theta,-s\sin\theta)\rd{\theta} \\
= & s^{-2\beta} \int_{-\pi}^\pi \Big((\frac{r}{s}-\cos\theta)^2+(\sin\theta)^2\Big)^{-\beta} \Big(\frac{s}{r}-\cos\theta,-\sin\theta\Big)^T\cdot \Big(\frac{r}{s}-\cos\theta,-\sin\theta\Big)^T\rd{\theta} \\
= & s^{-2\beta} \int_{-\pi}^\pi \Big(\frac{r^2}{s^2}+1-2\frac{r}{s}\cos\theta\Big)^{-\beta} \Big(2-(\frac{r}{s}+\frac{s}{r})\cos\theta\Big)\rd{\theta} \\
= & 2^{-\beta+1}(rs)^{-\beta} f(z),
\end{split}\end{equation}
where
\begin{equation}\label{fz1}
f(z):= \int_{-\pi}^\pi (z-\cos\theta)^{-\beta}(1-z\cos\theta)\rd{\theta},\quad z>1.
\end{equation}

To analyze the positivity of $f(z)$, we use integration by parts to compute (for any $\beta>1$)
\begin{equation}\begin{split}
 \int_{-\pi}^\pi (z-\cos\theta)^{-\beta}\sin^2\theta \rd{\theta}  
= & -\frac{1}{\beta-1}\int_{-\pi}^\pi \sin\theta \rd{(z-\cos\theta)^{-\beta+1}}  \\
= & \frac{1}{\beta-1}\int_{-\pi}^\pi (z-\cos\theta)^{-\beta+1}\cos\theta\rd{\theta} \\
= & \frac{1}{\beta-1}\int_{-\pi}^\pi (z-\cos\theta)^{-\beta}(z\cos\theta-1+\sin^2\theta)\rd{\theta} \\
= & \frac{1}{\beta-1}\Big(-f(z)+\int_{-\pi}^\pi (z-\cos\theta)^{-\beta}\sin^2\theta\rd{\theta}\Big). \\
\end{split}\end{equation}
Therefore
\begin{equation}\label{fz2}
f(z) = (2-\beta)\int_{-\pi}^\pi (z-\cos\theta)^{-\beta}\sin^2\theta \rd{\theta},
\end{equation}
where the last integrand is non-negative. The conclusion follows by noticing that $\beta\le 2\Leftrightarrow \alpha\le 3$.

\end{proof}

Despite its simplicity, this proposition enables us to prove Theorem \ref{thm_erc} by using comparison argument for $W$ with the power-law potential $r^{-\alpha+1}/(-\alpha+1)$, in view of \eqref{Winc} (a consequence of {\bf (A1)}). The difference between $\phi_\epsilon$ and $\ln|\bx|$ can be handled by perturbative arguments, if $\alpha$ is strictly less than 3.

\subsection{Finiteness of time integral in $\rho$}\label{sec_finite}

In this subsection we prove the following result, which was used in the last step of the proof of Theorem \ref{thm1}:
\begin{theorem}\label{thm_finite}
For $\cR_1>0$ large enough, the solution $\rho(t,\bx)$ to \eqref{eq0} satisfies
\begin{equation}
\int_0^\infty \int_{7\cR_1\le |\bx| \le 8\cR_1} \rho(t,\bx)\rd{\bx}\rd{t} < \infty.
\end{equation}
\end{theorem}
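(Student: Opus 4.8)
The plan is to follow the three-step structure outlined in the introduction for the 1D result of \cite{S1D}, but adapted to handle the two new 2D difficulties: the ineffectiveness of CSS on `flat' level sets, and the degeneracy of $W'$. First I would split the density $\rho(t,\cdot)$ into three pieces: the radially-decreasing part $\rho^*$, the `sharp' non-radially-decreasing part (the portion of $\mu=\rho-\rho^*$ supported on annuli $\{r_j \le |\bx| \le R_j\}$ with $R_j$ comparable to $r_j$), and the `flat' part $\rho_\flat$ (the portion of $\mu$ on annuli with $R_j \gg r_j$). It then suffices to bound $\int_0^\infty \int_{7\cR_1 \le |\bx| \le 8\cR_1}$ of each of the three pieces separately.

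For the sharp non-radially-decreasing part, I would use the continuous Steiner symmetrization curves CSS1 and CSS2 exactly as in \cite{S1D}: along such a curve emanating from $\rho$, the total energy $E$ is decreasing, with a quantitative dissipation rate that is bounded below in terms of the mass of the sharp part lying in the target annulus, while CSS2 is used to absorb the degeneracy of $W'(r)$ at large $r$ (this is where \eqref{lambda}, i.e. $W'(r)\ge cr^{-\alpha}$, enters). Since $E$ is bounded below (by {\bf (A5)} and the lower bound on $W$) and non-increasing along the PDE flow, integrating this dissipation estimate over $t\in[0,\infty)$ gives the finiteness of the time integral of the sharp part. For the radially-decreasing part $\rho^*$ together with the flat part $\rho_\flat$, I would invoke the local clustering curve: the key new input, as flagged in the introduction, is Lemma \ref{lem_ctr} and the degeneracy estimate \eqref{rhosmall}, which say that $\rho_\flat$ decays at large radius and generates a potential field behaving like that of a radially-decreasing density; this is what lets the local clustering curve, with its improved energy estimate, treat $\rho^* + \rho_\flat$ uniformly. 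Again one sets up a curve from $\rho$ along which $E$ strictly decreases with a dissipation rate controlling the mass of $\rho^*+\rho_\flat$ in the annulus $\{7\cR_1 \le |\bx| \le 8\cR_1\}$, and integrates in time using the lower bound and monotonicity of $E$.

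A couple of supporting ingredients are needed throughout: the uniform $L^\infty$ bound on $\rho$ (Lemma \ref{lem_reg1}), used to pass between $\int \rho$ and $\int \rho^m$ and to control the internal energy $\cS$ along the comparison curves; Lemma \ref{lem_circle} and Lemma \ref{lem_crho}, the latter guaranteeing that a fixed positive mass stays near the origin (which is what makes clustering/symmetrization at a fixed large radius genuinely lower the energy rather than being vacuous); and the Wasserstein gradient-flow structure, which is what converts "$E$ decreases along the auxiliary curve with rate $\gtrsim$ (mass in annulus)" into a bound on the time integral along the actual PDE solution. The precise mechanism is the standard one: if $\gamma\mapsto\rho_\gamma$ is a curve with $\rho_0 = \rho(t,\cdot)$ and $\frac{d}{d\gamma}\big|_{\gamma=0} E[\rho_\gamma] \le -c\,(\text{mass in annulus})$ while the curve has bounded metric speed, then $-\frac{d}{dt}E(t)$ dominates a multiple of the squared mass (or the mass itself, after using boundedness), and $\int_0^\infty(-\frac{dE}{dt})\,dt = E(0)-\lim E(t) < \infty$.

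The main obstacle I expect is the flat-part analysis: establishing that $\rho_\flat$ really does behave like the radially-decreasing part both in its radial decay \eqref{rhosmall} and in the potential it generates (Lemma \ref{lem_ctr}), and then feeding this into the local clustering curve to get an energy dissipation estimate that is strong enough to control the time integral. This is precisely the point where the 2D geometry genuinely departs from 1D — in 1D there is no analogue of a `flat' annular level set — so the delicate part is the interplay between the geometry of wide annuli, the degeneracy exponent $\alpha$ (constrained by $1<\alpha<3$ and $m>(\alpha+1)/2$ from {\bf (A1)} and {\bf (A3)}), and the quantitative energy gain of the clustering curve. A secondary technical difficulty is organizing the decomposition $\rho = \rho^* + \rho_\flat + (\text{sharp part})$ consistently in time (the level-set structure changes with $t$) so that the three separate time-integral bounds can be added without double counting.
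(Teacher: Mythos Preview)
Your proposal is correct and matches the paper's approach: the three-piece decomposition $\rho=\rho^*+\mu_\sharp+\mu_\flat$, CSS1/CSS2 for $\mu_\sharp$ (Proposition~\ref{prop_finite1}), the local clustering curve for $\rho^*+\mu_\flat$ (Proposition~\ref{prop_finite2}), all fed through Lemma~\ref{lem_basic} and the energy lower bound (Lemma~\ref{lem_Em}). The one structural point you leave implicit is the bootstrapping order---CSS1 first yields a fourth-power time-integrability of $\mu_\sharp$ on a larger annulus (Proposition~\ref{prop_CSS1c}), which is then used to remove a finite-measure exceptional time set both in the CSS2 estimate and in the local-clustering estimate---but this is exactly how the paper proceeds.
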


The proof of Theorem \ref{thm_finite} follows a similar approach as~\cite{S1D}: use the gradient flow structure of \eqref{eq0} and design curves of density distributions which decrease the total energy, c.f. Lemma \ref{lem_basic}. We introduce the $h$-representation (similar to~\cite{S1D}) of a radially-symmetric density distribution $\rho(\bx)$, viewed as a function of $r$:
\begin{equation}\label{rhoh1}
\rho(r) = \int_0^\infty \sum_{j}\chi_{I_j(h)}(r) \rd{h},
\end{equation}
where the level sets of $\rho$ is decomposed as
\begin{equation}\label{rhoh2}
\{r: \rho(r)\ge h\} = \bigcup_j I_j(h),
\end{equation}
as a disjoint union of closed intervals\footnote{We will always assume that the union in \eqref{rhoh2} is a finite union, and any $I_j$ which is a single point is ignored. The general case can be treated by approximation arguments, which are omitted in this paper.}, with $I_0$ being the unique interval containing 0 (if there is such an interval). We write 
\begin{equation}\label{Ij}
I_j = [r_j,R_j],\quad R_j>r_j.
\end{equation}

In~\cite{S1D}, we used the decomposition 
\begin{equation}\label{rhoh3}
\rho(r) = \rho^*(r) + \mu(r),\quad \rho^*(r) = \int_0^\infty \chi_{I_0(h)}(r) \rd{h},
\end{equation}
as its radially-decreasing and non-radially-decreasing parts, and used the continuous Steiner symmetrization (CSS) curves and the local clustering curve to control $\rho^*$ and $\mu$ respectively. However, in 2D, the CSS curves are not effective in decreasing the interaction energy if $r_j<<R_j$, see Figure \ref{fig_pic47} (left). Therefore, for fixed\footnote{The $\varepsilon$ dependence will be suppressed when unnecessary. Also, $\varepsilon$ should be distinguished from any $\epsilon$ used in this paper.} $0<\varepsilon< 1/4$, we need to further decompose $\mu$ and write
\begin{equation}\label{rhodecomp}
\rho(r) = \rho^*(r) + \mu_\sharp(r) + \mu_\flat(r),
\end{equation}
where the \emph{sharp and flat parts} of $\mu$ are defined as
\begin{equation}
\mu_\sharp(r) = \int_0^\infty \sum_{j\ge 1, \text{sharp}}\chi_{I_j(h)}(r) \rd{h},\quad \mu_\flat(r) = \int_0^\infty \sum_{j\ge 1, \text{flat}}\chi_{I_j(h)}(r) \rd{h},
\end{equation}
where a radial interval $I_j(h),\,j\ge 1$ is called \emph{sharp} if $r_j\ge \varepsilon R_j$, and otherwise \emph{flat}. See Figure \ref{fig_pic47} (right) for illustration.

\begin{figure}
\begin{center}
  \includegraphics[width=.45\linewidth]{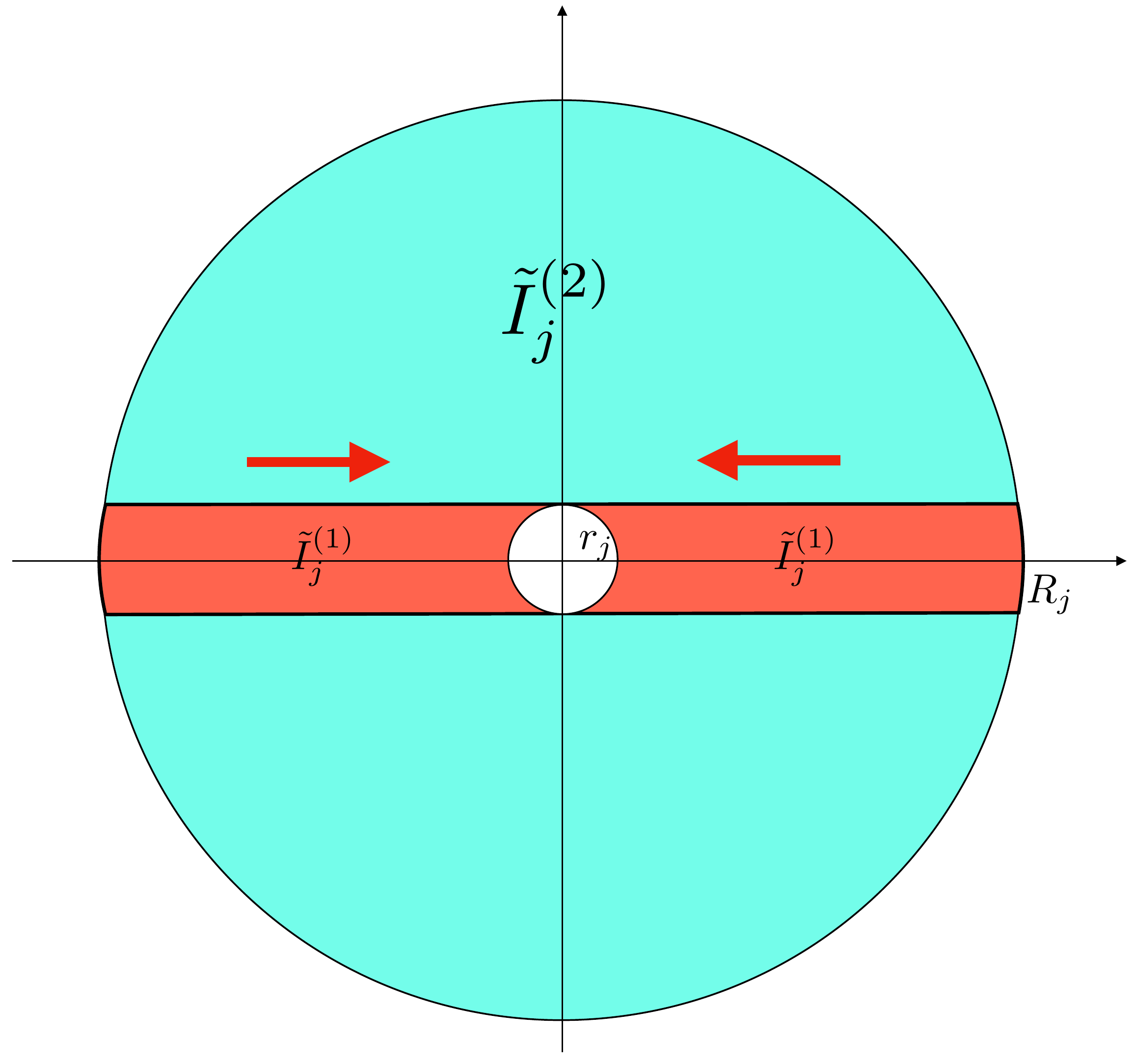}  
  \includegraphics[width=.45\linewidth]{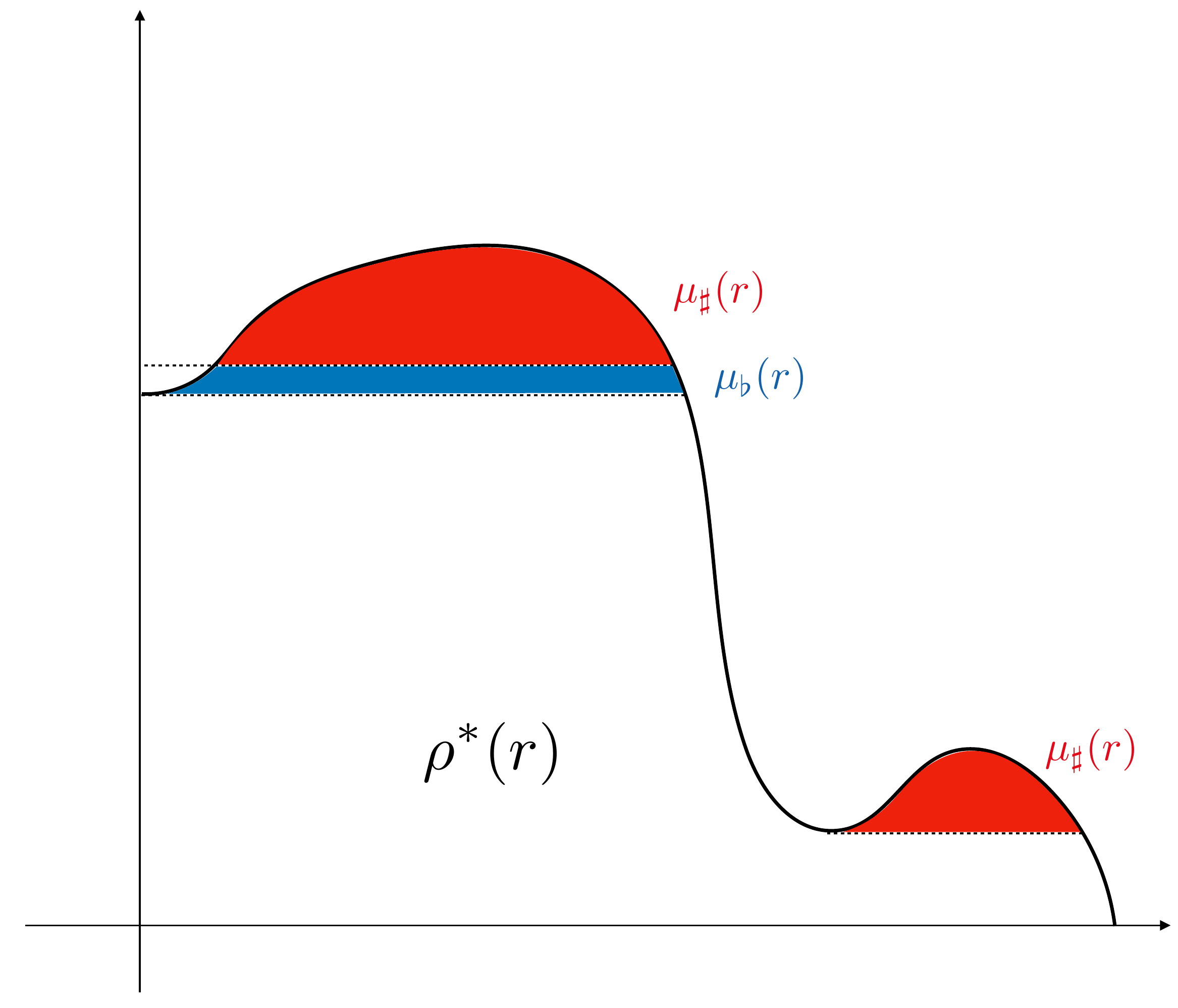}  
  \caption{Left: the CSS1 curve for a slice $\tI_j=\{\bx: r_j\le |\bx|\le R_j\}$. The red part is moving horizontally towards the center, and its area becomes small if $r_j<<R_j$. Right: the decomposition \eqref{rhodecomp}. The blue part $\mu_\flat$ contains those slices with $r_j<<R_j$.}
\label{fig_pic47}
\end{center}
\end{figure}

For the sharp part $\mu_\sharp$, the case $r_j<<R_j$ is ruled out, and we handle its time integral by using the CSS curves and obtain
\begin{proposition}\label{prop_finite1}
For $\cR_1>0$ large enough, we have
\begin{equation}
\int_0^\infty \int_{4\cR_1\le |\bx| \le \cR_2} \mu_\sharp(t,\bx)\rd{\bx}\rd{t} < \infty,
\end{equation}
for any $\cR_2 > 4\cR_1$.
\end{proposition}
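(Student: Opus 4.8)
\textbf{Proof proposal for Proposition \ref{prop_finite1}.}

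The plan is to realize the uniform-in-time control of the sharp part $\mu_\sharp$ on the annulus $\{4\cR_1 \le |\bx| \le \cR_2\}$ through the $2$-Wasserstein gradient flow structure, exactly in the spirit of \cite{S1D}: we build a competitor curve of densities emanating from $\rho(t,\cdot)$ along which the total energy $E$ decreases at a rate comparable to $\int_{4\cR_1 \le |\bx| \le \cR_2}\mu_\sharp(t,\bx)\rd{\bx}$, and then integrate the resulting differential inequality using that $E$ is bounded below (which holds by \textbf{(A5)} together with the fact that the interaction energy is bounded and the internal energy is nonnegative) and is nonincreasing along the actual flow. Concretely, I would first fix a CSS curve (CSS1, or its variant CSS2 to absorb the degeneracy of $W'$ at large $r$) acting only on the slices $\tI_j = \{\bx : r_j \le |\bx| \le R_j\}$ with $j \ge 1$ that are \emph{sharp} and that intersect $\{4\cR_1 \le |\bx| \le \cR_2\}$, leaving the radially-decreasing part $\rho^*$ and the flat part $\mu_\flat$ untouched. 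The key point is that for a sharp slice one has $r_j \ge \varepsilon R_j$, so the `red region' that CSS moves horizontally inward (Figure \ref{fig_pic47}, left) has area bounded below by a fixed fraction of the mass of the slice in the annulus; this is precisely what fails for flat slices and is the reason for the decomposition \eqref{rhodecomp}.

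The key steps, in order: (i) recall from Lemma \ref{lem_basic} that along a density curve $\rho_\tau$ with $\rho_0 = \rho(t,\cdot)$ and $2$-Wasserstein speed bounded, one has $\frac{\rd}{\rd t}E(t) \le \frac{1}{2\delta}\big(\text{speed}\big)^2 - \frac{1}{\delta}\frac{\rd}{\rd\tau}\big|_{\tau=0} E[\rho_\tau]$ for any $\delta > 0$, so it suffices to produce a curve whose initial energy dissipation rate dominates the target integrand while its squared speed is controlled by the same quantity; (ii) compute the initial derivative of the internal energy $\cS$ along the CSS curve — for the Steiner-type rearrangement this is nonpositive (rearrangement does not increase $\int \rho^m$) or contributes a harmless term, so the gain must come from the interaction energy $\cI$; (iii) compute the initial derivative of $\cI$ along the CSS curve and show it is bounded above by $-c\int_{4\cR_1 \le |\bx| \le \cR_2}\mu_\sharp(t,\bx)\rd{\bx}$ plus error terms supported near $|\bx| = 4\cR_1$ or $|\bx| = \cR_2$; here one uses that the attraction felt by the red region, due to the bulk of mass retained near the origin by Lemma \ref{lem_crho} together with the lower bound $W'(r) \ge \lambda(r) = cr^{-\alpha}$ from \eqref{lambda}, is strictly inward and of size $\gtrsim \lambda(\cR_2)$, and that the displaced area is $\gtrsim \varepsilon \cdot (\text{mass of the sharp slice})$ by sharpness; (iv) bound the squared $2$-Wasserstein speed of the curve by (displaced mass)$\times$(displacement)$^2 \lesssim$ the same integrand (up to constants depending on $\cR_1,\cR_2,\varepsilon$); (v) assemble the differential inequality $\frac{\rd}{\rd t}E(t) \le -c\int_{4\cR_1 \le |\bx| \le \cR_2}\mu_\sharp(t,\bx)\rd{\bx} + C\int_{|\bx|\in[\cR_1,4\cR_1]\cup[\cR_2,2\cR_2]}\rho(t,\bx)\rd{\bx}$ (the boundary error terms being on thinner annuli that are handled by an induction/bootstrapping over a finite covering, as in the last step of the proof of Theorem \ref{thm1}, or absorbed because they too are controlled by the same scheme), and integrate in $t$ over $[0,\infty)$ using $\int_0^\infty(-\frac{\rd}{\rd t}E)\rd t = E(0) - \lim_{t\to\infty}E(t) < \infty$.

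The main obstacle I expect is step (iii): showing that the CSS curve genuinely \emph{dissipates} interaction energy at the required rate on the sharp slices, uniformly in the shape of $\rho(t,\cdot)$. Two subtleties must be handled. First, the degeneracy of $W'$ at large $r$ means the inward pull on a slice near $|\bx| = \cR_2$ is only of order $\lambda(\cR_2) = c\cR_2^{-\alpha}$; this is a fixed positive constant once $\cR_1,\cR_2$ are fixed, but one must check that it is not cancelled by the mutual self-interaction of the slice with itself or with other far-away slices — this is exactly where CSS2 (the modified curve designed in \cite{S1D} to handle degeneracy) and the ERC-type monotonicity estimates enter. Second, one needs a quantitative lower bound on the mass that CSS actually transports into the disc $\{|\bx| < r_j\}$ from the inner edge of a sharp slice; for $r_j \ge \varepsilon R_j$ the annular region swept has area comparable to $\varepsilon r_j (R_j - r_j)$, which one must relate back to $\int_{\tI_j}\mu_\sharp \rd\bx$ — a bookkeeping step that is routine but where the constant $\varepsilon$ and the factor $r_j$ (which can be as large as $\cR_2$) interact, so one has to be careful that the final constant $c$ does not degenerate. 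Modulo these points, the argument is a direct adaptation of the $\rho^*$ and $\mu$ estimates of \cite{S1D} with the sharp/flat split inserted; the genuinely new work beyond \cite{S1D} is concentrated in the companion estimate for $\rho^* + \mu_\flat$ (via the local clustering curve with improved energy estimate and Lemma \ref{lem_ctr}), not in Proposition \ref{prop_finite1} itself.
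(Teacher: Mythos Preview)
Your broad strategy is right --- a CSS-type curve acting on the sharp slices, driven inward by the bulk mass near the origin (Lemma \ref{lem_crho}) and the lower bound \eqref{lambda} --- but there is a genuine gap in how you close the error from the far-away part of $\mu_\sharp$. The differential inequality you write in step (v) necessarily carries a bad term: when a sharp slice inside $\{4\cR_1\le|\bx|\le\cR_2\}$ moves inward, the attraction exerted on it by $\mu_\sharp$ sitting in $\{|\bx|\ge\cR_2\}$ can point outward, and this contributes an error of the form $C\int_{\cR_2\le|\by|\le\cR_3}\mu_\sharp(\by)\rd\by$ (with $\cR_3$ large, determined by how far out one must go before the force decays below the gain; see Proposition \ref{prop_CSS2}). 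Your proposal to absorb this by ``induction/bootstrapping over a finite covering'' does not terminate: applying the same scheme on the larger annulus $\{4\cR_1\le|\bx|\le\cR_3\}$ produces a new error on $\{\cR_3\le|\bx|\le\cR_3'\}$, and since the gain constant scales like $\lambda(\cR_2)\sim\cR_2^{-\alpha}$ while the error constant scales like $\cR_2^{-1}$ (from Lemma \ref{lem_circle}), the ratio blows up for $\alpha>1$ and no geometric summation is available.

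The paper's resolution is a two-stage argument that you miss by treating CSS1 and CSS2 as interchangeable alternatives rather than as a pipeline. CSS1 (the unlocalized Steiner symmetrization, moving \emph{all} non-radially-decreasing slices) yields only a \emph{quadratic} dissipation rate,
\[
\frac{\rd}{\rd t}\Big|_{t=0}E[\rho_t]\le -c\varepsilon^2\lambda(2\cR_3)\Big(\int_{|\bx|\le\cR_3}\mu_\sharp\rd\bx\Big)^2,
\]
which through Lemma \ref{lem_basic} (whose conclusion is $\frac{\rd}{\rd t}E\le -(\partial_\tau E)^2/\text{cost}$, not the Young-inequality form you wrote) gives only the weak bound $\int_0^\infty\big(\int_{|\bx|\le\cR_3}\mu_\sharp\big)^4\rd t<\infty$ (Proposition \ref{prop_CSS1c}). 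This is too weak for Proposition \ref{prop_finite1} directly, but it has no external error term. It implies that the set of times where the CSS2 error $\int_{\cR_2\le|\by|\le\cR_3}\mu_\sharp$ exceeds any fixed threshold has finite measure. Off this exceptional set the CSS2 estimate is clean and gives the desired linear-in-mass dissipation; on the exceptional set one uses its finite measure together with the uniform $L^\infty$ bound on $\rho$. So the missing idea is: CSS1 first, to tame the error in CSS2.
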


For the flat part $\mu_\flat$, it shares the following property with $\rho^*$:
\begin{equation}\label{rhosmall}
\rho^*(r)+\mu_\flat(r) \le C\frac{1}{r^2},\quad \forall r>0,
\end{equation}
which is the key property in the energy analysis in the local clustering curve, providing the degeneracy of the internal energy at large $r$. This enable us to handle both $\rho^*$ and $\mu_\flat$ by the local clustering curve, and obtain
\begin{proposition}\label{prop_finite2}
For $\cR_1>0$ large enough, we have
\begin{equation}
\int_0^\infty \int_{7\cR_1\le |\bx| \le 8\cR_1} (\rho^*(t,\bx)+\mu_\sharp(t,\bx))\rd{\bx}\rd{t} < \infty.
\end{equation}
\end{proposition}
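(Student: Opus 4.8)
The plan is to use the local clustering curve as in~\cite{S1D}, but with the energy estimates refined so that the combined density $\sigma := \rho^* + \mu_\flat$ (rather than just $\rho^*$) can be handled. The starting point is the observation \eqref{rhosmall}: both $\rho^*$ and $\mu_\flat$ obey $\sigma(r)\le C r^{-2}$, so the internal energy contribution $\cS[\sigma]$ is, modulo the non-flat (sharp) cross terms, degenerate at large radius in exactly the way the 1D argument exploits. First I would set up the local clustering curve $\rho_\tau$ starting from $\rho(t,\cdot)$ at a fixed time $t$: on a thin annulus $\{r\in[a,b]\}$ contained in $[7\cR_1,8\cR_1]$, split the slab of mass $\sigma$ into a family of small clusters and transport each cluster inward, imitating the cluster-formation mechanism. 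Along this curve one computes $\frac{\rd}{\rd\tau}E[\rho_\tau]|_{\tau=0}$ and, as in Lemma \ref{lem_basic}, the gradient-flow structure gives a bound of the form $\int_0^\infty \big(-\frac{\rd}{\rd\tau}E[\rho_\tau]\big|_{\tau=0}\big)\,\rd t<\infty$; the goal is to show that this derivative is bounded above by $-c\int_{a\le|\bx|\le b}\sigma(t,\bx)\rd\bx$ plus terms already controlled by Proposition \ref{prop_finite1} (the time integral of $\mu_\sharp$ on a larger annulus).

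The energy derivative splits into three pieces: the internal-energy term, the self-interaction of $\sigma$ on the moving annulus, and the cross-interaction of the moving mass with the rest of $\rho$ (which includes $\rho^*$, $\mu_\flat$, $\mu_\sharp$ and the mass near the origin). For the internal energy, moving mass inward into clusters increases $\cS$, but the gain is controlled because $\sigma(r)\le Cr^{-2}$ makes the pre-clustering density small on $[7\cR_1,8\cR_1]$ once $\cR_1$ is large; this is where \eqref{rhosmall} is essential and where the flat part is treated on equal footing with $\rho^*$. For the interaction energy, the key is Lemma \ref{lem_ctr}: the potential field generated by $\mu_\flat$ behaves like the field generated by a radially-decreasing density, so the attraction felt by the transported clusters from $\mu_\flat$ (and from $\rho^*$, and from the guaranteed mass near the origin, using Lemma \ref{lem_crho}) points inward and hence \emph{decreases} the energy as we cluster-and-move-inward, or at worst contributes a harmless sign. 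The self-interaction of $\sigma$ on the annulus is handled by the standard local-clustering estimate: at low density, forming clusters strictly decreases the interaction energy, with a quantitative gain proportional to the mass $\int_{a\le|\bx|\le b}\sigma\rd\bx$, using the lower bound \eqref{lambda} on $W'$ at large $r$ together with {\bf (A3)}, $m>(\alpha+1)/2$, which is exactly the threshold making the clustering gain beat the diffusion loss.

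The one genuinely new term is the cross-interaction between the transported $\sigma$-mass and $\mu_\sharp$ (the sharp slices, some of which are large annuli overlapping $[7\cR_1,8\cR_1]$). This cannot be given a favorable sign a priori, so instead I would bound its absolute value by $C\int_{4\cR_1\le|\bx|\le\cR_2}\mu_\sharp(t,\bx)\rd\bx$ for a suitable fixed $\cR_2$, using Lemma \ref{lem_circle} to convert the potential interaction into an $L^1$ bound on $\mu_\sharp$, and then absorb its time integral via Proposition \ref{prop_finite1}. I expect this cross term with $\mu_\sharp$ to be the main obstacle: one must choose the clustering scale and the annulus $[a,b]$ so that the $\mu_\sharp$-interaction error is genuinely smaller (in the relevant time-integrated sense) than the coercive gain $-c\int_{a\le|\bx|\le b}\sigma$, which requires a careful accounting of how the flat/sharp dichotomy interacts with the radial cutoffs, and possibly summing the estimate over a finite family of thin annuli covering $[7\cR_1,8\cR_1]$. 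Once the differential inequality $\frac{\rd}{\rd\tau}E[\rho_\tau]|_{\tau=0}\le -c\int_{7\cR_1\le|\bx|\le8\cR_1}\sigma(t,\bx)\rd\bx + C\int_{4\cR_1\le|\bx|\le\cR_2}\mu_\sharp(t,\bx)\rd\bx$ is established, integrating in $t$ and invoking Lemma \ref{lem_basic} together with Proposition \ref{prop_finite1} yields the claimed finiteness.
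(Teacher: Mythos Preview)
Your overall strategy matches the paper's: use a local clustering curve, exploit \eqref{rhosmall} to control the internal-energy increment, use Lemma~\ref{lem_ctr} and Lemma~\ref{lem_crho} for the inward attraction, and bound the cross-term with $\mu_\sharp$ via Proposition~\ref{prop_finite1}. However, there are two genuine gaps.

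First, your description of the curve as ``split the slab of mass $\sigma$ into a family of small clusters and transport each cluster inward'' is not what actually works, and your identification of where the coercive gain comes from is off. The paper's local clustering curve is simply the radial compression $\bv(\bx)=-\frac{|\bx|-\crr}{\cR_1}\chi_{\crr\le|\bx|\le 8\cR_1}\frac{\bx}{|\bx|}$ applied to \emph{all} of $\rho$ (not just $\sigma$) on a \emph{wide} annulus with $\crr\in[2\cR_1,4\cR_1]$, not a thin one inside $[7\cR_1,8\cR_1]$. The interaction-energy gain comes entirely from the inward attraction (Corollary~\ref{cor_ctr}), not from ``self-interaction of $\sigma$ on the annulus'' or cluster formation. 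The internal-energy increment is then $\frac{C}{\cR_1}\int_{\crr}^{8\cR_1}\rho^m\,\rd r$, and the crucial step you omit is that $\crr$ is \emph{chosen} via the Lemma of local clustering (Lemma~\ref{lem_r}) applied to the decreasing rearrangement $\rho^{**}$ of $\rho^*+\mu_\flat$ on $[2\cR_1,8\cR_1]$; this is what converts $\int\rho^{**}(r)^m\rd r$ into $\frac{C}{\cR_1}\rho^{**}(2\cR_1)^{m-1}\int(r-\crr)\rho\,\rd r$, after which \eqref{rhosmall} and {\bf (A3)} allow the internal term to be absorbed by the interaction term. Without this choice of $\crr$ on a wide annulus, the claimed balance cannot be closed.

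Second, your use of Lemma~\ref{lem_basic} is incorrect: it does \emph{not} give $\int_0^\infty\big(-\frac{\rd}{\rd\tau}E[\rho_\tau]|_{\tau=0}\big)\rd t<\infty$. It gives $\frac{\rd}{\rd t}E[\rho(t,\cdot)]\le -\big(\frac{\rd}{\rd\tau}E[\rho_\tau]|_{\tau=0}\big)^2/\text{cost}$, which is quadratic in the energy derivative and involves the Wasserstein cost in the denominator. Because the energy derivative has the form $-cA(t)+CB(t)$ with $A(t)=\int(r-\crr)\rho\,\rd r$ and $B(t)=\int\mu_\sharp$, and the cost is $\sim\cR_1^{-2}\int(r-\crr)^2\rho\,\rd r\le C\cR_1^{-1}A(t)$, one cannot simply integrate; the paper instead splits time into $\cT_1$ (where $B$ is large, hence finite measure by Proposition~\ref{prop_finite1}), $\cT_2$ (where $A\le CB$, hence $\int_{\cT_2}A\,\rd t$ is controlled by $\int B\,\rd t$), and $\cT_3$ (where $-cA+CB\le -\frac{c}{2}A$, so the quadratic estimate linearizes to $\frac{\rd}{\rd t}E\le -cA$). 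This time decomposition is an essential step that your plan does not address.
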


Combining Propositions \ref{prop_finite1} and \ref{prop_finite2} gives Theorem \ref{thm_finite}.

\section{Proof of Theorem \ref{thm_erc}: the ERC property}\label{sec_erc}

We start from some lemmas. Define
\begin{equation}\label{eta}
\eta(\epsilon):=\frac{1}{2\pi}\int_{-\pi}^\pi \frac{1-\epsilon\cos\theta}{(1-\epsilon\cos\theta)^2+(\epsilon\sin\theta)^2}\rd{\theta},
\end{equation}
for $|\epsilon|<1$. Then we have
\begin{lemma}
$\eta(\epsilon)$ is a smooth even function in $\epsilon$, satisfying the estimate
\begin{equation}\label{etaeps}
|\eta'(\epsilon)|\le C\epsilon,\quad |\eta(\epsilon)-1| \le C\epsilon^2,\quad \forall |\epsilon|\le \frac{1}{2}.
\end{equation}
Furthermore
\begin{equation}\label{etaeps2}
\eta(\epsilon) \le C,\quad \forall |\epsilon|<1.
\end{equation}
\end{lemma}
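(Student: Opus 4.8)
The plan is to analyze the explicit integral defining $\eta(\epsilon)$ directly. First I would observe that the integrand, call it $g(\epsilon,\theta)=\dfrac{1-\epsilon\cos\theta}{(1-\epsilon\cos\theta)^2+(\epsilon\sin\theta)^2}$, has denominator $1-2\epsilon\cos\theta+\epsilon^2 = |1-\epsilon e^{i\theta}|^2$, which for $|\epsilon|<1$ is bounded below by $(1-|\epsilon|)^2>0$ uniformly in $\theta$. Hence $g$ is smooth on the compact set $\{|\epsilon|\le\rho_0\}\times[-\pi,\pi]$ for any $\rho_0<1$, and differentiation under the integral sign is justified; this gives smoothness of $\eta$ on $(-1,1)$. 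Evenness follows from the substitution $\theta\mapsto\theta+\pi$ (equivalently $\epsilon\mapsto-\epsilon$ combined with $\theta\mapsto\pi-\theta$): the map $\epsilon\mapsto-\epsilon$ sends the integrand at $\theta$ to the integrand at $\pi-\theta$, and integrating over the full period is invariant, so $\eta(-\epsilon)=\eta(\epsilon)$.

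Next I would get a clean closed form or at least a clean comparison. In fact $g(\epsilon,\theta)=\Re\dfrac{1}{1-\epsilon e^{i\theta}}$, since $\dfrac{1}{1-\epsilon e^{i\theta}}=\dfrac{1-\epsilon e^{-i\theta}}{|1-\epsilon e^{i\theta}|^2}$ has real part $\dfrac{1-\epsilon\cos\theta}{|1-\epsilon e^{i\theta}|^2}$. Therefore
\begin{equation}
\eta(\epsilon)=\frac{1}{2\pi}\int_{-\pi}^\pi \Re\frac{1}{1-\epsilon e^{i\theta}}\rd\theta = \Re\Big(\frac{1}{2\pi}\int_{-\pi}^\pi \sum_{k=0}^\infty \epsilon^k e^{ik\theta}\rd\theta\Big)=1,
\end{equation}
for $|\epsilon|<1$, where the geometric series converges uniformly in $\theta$ so the interchange is legitimate, and only the $k=0$ term survives the angular integration. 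This actually shows $\eta\equiv 1$, which trivially implies all of \eqref{etaeps} ($\eta'\equiv 0$, $\eta-1\equiv 0$) and \eqref{etaeps2}. I would then present this as the proof, and possibly remark that one does not even need the full evaluation: the bound $|1-\epsilon e^{i\theta}|^2\ge (1-|\epsilon|)^2$ alone gives $|g|\le \frac{1}{1-|\epsilon|}$, hence \eqref{etaeps2} on any $|\epsilon|\le\rho_0<1$ and, by redoing the estimate, on the full range by noting $g\ge 0$ is false in general — so the series argument is the honest route for \eqref{etaeps2} near $|\epsilon|=1$.

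Alternatively, if one prefers to avoid complex analysis, I would expand $g(\epsilon,\theta)$ in powers of $\epsilon$ at fixed $\theta$: writing $D=1-2\epsilon\cos\theta+\epsilon^2$, one has $g=(1-\epsilon\cos\theta)D^{-1}$, and a Taylor expansion in $\epsilon$ to second order gives $g=1+\epsilon\cos\theta+\epsilon^2(\text{something})+O(\epsilon^3)$ with coefficients bounded uniformly in $\theta$ for $|\epsilon|\le 1/2$; integrating, the $O(\epsilon)$ term has zero average, giving $|\eta(\epsilon)-1|\le C\epsilon^2$, and differentiating the expansion gives $|\eta'(\epsilon)|\le C\epsilon$ after again killing the leading term by symmetry. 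The main obstacle, such as it is, is purely bookkeeping: justifying the interchange of sum/derivative and integral, which is immediate from the uniform lower bound on the denominator on compact $\epsilon$-sets. For \eqref{etaeps2} on $|\epsilon|$ close to $1$ the cheap pointwise bound $|g|\le (1-|\epsilon|)^{-1}$ blows up, so one genuinely needs the cancellation captured by the series computation $\eta\equiv 1$ (or the Poisson-kernel interpretation: $\eta(\epsilon)$ is, up to the sign convention, the average of a harmonic function, hence its value at the center). I expect the whole argument to be two short paragraphs once the identity $g=\Re(1-\epsilon e^{i\theta})^{-1}$ is noticed.
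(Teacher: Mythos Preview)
Your proof is correct and in fact establishes the stronger statement $\eta\equiv 1$ on $(-1,1)$, which the paper does not observe. The identification $g(\epsilon,\theta)=\Re\,(1-\epsilon e^{i\theta})^{-1}$ is clean; equivalently, writing the denominator as $1-2\epsilon\cos\theta+\epsilon^2$ one has $g=\tfrac12\bigl(1+P_\epsilon(\theta)\bigr)$ with $P_\epsilon$ the Poisson kernel for the disk, so the angular average is exactly $1$. This renders \eqref{etaeps} and \eqref{etaeps2} trivial.

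The paper takes a more pedestrian route: smoothness and evenness are argued as you do, then \eqref{etaeps} is read off from $\eta(0)=1$, $\eta'(0)=0$ together with smoothness on $|\epsilon|\le 1/2$; for \eqref{etaeps2} near $|\epsilon|=1$ the paper does a direct hands-on estimate of the integral, expanding $1-\cos\theta\sim\theta^2$ and reducing to an arctangent bound. Your approach is shorter and yields more; in particular it shows that every occurrence of $\eta(\epsilon/r)-1$ and $\eta(\epsilon/s)-1$ in the subsequent proof of Theorem~\ref{thm_erc} is identically zero (since there $\epsilon<r,s$), which would collapse the entire ``error term $I$'' analysis in that proof. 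The paper's approach, by contrast, stays within elementary real calculus and does not appeal to any harmonic-function identity, at the cost of carrying these spurious error terms downstream.
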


\begin{proof}
The smoothness of $\eta(\epsilon)$ follows from the fact that  the denominator $(1-\epsilon\cos\theta)^2+(\epsilon\sin\theta)^2$ is away from 0 near any fixed $|\epsilon|<1$. The even property of $\eta(\epsilon)$ follows from 
\begin{equation}
\eta(\epsilon)=\frac{1}{\pi}\int_{0}^\pi \frac{1-\epsilon\cos\theta}{(1-\epsilon\cos\theta)^2+(\epsilon\sin\theta)^2}\rd{\theta},
\end{equation}
and the change of variable $\theta\mapsto \pi-\theta$. Therefore $\eta'(0)=0$ and \eqref{etaeps} follows from $\eta(0)=1$.

To see \eqref{etaeps2}, for $\epsilon>\frac{1}{2}$,
\begin{equation}\begin{split}
\eta(\epsilon)= & \frac{1}{\pi}\int_{0}^\pi \frac{(1-\epsilon)+\epsilon(1-\cos\theta)}{[(1-\epsilon)+\epsilon(1-\cos\theta)]^2+(\epsilon\sin\theta)^2}\rd{\theta}
\le   C\int_{0}^\pi \frac{(1-\epsilon)+\epsilon\theta^2}{[(1-\epsilon)+\epsilon \theta^2]^2+\epsilon^2\theta^2}\rd{\theta}\\
\le  & C\int_{0}^\pi \frac{(1-\epsilon)+\theta^2}{\theta^2 + (1-\epsilon)^2}\rd{\theta}
\le   C\int_{0}^\pi \frac{1-\epsilon}{\theta^2 + (1-\epsilon)^2}\rd{\theta} + C
=   C\tan^{-1}\frac{\theta}{1-\epsilon}\Big|_{\theta=0}^\pi + C
\le  C.
\end{split}\end{equation}
The uniform bound of $\eta(\epsilon)$ on $[0,\frac{1}{2}]$ follows from its smoothness.

\end{proof}

Then we  prove an estimate on an angular integral:
\begin{lemma}\label{lem_th}
Fix $1<\beta<2$. Then
\begin{equation}\label{lem_th}
\frac{z-1}{z}\int_{-\pi}^\pi (z-\cos\theta)^{-\beta}\rd{\theta} \le C\int_{-\pi}^\pi (z-\cos\theta)^{-\beta}(1-z\cos\theta)\rd{\theta},\quad \forall  z>1,
\end{equation}
where $C$ depends on $\beta$.
\end{lemma}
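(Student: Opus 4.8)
\textbf{Proof proposal for Lemma \ref{lem_th}.}

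The plan is to reduce the inequality to a comparison of the two nonnegative quantities $f(z) := \int_{-\pi}^\pi (z-\cos\theta)^{-\beta}(1-z\cos\theta)\rd\theta$ (which, by the integration-by-parts identity \eqref{fz2}, equals $(2-\beta)\int_{-\pi}^\pi (z-\cos\theta)^{-\beta}\sin^2\theta\rd\theta$, hence is strictly positive for $1<\beta<2$) and $g(z) := \frac{z-1}{z}\int_{-\pi}^\pi (z-\cos\theta)^{-\beta}\rd\theta$. Since both sides are continuous and positive on $(1,\infty)$, it suffices to establish the bound separately in the regime $z\to 1^+$ (where $z-\cos\theta$ degenerates near $\theta=0$) and in the regime $z\to\infty$ (where $(z-\cos\theta)^{-\beta}\sim z^{-\beta}$), and then invoke compactness on any interval $[1+\delta, M]$.

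For the large-$z$ regime, I would simply estimate $z-\cos\theta \asymp z$ uniformly in $\theta$, so that $\int_{-\pi}^\pi (z-\cos\theta)^{-\beta}\rd\theta \asymp z^{-\beta}$ and $\int_{-\pi}^\pi (z-\cos\theta)^{-\beta}\sin^2\theta\rd\theta \asymp z^{-\beta}$; then $g(z) \asymp z^{-\beta}$ (using $\frac{z-1}{z}\le 1$, and $\frac{z-1}{z}\to 1$) while $f(z)\asymp z^{-\beta}$, giving the inequality with a $\beta$-dependent constant. For the delicate regime $z\to 1^+$, set $w := z-1 \to 0^+$. Near $\theta = 0$ we have $z-\cos\theta = w + (1-\cos\theta) \asymp w + \theta^2$, and $\sin^2\theta \asymp \theta^2$, so the standard scaling $\theta = \sqrt{w}\,u$ gives $\int (z-\cos\theta)^{-\beta}\rd\theta \asymp w^{1/2-\beta}$ and $\int (z-\cos\theta)^{-\beta}\sin^2\theta\rd\theta \asymp w^{3/2-\beta}$ (both integrals being dominated by the region $|\theta|\lesssim\sqrt w$ precisely because $1<\beta<2$ makes the relevant $u$-integrals convergent). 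Hence $g(z) \asymp w\cdot w^{1/2-\beta} = w^{3/2-\beta}$ and $f(z) = (2-\beta)\int(z-\cos\theta)^{-\beta}\sin^2\theta\rd\theta \asymp w^{3/2-\beta}$, so the two sides match to leading order and the inequality holds for small $w$.

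The main obstacle is making the two local estimates near $\theta=0$ uniform and rigorous: I need two-sided bounds $c(w+\theta^2) \le z-\cos\theta \le C(w+\theta^2)$ and $c\theta^2 \le \sin^2\theta\le\theta^2$ valid on all of $[-\pi,\pi]$ (the first follows from $1-\cos\theta\asymp\theta^2$ there, the second is elementary), and then to check that after the substitution $\theta=\sqrt w\,u$ the resulting $u$-integrals $\int_0^{\pi/\sqrt w}(1+u^2)^{-\beta}\rd u$ and $\int_0^{\pi/\sqrt w}(1+u^2)^{-\beta}u^2\rd u$ converge to finite positive limits as $w\to 0^+$ — the first because $2\beta > 1$, the second because $2\beta - 2 > 1$, i.e. $\beta > 3/2$; for $1<\beta\le 3/2$ the second integral instead grows like $w^{3/2-\beta-1/2}\cdot(\text{const})$... but in fact one only needs the lower bound $f(z)\gtrsim w^{3/2-\beta}$ and the upper bound $g(z)\lesssim w^{3/2-\beta}$, so it is enough that the $\sin^2\theta$-integral is bounded \emph{below} by its contribution from $|\theta|\le\sqrt w$ (always valid) and that the plain integral is bounded \emph{above} by splitting $[-\pi,\pi] = \{|\theta|\le\sqrt w\}\cup\{|\theta|>\sqrt w\}$ and using $(z-\cos\theta)^{-\beta}\le C(w+\theta^2)^{-\beta}\le C\theta^{-2\beta}$ on the outer region, whose integral is $O(w^{1/2-\beta})$ since $2\beta>1$. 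Once these one-sided estimates are in place on both regimes, continuity and positivity of $f,g$ on the compact middle interval finishes the proof.
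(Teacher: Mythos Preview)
Your argument is correct and rests on the same two ingredients as the paper's: the identity \eqref{fz2} rewriting the right-hand side as $(2-\beta)\int(z-\cos\theta)^{-\beta}\sin^2\theta\,\rd\theta$, and the scaling $\theta\sim\sqrt{z-1}$ near the degeneracy. The difference is purely organizational. The paper does not separate into three regimes and does not invoke compactness. Instead, after peeling off the range $|\theta|\in[\pi/2,\pi]$ (which contributes only a harmless $(z+1)^{-\beta}$ on each side), it reduces to comparing $I_L=\int_0^{\pi/2}(z-\cos\theta)^{-\beta}(z-1)\,\rd\theta$ with $I_R=\int_0^{\pi/2}(z-\cos\theta)^{-\beta}\sin^2\theta\,\rd\theta$, and splits \emph{once} at the point $\theta_1$ defined by $\sin^2\theta_1=z-1$ (or $\theta_1=\pi/2$ when $z-1\ge 1$): for $\theta>\theta_1$ the pointwise bound $\sin^2\theta\ge z-1$ already gives the comparison, and for $\theta\le\theta_1$ both pieces are estimated directly in terms of $\theta_1\asymp\sqrt{z-1}$. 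This covers all $z>1$ uniformly in one pass. Your version reaches the same conclusion but trades that uniformity for a soft continuity-plus-positivity argument on the intermediate range $[1+\delta,M]$; this is harmless here, though it would be less convenient if one later needed the constant $C$ to be explicit or to track its dependence on $\beta$.
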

The main point of this lemma is the behavior near $z=1$: reformulating the RHS integral as $\int_{-\pi}^\pi (z-\cos\theta)^{-\beta}\sin^2\theta \rd{\theta}$ by \eqref{fz1} and \eqref{fz2}, the $\sin^2\theta$ factor in the integrand is degenerate near $\theta=0$, which is exactly the place where the factor $(z-\cos\theta)^{-\beta}$ is most singular. The $z-1$ factor on the LHS quantifies this degeneracy for $z$ near 1.

\begin{proof}

By \eqref{fz1} and \eqref{fz2}, the integral on the RHS of \eqref{lem_th} is
\begin{equation}\label{IR}\begin{split}
&  \int_{-\pi}^\pi (z-\cos\theta)^{-\beta}(1-z\cos\theta)\rd{\theta} \\
= & (2-\beta)\int_{-\pi}^\pi (z-\cos\theta)^{-\beta}\sin^2\theta \rd{\theta} \\
\ge  & (2-\beta)\int_{0}^{\pi/2} (z-\cos\theta)^{-\beta}\sin^2\theta \rd{\theta} + (2-\beta)\int_{\pi/2}^{3\pi/4} (z-\cos\theta)^{-\beta}\sin^2\theta \rd{\theta} \\
\ge  & (2-\beta)\int_{0}^{\pi/2} (z-\cos\theta)^{-\beta}\sin^2\theta \rd{\theta} + c(z+1)^{-\beta}. \\
\end{split}\end{equation}

We write  the LHS of \eqref{lem_th} as
\begin{equation}\begin{split}
 \frac{z-1}{z}\int_{-\pi}^\pi (z-\cos\theta)^{-\beta}\rd{\theta} = & \frac{2}{z}\int_{0}^\pi (z-\cos\theta)^{-\beta}(z-1)\rd{\theta} \\
= & \frac{2}{z}\int_{0}^{\pi/2} (z-\cos\theta)^{-\beta}(z-1)\rd{\theta} + \frac{2}{z}\int_{\pi/2}^\pi (z-\cos\theta)^{-\beta}(z-1)\rd{\theta} \\
\le & \frac{2}{z}\int_{0}^{\pi/2} (z-\cos\theta)^{-\beta}(z-1)\rd{\theta} + C(z-1)z^{-\beta-1} \\
\le & 2\int_{0}^{\pi/2} (z-\cos\theta)^{-\beta}(z-1)\rd{\theta} + C(z+1)^{-\beta}, \\
\end{split}\end{equation}
where  we used $z>1$. Therefore it suffices to prove
\begin{equation}
I_L:=\int_{0}^{\pi/2} (z-\cos\theta)^{-\beta}(z-1)\rd{\theta} \le C \int_{0}^{\pi/2} (z-\cos\theta)^{-\beta}\sin^2\theta \rd{\theta}=: CI_R.
\end{equation}

We cut the above integrals into $0\le \theta \le \theta_1$ and $\theta_1\le \theta \le\pi/2$, where $\theta_1\in[0,\pi/2]$ is determined by
\begin{equation}
\left\{\begin{split}
& \sin^2\theta_1 = z-1 ,\quad \text{if there exists such $\theta_1\in (\theta_0,\pi/2)$}; \\
& \pi/2 ,\quad \text{otherwise}.
\end{split}\right.
\end{equation}

For any $\theta$ with $\theta_1< \theta \le\pi/2$, it is clear that 
\begin{equation}
\sin^2\theta \ge z-1,
\end{equation}
and thus
\begin{equation}
\int_{\theta_1}^{\pi/2} (z-\cos\theta)^{-\beta}(z-1)\rd{\theta} \le \int_{\theta_1}^{\pi/2} (z-\cos\theta)^{-\beta}\sin^2\theta\rd{\theta} .
\end{equation}

By the definition of $\theta_1$ we have
\begin{equation}\label{theta1}
\min\{\sqrt{z-1},\frac{\pi}{2}\} \le \theta_1 \le \frac{\pi}{2}\sqrt{z-1}.
\end{equation}
Therefore
\begin{equation}\label{theta01_1}
\int_{0}^{\theta_1} (z-\cos\theta)^{-\beta}(z-1)\rd{\theta} \le \int_{0}^{\theta_1} (z-1)^{-\beta}(z-1)\rd{\theta} \le C(z-1)^{1-\beta}\theta_1,
\end{equation}
and
\begin{equation}\label{theta01_2}\begin{split}
& \int_{0}^{\pi/2} (z-\cos\theta)^{-\beta}\sin^2\theta \rd{\theta} 
\ge  \int_{\theta_1/2}^{\theta_1} \Big(z-1+\frac{\theta^2}{2}\Big)^{-\beta}\Big(\frac{2}{\pi}\cdot \theta\Big)^2 \rd{\theta} \\
\ge & \int_{\theta_1/2}^{\theta_1} \Big(z-1+\frac{\theta_1^2}{2}\Big)^{-\beta}\Big(\frac{2}{\pi}\cdot \frac{\theta_1}{2}\Big)^2 \rd{\theta} 
\ge  c(z-1+\theta_1^2)^{-\beta}\theta_1^3.
\end{split}\end{equation}

Now we compare the RHS of \eqref{theta01_1} and \eqref{theta01_2}. If $\theta_1=\pi/2$, then \eqref{theta1} gives $z-1 \ge 1$. Then
\begin{equation}
(z-1+\theta_1^2)^{-\beta}\theta_1^2 \ge c
,\quad (z-1)^{1-\beta} \le 1,
\end{equation}
which gives $I_L\le CI_R$. If $\theta_1<\pi/2$, then \eqref{theta1} gives $\sqrt{z-1}\le \theta_1\le \frac{\pi}{2}\sqrt{z-1}$. Therefore
\begin{equation}
(z-1+\theta_1^2)^{-\beta}\theta_1^2 \ge c(z-1)^{1-\beta},
\end{equation}
which also gives $I_L\le CI_R$.

\end{proof}

\begin{proof}[Proof of Theorem \ref{thm_erc}]

We first notice that \eqref{Winc} (a consequence of {\bf (A1)}) implies that $W'(r)r^{\alpha}$ is an increasing function in $r$. Similarly
\begin{equation}\label{Wassu1_2}
(W'(r)r^{-A})' = W''(r)r^{-A} -AW'(r)r^{-A-1} \le 0,
\end{equation}
and thus $W'(r)r^{-A}$ is a decreasing function in $r$.

{\bf STEP 1:} rescaling.

We will show that it suffices to prove the case when $\cR=1$ and $\cR_1$ sufficiently small. 

Assume this case has been proved. In the general case, we first notice the following rescaling:
\begin{equation}\begin{split}
& F[\phi_\epsilon,W](r,s) \\
= & \int_{-\pi}^\pi \Big(\nabla\phi_\epsilon((r,0)^T)-\nabla\phi_\epsilon((s\cos\theta,s\sin\theta)^T)\Big)\cdot\nabla W\Big((r,0)^T-(s\cos\theta,s\sin\theta)^T\Big)\rd{\theta} \\
 = & \int_{-\pi}^\pi \Big(\nabla\phi_\epsilon(\cR(\frac{r}{\cR},0)^T)-\nabla\phi_\epsilon(\cR(\frac{s}{\cR}\cos\theta,\frac{s}{\cR}\sin\theta)^T)\Big)\\
 & \cdot\nabla W\Big(\cR(\frac{r}{\cR},0)^T-\cR(\frac{s}{\cR}\cos\theta,\frac{s}{\cR}\sin\theta)^T\Big)\rd{\theta} \\
= & \frac{1}{\cR^2}F[\tilde{\phi},\tilde{W}](\frac{r}{\cR},\frac{s}{\cR}),
\end{split}\end{equation}
where
\begin{equation}
\tilde{W}(r) =W(\cR r),\quad \nabla \tilde{W}(\bx) = \cR \nabla W(\cR \bx),
\end{equation}
and
\begin{equation}
\tilde{\phi}(r) = \phi_\epsilon(\cR r),\quad \nabla \tilde{\phi}(\bx) = \cR \nabla \phi_\epsilon(\cR \bx).
\end{equation}
$\tilde{W}$ satisfies \eqref{Wassu1} (with the same constants $\alpha$ and $A$) by noticing that 
\begin{equation}
\frac{r\tilde{W}''(r)}{\tilde{W}'(r)} = \frac{\cR r W''(\cR r)}{W'(\cR r)}.
\end{equation}
$\tilde{\phi}$ can be written as
\begin{equation}\begin{split}
\tilde{\phi}(r) = & \frac{1}{2\pi}\int \delta(|\cR\bx-\by|- \epsilon)\ln |\by|\rd{\by}\\ 
= & \frac{1}{2\pi}\int \delta\Big(\cR(|\bx-\frac{\by}{\cR}|- \frac{\epsilon}{\cR})\Big)(\ln |\frac{\by}{\cR}|+\ln\cR)\rd{\by} \\
= & \frac{1}{2\pi \cR}\int \delta\Big(|\bx-\frac{\by}{\cR}|- \frac{\epsilon}{\cR}\Big)(\ln |\frac{\by}{\cR}|+\ln\cR)\rd{\by} \\
= & \frac{\cR}{2\pi}\int \delta\Big(|\bx-\by|- \frac{\epsilon}{\cR}\Big)(\ln |\by|+\ln\cR)\rd{\by} \\
= & \cR \phi_{\epsilon/\cR}(\bx) + \frac{\cR\ln\cR}{2\pi}\int \delta\Big(|\by|- \frac{\epsilon}{\cR}\Big)\rd{\by}, \\
\end{split}\end{equation}
where the last term is independent of $\bx$, and therefore $\nabla\tilde{\phi} = \cR\nabla \phi_{\epsilon/\cR}$. Then
\begin{equation}
F[\tilde{\phi},\tilde{W}](\frac{r}{\cR},\frac{s}{\cR}) = \cR F[\phi_{\epsilon/\cR},\tilde{W}](\frac{r}{\cR},\frac{s}{\cR}) \ge 0,\quad \forall r>s\ge \cR,
\end{equation}
by the assumed case, if $\cR$ is large enough so that $\epsilon/\cR \le \cR_1/\cR$ is small enough.

{\bf STEP 2}: expand $F(r,s)$. 

In the rest of the proof, we assume $\cR=1$ and $\epsilon \le \cR_1$ sufficiently small. We will omit the subscript of $\phi_\epsilon$ and write it as $\phi$.

Using $\nabla W(\bx) = W'(|\bx|)\frac{\bx}{|\bx|}$, we write $F(r,s)$ as
\begin{equation}\begin{split}
F(r,s) = & \int_{-\pi}^\pi \frac{W'(d_\theta)}{d_\theta} \Big(\nabla\phi((r,0)^T)-\nabla\phi((s\cos\theta,s\sin\theta)^T)\Big)\cdot\Big((r,0)^T-(s\cos\theta,s\sin\theta)^T\Big)\rd{\theta}, \\
\end{split}\end{equation}
where $d_\theta$ is defined as (recall the definition of $z$ in \eqref{notation})
\begin{equation}\label{notation1}\begin{split}
& d_\theta  = \Big|(r,0)^T-(s\cos\theta,s\sin\theta)^T\Big| = \sqrt{r^2+s^2-2rs\cos\theta},\quad d_\theta^2 = 2rs(z-\cos\theta) .\\
\end{split}\end{equation}

The radially-symmetric function $\phi$ can be written as
\begin{equation}\label{phieps}
\phi(r) = \frac{1}{2\pi}\int_{-\pi}^\pi \ln\Big|(r,0)^T-\epsilon(\cos\theta,\sin\theta)^T\Big|\rd{\theta}= \frac{1}{4\pi}\int_{-\pi}^\pi \ln\Big((r-\epsilon\cos\theta)^2+(\epsilon\sin\theta)^2\Big)\rd{\theta}.
\end{equation}
Then
\begin{equation}
\nabla \phi(\bx) = \frac{\bx}{r}\cdot \frac{1}{2\pi}\int_{-\pi}^\pi \frac{r-\epsilon\cos\theta}{(r-\epsilon\cos\theta)^2+(\epsilon\sin\theta)^2}\rd{\theta} = \frac{\bx}{r^2}\eta(\frac{\epsilon}{r}),
\end{equation}
where $\eta(\epsilon)$ is defined in \eqref{eta}. Then  we write
\begin{equation}\label{pic1eq}\begin{split}
& \Big(\nabla\phi((r,0)^T)-\nabla\phi((s\cos\theta,s\sin\theta)^T)\Big)\cdot\Big((r,0)^T-(s\cos\theta,s\sin\theta)^T\Big) \\
= & \Big(\eta(\frac{\epsilon}{r})(\frac{1}{r},0)^T-\eta(\frac{\epsilon}{s})\frac{1}{s}(\cos\theta,\sin\theta)^T\Big)\cdot (r-s\cos\theta,-s\sin\theta)^T \\
= & 2-\Big(\frac{r}{s}+\frac{s}{r}\Big)\cos\theta + \frac{1}{r}\Big(\eta(\frac{\epsilon}{r})-1\Big)(r-s\cos\theta) + \frac{1}{s}\Big(\eta(\frac{\epsilon}{s})-1\Big)(s-r\cos\theta).
\end{split}\end{equation}

Therefore  $F$ can be written  as
\begin{equation}\label{F1}\begin{split}
F(r,s) = &  2\int_{-\pi}^\pi \frac{W'(d_\theta)}{d_\theta} \cdot (1-z\cos\theta)\rd{\theta} \\
& + \int_{-\pi}^\pi \frac{W'(d_\theta)}{d_\theta}  \cdot \Big[ \frac{1}{r}\Big(\eta(\frac{\epsilon}{r})-1\Big)(r-s\cos\theta) + \frac{1}{s}\Big(\eta(\frac{\epsilon}{s})-1\Big)(s-r\cos\theta)\Big]\rd{\theta} \\
= & 2M + I,
\end{split}\end{equation}
where $z$ is defined in \eqref{notation}.

{\bf STEP 3}: the case $r\ge C_1s$, where $C_1$ is large, to be determined. In this case we have $z \ge r/(2s) \ge C_1/2$. 

By \eqref{notation1}, we further write
\begin{equation}\begin{split}
\frac{W'(d_\theta)}{d_\theta} = w(z-\cos\theta) ,\quad w(u) := \frac{W'(\sqrt{2rsu})}{\sqrt{2rsu}}.
\end{split}\end{equation}

 Notice that \eqref{Wassu1} implies
\begin{equation}\label{wzu1}\begin{split}
w'(u) = & \frac{W''(\sqrt{2rsu})}{\sqrt{2rsu}}\cdot \frac{\sqrt{2rs}}{2\sqrt{u}} - \frac{W'(\sqrt{2rsu})}{\sqrt{2rsu}}\cdot\frac{1}{2u} \\
\ge & -\alpha\frac{W'(\sqrt{2rsu})}{\sqrt{2rsu}\cdot\sqrt{2rsu}}\cdot \frac{\sqrt{2rs}}{2\sqrt{u}} - \frac{W'(\sqrt{2rsu})}{\sqrt{2rsu}}\cdot\frac{1}{2u}, \\
= & -\beta\frac{w(u)}{u}
\end{split}\end{equation}
where $\beta$ is defined in \eqref{notation}, and similarly
\begin{equation}\begin{split}
w'(u)  \le & \frac{A-1}{2}\cdot \frac{w(u)}{u}.
\end{split}\end{equation}
Integrating these differential inequalities in $u$, we see that  for any $u>0$,
\begin{equation}
w(z)\exp(-\beta\cdot\frac{1}{z}\cdot u) \le w(z+u) \le w(z)\exp(\frac{A-1}{2}\cdot\frac{1}{z}\cdot u).
\end{equation}
Therefore we proved the case $u\ge 0$ of
\begin{equation}\label{wzu}
\Big|\frac{w(z+u)}{w(z)} - 1\Big| \le \exp(\frac{A}{z}u)-1 \le \frac{2A}{z},\quad \forall |u|\le 1,
\end{equation}
by assuming $A>\alpha$ without loss of generality, and $z\ge C_1/2$ large enough. The case $u<0$ can be proved similarly.

{\bf STEP 3-1}: estimate $M$.

Using \eqref{wzu1} and \eqref{wzu}, the term $M$ in \eqref{F1} is estimated by
\begin{equation}\label{estM0}\begin{split}
M = & \int_{-\pi}^\pi w(z-\cos\theta)(1-z\cos\theta)\rd{\theta} \\
= & \int_{-\pi}^\pi w(z-\cos\theta)\rd{\theta} - z\int_{-\pi}^\pi w(z-\cos\theta)\cos\theta\rd{\theta} \\
= & \int_{-\pi}^\pi w(z-\cos\theta)\rd{\theta} - z\int_{0}^\pi (w(z-\cos\theta)-w(z+\cos\theta))\cos\theta\rd{\theta} \\
= & \int_{-\pi}^\pi w(z-\cos\theta)\rd{\theta} - 2z\int_{0}^{\pi/2} (w(z-\cos\theta)-w(z+\cos\theta))\cos\theta\rd{\theta} \\
= & \int_{-\pi}^\pi w(z-\cos\theta)\rd{\theta} + 2z\int_{0}^{\pi/2} \int_{-\cos\theta}^{\cos\theta}w'(z+u)\rd{u}\cos\theta\rd{\theta} \\
\ge & \int_{-\pi}^\pi w(z-\cos\theta)\rd{\theta} - 2\beta z\int_{0}^{\pi/2} \int_{-\cos\theta}^{\cos\theta}\frac{w(z+u)}{z+u}\rd{u}\cos\theta\rd{\theta} \\
\ge & 2\pi(1-\frac{2A}{z})w(z) - \pi \beta\cdot (1+\frac{1}{z-1})\cdot (1+\frac{2A}{z})w(z) \\
\ge & cw(z), \\
\end{split}\end{equation}
by taking $z\ge C_1/2$ large, since $\beta<2$. See Figure \ref{fig_pic1} for illustration. Now $C_1$ is chosen so that the above estimate holds.

\begin{figure}
\begin{center}
  \includegraphics[width=.5\linewidth]{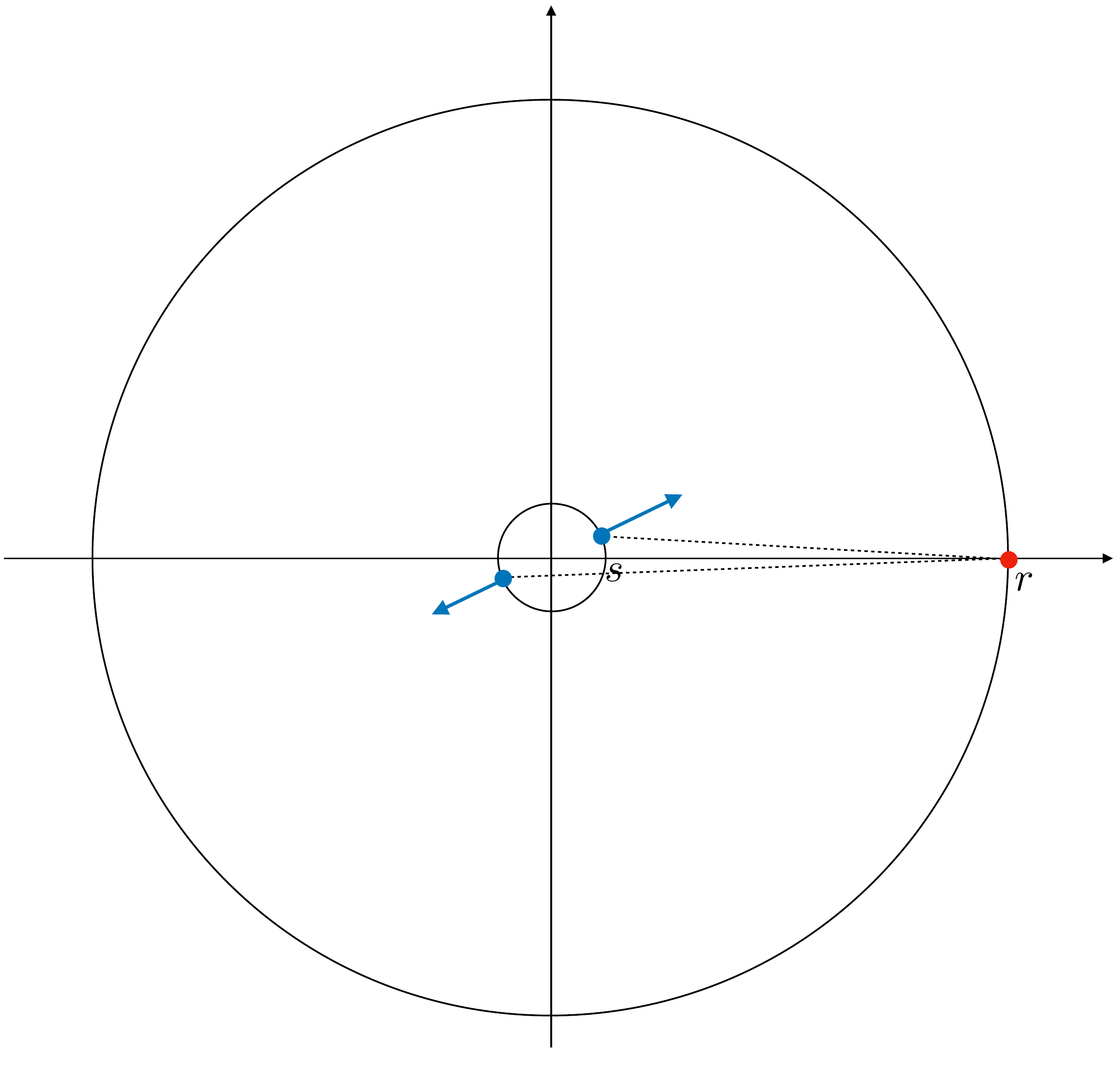}  
  \caption{The case $r\ge C_1s$. In the third equality of \eqref{estM0}, the cancellation between $\theta$ and $\theta+\pi$ is used, as shown in the two arrows in the picture. This originates from the term $\nabla\phi((s\cos\theta,s\sin\theta)^T)\cdot((r,0)^T-(s\cos\theta,s\sin\theta)^T) $ in \eqref{pic1eq}, which is the inner product between an arrow and its corresponding dashed segment.}
\label{fig_pic1}
\end{center}
\end{figure}

{\bf STEP 3-2}: estimate $I$.

To estimate the term $I$ in \eqref{F1}, we notice that
\begin{equation}\begin{split}
& \frac{1}{r}\Big(\eta(\frac{\epsilon}{r})-1\Big)(r-s\cos\theta) + \frac{1}{s}\Big(\eta(\frac{\epsilon}{s})-1\Big)(s-r\cos\theta) \\
= & -\frac{1}{s}\Big(\eta(\frac{\epsilon}{s})-1\Big)r\cos\theta +  \Big[\frac{1}{r}\Big(\eta(\frac{\epsilon}{r})-1\Big)(r-s\cos\theta) + \Big(\eta(\frac{\epsilon}{s})-1\Big) \Big].
\end{split}\end{equation}
By \eqref{etaeps}, the coefficient in front of $\cos\theta$ can be estimated by
\begin{equation}
\Big|\frac{1}{s}\Big(\eta(\frac{\epsilon}{s})-1\Big)r\Big| \le C(\frac{\epsilon}{s})^2 \frac{r}{s} \le C\epsilon^2 z,
\end{equation}
since $s\ge 1$, and the rest can be estimated by
\begin{equation}
\Big|\frac{1}{r}\Big(\eta(\frac{\epsilon}{r})-1\Big)(r-s\cos\theta) + \Big(\eta(\frac{\epsilon}{s})-1\Big)\Big| \le C\epsilon^2,
\end{equation}
 since $r>s\ge 1$.

Notice that, first, as in the estimate of $M$,
\begin{equation}
\Big| \int_{-\pi}^\pi w(z-\cos\theta)  \cdot \cos\theta\rd{\theta}\Big| \le C\frac{w(z)}{z}.
\end{equation}
Next, by \eqref{wzu},
\begin{equation}
 \int_{-\pi}^\pi w(z-\cos\theta)\rd{\theta} \le Cw(z).
\end{equation}
Therefore 
\begin{equation}
|I| \le C\epsilon^2 w(z),
\end{equation}
and it can be absorbed by $M$ (see \eqref{estM0}) if $\epsilon$ is small enough. Therefore the proof of the case $r\ge C_1s$ is finished.

{\bf STEP 4}: the case $r< C_1s$. In this case $z = \frac{1}{2}(\frac{r}{s}+\frac{s}{r}) \le C_1 = C$ (since $C_1=C_1(A,\beta)$ is already chosen).

We further write the main term $M$ into two parts, by comparing the potential $W$ with the potential $\frac{r^{-\alpha+1}}{-\alpha+1}$. Define 
\begin{equation}\label{theta0}
\theta_0 = \cos^{-1} \frac{1}{z} \in (0,\frac{\pi}{2}),\quad a_0 = W'(d_{\theta_0})d_{\theta_0}^{\alpha}.
\end{equation}
See Figure \ref{fig_pic23} (left) for illustration. Then we write
\begin{equation}\begin{split}
 M = & a_0\int_{-\pi}^\pi  d_\theta ^{-2\beta} \cdot \Big(2-(\frac{r}{s}+\frac{s}{r})\cos\theta\Big)\rd{\theta} \\
& + \int_{-\pi}^\pi \Big(\frac{W'(d_\theta )}{d_\theta } - a_0 d_\theta ^{-2\beta}\Big) \cdot \Big(2-(\frac{r}{s}+\frac{s}{r})\cos\theta\Big)\rd{\theta} \\
= & 2(2rs)^{-\beta}a_0\int_{-\pi}^\pi  (z-\cos\theta)^{-\beta} \cdot (1-z\cos\theta)\rd{\theta} \\
& + 2\int_{-\pi}^\pi d_\theta^{-1}(W'(d_\theta ) - a_0 d_\theta ^{-\alpha}) \cdot (1-z\cos\theta)\rd{\theta} \\
=: & 2M_1 + 2M_2.
\end{split}\end{equation}

\begin{figure}
\begin{center}
  \includegraphics[width=.45\linewidth]{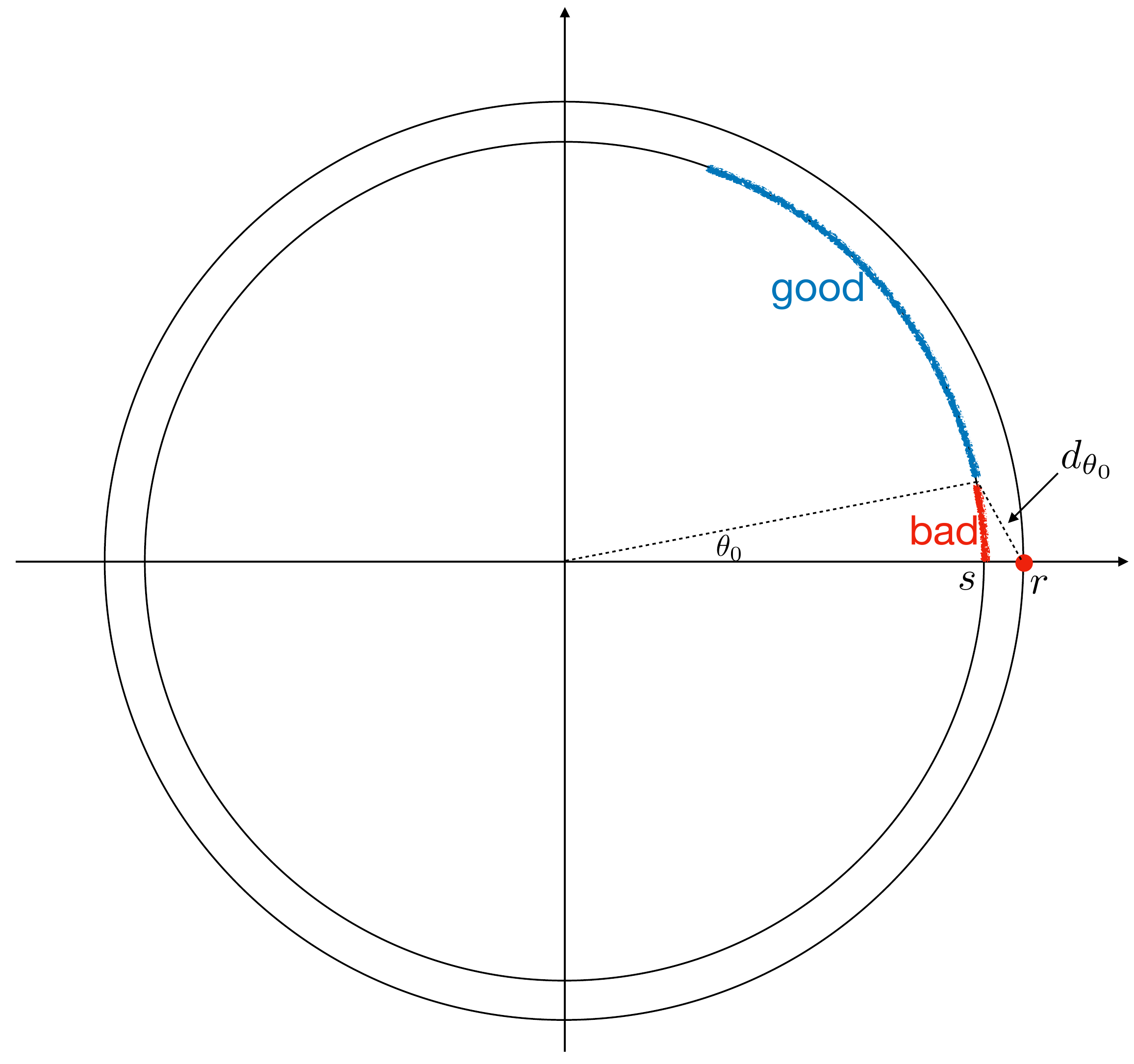}  
  \includegraphics[width=.45\linewidth]{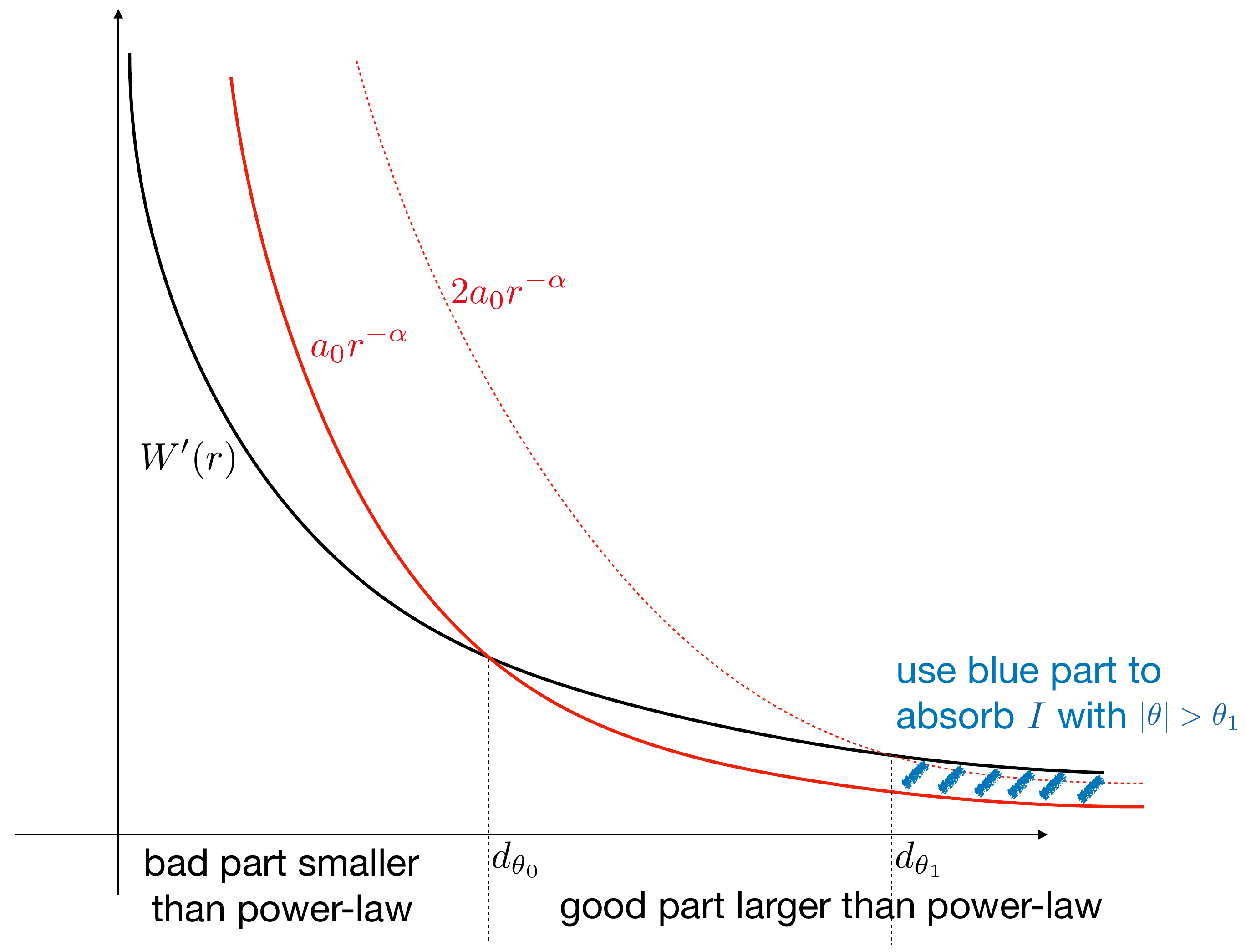}  
  \caption{Left: the definition of $\theta_0$ separates the sign of the integrand in $M$, as defined in \eqref{F1}.  Right: comparison with power-law interactions. For $r\le d_{\theta_0}$, the integrand in $M$ is negative (bad), and $W'(r)$ is smaller than the power-law $a_0r^{-\alpha}$. For $r\ge d_{\theta_0}$, the integrand in $M$ is positive (good), and $W'(r)$ is larger than the power-law $a_0r^{-\alpha}$. When $r$ is even larger ($r\ge d_{\theta_1}$), $W'(r)$ is at least twice as large as $a_0r^{-\alpha}$, and the extra part (blue region) can be used to control $I$.}
\label{fig_pic23}
\end{center}
\end{figure}

Lemma \ref{lem_th} implies $M_1\ge 0$ with a lower bound. Then we estimate the terms $M_2$ and $I$ separately.

{\bf STEP 4-1}: positivity of $M_2$.

By the definition of $a_0$, we have 
\begin{equation}
W'(d_{\theta_0})d_{\theta_0}^{\alpha} = a_0 .
\end{equation}
By \eqref{Winc} and the increasing property of the map $\theta\mapsto d_\theta $ for $\theta\in(0,\pi)$, we have
\begin{equation}
(W'(d_\theta ) - a_0 d_\theta ^{-\alpha})\cdot (\cos\theta_0-\cos\theta) \ge 0,\quad \forall \theta\in(-\pi,\pi).
\end{equation}
Notice that $\cos\theta_0-\cos\theta = \frac{1}{z}(1-z\cos\theta)$ by \eqref{theta0}, the definition of $\theta_0$. Therefore the integrand of $M_2$ is non-negative, which implies $M_2\ge 0$. See Figure \ref{fig_pic23} (right) for illustration. 

{\bf STEP 4-2}: estimate $I$.

Noticing that $s< r \le Cs$, we give the following estimate for the integrand of $I$:
\begin{equation}\label{I2term}\begin{split}
& \left|\frac{1}{r}\Big(\eta(\frac{\epsilon}{r})-1\Big)(r-s\cos\theta) + \frac{1}{s}\Big(\eta(\frac{\epsilon}{s})-1\Big)(s-r\cos\theta)\right| \\
= & \left|\frac{1}{r}\Big(\eta(\frac{\epsilon}{r})-1\Big)(r-s) + \frac{1}{s}\Big(\eta(\frac{\epsilon}{s})-1\Big)(s-r)\right| + \left|\Big[\frac{s}{r}\Big(\eta(\frac{\epsilon}{r})-1\Big) + \frac{r}{s}\Big(\eta(\frac{\epsilon}{s})-1\Big)\Big](1-\cos\theta)\right| \\
= & |r-s|\cdot\left|\Big(\frac{1}{r}-\frac{1}{s}\Big)\Big(\eta(\frac{\epsilon}{r})-1\Big) - \frac{1}{s}\Big(\eta(\frac{\epsilon}{s})-\eta(\frac{\epsilon}{r})\Big) \right| + \left|\Big[\frac{s}{r}\Big(\eta(\frac{\epsilon}{r})-1\Big) + \frac{r}{s}\Big(\eta(\frac{\epsilon}{s})-1\Big)\Big](1-\cos\theta)\right| \\
\le & |r-s| \cdot \Big|\frac{1}{r}-\frac{1}{s}\Big|\cdot \Big|\eta(\frac{\epsilon}{r})-1\Big| + |r-s| \cdot \Big| \frac{1}{s}\Big|\cdot\Big|\eta(\frac{\epsilon}{s})-\eta(\frac{\epsilon}{r})\Big| + C\epsilon^2(1-\cos\theta)\frac{1}{r^2} \\
\le & C\epsilon^2\Big( |r-s|^2\frac{1}{r^4}+(1-\cos\theta)\frac{1}{r^2}\Big),
\end{split}\end{equation}
where we used \eqref{etaeps} for small $\epsilon$.

Then notice that
\begin{equation}
z^2-1 = \frac{(r+s)^2}{4r^2s^2}(r-s)^2 \ge \frac{1}{4r^2}(r-s)^2,
\end{equation}
which implies
\begin{equation}\label{I2term2}
\frac{z-1}{z} \ge c\frac{1}{r^2}(r-s)^2,
\end{equation}
since $z\le C$.

We cut the $I$ integral into $|\theta|\le \theta_1$ and $\theta>\theta_1$, where $\theta_1>\theta_0$ is determined by
\begin{equation}\label{theta11}
\left\{\begin{split}
& W'(d_{\theta_1})d_{\theta_1}^{\alpha} = 2a_0 ,\quad \text{if there exists such $\theta_1\in (\theta_0,\pi)$}; \\
& \pi ,\quad \text{otherwise}.
\end{split}\right.
\end{equation}
In the first case in \eqref{theta11}, by \eqref{Wassu1_2}, we have
\begin{equation}
\Big(\frac{d_{\theta_1}}{d_{\theta_0}}\Big)^{\alpha} = 2\frac{W'(d_{\theta_0})}{W'(d_{\theta_1})} \ge 2\Big(\frac{d_{\theta_0}}{d_{\theta_1}}\Big)^{A},
\end{equation}
which implies
\begin{equation}
\frac{1-\frac{1}{z}\cos\theta_1}{1-\frac{1}{z}\cos\theta_0} = \frac{d_{\theta_1}}{d_{\theta_0}} \ge 1+c,\quad c = 2^{1/(A+\alpha)}-1>0.
\end{equation}
In the second case in \eqref{theta11}, we have
\begin{equation}
\frac{1-\frac{1}{z}\cos\theta_1}{1-\frac{1}{z}\cos\theta_0} =\frac{1+\frac{1}{z}}{1-\frac{1}{z^2}}   \ge 1+c,
\end{equation}
since $z\le C$. Therefore
\begin{equation}\label{theta1_2}\begin{split}
1-z\cos\theta_1 = & 1-z^2\Big(1-(1-\frac{1}{z}\cos\theta_1)\Big) \ge 1-z^2\Big(1-(1+c)(1-\frac{1}{z}\cos\theta_0)\Big) \\
= & 1-z^2\Big(1-(1+c)(1-\frac{1}{z^2})\Big) 
=  1-z^2(-c+\frac{1+c}{z^2}) = c(z^2-1) \ge c(z-1),
\end{split}\end{equation}
and the same holds when $\theta_1$ is replaced by any $\theta \in[\theta_1,\pi]$.

{\bf Case 1}: $|\theta|\le \theta_1$. In this case $W'(d_{\theta})d_{\theta}^{\alpha} \le 2a_0$.

\begin{equation}\label{I2}\begin{split}
& \int_{-\theta_1}^{\theta_1} \frac{W'(d_\theta )}{d_\theta }  \cdot \Big| \frac{1}{r}\Big(\eta(\frac{\epsilon}{r})-1\Big)(r-s\cos\theta) + \frac{1}{s}\Big(\eta(\frac{\epsilon}{s})-1\Big)(s-r\cos\theta)\Big|\rd{\theta} \\
\le  & Ca_0\epsilon^2\int_{-\pi}^{\pi} d_\theta ^{-2\beta}  \cdot  \Big(|r-s|^2\frac{1}{r^4} + (1-\cos\theta)\frac{1}{r^2}\Big)  \rd{\theta}  \\
= & Ca_0\epsilon^2 (rs)^{-\beta}\Big(\frac{1}{r^2}\cdot\frac{z-1}{z}\int_{-\pi}^\pi (z-\cos\theta)^{-\beta} \rd{\theta} + \frac{1}{r^2}\int_{-\pi}^\pi (z-\cos\theta)^{-\beta} (1-\cos\theta)\rd{\theta}\Big),
\end{split}\end{equation}
by \eqref{I2term} and \eqref{I2term2}. To handle the second integral above,
\begin{equation}\begin{split}
& \int_{-\pi}^\pi (z-\cos\theta)^{-\beta} (1-\cos\theta)\rd{\theta} \\
= & 2\int_{0}^{\pi/2} (z-\cos\theta)^{-\beta} 2\sin^2(\theta/2) \rd{\theta} + 2\int_{\pi/2}^\pi (z-\cos\theta)^{-\beta} 2\sin^2(\theta/2) \rd{\theta} \\
\le & 4\int_{0}^{\pi/2} (z-\cos\theta)^{-\beta} \sin^2\theta \rd{\theta} + C \\
\le & C\int_{-\pi}^\pi (z-\cos\theta)^{-\beta}(1-z\cos\theta)\rd{\theta} ,
\end{split}\end{equation}
where we used \eqref{IR} with $z\le C$. Then combined with Lemma \ref{lem_th} to handle the first integral on the RHS of \eqref{I2}, we obtain
\begin{equation}\begin{split}
& \int_{-\theta_1}^{\theta_1} \frac{W'(d_\theta )}{d_\theta }  \cdot \Big| \frac{1}{r}\Big(\eta(\frac{\epsilon}{r})-1\Big)(r-s\cos\theta) + \frac{1}{s}\Big(\eta(\frac{\epsilon}{s})-1\Big)(s-r\cos\theta)\Big|\rd{\theta} \\
\le & Ca_0\epsilon^2 (rs)^{-\beta}\frac{1}{r^2}\cdot\int_{-\pi}^\pi (z-\cos\theta)^{-\beta}(1-z\cos\theta)\rd{\theta} \le C\epsilon^2 M_1,  \\
\end{split}\end{equation}
using $r\ge 1$.

{\bf Case 2}: $|\theta|> \theta_1$. The integral $I$ with $|\theta|> \theta_1$ is empty if $\theta_1=\pi$. Therefore we only need to consider the first case in \eqref{theta11}.

We notice that for $|\theta|> \theta_1$, \eqref{theta11} implies that
\begin{equation}
\frac{W'(d_\theta )}{d_\theta } \ge \frac{1}{2}\Big(\frac{W'(d_\theta )}{d_\theta }- a_0 d_\theta ^{-2\beta}\Big),
\end{equation}
and \eqref{theta1_2} with \eqref{I2term2} implies that
\begin{equation}
|r-s|^2 \le Cr^2(1-z\cos\theta),\quad z-1 \le C(1-z\cos\theta).
\end{equation}
Therefore
\begin{equation}\begin{split}
& \int_{\theta_1\le |\theta|\le \pi} \frac{W'(d_\theta )}{d_\theta }  \cdot \Big| \frac{1}{r}\Big(\eta(\frac{\epsilon}{r})-1\Big)(r-s\cos\theta) + \frac{1}{s}\Big(\eta(\frac{\epsilon}{s})-1\Big)(s-r\cos\theta)\Big|\rd{\theta} \\
\le & C\epsilon^2 \int_{\theta_1\le |\theta|\le \pi} \Big(\frac{W'(d_\theta )}{d_\theta }- a_0 d_\theta ^{-2\beta}\Big) \cdot \Big(|r-s|^2\frac{1}{r^4} + (1-\cos\theta)\frac{1}{r^2}\Big)\rd{\theta} \\
\le & C\epsilon^2\frac{1}{r^2} \int_{\theta_1\le |\theta|\le \pi} \Big(\frac{W'(d_\theta )}{d_\theta }- a_0 d_\theta ^{-2\beta}\Big) \cdot (1-z\cos\theta)\rd{\theta} \\
\le & C\epsilon^2 M_2,
\end{split}\end{equation}
using $r\ge 1$ and the fact that the integrand in $M_2$, being the same as the last integrand, is nonnegative.

Finally, by choosing $\epsilon$ small enough, we can absorb the integral $I$ with $|\theta|\le \theta_1$ and $|\theta|> \theta_1$ by $M_1$ and $M_2$ respectively, and  therefore the proof of the case $r< C_1s$ is finished.

\end{proof}

\section{Some lemmas}

In this section we give some lemmas which will be used in the proof of Theorem \ref{thm_finite}.

\subsection{Basic lemmas}

From now on,  $\rho(t,\cdot)$ always denotes the solution to \eqref{eq0}. $\rho_t(\cdot)$ denotes a \emph{curve} of density distribution, i.e., a family of density distributions parametrized by $t\ge 0$, starting from some given $\rho=\rho_0$. $\rho_t$ may refer to different curves in different contexts.

We first state the following lemma which is a consequence of the 2-Wasserstein gradient flow structure of \eqref{eq0}. 
\begin{lemma}\label{lem_basic}
Let $\rho_t(\bx),\,0\le t \le t_1,\,t_1>0$ satisfy $\rho_0 = \rho_{\ini}$ and
\begin{equation}\label{lem_basic_1}
\partial_t \rho_t + \nabla\cdot (\rho_t \bv_t) = 0,
\end{equation}
for some velocity field $\bv_t(\bx)$ with $\int |\bv_t|^2\rho_t\rd{\bx} < \infty$. Assume 
\begin{equation}
\frac{\rd}{\rd{t}}\Big|_{t=0} E[\rho_t] < 0.
\end{equation}
Then the solution $\rho(t,\bx)$ to \eqref{eq0} satisfies
\begin{equation}
\frac{\rd}{\rd{t}}\Big|_{t=0} E[\rho(t,\cdot)] \le -\frac{(\frac{\rd}{\rd{t}} E[\rho_t(\cdot)])^2}{\int |\bv_t|^2\rho_t\rd{\bx}}\Big|_{t=0}.
\end{equation}
\end{lemma}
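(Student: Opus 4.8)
The plan is to exploit the fact that \eqref{eq0} is the $2$-Wasserstein gradient flow of $E$, which means that among all admissible perturbations of the form $\partial_t\rho + \nabla\cdot(\rho\bv)=0$, the PDE \eqref{eq0} moves in the direction of steepest descent of $E$ in the metric induced by $\int|\bv|^2\rho\rd\bx$. Concretely, write $\xi := \bu - \frac{m}{m-1}\nabla(\rho^{m-1})$, so that $\rho(t,\cdot)$ satisfies $\partial_t\rho + \nabla\cdot(\rho\,\xi) = 0$ at $t=0$ with $\rho_0 = \rho_{\ini}$, and $\frac{\rd}{\rd t}\big|_{t=0}E[\rho(t,\cdot)] = -\int|\xi|^2\rho_{\ini}\rd\bx$ by the energy identity recalled in the introduction. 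The key observation is that for \emph{any} velocity field $\bv$ generating a curve from $\rho_{\ini}$, the first variation of $E$ along that curve is the same bilinear pairing of $\bv$ against $-\xi$:
\begin{equation}\label{eq:firstvar}
\frac{\rd}{\rd t}\Big|_{t=0} E[\rho_t] = -\int \bv_0 \cdot \xi\, \rho_{\ini}\rd\bx .
\end{equation}
This follows by differentiating $E[\rho_t] = \cS[\rho_t] + \cI[\rho_t]$, using $\partial_t\rho_t = -\nabla\cdot(\rho_t\bv_t)$, integrating by parts, and recognizing $\frac{\delta E}{\delta\rho} = \frac{m}{m-1}\rho^{m-1} + W*\rho$ whose gradient is $-\xi$; the same computation with $\bv = \xi$ recovers $\frac{\rd}{\rd t}\big|_{t=0}E[\rho(t,\cdot)] = -\int|\xi|^2\rho_{\ini}\rd\bx$.

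Given \eqref{eq:firstvar}, the conclusion is just Cauchy--Schwarz:
\begin{equation}
\Big(\frac{\rd}{\rd t}\Big|_{t=0}E[\rho_t]\Big)^2 = \Big(\int \bv_0\cdot\xi\,\rho_{\ini}\rd\bx\Big)^2 \le \Big(\int|\bv_0|^2\rho_{\ini}\rd\bx\Big)\Big(\int|\xi|^2\rho_{\ini}\rd\bx\Big),
\end{equation}
and since $\int|\xi|^2\rho_{\ini}\rd\bx = -\frac{\rd}{\rd t}\big|_{t=0}E[\rho(t,\cdot)] \ge 0$, dividing through by $\int|\bv_0|^2\rho_{\ini}\rd\bx$ (which is positive, since otherwise \eqref{eq:firstvar} would force $\frac{\rd}{\rd t}\big|_{t=0}E[\rho_t]=0$, contradicting the hypothesis that it is negative) gives exactly
\begin{equation}
-\frac{\rd}{\rd t}\Big|_{t=0}E[\rho(t,\cdot)] = \int|\xi|^2\rho_{\ini}\rd\bx \ge \frac{\big(\frac{\rd}{\rd t}\big|_{t=0}E[\rho_t]\big)^2}{\int|\bv_0|^2\rho_{\ini}\rd\bx},
\end{equation}
which is the claimed inequality. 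In the statement the numerator is written as $\frac{\rd}{\rd t}E[\rho_t]$ evaluated at $t=0$, i.e. $\bv_0$ and $\rho_0=\rho_{\ini}$ throughout.

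I expect the only genuinely delicate point to be the rigorous justification of \eqref{eq:firstvar} — i.e., that $E[\rho_t]$ is differentiable at $t=0$ and that the interchange of $\frac{\rd}{\rd t}$ with the spatial integrals, and the integration by parts, are licit under the stated hypotheses ($\int|\bv_t|^2\rho_t\rd\bx < \infty$, plus the regularity/decay of $\rho_{\ini}$ and of the solution coming from Lemma \ref{lem_reg1} and the support/$L^\infty$ bounds). In practice the curves $\rho_t$ to which this lemma will be applied (CSS curves and the local clustering curve) are explicit and compactly supported with controlled densities, so these manipulations are routine there; one may also note that only an upper bound on $\frac{\rd}{\rd t}\big|_{t=0}E[\rho(t,\cdot)]$ is needed, so it suffices to establish \eqref{eq:firstvar} as a one-sided ($\limsup$) statement, which relaxes the regularity demands. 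Everything else is the two-line Cauchy--Schwarz argument above.
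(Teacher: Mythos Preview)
Your argument is correct and is exactly the standard gradient-flow computation one expects here: identify the first variation of $E$ along any transport curve as the $\rho$-weighted $L^2$ pairing of $\bv_0$ with the steepest-descent field $\xi=\bu-\frac{m}{m-1}\nabla(\rho^{m-1})$, then apply Cauchy--Schwarz. The paper itself omits the proof, pointing to the 1D analogue (Lemma 3.1 of \cite{S1D}), so there is nothing to compare against; your write-up is precisely the argument that reference would supply.
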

This lemma is the multi-dimensional version of Lemma 3.1 of~\cite{S1D}, and can be proved in a similar way. Therefore we omit its proof. This lemma says that, as long as we can find a curve $\rho_t$ which decreases the total energy, with its 2-Wasserstein cost $\int |\bv_t|^2\rho_t\rd{\bx}$ being finite, then we obtain a lower bound for the energy dissipation rate of the solution $\rho(t,\cdot)$. 

The following lemma describes the cost of a CSS-type curve in the 2-Wasserstein sense, when the curve is described by the horizontal movement of the level sets at each level $h$. It is the multi-dimensional analogue of Lemma 4.6 of~\cite{S1D}, and can be proved in a similar way, and we omit its proof.
\begin{lemma}\label{lem_cost}
Let $\rho_t$ be defined by
\begin{equation}
\rho_t(\bx) = \int_0^\infty \Big(\chi_{S_0(h)}(\bx)+ \sum_{j\ge 1} \chi_{S_{j,t}(h)}(\bx)\Big)\rd{h},
\end{equation}
where the sets $S_{j,t}(h),\,j\ge 1$ are translations of $S_{j,0}(h)$ with speed 1:
\begin{equation}
S_{j,t}(h) = S_{j,0}(h)+t\omega_j(h),\quad \omega_j(h) \in \mathbb{S}^1.
\end{equation}
Then at $t=0$, $\rho_t$ satisfies \eqref{lem_basic_1} with 
\begin{equation}
\bv_0(\bx) = \frac{1}{\rho_0(\bx)}\int_0^{\rho_0(\bx)} \sum_{j\ge 1,\,\bx\in S_{j,0}} \omega_j(h) \rd{h}
\end{equation}
Furthermore, we have the estimate
\begin{equation}
\int |\bv_0|^2\rho_0\rd{\bx} \le \int_0^\infty \sum_{j\ge 1} |S_{j,0}(h)|\rd{h}.
\end{equation}
\end{lemma}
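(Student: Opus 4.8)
The plan is to prove the two assertions in order, reducing each to a computation for a single rigidly translating level set and then summing over $j$ and integrating over $h$. First I would split $\rho_t=\rho^{(0)}+\sum_{j\ge1}\rho_t^{(j)}$, where $\rho^{(0)}(\bx)=\int_0^\infty\chi_{S_0(h)}(\bx)\rd\bx$ is independent of $t$ and $\rho_t^{(j)}(\bx)=\int_0^\infty\chi_{S_{j,0}(h)}(\bx-t\omega_j(h))\rd h$. Testing against an arbitrary $\psi\in C_c^\infty(\mathbb{R}^2)$ and changing variables $\bx\mapsto\bx+t\omega_j(h)$ in each $\bx$-integral turns $\int\psi\rho_t^{(j)}\rd\bx$ into $\int_0^\infty\int\psi(\bx+t\omega_j(h))\chi_{S_{j,0}(h)}(\bx)\rd\bx\rd h$, whose $t$-derivative is $\int_0^\infty\int\omega_j(h)\cdot\nabla\psi(\bx)\,\chi_{S_{j,t}(h)}(\bx)\rd\bx\rd h$; summing over $j$ and using $\partial_t\rho^{(0)}=0$ then shows $\partial_t\rho_t+\nabla\cdot\mathbf{q}_t=0$ in the distributional sense, with flux $\mathbf{q}_t(\bx):=\int_0^\infty\sum_{j\ge1,\,\bx\in S_{j,t}(h)}\omega_j(h)\rd h$. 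To recognize this as \eqref{lem_basic_1} with the stated velocity, I would use that for each fixed $h$ the sets $S_0(h),S_{1,0}(h),S_{2,0}(h),\dots$ are disjoint with union $\{\rho_0\ge h\}$ (so that $\rho_0(\bx)=\int_0^\infty(\chi_{S_0(h)}(\bx)+\sum_j\chi_{S_{j,0}(h)}(\bx))\rd h$ really is the $h$-representation of $\rho_0$): the summand in $\mathbf{q}_0$ then vanishes as soon as $h>\rho_0(\bx)$, so $\mathbf{q}_0(\bx)=\int_0^{\rho_0(\bx)}\sum_{j\ge1,\,\bx\in S_{j,0}(h)}\omega_j(h)\rd h=\rho_0(\bx)\bv_0(\bx)$ with the convention $\bv_0=0$ on $\{\rho_0=0\}$, which is the claimed formula.

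For the cost bound, I would write $\int|\bv_0|^2\rho_0\rd\bx=\int_{\{\rho_0>0\}}|\mathbf{q}_0(\bx)|^2/\rho_0(\bx)\rd\bx$, set $\bw(\bx,h):=\sum_{j\ge1,\,\bx\in S_{j,0}(h)}\omega_j(h)$, and use disjointness of the $S_{j,0}(h)$ to get $|\bw(\bx,h)|^2=\sum_{j\ge1}\chi_{S_{j,0}(h)}(\bx)\le1$. Applying the Cauchy--Schwarz inequality to the $h$-integral over $(0,\rho_0(\bx))$ gives $|\mathbf{q}_0(\bx)|^2\le\rho_0(\bx)\int_0^{\rho_0(\bx)}|\bw(\bx,h)|^2\rd h=\rho_0(\bx)\int_0^\infty\sum_{j\ge1}\chi_{S_{j,0}(h)}(\bx)\rd h$; dividing by $\rho_0(\bx)$, integrating in $\bx$ and applying Fubini then yields $\int|\bv_0|^2\rho_0\rd\bx\le\int_0^\infty\sum_{j\ge1}|S_{j,0}(h)|\rd h$, the desired estimate.

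I do not expect a genuine obstacle here; this is the two-dimensional transcription of Lemma~4.6 of~\cite{S1D}. The only thing that requires attention is the measure-theoretic bookkeeping: joint measurability of $(\bx,h)\mapsto\chi_{S_{j,0}(h)}(\bx)$ and of $h\mapsto\omega_j(h)$, so that $\mathbf{q}_t$ and $\bv_0$ are well defined, together with the uniform-in-$t$ domination $|\omega_j(h)\cdot\nabla\psi(\bx)|\,\chi_{S_{j,t}(h)}(\bx)\le\|\nabla\psi\|_{L^\infty}\,\chi_{S_{j,t}(h)}(\bx)$ and the summable bound $\int_0^\infty\sum_{j\ge1}|S_{j,0}(h)|\rd h\le\int_0^\infty|\{\rho_0\ge h\}|\rd h=\|\rho_0\|_{L^1}<\infty$ (with $|S_{j,t}(h)|=|S_{j,0}(h)|$ since translations preserve measure), which are what license the Fubini and differentiation-under-the-integral steps above. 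As in~\cite{S1D}, the case when the union in \eqref{rhoh2} is not finite is reduced to the present one by approximation.
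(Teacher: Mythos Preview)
Your proposal is correct and is exactly the argument the paper has in mind: the paper omits the proof entirely, pointing to Lemma~4.6 of~\cite{S1D}, and your computation (distributional continuity equation via test functions plus Cauchy--Schwarz in the $h$-variable for the cost) is the straightforward multi-dimensional transcription of that proof. One trivial typo: in the definition of $\rho^{(0)}$ you wrote $\rd\bx$ where you meant $\rd h$.
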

The last estimate shows that, if some slices $S_{j,t}(h),\,j\ge 1$ are moving with speed 1 while other slices $S_0(h)$ are not moving, then the cost of the curve $\rho_t$ is bounded by the total mass of all moving mass.

The following lemma is a direct consequence of Theorem 1.1 of \cite{KZ}:
\begin{lemma}\label{lem_reg1}
The solution $\rho(t,\cdot)$ to \eqref{eq0} satisfies
\begin{equation}
\|\rho\|_{L^\infty([0,\infty)\times \mathbb{R}^2)} \le C.
\end{equation}
\end{lemma}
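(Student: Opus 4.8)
This bound is an instance of \cite[Theorem~1.1]{KZ}: that result provides a time-uniform $L^\infty$ bound for \eqref{eq0} whenever the diffusion exponent lies in the range for which degenerate diffusion dominates aggregation in the ambient dimension (in $d=2$ this is precisely $m>1$, as in {\bf (A3)}), the interaction is no more singular than the $2$D Newtonian potential near the origin (guaranteed by {\bf (A2)}), and the initial datum is a compactly supported $L^\infty$ density of finite mass (guaranteed by {\bf (A4)}). So the lemma follows simply by checking these hypotheses. For completeness I record the structure of the argument underlying \cite{KZ}.

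By the local well-posedness theory, the conservation of mass $\|\rho(t,\cdot)\|_{L^1}=1$, and the energy dissipation law for \eqref{E}, a global weak solution exists; the content of the lemma is that its $L^\infty$ norm does not grow in $t$. The plan is an Alikakos--Moser iteration that is uniform in time. Testing \eqref{eq0} against $\rho^{p-1}$ for $p\ge 2$, the diffusion term contributes a favorable quantity proportional to $-\int|\nabla\rho^{(m+p-1)/2}|^2\rd{\bx}$, while the drift term becomes $(p-1)\int\rho^p\,(\Delta W\ast\rho)\rd{\bx}$. Using {\bf (A2)} one splits the kernel $\Delta W$ into a piece controlled by a nonnegative multiple of a Dirac mass and an integrable remainder, so that $\|\Delta W\ast\rho\|_{L^r}\le C(\|\rho\|_{L^r}+\|\rho\|_{L^1})$; the resulting higher power of $\rho$ in the drift term is subcritical relative to the diffusion term precisely because $m>1$. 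Absorbing it via the Gagliardo--Nirenberg--Sobolev inequality in $\mathbb{R}^2$ applied to $\rho^{(m+p-1)/2}$ yields a differential inequality of the form $\frac{\rd}{\rd{t}}\|\rho\|_{L^p}^p\le -c_p\|\rho\|_{L^p}^p+C_p$ with $c_p,C_p$ controlled, hence $\sup_{t\ge0}\|\rho(t,\cdot)\|_{L^p}\le K_p$ for each $p$. Iterating $p\to\infty$ while tracking the constants gives $\sup_{t\ge0}\|\rho(t,\cdot)\|_{L^\infty}\le C$.

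The main difficulty is entirely quantitative: making the iteration \emph{uniform in $t$} --- ensuring that the spatial $L^p$ bounds do not deteriorate as $t\to\infty$ and that the constants $K_p$ grow slowly enough that the $p\to\infty$ limit is finite. This is exactly where $m>1$ (the diffusion-dominated regime in $d=2$) and {\bf (A2)} (no worse than a Newtonian singularity) are essential; for $m=1$ the drift term is critical and, as in the classical Keller--Segel equation, large-mass solutions can blow up, so no such bound holds. Since in the present paper this lemma is invoked only through the qualitative statement $\|\rho\|_{L^\infty_{t,\bx}}\le C$, we do not reproduce the iteration and instead refer to \cite{KZ}.
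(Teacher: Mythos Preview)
Your proposal is correct and matches the paper's own treatment: the paper simply states that the lemma is a direct consequence of \cite[Theorem~1.1]{KZ} without further argument. Your additional sketch of the Alikakos--Moser iteration and verification of hypotheses {\bf (A2)}--{\bf (A4)} is more detailed than what the paper provides but follows the same route.
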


Finally we show that the total energy is bounded from below:
\begin{lemma}\label{lem_Em}
There exists $E_-\in \mathbb{R}$ such that 
\begin{equation}
E[\rho(t,\cdot)]\ge E_-,\quad \forall t\ge 0.
\end{equation}
\end{lemma}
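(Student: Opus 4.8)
\textbf{Proof plan for Lemma \ref{lem_Em}.}

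The plan is to bound the two pieces of $E[\rho]=\cS[\rho]+\cI[\rho]$ from below separately. The internal energy $\cS[\rho]=\frac{1}{m-1}\int\rho^m\rd\bx$ is manifestly nonnegative, so the only concern is the interaction energy $\cI[\rho]=\frac12\iint W(\bx-\by)\rho(\by)\rd\by\,\rho(\bx)\rd\bx$. Since $W'(r)>0$, the potential $W$ is increasing, hence $W(r)\le\lim_{r\to\infty}W(r)=:W_\infty\le+\infty$. If $W_\infty<\infty$, then using the normalization $\int\rho(t,\bx)\rd\bx=1$ (assumption {\bf (A4)}, preserved by \eqref{eq0}) we immediately get $\cI[\rho(t,\cdot)]\le\frac12 W_\infty$, which is an \emph{upper} bound, not what we want; the needed \emph{lower} bound on $\cI$ must come from below on $W$. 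So first I would record that $W$ is bounded below on $(0,\infty)$: indeed $W'(r)>0$ means $W$ is increasing, so $W(r)\ge W(0^+)$ for all $r>0$, where $W(0^+):=\lim_{r\to0^+}W(r)\in[-\infty,\infty)$. If $W(0^+)>-\infty$, we are done: $\cI[\rho]\ge\frac12 W(0^+)$ and $E[\rho]\ge\frac12 W(0^+)=:E_-$.

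The genuine issue is the case where $W$ is singular at the origin, i.e. $W(0^+)=-\infty$ (e.g. the Newtonian case $W(r)\sim\frac{1}{2\pi}\ln r\to-\infty$). Here the diffusion term must compensate the singular attraction. The standard device is a Hardy--Littlewood--Sobolev-type (or logarithmic HLS) interpolation: assumption {\bf (A2)} gives $W'(r)\le Cr^{-1}$, hence $W(r)\le W(1)+C\ln r\le C(1+|\ln r|)$ near the origin, and in particular $W$ is dominated by the 2D Newtonian potential $-\frac{C}{2\pi}\ln\frac1r$ up to a bounded remainder. Therefore
\begin{equation}\label{plan:HLS}
-\cI[\rho] \le \frac{C}{2\pi}\iint \ln\frac{1}{|\bx-\by|}\,\rho(\bx)\rho(\by)\rd\bx\rd\by + C,
\end{equation}
and by the logarithmic HLS inequality (or, more simply, by splitting $\ln\frac{1}{|\bx-\by|}$ into its positive part supported on $|\bx-\by|\le1$, which is in $L^p_{loc}$ for every $p<\infty$, and its negative part which is bounded below), one controls the right-hand side by $\varepsilon\int\rho\ln\rho\rd\bx + C_\varepsilon$, or more crudely by $\varepsilon\|\rho\|_{L^q}^{q} + C_\varepsilon$ for a suitable $q\in(1,m]$ using Young's convolution inequality and the mass bound. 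Since {\bf (A3)} gives $m>(\alpha+1)/2>1$, we have $\rho(t,\cdot)\in L^m$ with a bound one can absorb: picking $\varepsilon$ small relative to $\frac{1}{m-1}$,
\begin{equation}\label{plan:absorb}
E[\rho]=\frac{1}{m-1}\int\rho^m\rd\bx+\cI[\rho] \ge \frac{1}{m-1}\int\rho^m\rd\bx - \varepsilon\int\rho^m\rd\bx - C_\varepsilon \ge -C_\varepsilon=:E_-,
\end{equation}
uniformly in $t$. (Alternatively, one may invoke Lemma \ref{lem_reg1}: the uniform $L^\infty$ bound on $\rho(t,\cdot)$ together with the mass bound gives $\|\rho(t,\cdot)\|_{L^q}\le C$ for all $q\in[1,\infty]$, which makes the interpolation step trivial and removes the need to absorb into $\cS$ at all, since $-\cI[\rho]\le C$ directly from \eqref{plan:HLS} and $\rho\in L^1\cap L^\infty$.)

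The main obstacle is purely the near-origin singularity of $W$, i.e. making the interpolation in \eqref{plan:HLS}--\eqref{plan:absorb} clean: one must be careful that {\bf (A2)} only controls $W'$, so $W$ itself could a priori be large and negative near $0$, and one needs $W(r)+\frac{C}{2\pi}\ln r$ to be bounded on $(0,1]$, which follows by integrating $W'(r)\le Cr^{-1}$ from $r$ to $1$. Given Lemma \ref{lem_reg1} this is in fact routine, since the $L^1\cap L^\infty$ bound on $\rho(t,\cdot)$ trivializes every function-space estimate needed; the only thing to state carefully is that the constant $C$ produced depends on $W,m,\rho_{\ini}$ only (through $\|\rho\|_{L^\infty_{t,\bx}}$ and the mass), not on $t$.
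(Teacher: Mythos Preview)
Your proposal is correct, and your parenthetical alternative---invoking the uniform $L^\infty$ bound from Lemma \ref{lem_reg1} to bound $-\cI[\rho]\le C$ directly---is precisely the paper's route: split the potential integral into $|\bx-\by|\le 1$ and $|\bx-\by|>1$, use $W(r)\ge W(1)+C\ln r$ on $(0,1]$ (integrate {\bf (A2)}) together with $\|\rho\|_{L^\infty}$ on the near part, and $W(r)\ge W(1)$ on the far part, then add the nonnegative $\cS$.

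Your primary approach (interpolation and absorbing into $\cS$) is a genuine alternative: it avoids Lemma \ref{lem_reg1} entirely and would give a lower bound on $E$ depending only on $W$, $m$, and the total mass, not on $\|\rho_{\ini}\|_{L^\infty}$. The paper's approach is shorter given that Lemma \ref{lem_reg1} is already available. One small slip: in your prose you wrote $W(r)\le W(1)+C\ln r$, but integrating $W'(s)\le Cs^{-1}$ from $r$ to $1$ gives $W(1)-W(r)\le -C\ln r$, i.e.\ $W(r)\ge W(1)+C\ln r$; this is the direction you actually need (and correctly use) for \eqref{plan:HLS}.
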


\begin{proof}

{\bf (A2)} implies that 
\begin{equation}
W(r) = W(1) - \int_r^1 W'(s)\rd{s} \ge W(1) - C\int_r^1 \frac{1}{s}\rd{s} = W(1) + C\ln r,\quad \forall 0<r\le 1.
\end{equation}
Therefore, for any $t,\bx$,
\begin{equation}\begin{split}
 \int_{|\bx-\by|\le 1} W(\bx-\by)\rho(t,\by)\rd{\by} 
\ge & \int_{|\bx-\by|\le 1} (W(1) + C\ln |\bx-\by|)\rho(t,\by)\rd{\by} \\
\ge & \min\{W(1),0\} + C\|\rho\|_{L^\infty}\int_{|\bx-\by|\le 1}\ln |\bx-\by|\rd{\by} 
\ge  a,
\end{split}\end{equation}
for some $a\in\mathbb{R}$, independent of $t,\bx$. Also, 
\begin{equation}\begin{split}
& \int_{|\bx-\by|> 1} W(\bx-\by)\rho(t,\by)\rd{\by} \ge \int_{|\bx-\by|> 1} W(1)\rho(t,\by)\rd{\by} \ge  \min\{W(1),0\}.\\
\end{split}\end{equation}
Therefore,
\begin{equation}
\cI[\rho(t,\cdot)] \ge a +  \min\{W(1),0\} ,
\end{equation}
and then the conclusion follows from the positivity of $\cS[\rho(t,\cdot)]$.

\end{proof}

\subsection{Lemmas on the force field}

In this subsection we give several lemmas, which will be used to quantify the energy change in the CSS curves.

We define the interaction energy between two horizontal `slices' as
\begin{equation}\label{slices}
\cI_{x_2,y_2}[f,g] := \int\int W(\bx-\by)f(x_1)g(y_1)\rd{y_1}\rd{x_1} ,
\end{equation}
as a refined version of the bilinear interaction energy \eqref{bilinear}, where $\bx=(x_1,x_2)^T$ and $\by=(y_1,y_2)^T$, and $f$ and $g$ are functions of one variable.

In this subsection, all $\lambda$ appeared in an inequality denotes a positive lower bound of all possibly used values of $W'$. Therefore, $\lambda$ depends on $W$ and the parameters appeared in the statement. In the applications in later sections, the proper value of $\lambda$ will be made clear in the context. 

In fact, all results in this subsection works for all radial attractive potentials and do not require the assumptions {\bf (A1)}-{\bf (A5)}, except Lemma \ref{lem_circle} which requires {\bf (A2)}.

\begin{lemma}\label{lem_CSSb1}
Let $\kc >0, z>0$. Then 
\begin{equation}\label{lem_CSSb1_1}
\frac{\rd}{\rd{t}}\Big|_{t=0}\cI_{x_2,y_2}[\chi_{[-z,z]},\delta(\cdot - (\kc -t))] \le 0,
\end{equation}
where  $\cI_{x_2,y_2}$ is defined in \eqref{slices}. Furthermore, if $|x_2-y_2|\le \kc$, then
\begin{equation}\label{lem_CSSb1_2}
\frac{\rd}{\rd{t}}\Big|_{t=0}\cI_{x_2,y_2}[\chi_{[-z,z]},\delta(\cdot - (\kc -t))] \le -c\lambda \min\{\kc,z\}.
\end{equation}
\end{lemma}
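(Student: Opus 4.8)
## Proof proposal for Lemma~\ref{lem_CSSb1}

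\textbf{Setup and strategy.} The quantity $\cI_{x_2,y_2}[\chi_{[-z,z]},\delta(\cdot-(\kc-t))]$ is, by \eqref{slices}, simply $g(t):=\int_{-z}^{z} W\big((x_1-(\kc-t),\,x_2-y_2)^T\big)\rd{x_1}$, the interaction of a point mass at horizontal position $\kc-t$ (at height $y_2$) with a uniform horizontal rod of half-width $z$ (at height $x_2$). Differentiating in $t$ at $t=0$ and writing $\Delta_2:=x_2-y_2$, we get
\begin{equation}\label{gprime}
g'(0) = \int_{-z}^{z} \partial_{x_1} W\big((x_1-\kc,\Delta_2)^T\big)\rd{x_1} = W\big((z-\kc,\Delta_2)^T\big) - W\big((-z-\kc,\Delta_2)^T\big).
\end{equation}
Since $W=W(r)$ is radial and increasing in $r$, the sign of this difference is governed by comparing the two distances: $\big|(z-\kc,\Delta_2)\big|$ versus $\big|(z+\kc,\Delta_2)\big|$. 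Because $\kc,z>0$, we have $|z+\kc| \ge |z-\kc|$, hence the second point is at least as far from the origin as the first, so $g'(0)\le 0$. This proves \eqref{lem_CSSb1_1}. The plan is then to quantify this: the gap between the two distances is bounded below in terms of $\kc$ and $z$, and combined with the lower bound $W'\ge\lambda$ on the relevant range this yields \eqref{lem_CSSb1_2}.

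\textbf{The quantitative bound.} Rewrite \eqref{gprime} using the fundamental theorem of calculus along the $x_1$-variable, or more transparently by integrating $W'$ along the segment of distances. Let $r_-=\sqrt{(z-\kc)^2+\Delta_2^2}$ and $r_+=\sqrt{(z+\kc)^2+\Delta_2^2}$. Then $g'(0)=W(r_-)-W(r_+)=-\int_{r_-}^{r_+}W'(\tau)\rd\tau \le -\lambda\,(r_+-r_-)$, provided $\lambda$ is chosen as a lower bound of $W'$ on $[r_-,r_+]$, which is legitimate under the convention on $\lambda$ stated before the lemma (note $r_+$ stays bounded on the parameter ranges used in applications). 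It remains to show $r_+-r_- \ge c\min\{\kc,z\}$ when $|\Delta_2|=|x_2-y_2|\le \kc$. Using $r_+-r_- = \frac{(z+\kc)^2-(z-\kc)^2}{r_++r_-} = \frac{4\kc z}{r_++r_-}$, and bounding $r_+ \le |z+\kc|+|\Delta_2| \le z+2\kc$ (using $|\Delta_2|\le\kc$) and similarly $r_+ \le C(z+\kc)$, we get $r_++r_- \le 2r_+ \le C(z+\kc)$, hence $r_+-r_- \ge c\,\frac{\kc z}{z+\kc} \ge c\min\{\kc,z\}$, where the last inequality is the elementary fact $\frac{ab}{a+b}\ge\frac12\min\{a,b\}$. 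Combining, $g'(0) \le -c\lambda\min\{\kc,z\}$, which is \eqref{lem_CSSb1_2}.

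\textbf{Main obstacle.} The argument is essentially elementary; the only subtle point is the bookkeeping around the constant $\lambda$ — I must make sure that $W'$ is bounded below by $\lambda$ on the whole distance interval $[r_-,r_+]$ actually traversed, which requires that $r_+$ be uniformly bounded. This is guaranteed in the applications (where $\kc,z$ range over bounded sets), per the convention preceding the lemma, but should be noted explicitly. A secondary point is handling the degenerate case $\kc\le|\Delta_2|$: here \eqref{lem_CSSb1_1} still holds by the same sign argument, but the quantitative bound \eqref{lem_CSSb1_2} is only claimed under $|x_2-y_2|\le\kc$, so no further care is needed. I do not anticipate any real difficulty beyond these technical checks.
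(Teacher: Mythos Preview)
Your proof is correct and takes a genuinely different (and cleaner) route than the paper's. The paper keeps the integral $\int_{-z}^z \nabla W((x_1-\kc,\Delta_2)^T)\cdot\vec{e}_1\,\rd{x_1}$ intact and bounds the integrand pointwise, which forces a case split: for $\kc>z$ the integrand has a fixed sign because $x_1-\kc<0$ throughout, and the quantitative bound comes from restricting to $x_1\in[-z,0]$ where $|x_1-\kc|\ge|\Delta_2|$; the case $\kc\le z$ is then reduced to the first by a symmetry cancellation on the sub-interval $[2\kc-z,z]$. Your approach instead evaluates the $x_1$-integral exactly via the fundamental theorem of calculus, collapsing everything to the difference $W(r_-)-W(r_+)$, after which both the sign and the quantitative bound follow from the single estimate $r_+-r_-=\frac{4\kc z}{r_++r_-}\ge c\min\{\kc,z\}$ under $|\Delta_2|\le\kc$. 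This handles both cases uniformly and avoids the symmetry trick. The only minor trade-off is in the range of $W'$ that $\lambda$ must bound below: you need $\lambda\le W'$ on $[r_-,r_+]$, whereas the paper needs it on the (slightly different) set of distances from points of the rod to the moving point; both are finite intervals bounded above by $r_+\le\sqrt{2}(z+\kc)$, so under the paper's convention on $\lambda$ this is harmless, as you correctly note.
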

See Figure \ref{fig_pic6} (left) for illustration.

\begin{figure}
\begin{center}
  \includegraphics[width=.49\linewidth]{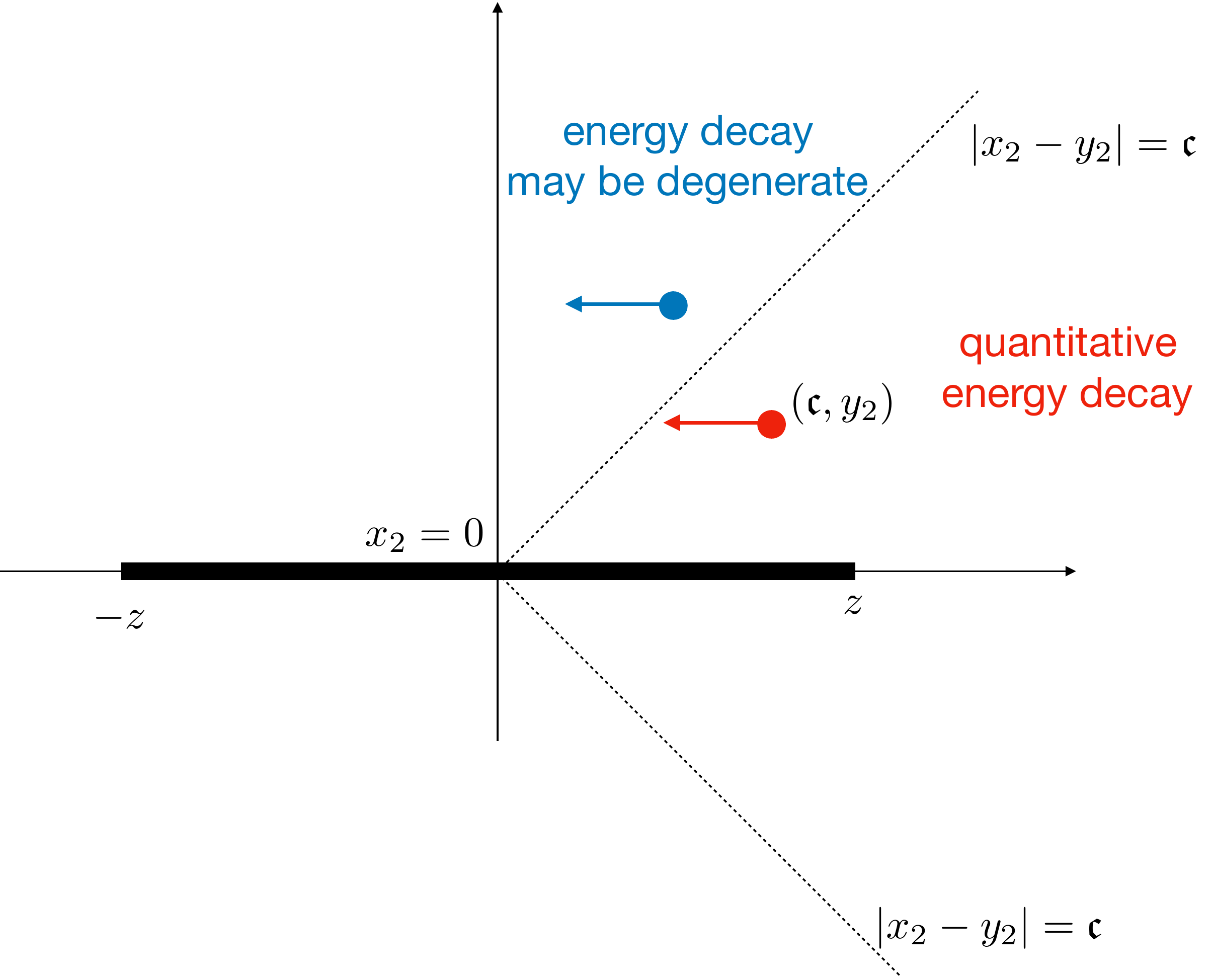}  
  \includegraphics[width=.49\linewidth]{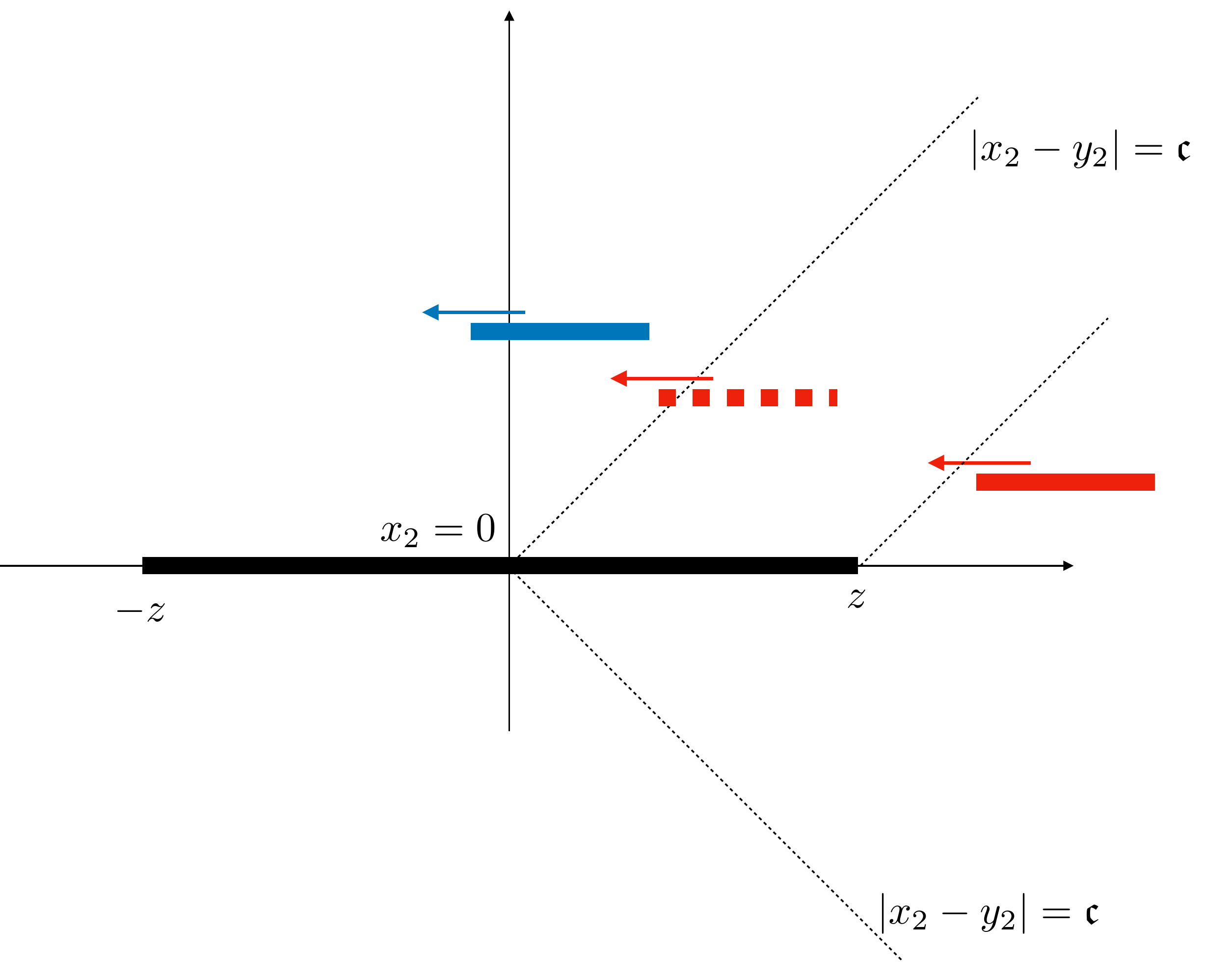}  
  \caption{Left: Lemma \ref{lem_CSSb1}, decay of interaction energy when moving a point mass against an interval. This decay may become degenerate when $\kc<<|x_2-y_2|$ (blue point), since the movement is horizontal, but the direction of the force may be almost vertical. This is avoided by the condition $|x_2-y_2|\le \kc$. Right: Lemma \ref{lem_CSSb2}, decay of interaction energy when moving an interval against an interval. The dashed red segment and the colored red segment denote the case of \eqref{lem_CSSb2_1} and \eqref{lem_CSSb2_2} respectively.}
\label{fig_pic6}
\end{center}
\end{figure}

\begin{proof}
If $\kc >z$, then
\begin{equation}\label{lem_CSS_01}\begin{split}
& \frac{\rd}{\rd{t}}\Big|_{t=0}\cI_{x_2,y_2}[\chi_{[-z,z]},\delta(\cdot - (\kc -t))] \\
= & \frac{\rd}{\rd{t}}\Big|_{t=0}\int\int W(\bx-\by)\chi_{[-z,z]}(x_1)\delta(y_1- (\kc -t))\rd{y_1}\rd{x_1} \\
= & \frac{\rd}{\rd{t}}\Big|_{t=0}\int W\Big((x_1-(\kc -t),x_2-y_2)^T\Big)\chi_{[-z,z]}(x_1)\rd{x_1} \\
= & \int \nabla W\Big((x_1-\kc ,x_2-y_2)^T\Big)\cdot\vec{e}_1\chi_{[-z,z]}(x_1)\rd{x_1}, \\
\end{split}\end{equation}
where $\vec{e}_1$ denotes the unit vector $(1,0)^T$. Notice that
\begin{equation}
\nabla W\Big((x_1-\kc ,x_2-y_2)^T\Big)\cdot\vec{e}_1 = \frac{W'\Big(\sqrt{(x_1-\kc)^2+(x_2-y_2)^2}\Big)}{\sqrt{(x_1-\kc )^2+(x_2-y_2)^2}}(x_1-\kc ).
\end{equation}
Therefore the last integrand in \eqref{lem_CSS_01} is always negative, since $x_1-\kc \le z-\kc <0$ by $\kc >z$, and $W'(r)>0$. This proves \eqref{lem_CSSb1_1} (for the case $\kc>z$).

Furthermore, if $|x_2-y_2|\le \kc$, then
\begin{equation}
\sqrt{(x_1-\kc)^2+(x_2-y_2)^2} \le \sqrt{(x_1-\kc)^2+(x_1-\kc)^2} = \sqrt{2}|x_1-\kc|,\quad \forall x_1\le 0.
\end{equation}
Therefore
\begin{equation}\begin{split}
 & \int \nabla W\Big((x_1-\kc ,x_2-y_2)^T\Big)\cdot\vec{e}_1\chi_{[-z,z]}(x_1)\rd{x_1} \\
\le & - \frac{1}{\sqrt{2}}\int_{-z}^0 W'\Big(\sqrt{(x_1-\kc)^2+(x_2-y_2)^2}\Big) \rd{x_1}\\
\le & - \frac{1}{\sqrt{2}}\lambda z,
\end{split}\end{equation}
which proves \eqref{lem_CSSb1_2} (for the case $\kc>z$).

If $\kc\le z$, then by symmetry we have
\begin{equation}
\int \nabla W\Big((x_1-\kc ,x_2-y_2)^T\Big)\cdot\vec{e}_1\chi_{[-z,z]}(x_1)\rd{x_1} = \int \nabla W\Big((x_1-\kc ,x_2-y_2)^T\Big)\cdot\vec{e}_1\chi_{[-z,2\kc-z]}(x_1)\rd{x_1},
\end{equation}
and the conclusion is obtained by using a translated version of the case $\kc>z$.

\end{proof}

\begin{lemma}\label{lem_CSSb2}
Let $\kc>0, z>0,z'>0$. Then 
\begin{equation}\label{lem_CSSb2_0}
\frac{\rd}{\rd{t}}\Big|_{t=0}\cI_{x_2,y_2}[\chi_{[-z,z]},\chi_{[(\kc-t)-z',(\kc-t)+z']}] \le 0.
\end{equation}
Furthermore, if $\kc>z'$ and $|x_2-y_2|\le \kc$, then 
\begin{equation}\label{lem_CSSb2_1}
\frac{\rd}{\rd{t}}\Big|_{t=0}\cI_{x_2,y_2}[\chi_{[-z,z]},\chi_{[(\kc-t)-z',(\kc-t)+z']}] \le -c\lambda \min\{z,\kc\}z'.
\end{equation}
If $|x_2-y_2|\le \kc-z-z'$, then 
\begin{equation}\label{lem_CSSb2_2}
\frac{\rd}{\rd{t}}\Big|_{t=0}\cI_{x_2,y_2}[\chi_{[-z,z]},\chi_{[(\kc-t)-z',(\kc-t)+z']}] \le -c\lambda(S,S')\cdot |S|\cdot|S'|,
\end{equation}
for any $S\subset[-z,z]$ and $S'\subset[\kc-z',\kc+z']$, and $\lambda(S,S')$ denotes a lower bound for all possibly used $W'$ in the interaction between $(x_1,x_2)$ and $(y_1,y_2)$ with $x_1\in S$ and $y_1\in S'$.
\end{lemma}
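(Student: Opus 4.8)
The plan is to reduce the two ``slide the whole inner interval'' estimates \eqref{lem_CSSb2_0}--\eqref{lem_CSSb2_1} to the single‑point‑mass case of Lemma~\ref{lem_CSSb1} by writing the moving interval as a continuous superposition of moving point masses. From
\begin{equation}\label{eq:slideplan1}
\chi_{[(\kc-t)-z',(\kc-t)+z']}(y_1) = \int_{-z'}^{z'}\delta\big(y_1-((\kc+\tau)-t)\big)\rd{\tau},
\end{equation}
bilinearity of $\cI_{x_2,y_2}$ and differentiation under the $\tau$‑integral give
\begin{equation}\label{eq:slideplan2}\begin{split}
& \frac{\rd}{\rd{t}}\Big|_{t=0}\cI_{x_2,y_2}\big[\chi_{[-z,z]},\chi_{[(\kc-t)-z',(\kc-t)+z']}\big] \\
= & \int_{-z'}^{z'}\frac{\rd}{\rd{t}}\Big|_{t=0}\cI_{x_2,y_2}\big[\chi_{[-z,z]},\delta(\cdot-((\kc+\tau)-t))\big]\rd{\tau},
\end{split}\end{equation}
so \eqref{lem_CSSb2_0} and \eqref{lem_CSSb2_1} should follow by integrating the relevant estimate of Lemma~\ref{lem_CSSb1} over $\tau\in[-z',z']$, while for the subinterval statement \eqref{lem_CSSb2_2} I would differentiate $\cI_{x_2,y_2}$ directly so as to keep the pointwise structure of the force.

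For \eqref{lem_CSSb2_0} I would collapse the $\tau$‑integral by the fundamental theorem of calculus. Writing $E(q):=\cI_{x_2,y_2}[\chi_{[-z,z]},\delta(\cdot-q)]=\int_{-z}^{z}W\big(\sqrt{(x_1-q)^2+(x_2-y_2)^2}\big)\rd{x_1}$, the right side of \eqref{eq:slideplan2} equals $-\int_{-z'}^{z'}E'(\kc+\tau)\rd{\tau}=-(E(\kc+z')-E(\kc-z'))$. The substitution $v=x_1-q$ shows $E(q)=\int_{-z-q}^{z-q}G(v)\rd{v}$ with $G(v):=W(\sqrt{v^2+(x_2-y_2)^2})$ even and nondecreasing in $|v|$ (since $W'>0$); hence $E$ is even and $E'(q)=G(z+q)-G(z-q)\ge 0$ for $q\ge 0$, i.e.\ $E$ is nondecreasing on $[0,\infty)$. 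Since $\kc+z'\ge|\kc-z'|$, this gives $E(\kc+z')\ge E(|\kc-z'|)=E(\kc-z')$, which is \eqref{lem_CSSb2_0}. The delicate configuration is $\kc<z'$, where the moving interval straddles the origin: there the needed monotonicity of $E$ comes from the reflection symmetry of $\chi_{[-z,z]}$ (equivalently, $G$ even), not from any convexity of $W$ — this mirrors the ``$\kc\le z$'' case in the proof of Lemma~\ref{lem_CSSb1}.

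For \eqref{lem_CSSb2_1}, when $\kc>z'$ every point $\kc+\tau$ with $|\tau|\le z'$ is positive, so Lemma~\ref{lem_CSSb1}\,\eqref{lem_CSSb1_1} makes each integrand on the right of \eqref{eq:slideplan2} nonpositive; discarding the range $\tau\in[-z',0)$ therefore only increases the integral, leaving it $\le\int_0^{z'}\frac{\rd}{\rd{t}}\big|_{t=0}\cI_{x_2,y_2}[\chi_{[-z,z]},\delta(\cdot-((\kc+\tau)-t))]\rd{\tau}$. For $\tau\in[0,z']$ one has $\kc+\tau\ge\kc\ge|x_2-y_2|$, so Lemma~\ref{lem_CSSb1}\,\eqref{lem_CSSb1_2}, applied with $\kc$ replaced by $\kc+\tau$, bounds this integrand by $-c\lambda\min\{\kc+\tau,z\}\le-c\lambda\min\{\kc,z\}$; integrating over $\tau\in[0,z']$ gives \eqref{lem_CSSb2_1}.

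For \eqref{lem_CSSb2_2}, differentiating the endpoints of the inner interval directly and using the fundamental theorem of calculus in $y_1$ yields, with $d:=\sqrt{(x_1-y_1)^2+(x_2-y_2)^2}$,
\begin{equation}\label{eq:slideplan3}\begin{split}
& \frac{\rd}{\rd{t}}\Big|_{t=0}\cI_{x_2,y_2}\big[\chi_{[-z,z]},\chi_{[(\kc-t)-z',(\kc-t)+z']}\big] \\
= & -\int_{-z}^{z}\int_{\kc-z'}^{\kc+z'}\frac{W'(d)}{d}\,(y_1-x_1)\rd{y_1}\rd{x_1}.
\end{split}\end{equation}
Under $|x_2-y_2|\le\kc-z-z'$ one has $y_1-x_1\ge\kc-z-z'\ge|x_2-y_2|\ge0$ for all $x_1\in[-z,z]$, $y_1\in[\kc-z',\kc+z']$, so the integrand in \eqref{eq:slideplan3} is nonnegative and $(y_1-x_1)/d\ge 1/\sqrt{2}$; restricting this nonnegative integrand to $S\times S'$ only enlarges the right side of \eqref{eq:slideplan3}, so it is $\le-\int_S\int_{S'}\tfrac{1}{\sqrt 2}W'(d)\rd{y_1}\rd{x_1}\le-\tfrac{1}{\sqrt 2}\lambda(S,S')\,|S|\,|S'|$, which is \eqref{lem_CSSb2_2}. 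I expect the main obstacle to be not any single estimate but the bookkeeping of signs and of the degenerate geometry ($\kc<z'$, and $x_1>\kc$ when $\kc<z$): the symmetry of $\chi_{[-z,z]}$ is what forces the energy to stay monotone in those cases, and the vertical‑separation hypotheses ($|x_2-y_2|\le\kc$ for \eqref{lem_CSSb2_1}, $\le\kc-z-z'$ for \eqref{lem_CSSb2_2}) are precisely what prevents the horizontal displacement from being nearly orthogonal to the force, thereby preserving the quantitative decay.
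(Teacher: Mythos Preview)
Your proof is correct and follows essentially the same approach as the paper: reduce \eqref{lem_CSSb2_0} and \eqref{lem_CSSb2_1} to Lemma~\ref{lem_CSSb1} by writing the moving interval as a superposition of moving point masses, and handle \eqref{lem_CSSb2_2} by directly differentiating and observing that the integrand has a sign. The only cosmetic difference is your treatment of the degenerate case $\kc\le z'$ in \eqref{lem_CSSb2_0}: the paper uses the reflection $\chi_{[\kc-z',\kc+z']}\mapsto\chi_{[z'-\kc,\kc+z']}$ to reduce to the case $\kc>z'$, whereas you argue directly that $E$ is even and nondecreasing on $[0,\infty)$ so that $E(\kc+z')\ge E(|\kc-z'|)=E(\kc-z')$; these are two phrasings of the same symmetry argument.
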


Notice that the last condition $|x_2-y_2|\le \kc-z-z'$ says that the horizontal distance between the closest pair of endpoints of the two intervals $\{x_2\}\times [-z,z]$ and $\{y_2\}\times [\kc-z',\kc+z']$ is at least their vertical distance $|x_2-y_2|$. See Figure \ref{fig_pic6} (right) for illustration.

\begin{proof}

We first show \eqref{lem_CSSb2_0}. If $\kc > z'$, then 
\begin{equation}\label{lem_CSSb2_01}
\frac{\rd}{\rd{t}}\Big|_{t=0}\cI_{x_2,y_2}[\chi_{[-z,z]},\chi_{[(\kc-t)-z',(\kc-t)+z']}] = \int_{\kc-z'}^{\kc+z'}\frac{\rd}{\rd{t}}\Big|_{t=0}\cI_{x_2,y_2}[\chi_{[-z,z]},\delta(\cdot - (\kc_1-t))] \rd{\kc_1},
\end{equation}
with $\kc_1>0$ always holds. Then \eqref{lem_CSSb2_0} follows from \eqref{lem_CSSb1_1}. If $\kc\le z'$, then by symmetry we have
\begin{equation}
\frac{\rd}{\rd{t}}\Big|_{t=0}\cI_{x_2,y_2}[\chi_{[-z,z]},\chi_{[(\kc-t)-z',(\kc-t)+z']}] = \frac{\rd}{\rd{t}}\Big|_{t=0}\cI_{x_2,y_2}[\chi_{[-z,z]},\chi_{[z'-\kc-t,(\kc-t)+z']}],
\end{equation}
and \eqref{lem_CSSb2_0} follows from the previous case.

To see \eqref{lem_CSSb2_1}, notice that in the case $\kc>z'$ and $|x_2-y_2|\le \kc$, in \eqref{lem_CSSb2_01} there holds
\begin{equation}
\kc_1\ge |x_2-y_2|,\quad \forall \kc \le \kc_1 \le \kc+z'.
\end{equation}
Then applying \eqref{lem_CSSb1_2} to these $\kc_1$ gives \eqref{lem_CSSb2_1}.

To see \eqref{lem_CSSb2_2}, we write
\begin{equation}\begin{split}
& \frac{\rd}{\rd{t}}\Big|_{t=0}\cI_{x_2,y_2}[\chi_{[-z,z]},\chi_{[(\kc-t)-z',(\kc-t)+z']}] \\
= & \int_{\kc-z'}^{\kc+z'}\frac{\rd}{\rd{t}}\Big|_{t=0}\cI_{x_2,y_2}[\chi_{[-z,z]},\delta(\cdot - (\kc_1-t))] \rd{\kc_1} \\
= & \int_{\kc-z'}^{\kc+z'}\int_{-z}^z \nabla W\Big((x_1-\kc_1 ,x_2-y_2)^T\Big)\cdot\vec{e}_1\rd{x_1} \rd{\kc_1}, \\
\end{split}\end{equation}
and $|x_2-y_2| \le \kc_1-x_1$ always holds, due to the assumption $|x_2-y_2|\le \kc-z-z'$. Therefore the integrand above is always negative, and those $(x_1,x_2)$ and $(y_1,y_2)$ with $x_1\in S$ and $y_1\in S'$ can be estimated in the same way as the proof of Lemma \ref{lem_CSSb1} to give \eqref{lem_CSSb2_2}.

\end{proof}

%
%
%

\begin{lemma}\label{lem_disk}
Potential generated by a disk is attractive:
\begin{equation}\label{lem_disk_1}
\int \nabla W(\bx-\by)\cdot \frac{\bx}{|\bx|}\chi_{|\by|\le R}(\by)\rd{\by}  \ge c \lambda \min\{R,|\bx|\}^2,
\end{equation}
for any $\bx\ne 0$, $R>0$.
\end{lemma}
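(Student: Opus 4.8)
The plan is to reduce the statement to the one-dimensional slice estimates of Lemmas \ref{lem_CSSb1} and \ref{lem_CSSb2}. By radial symmetry we may take $\bx = (|\bx|,0)^T$ with $|\bx| =: a > 0$, so the quantity to bound from below is $\int \partial_{x_1} W(\bx - \by)\,\chi_{|\by|\le R}(\by)\rd{\by}$ (up to the sign conventions, this is minus the $x_1$-component of the velocity field generated by the uniform disk, evaluated at $\bx$, and we want it $\ge c\lambda\min\{R,a\}^2$). Slicing the disk $\{|\by|\le R\}$ horizontally, $\by = (y_1,y_2)^T$ with $y_2$ fixed and $y_1$ ranging over $[-\sqrt{R^2-y_2^2},\sqrt{R^2-y_2^2}]$, the integrand $\partial_{x_1}W((x_1-y_1, -y_2)^T)$ is, for each fixed $y_2$, exactly the kernel appearing in the integrand of \eqref{lem_CSS_01}; and since $x_1 = a > 0$ is to the right of the whole slice $[-z,z]$ with $z = \sqrt{R^2-y_2^2}$ whenever $a \ge z$ (which we can arrange by first shrinking $R$ down to $\min\{R,a\}$, only decreasing the left-hand side since the integrand over the added annulus $\{ \min\{R,a\}\le|\by|\le R\}$ has the favorable sign by the same computation), each slice contributes a nonnegative amount, and the whole integral is nonnegative. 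This gives the sign; the point is the quantitative lower bound.

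For the lower bound, after replacing $R$ by $R_0 := \min\{R,a\}$ I would keep only the slices with $|y_2| \le R_0/2$, so that $z = \sqrt{R_0^2 - y_2^2} \ge \tfrac{\sqrt3}{2}R_0$ and also the vertical displacement satisfies $|x_2 - y_2| = |y_2| \le R_0/2 \le a$. For such a slice, the ``center'' $\kc$ of the interval seen from $\bx$ is $a$, which satisfies $\kc = a \ge R_0 \ge z$, so the hypothesis of the sharp bound is met and the slice computation of Lemma \ref{lem_CSSb1} — more precisely the intermediate estimate in its proof, $\int \nabla W((x_1-\kc,x_2-y_2)^T)\cdot \vec e_1\,\chi_{[-z,z]}(x_1)\rd{x_1} \le -\tfrac{1}{\sqrt2}\lambda z$, with the sign flipped to our convention — gives a contribution of size at least $c\lambda z \ge c\lambda R_0$ from each such slice. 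Integrating over $y_2 \in [-R_0/2, R_0/2]$ (a range of length $R_0$) yields the claimed bound $c\lambda R_0^2 = c\lambda \min\{R,a\}^2$.

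The one point requiring a little care — and the main obstacle — is matching the geometry to the hypotheses of Lemma \ref{lem_CSSb1}: that lemma is stated for a point mass at $\kc - t$ being moved against an interval $[-z,z]$ centered at the origin, whereas here the interval is centered at $\by$'s horizontal position (namely $y_1$-centered at $0$, which is already fine) and the "observation point" is $x_1 = a$. One must translate so that the observation point plays the role of $\kc$ and verify $\kc > z$, i.e. $a > \sqrt{R_0^2 - y_2^2}$, which holds since $a \ge R_0 > \sqrt{R_0^2-y_2^2}$ for $y_2 \ne 0$ (and the $y_2 = 0$ slice is a single point, hence negligible). One must also check that the value of $W'$ being used — i.e. $W'$ evaluated at distances $\sqrt{(x_1-a)^2 + y_2^2}$ with $x_1 \in [-z,0]$, which are at most $\sqrt{(z+a)^2 + R_0^2/4} \le C\, a$ — lies in the range over which $\lambda$ is a valid lower bound; this is exactly the role of the sentence preceding the lemma fixing $\lambda$ as a lower bound for all $W'$ used. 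No use of {\bf (A1)}–{\bf (A5)} beyond positivity of $W'$ is needed, consistent with the remark before the statement.
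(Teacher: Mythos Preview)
Your approach—slice the disk horizontally and apply Lemma \ref{lem_CSSb1} to each slice—is exactly the paper's approach. But the reduction step ``shrink $R$ to $R_0=\min\{R,a\}$'' is both unnecessary and unjustified, and that is a genuine gap.

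It is unjustified because slices of the annulus $\{R_0\le|\by|\le R\}$ at heights $|y_2|<R_0$ are unions of two intervals, not a single symmetric interval, so Lemma \ref{lem_CSSb1} does not apply to them. Your phrase ``has the favorable sign by the same computation'' does not go through: for instance, at $y_2=0$ the annulus slice is $[-R,-a]\cup[a,R]$ and its contribution equals $[W(R+a)-W(2a)]-[W(R-a)-W(0)]$, which is \emph{negative} whenever $W$ is strictly concave (and even diverges to $-\infty$ under {\bf (A2)}, since $\bx$ sits on the inner boundary). So the annulus need not contribute nonnegatively, and you cannot simply discard it.

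It is unnecessary because Lemma \ref{lem_CSSb1}, part \eqref{lem_CSSb1_1}, requires only $\kc>0$, not $\kc>z$; so every slice of the \emph{full} disk already gives a nonnegative contribution, with no reduction needed. For the quantitative bound, apply part \eqref{lem_CSSb1_2} directly to the full-disk slices with $|y_2|\le a$: each such slice yields at least $c\lambda\min\{a,\sqrt{R^2-y_2^2}\}$. Restricting to $|y_2|\le R_0/2$ gives $\min\{a,\sqrt{R^2-y_2^2}\}\ge \tfrac{\sqrt3}{2}R_0$ in both cases $R\ge a$ and $R<a$, and integrating in $y_2$ over an interval of length $R_0$ yields $c\lambda R_0^2$. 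This is precisely the paper's argument (the paper splits into $r>R$ and $r\le R$ and integrates $\min\{r,\sqrt{R^2-x_2^2}\}$ directly), and your second paragraph becomes correct once you drop the reduction and work with the original $R$.
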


\begin{proof}

By radial symmetry, we may assume $\bx=(r,0)^T,\,r>0$ without loss of generality. Then $\frac{\bx}{|\bx|}=\vec{e}_1$.

Notice that
\begin{equation}\begin{split}
& - \int \nabla W(\bx-\by)\cdot \frac{\bx}{|\bx|}\chi_{|\by|\le R}(\by)\rd{\by} \\
= & \int_{-R}^R \frac{\rd}{\rd{t}}\Big|_{t=0}\cI_{x_2,0}\Big[\chi_{\left[-\sqrt{R^2-x_2^2},\sqrt{R^2-x_2^2}\right]}, \delta(\cdot - (r-t))\Big] \rd{x_2} \\
\le & - c\lambda \int_{x_2\in[-R,R], |x_2|\le r}  \min\Big\{r,\sqrt{R^2-x_2^2}\Big\} \rd{x_2}, \\
\end{split}\end{equation}
by Lemma \ref{lem_CSSb1}.

For the case $|\bx| = r > R$, we have
\begin{equation}\begin{split}
& \int_{x_2\in[-R,R], |x_2|\le r}  \min\Big\{r,\sqrt{R^2-x_2^2}\Big\} \rd{x_2} 
=  \int_{x_2\in[-R,R]}  \sqrt{R^2-x_2^2} \rd{x_2} 
=  cR^2.
\end{split}\end{equation}

For the case $r \le R$, we have
\begin{equation}\begin{split}
& \int_{x_2\in[-R,R], |x_2|\le r}  \min\Big\{r,\sqrt{R^2-x_2^2}\Big\} \rd{x_2} 
=  \int_{x_2\in[-r,r]}  \min\Big\{r,\sqrt{R^2-x_2^2}\Big\} \rd{x_2} \\
\ge & \int_{x_2\in[-r/2,r/2]}  \min\Big\{r,\sqrt{R^2-x_2^2}\Big\} \rd{x_2} 
\ge  \int_{x_2\in[-r/2,r/2]}  \frac{1}{2}r \rd{x_2} 
= cr^2.
\end{split}\end{equation}

\end{proof}

\begin{lemma}\label{lem_circle}
Assume {\bf (A2)}. Then the potential generated by a circle is Lipschitz:
\begin{equation}\label{lem_circle_1}
\left|\int \nabla W(\bx-\by)\delta(|\by|-R)\rd{\by}\right| \le C  ,
\end{equation}
for any $\bx\in\mathbb{R}^2$, $R>0$. Furthermore, if $|\bx|>R$, then
\begin{equation}\label{lem_circle_2}
\int \nabla W(\bx-\by)\delta(|\by|-R)\rd{\by} \ge c\lambda R .
\end{equation}
\end{lemma}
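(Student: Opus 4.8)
\emph{Proposal.} By radial symmetry I would take $\bx=(r,0)^T$ with $r=|\bx|>0$, and parametrize the circle by $\by=(R\cos\theta,R\sin\theta)^T$, so that $\delta(|\by|-R)\rd\by$ becomes the measure $R\,\rd\theta$ on $[-\pi,\pi]$. Since $\nabla W(\bz)=W'(|\bz|)\bz/|\bz|$ and $|\bx-\by|=d_\theta:=\sqrt{r^2+R^2-2rR\cos\theta}$, the second (vertical) component of the integrand is odd in $\theta$ and integrates to zero, so the whole integral equals $g(r)\,\vec e_1$ with $g(r)=R\int_{-\pi}^\pi \frac{W'(d_\theta)}{d_\theta}(r-R\cos\theta)\,\rd\theta$, and $\vec e_1=\bx/|\bx|$. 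The lemma thus reduces to two estimates on the scalar $g(r)$; the inequality \eqref{lem_circle_2} is understood, as in Lemma \ref{lem_disk}, for the $\bx/|\bx|$-component.

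For \eqref{lem_circle_1} I would use {\bf (A2)} in the form $W'(d_\theta)\le C/d_\theta$, so that $|g(r)|\le CR\int_{-\pi}^\pi |r-R\cos\theta|\,d_\theta^{-2}\,\rd\theta$. If $R\ge 2r$, then $d_\theta^2\ge (R-r)^2\ge R^2/4$ and $|r-R\cos\theta|\le d_\theta$, giving $|g(r)|\le CR\int_{-\pi}^\pi d_\theta^{-1}\,\rd\theta\le CR\cdot 2\pi/(R/2)\le C$. If $R<2r$, I would exploit the algebraic identity $r-R\cos\theta=\bigl((r^2-R^2)+d_\theta^2\bigr)/(2r)$, a direct rearrangement of $d_\theta^2=r^2+R^2-2rR\cos\theta$, combined with the triangle inequality, to get $|r-R\cos\theta|\,d_\theta^{-2}\le \tfrac{|r^2-R^2|}{2r}\,d_\theta^{-2}+\tfrac1{2r}$. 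The explicit integral $\int_{-\pi}^\pi d_\theta^{-2}\,\rd\theta=2\pi/|r^2-R^2|$ (valid for $r\ne R$; when $r=R$ one has $r-R\cos\theta=d_\theta^2/(2r)$ and the bound is immediate) makes the first term contribute exactly $\pi/r$, so $|g(r)|\le CR\cdot 2\pi/r\le C$ since $R<2r$.

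For \eqref{lem_circle_2}, with $r>R$ I would note that $r-R\cos\theta\ge r-R>0$ for every $\theta$, so every term of $g(r)$ is positive. Bounding $W'(d_\theta)$ from below by $\lambda$ (legitimate since here $d_\theta$ ranges over the compact interval $[r-R,r+R]$, which is bounded away from $0$) and $d_\theta$ from above by $r+R$ gives $g(r)\ge \frac{\lambda R}{r+R}\int_{-\pi}^\pi (r-R\cos\theta)\,\rd\theta=\frac{2\pi r}{r+R}\,\lambda R\ge \pi\lambda R$, which is the claimed bound.

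The only genuinely delicate point is the upper bound in the regime $r\approx R$, where $\bx$ essentially sits on the circle: there $d_\theta\sim R|\theta|$ near $\theta=0$, so the pointwise estimate $W'(d_\theta)|r-R\cos\theta|/d_\theta\le C/d_\theta$ produces a \emph{logarithmic divergence}, and one cannot avoid using the cancellation encoded in the factor $r-R\cos\theta$. The identity above, which shows that $r-R\cos\theta$ is comparable to $d_\theta^2$ near $\theta=0$ and therefore cancels one power of $d_\theta^{-2}$, is exactly the mechanism that resolves this; everything else is elementary. An alternative would be to split $[-\pi,\pi]$ into $\{|\theta|\le c_0\}$ and its complement and estimate $|r-R\cos\theta|\lesssim |r-R|+R\theta^2$ directly on the first piece, but the identity-plus-explicit-integral route is cleaner and automatically uniform in $r,R$.
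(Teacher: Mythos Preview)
Your argument is correct. Both proofs start identically---reduce to the scalar $g(r)=R\int_{-\pi}^\pi W'(d_\theta)d_\theta^{-1}(r-R\cos\theta)\,\rd\theta$ and invoke {\bf (A2)}---but diverge in how they handle the near-circle regime. For $r>R$ the paper bounds the resulting integral by $\tfrac{CR}{r}\eta(R/r)$, with $\eta$ the auxiliary function defined in \eqref{eta}, and then quotes the uniform bound \eqref{etaeps2}; your identity $r-R\cos\theta=\bigl((r^2-R^2)+d_\theta^2\bigr)/(2r)$ together with the exact value $\int_{-\pi}^\pi d_\theta^{-2}\,\rd\theta=2\pi/|r^2-R^2|$ is in fact the same computation unwound (and incidentally shows $\eta\equiv 1$ on $(-1,1)$, so the paper's separate estimate of $\eta$ is more than needed). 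The real difference is the case $r<R$: the paper introduces the reflected point $\tilde r=2R-r$ and proves, via a three-range case analysis in $\theta$, that $|r-R\cos\theta|\le \tilde r-R\cos\theta$ and $d_\theta(r)\ge c\,d_\theta(\tilde r)$, thereby reducing to the previous case; you instead observe that the algebraic identity is insensitive to the sign of $r^2-R^2$, so the triangle inequality and the same explicit integral cover $R<2r$ uniformly, while the far regime $R\ge 2r$ is dispatched by the crude bound $d_\theta\ge R/2$. Your route is shorter and avoids the reflection lemma entirely. For \eqref{lem_circle_2} the paper defers to the slicing argument of Lemma~\ref{lem_disk} (hence ultimately to Lemma~\ref{lem_CSSb1}); your direct bound $W'(d_\theta)/d_\theta\ge\lambda/(r+R)$ followed by $\int_{-\pi}^\pi(r-R\cos\theta)\,\rd\theta=2\pi r$ is a legitimate and more self-contained alternative.
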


Notice that the density distribution $\delta(|\by|-R)$ has total mass $2\pi R$.

\begin{proof}

\eqref{lem_circle_2} can be proved similarly as the case $r>R$ of Lemma \ref{lem_disk} and we omit its proof.

By radial symmetry, we may assume $\bx=(r,0)^T,\,r>0$ without loss of generality. Then the vector $\int \nabla W(\bx-\by)\delta(|\by|-R)\rd{\by}$ is parallel to $\vec{e}_1$.

Then we compute
\begin{equation}\begin{split}
& \int \nabla W(\bx-\by)\cdot \vec{e}_1 \delta(|\by|-R)\rd{\by} \\
= & R\int_{-\pi}^\pi \nabla W\Big((r-R\cos\theta, -R\sin\theta)^T\Big)\cdot \vec{e}_1  \rd{\theta} \\
= & R\int_{-\pi}^\pi W'\Big(\sqrt{r^2-2rR\cos\theta+R^2}\Big) \frac{r-R\cos\theta}{\sqrt{r^2-2rR\cos\theta+R^2}}  \rd{\theta}. \\
\end{split}\end{equation}

By the assumption ({\bf A2}),
\begin{equation}
W'(r) \le C\frac{1}{r}.
\end{equation}
If $r>R$, then $r-R\cos\theta>0$ for any $\theta$. Therefore
\begin{equation}\begin{split}
& \left|R\int_{-\pi}^\pi W'\Big(\sqrt{r^2-2rR\cos\theta+R^2}\Big) \frac{r-R\cos\theta}{\sqrt{r^2-2rR\cos\theta+R^2}}  \rd{\theta}\right| \\
= & R\int_{-\pi}^\pi W'\Big(\sqrt{r^2-2rR\cos\theta+R^2}\Big) \frac{r-R\cos\theta}{\sqrt{r^2-2rR\cos\theta+R^2}}  \rd{\theta} \\
\le & CR\int_{-\pi}^\pi  \frac{r-R\cos\theta}{r^2-2rR\cos\theta+R^2}  \rd{\theta} \\
= & \frac{CR}{r}\eta(\frac{R}{r}), \\
\end{split}\end{equation}
where $\eta$ is defined in \eqref{eta}. Then the conclusion follows from the uniform bound \eqref{etaeps2}.

If $r<R$, then define the reflected point $\tilde{r} = 2R-r$ which satisfies $\tilde{r}>R$. We claim
\begin{equation}\label{claimrR}
|r-R\cos\theta| \le \tilde{r}-R\cos\theta,\quad \sqrt{r^2-2rR\cos\theta+R^2} \ge c\sqrt{\tilde{r}^2-2\tilde{r}R\cos\theta+R^2}.
\end{equation}
The first inequality is clear. To see the second inequality, we separate into the following cases:
\begin{itemize}
\item If $|\theta| < 0.1(1-\frac{r}{R})$, then
\begin{equation}\begin{split}
\sqrt{r^2-2rR\cos\theta+R^2} = & \sqrt{(r-R\cos\theta)^2+(R\sin\theta)^2} \ge R\cos\theta -r\\
 \ge & R\Big(1-\frac{0.1^2}{2}(1-\frac{r}{R})^2\Big)-r,
\end{split}\end{equation}
and
\begin{equation}
\sqrt{\tilde{r}^2-2\tilde{r}R\cos\theta+R^2} \le \tilde{r}-R\cos\theta +R|\sin\theta| \le \tilde{r}-R\Big(1-\frac{0.1^2}{2}(1-\frac{r}{R})^2\Big) + 0.1R(1-\frac{r}{R}).
\end{equation}
Taking quotient gives (with $y:=r/R$)
\begin{equation}
\frac{\sqrt{r^2-2rR\cos\theta+R^2}}{\sqrt{\tilde{r}^2-2\tilde{r}R\cos\theta+R^2}} \ge \frac{1-0.005(1-y)^2 - y}{2-y-1+0.005(1-y)^2 + 1-y} = \frac{(1-y)(0.995+0.005y)}{(1-y)(2.005-0.005y)} \ge \frac{1}{3}.
\end{equation}
for any $0\le y < 1$. 
\item If $ 0.1(1-\frac{r}{R}) \le |\theta| \le \frac{\pi}{2}$, then
\begin{equation}
\sqrt{r^2-2rR\cos\theta+R^2} \ge R|\sin\theta| \ge \frac{2}{\pi}R|\theta|,
\end{equation}
and
\begin{equation}\begin{split}
\sqrt{\tilde{r}^2-2\tilde{r}R\cos\theta+R^2} \le & \tilde{r}-R\cos\theta +R|\sin\theta| \le \tilde{r}-R(1-\frac{\theta^2}{2}) + R|\theta| \\
\le & R-r + 2R|\theta| \le 10R|\theta|+ 2R|\theta| = 12R|\theta|.
\end{split}\end{equation}
\item If $  |\theta| \ge \frac{\pi}{2}$, then
\begin{equation}
\sqrt{r^2-2rR\cos\theta+R^2} \ge R,\quad \sqrt{\tilde{r}^2-2\tilde{r}R\cos\theta+R^2} \le 3R.
\end{equation}

\end{itemize}

This proves \eqref{claimrR}, and the conclusion follows from the previous case $r>R$.

\end{proof}

\subsection{Consequence of the subcritical condition}

We first show that, the subcritical condition {\bf (A5)} is the key to guarantee that a positive amount of mass will stay near the center:
\begin{lemma}\label{lem_crho}
There exists $\cR_1>0$ large, and $c_\rho>0$, such that
\begin{equation}
\int_{|\bx|\le \cR_1} \rho(t,\bx)\rd{\bx} \ge c_\rho,\quad \forall t\ge 0.
\end{equation}
\end{lemma}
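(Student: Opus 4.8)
The plan is to argue by contradiction, using the monotonicity of the total energy. Let $W_\infty:=\lim_{r\to\infty}W(r)\in(-\infty,+\infty]$ (the limit exists since $W$ is increasing) and put $L:=W_\infty$ when this limit is finite, and otherwise let $L$ be any finite number with $L>2E[\rho_{\ini}]$ (recall $E[\rho_{\ini}]<\infty$ by {\bf (A4)} and the local integrability of $W$ that follows from {\bf (A2)}; when $W_\infty=+\infty$ there is moreover a finite $\ell_0$ with $W(\ell_0)>L$, since $W$ is increasing). Since \eqref{eq0} is the $2$-Wasserstein gradient flow of $E$ from \eqref{E}, the energy is non-increasing along the flow, so $E[\rho(t,\cdot)]\le E[\rho_{\ini}]$ for all $t$; combining this with $\cS[\rho(t,\cdot)]\ge 0$ and the subcritical condition {\bf (A5)}, and setting $\gamma:=\tfrac12 L-E[\rho_{\ini}]>0$, gives
\[
\iint W(\bx-\by)\,\rho(t,\bx)\rho(t,\by)\rd\by\rd\bx\;=\;2\,\cI[\rho(t,\cdot)]\;\le\;2E[\rho_{\ini}]\;=\;L-2\gamma,\qquad\forall\,t\ge 0.
\]
Hence it suffices to show that, for $\cR_1$ large and $c_\rho$ small, the alternative $\int_{|\bx|\le\cR_1}\rho(t,\bx)\rd\bx<c_\rho$ at some time $t$ would force the left-hand side above to exceed $L-2\gamma$.

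So fix $t$ and suppose $m_1:=\int_{|\bx|\le\cR_1}\rho(t,\bx)\rd\bx<c_\rho$; write $\rho(t,\cdot)=\rho_1+\rho_2$ with $\rho_1:=\rho(t,\cdot)\chi_{\{|\bx|\le\cR_1\}}$, $\rho_2:=\rho(t,\cdot)\chi_{\{|\bx|>\cR_1\}}$, and $m_2:=1-m_1$. The first step is to discard the terms involving $\rho_1$: repeating the computation in the proof of Lemma \ref{lem_Em} — i.e.\ using {\bf (A2)} together with the uniform bound of Lemma \ref{lem_reg1} — shows that $\int W(\bx-\by)\rho_1(\by)\rd\by$ and $\int W(\bx-\by)\rho_2(\by)\rd\by$ are both bounded below by a fixed constant $-C$, whence $2\,\cI[\rho_1,\rho_2]+\cI[\rho_1,\rho_1]\ge -C m_1\ge -C c_\rho$. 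The problem is thus reduced to proving a lower bound for $\iint W\rho_2\rho_2$ that is close to $L$.

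The crucial geometric fact — the point at which radial symmetry and the $L^\infty$ bound enter — is that for $|\bx|>\cR_1$ and $0<\ell\le\cR_1/2$ the mass of $\rho_2$ in the ball $B(\bx,\ell)$ is at most $C\ell/\cR_1$: writing $\bx=(r,0)^T$ with $r>\cR_1\ge 2\ell$, for each $u\in[r-\ell,r+\ell]$ the angular width of $B(\bx,\ell)$ at radius $u$ (in polar coordinates centered at the origin) is $O(\ell/r)$, while $\int_0^\infty\rho_2(u)\,u\rd u\le(2\pi)^{-1}$. Granting this, I would choose $\ell_0$ with $W(\ell_0)>L-\delta$ (for a small parameter $\delta$ to be fixed later) and split $\int W(\bx-\by)\rho_2(\by)\rd\by$ at $|\bx-\by|=\ell_0$. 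On $\{|\bx-\by|\ge\ell_0\}$, since $W$ is increasing, the contribution is $\ge W(\ell_0)\big(m_2-C\ell_0/\cR_1\big)\ge (L-\delta)m_2-C(\ell_0)/\cR_1$. On $\{|\bx-\by|<\ell_0\}$, bounding $W(\bx-\by)\ge W(1)+C\ln|\bx-\by|$ for $|\bx-\by|<1$ and $|W(\bx-\by)|\le C(\ell_0)$ for $1\le|\bx-\by|<\ell_0$, and running a dyadic decomposition in $|\bx-\by|$ against the mass estimate above, the contribution is $\ge -C(\ell_0)/\cR_1$. Integrating in $\bx$ against $\rho_2$ and using $m_2^2=(1-m_1)^2\ge 1-2c_\rho$ then gives $\iint W\rho_2\rho_2\ge L-\delta-C(\ell_0)\big(c_\rho+\cR_1^{-1}\big)$.

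Putting the pieces together, $\iint W\rho\rho\ge L-\delta-C(\ell_0)\big(c_\rho+\cR_1^{-1}\big)$, which will contradict $\iint W\rho\rho\le L-2\gamma$ once I choose, in this order, $\delta:=\gamma/2$ (this fixes $\ell_0$, hence the constant $C(\ell_0)$), then $\cR_1$ large enough that $\cR_1\ge 2\ell_0$ and $C(\ell_0)/\cR_1<\gamma/4$, and finally $c_\rho$ small enough that $C(\ell_0)c_\rho<\gamma/4$. With these choices the alternative cannot hold at any $t$, which is exactly the assertion of the lemma. I expect the main obstacle to be the geometric mass estimate together with the careful bookkeeping of the near-field of $\rho_2$ — the regime where $W(\bx-\by)$ is far from $L$ and possibly negative — whose smallness relies precisely on radial symmetry and $\|\rho\|_{L^\infty}\le C$ forcing a far-away point to see only $O(\ell/\cR_1)$ mass within distance $\ell$.
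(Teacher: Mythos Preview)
Your proposal is correct and follows essentially the same strategy as the paper: argue by contradiction from the energy bound $2\cI[\rho(t,\cdot)]\le 2E[\rho_{\ini}]<L$, and show that if too little mass sits near the origin then radial symmetry forces the far mass to be angularly thin (your estimate $\int_{B(\bx,\ell)}\rho_2\le C\ell/\cR_1$ is exactly the paper's observation that the near region $[r-\cR_0,r+\cR_0]\times[-\theta_0,\theta_0]$ carries mass $\le\theta_0/\pi$), whence most pairs $(\bx,\by)$ have $W(\bx-\by)$ close to $L$. Your decomposition $\rho=\rho_1+\rho_2$ and the dyadic treatment of the near field are cosmetically different from the paper's case split on $|\bx|\lessgtr\cR_1/2$ and explicit polar-coordinate log integral, but the content is the same.
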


\begin{proof}
{\bf (A4)}  implies that for any $t\ge 0$, $\rho(t,\cdot)$ is non-negative, radially-symmetric, and has mass 1. {\bf (A5)} implies that there exists $\cR_0>0$ large and $\epsilon>0$ small,  such that
\begin{equation}
E[\rho_{\ini}] \le \frac{1}{2}W(\cR_0) - \epsilon.
\end{equation}
Therefore,
\begin{equation}\label{Eupper}
E[\rho(t,\cdot)] \le E[\rho_{\ini}] \le \frac{1}{2}W(\cR_0) - \epsilon,
\end{equation}
for all $t\ge 0$.

Suppose on the contrary that for some $t\ge 0$, 
\begin{equation}
\int_{|\bx|\le \cR_1} \rho(t,\bx)\rd{\bx} < c_\rho,
\end{equation}
for some large $\cR_1>0$ and small $c_\rho>0$ to be determined. Then (omitting the dependence on $t$ from now on)
\begin{equation}
\int_{|\bx|\ge \cR_1} \rho(\bx)\rd{\bx} > 1-c_\rho.
\end{equation}
Therefore, at fixed $\bx = (r,0)^T$ with $r>\cR_1/2$, the potential generated by $\rho$ can be written as (with $d_\theta$ as defined in \eqref{notation1})
\begin{equation}\label{Wdtheta}\begin{split}
& \int W(\bx-\by)\rho(\by)\rd{\by} 
=  \int_0^\infty \int_{-\pi}^\pi W( d_\theta) \rd{\theta}s\rho(s)\rd{s} \\
= & \iint_{\Big([0,\infty)\times[-\pi,\pi]\Big) \backslash \Big([r-\cR_0,r+\cR_0]\times [-\theta_0,\theta_0]\Big)}  W( d_\theta) \rd{\theta}s\rho(s)\rd{s} \\
& + \iint_{[r-\cR_0,r+\cR_0]\times [-\theta_0,\theta_0]}  W( d_\theta) \rd{\theta}s\rho(s)\rd{s}, \\
\end{split}\end{equation}
where 
\begin{equation}
\theta_0 = \theta_0(r) = 2\cR_0/r,
\end{equation}
is small by taking $r>\cR_1/2$ large. See Figure \ref{fig_pic11} for illustration.

\begin{figure}
\begin{center}
  \includegraphics[width=.5\linewidth]{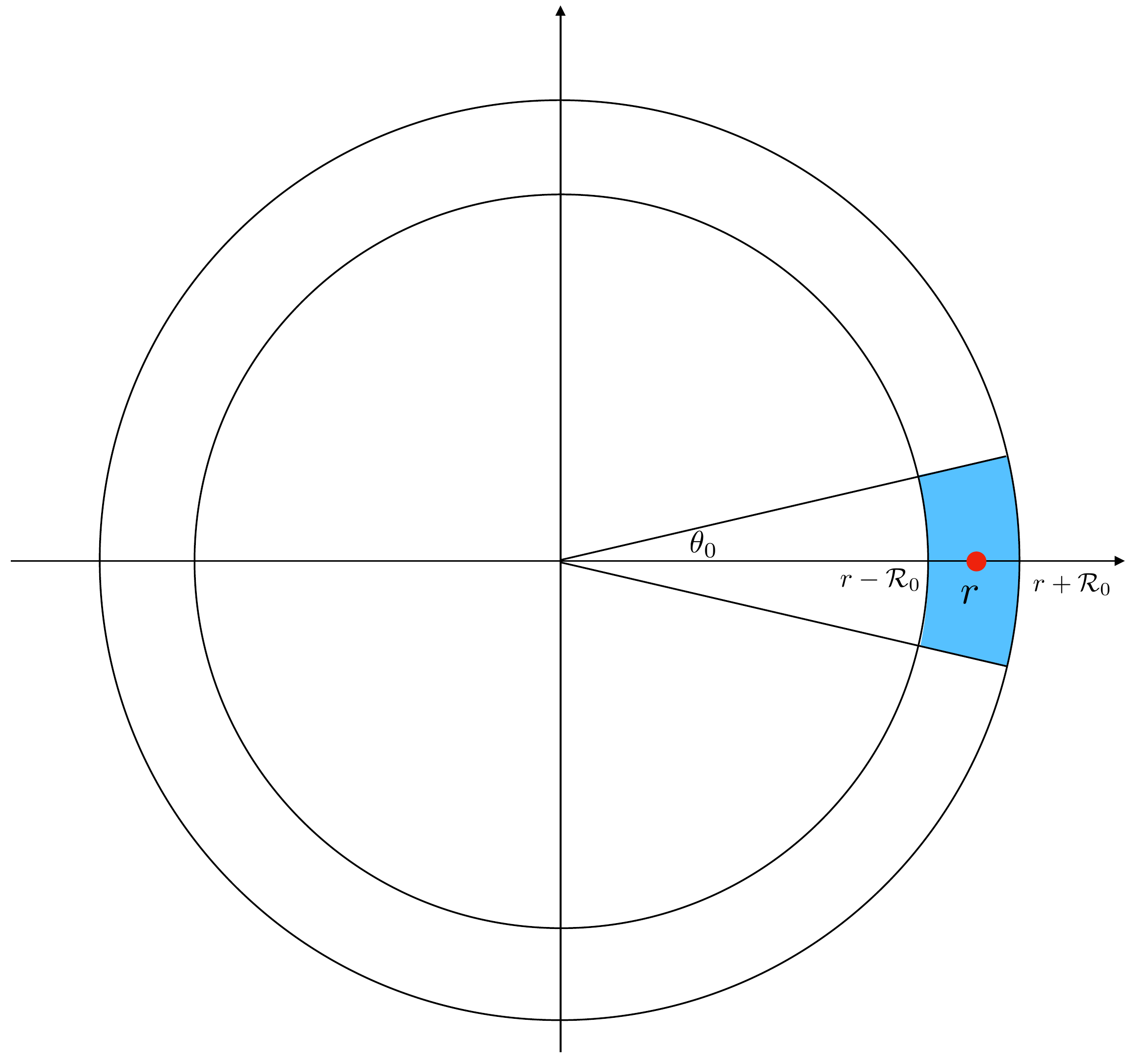}  
  \caption{The decomposition \eqref{Wdtheta}. The blue region corresponds to the last term, which contains every point with $d_\theta<\cR_0$.}
\label{fig_pic11}
\end{center}
\end{figure}

{\bf STEP 1}: estimate the first term on the RHS of \eqref{Wdtheta}.

We first claim that for any $(s,\theta)\in \Big([0,\infty)\times[-\pi,\pi]\Big) \backslash \Big([r-\cR_0,r+\cR_0]\times [-\theta_0,\theta_0]\Big)$, we have
\begin{equation}\label{dlarge}
d_\theta   \ge \cR_0.
\end{equation}
In fact, if $|s-r|>\cR_0$, then either $|r-s\cos\theta|> \cR_0$ or $|s-r\cos\theta|> \cR_0$ (depending on whether $r>s$ or $s>r$). Thus \eqref{dlarge} follows by noticing
\begin{equation}
d_\theta = \sqrt{(r-s\cos\theta)^2+(s\sin\theta)^2} = \sqrt{(s-r\cos\theta)^2+(r\sin\theta)^2}.
\end{equation}
If $|\theta| > \theta_0$, then \eqref{dlarge} follows from
\begin{equation}
|r\sin\theta| \ge r \cdot \frac{2}{\pi}\theta \ge r \cdot \frac{2}{\pi}\cdot \frac{2\cR_0}{r} \ge \cR_0.
\end{equation}
Therefore
\begin{equation}\label{Wdtheta11}\begin{split}
& \iint_{ \Big([0,\infty)\times[-\pi,\pi]\Big) \backslash \Big([r-\cR_0,r+\cR_0]\times [-\theta_0,\theta_0]\Big)}  W( d_\theta) \rd{\theta}s\rho(s)\rd{s} \\
\ge & W(\cR_0) \iint_{  \Big([0,\infty)\times[-\pi,\pi]\Big) \backslash \Big([r-\cR_0,r+\cR_0]\times [-\theta_0,\theta_0]\Big)}  \rd{\theta}s\rho(s)\rd{s} \\
= & W(\cR_0)\left( 1-\iint_{ [r-\cR_0,r+\cR_0]\times [-\theta_0,\theta_0]}  \rd{\theta}s\rho(s)\rd{s} \right) \\
= & W(\cR_0)\left( 1-2\theta_0\int_{ [r-\cR_0,r+\cR_0]} s\rho(s)\rd{s} \right) \\
\ge & W(\cR_0)\left( 1-\frac{\theta_0}{\pi}\iint_{ [0,\infty)\times [-\pi,\pi]}  \rd{\theta}s\rho(s)\rd{s} \right) \\
= & W(\cR_0)\Big( 1-\frac{\theta_0}{\pi} \Big). \\
\end{split}\end{equation}

{\bf STEP 2}: estimate the second term on the RHS of \eqref{Wdtheta}.

First notice that
\begin{equation}
|s-r\cos\theta| \le |s-r| + r(1-\cos\theta) \le \cR_0 + r\cdot\frac{\theta_0^2}{2} = \cR_0 + r\cdot\frac{2\cR_0^2}{r^2} \le 2\cR_0,
\end{equation}
if $r>\cR_1/2$ is large enough. It follows that
\begin{equation}
d_\theta = \sqrt{(s-r\cos\theta)^2+(r\sin\theta)^2} \le |s-r\cos\theta| + r|\theta_0| \le 4\cR_0.
\end{equation}

Then, notice that {\bf (A2)} implies that, for any $0<u\le 4\cR_0$,
\begin{equation}\label{Wlower}\begin{split}
W(u) = & W(4\cR_0) - \int_u^{4\cR_0}W'(u_1)\rd{u_1} \ge  W(4\cR_0) - C\int_u^{4\cR_0}\frac{1}{u_1}\rd{u_1} \\
= & \Big(W(4\cR_0) - C\ln(4\cR_0)\Big) + C\ln u =: A + C\ln u,
\end{split}\end{equation}
(where $A$ may be positive or negative). Therefore
\begin{equation}\begin{split}
& \iint_{[r-\cR_0,r+\cR_0]\times [-\theta_0,\theta_0]}  W( d_\theta) \rd{\theta}s\rho(s)\rd{s}  \\
\ge & A \iint_{[r-\cR_0,r+\cR_0]\times [-\theta_0,\theta_0]} \rd{\theta}s\rho(s)\rd{s} + C\iint_{[r-\cR_0,r+\cR_0]\times [-\theta_0,\theta_0]}  \ln d_\theta \rd{\theta}s\rho(s)\rd{s}.
\end{split}\end{equation}
As in \eqref{Wdtheta11}, we have $\iint_{[r-\cR_0,r+\cR_0]\times [-\theta_0,\theta_0]} \rd{\theta}s\rho(s)\rd{s} \le \frac{\theta_0}{\pi}$. To estimate the other term,
\begin{equation}\begin{split}
\int_{-\theta_0}^{\theta_0}  \ln d_\theta \rd{\theta} = & \frac{1}{2}\int_{-\theta_0}^{\theta_0}  \ln  \Big((s-r\cos\theta)^2+(r\sin\theta)^2\Big)\rd{\theta} \\
\ge & \int_{-\theta_0}^{\theta_0}  \ln |r\sin\theta|\rd{\theta} 
\ge  2\theta_0\ln \frac{2r}{\pi} + 2\int_0^{\theta_0}\ln \theta \rd{\theta} 
=  2\theta_0\ln \frac{2r}{\pi} + 2(\theta_0\ln\theta_0 - \theta_0). \\
\end{split}\end{equation}
Therefore we obtain
\begin{equation}\begin{split}
& \iint_{[r-\cR_0,r+\cR_0]\times [-\theta_0,\theta_0]}  W( d_\theta) \rd{\theta}s\rho(s)\rd{s}  \\
\ge & -|A|\cdot\frac{\theta_0}{\pi} + C\Big(2\theta_0\ln \frac{2r}{\pi} + 2(\theta_0\ln\theta_0 - \theta_0)\Big)\int_{[r-\cR_0,r+\cR_0]}s\rho(s)\rd{s} \\
\ge & -|A|\cdot\frac{\theta_0}{\pi} + \min\Big\{C\Big(2\theta_0\ln \frac{2r}{\pi} + 2(\theta_0\ln\theta_0 - \theta_0)\Big),0\Big\}, \\
\end{split}\end{equation}
where the last inequality follows from the fact that the total mass is 1.

{\bf STEP 3}: estimate the potential for small $r$.

For $r = |\bx| \le \cR_1/2$, we have 
\begin{equation}\label{Wrsmall}
\int W(\bx-\by)\rho(\by)\rd{\by} =  \int_{|\by| > \cR_1} W(\bx-\by)\rho(\by)\rd{\by} + \int_{|\by| \le \cR_1} W(\bx-\by)\rho(\by)\rd{\by} .
\end{equation}
The first term is estimated by
\begin{equation}
\int_{|\by| > \cR_1} W(\bx-\by)\rho(\by)\rd{\by} \ge W(\cR_1/2)\int_{|\by| > \cR_1} \rho(\by)\rd{\by} \ge \min\{W(\cR_1/2),0\},
\end{equation}
since $|\bx-\by| \ge \cR_1/2$ always holds in this case. 

To estimate the second term on the RHS of \eqref{Wrsmall}, using the $L^\infty$ bound on $\rho$ (from Lemma \ref{lem_reg1}) and the increasing property of $W(r)$,
\begin{equation}\begin{split}
\int_{|\by| \le \cR_1} W(\bx-\by)\rho(\by)\rd{\by}  \ge & \int_{|\by-\bx| \le \delta} W(\bx-\by)\|\rho\|_{L^\infty}\rd{\by} \ge \|\rho\|_{L^\infty}\int_{|\by| \le \delta} (A+C\ln|\by|)\rd{\by},
\end{split}\end{equation}
using the notation \eqref{Wlower}, where the first inequality is obtained by minimizing among all possible $\rho$ with the same total mass and $L^\infty$ bound, and
\begin{equation}\label{defdelta}
\delta  = \sqrt{\frac{\int_{|\by| \le \cR_1}\rho(\by)\rd{\by}}{\pi\|\rho\|_{L^\infty}}} \le \sqrt{\frac{c_\rho}{\pi\|\rho\|_{L^\infty}}},
\end{equation}
is determined by $\int_{|\by| \le \cR_1}\rho(\by)\rd{\by} = \int_{|\by-\bx| \le \delta} \|\rho\|_{L^\infty} \rd{\by}$.

Then notice that
\begin{equation}\begin{split}
 \int_{|\by| \le \delta} \ln|\by|\rd{\by} = 2\pi\int_0^\delta s\ln s \rd{s} = 2\pi(\frac{\delta^2}{2}\ln\delta - \frac{\delta^2}{4}),
\end{split}\end{equation}
and we obtain
\begin{equation}\label{Wrsmall1}
\int W(\bx-\by)\rho(\by)\rd{\by} \ge  \min\{W(\cR_1/2),0\} -  C|A| \cdot\pi\delta^2 + C\cdot2\pi(\frac{\delta^2}{2}\ln\delta - \frac{\delta^2}{4}).
\end{equation}

{\bf STEP 4}: choose $\cR_1$ and $c_\rho$ to finalize.

STEPs 1 and 2 imply that for any $|\bx|>\cR_1/2$,
\begin{equation}
\int W(\bx-\by)\rho(\by)\rd{\by} \ge W(\cR_0)\Big( 1-\frac{\theta_0}{\pi} \Big) + \Big(-|A|\cdot\frac{\theta_0}{\pi} + \min\Big\{C\Big(2\theta_0\ln \frac{2r}{\pi} + 2(\theta_0\ln\theta_0 - \theta_0)\Big),0\Big\}\Big),
\end{equation}
By choosing $\cR_1$ large enough so that $r>\cR_1/2$ is large and $\theta_0=2\cR_0/r$ is small enough, we can obtain
\begin{equation}
\int W(\bx-\by)\rho(\by)\rd{\by} \ge W(\cR_0) - \epsilon/2,\quad \forall |\bx|>\cR_1/2.
\end{equation}

For $|\bx|\le\cR_1/2$, from STEP 3, by taking $c_\rho$ small,
\begin{equation}
\int W(\bx-\by)\rho(\by)\rd{\by} \ge \min\{W(\cR_1/2),0\} - \epsilon,\quad \forall |\bx|\le\cR_1/2,
\end{equation}
from \eqref{Wrsmall1} (recall \eqref{defdelta}, the definition of $\delta$, implying that $\delta$ can be made small if $c_\rho$ is small).

Therefore we obtain
\begin{equation}\begin{split}
2\cI[\rho] = & \int\int W(\bx-\by)\rho(\by)\rd{\by}\rho(\bx)\rd{\bx} \\
= & \int_{|\bx|>\cR_1/2}\int W(\bx-\by)\rho(\by)\rd{\by}\rho(\bx)\rd{\bx} + \int_{|\bx|\le \cR_1/2}\int W(\bx-\by)\rho(\by)\rd{\by}\rho(\bx)\rd{\bx} \\
\ge & \int_{|\bx|>\cR_1/2}(W(\cR_0) - \epsilon/2)\rho(\bx)\rd{\bx} + \int_{|\bx|\le \cR_1/2}(\min\{W(\cR_1/2),0\} - \epsilon)\rho(\bx)\rd{\bx} \\
\ge & (1-c_\rho)(W(\cR_0) - \epsilon/2) + (\min\{W(\cR_1/2),0\} - \epsilon) c_\rho \\
\ge & W(\cR_0) - \epsilon/2 - \Big( W(\cR_0) +( \epsilon-\min\{W(\cR_1/2),0\})\Big) c_\rho .
\end{split}\end{equation}
By further taking $c_\rho$ small, we have
\begin{equation}
\Big| W(\cR_0) +( \epsilon-\min\{W(\cR_1/2),0\})\Big| c_\rho  \le \frac{\epsilon}{4},
\end{equation}
which contradicts \eqref{Eupper}.

\end{proof}

Next we show that the mass near the center guaranteed by Lemma \ref{lem_crho} provides enough attraction force towards the center, by the following lemma and its corollary:

\begin{lemma}\label{lem_ctr}
For any $0<\delta<1$, there holds
\begin{equation}\begin{split}
& \int \nabla W(\bx-\by)\cdot \frac{\bx}{|\bx|}\Big(\rho^*(\by) + \rho_\flat(\by)\Big) \rd{\by}\\
& \ge  c\lambda(r + \delta^{-1/2})\int_{|\by|\le r} \Big(\max\{\rho^*(\by)-\delta,0\} + \max\{ \rho_\flat(\by)-\delta,0\}\Big)\rd{\by} - \frac{C\varepsilon^2}{r}.
\end{split}\end{equation}
\end{lemma}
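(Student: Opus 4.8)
The plan is to estimate the contributions of $\rho^*$ and of $\rho_\flat$ to the left-hand side separately and add them; since $\max\{\rho^*-\delta,0\}+\max\{\rho_\flat-\delta,0\}$ is exactly the sum of the two ``effective masses'' produced below, this yields the claimed bound. Both pieces start from the layer-cake representation \eqref{rhoh1}: $\rho^*(s)=\int_0^\infty\chi_{[0,R^*(h)]}(s)\rd h$ with $R^*(\cdot)$ nonincreasing, and $\rho_\flat(s)=\int_0^\infty\sum_{j\ \mathrm{flat}}\chi_{[r_j(h),R_j(h)]}(s)\rd h$ with $r_j<\varepsilon R_j$ (notation of \eqref{rhodecomp}). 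The basic inputs are: the $\by$-integral of $\nabla W(\bx-\by)\cdot\frac{\bx}{|\bx|}$ against the indicator of a disk is bounded below by Lemma \ref{lem_disk}, and against the indicator of a small disk is bounded above using {\bf (A2)} (in the spirit of Lemma \ref{lem_circle}).

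\noindent\textbf{The radially decreasing part.}
Each super-level set of $\rho^*$ is a disk, so Lemma \ref{lem_disk} gives $\int\nabla W(\bx-\by)\cdot\frac{\bx}{|\bx|}\rho^*\rd\by=\int_0^\infty\big(\int\nabla W(\bx-\by)\cdot\frac{\bx}{|\bx|}\chi_{|\by|\le R^*(h)}\rd\by\big)\rd h\ge\int_0^\infty c\lambda\min\{R^*(h),r\}^2\rd h\ge 0$. To convert this into the effective mass $\int_{|\by|\le r}\max\{\rho^*-\delta,0\}\rd\by=\pi\int_\delta^\infty\min\{R^*(h),r\}^2\rd h$, I discard the levels $h<\delta$ (their contribution is nonnegative) and keep only $h\ge\delta$. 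For those levels \eqref{rhosmall} forces $R^*(h)\le R^*(\delta)\le C\delta^{-1/2}$, so every distance entering the corresponding disk integral is $\le r+C\delta^{-1/2}$, over which range $W'$ is bounded below by $c\,\lambda(r+\delta^{-1/2})$ (using {\bf (A1)} and \eqref{lambda}); this produces exactly the $\rho^*$-term on the right-hand side.

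\noindent\textbf{The flat part.}
Writing each flat annulus $\{r_j\le|\by|\le R_j\}$ as the difference of the disk $\{|\by|\le R_j\}$ and the disk $\{|\by|\le r_j\}$, the outer-disk parts are treated as above (keeping $h\ge\delta$ and using \eqref{rhosmall} to bound the outer radii there), and they produce the $\rho_\flat$-term on the right. The inner-disk parts $\int\nabla W(\bx-\by)\cdot\frac{\bx}{|\bx|}\chi_{|\by|\le r_j}\rd\by$ are the only terms that can be negative; the plan is to bound each above by $Cr_j^2/r\le C\varepsilon^2R_j^2/r$ and then sum, using that $r_j<\varepsilon R_j$ with $\varepsilon<1/4$ gives $R_j^2\le 2(R_j^2-r_j^2)=\tfrac{2}{\pi}|\tilde I_j|$, whence $\sum_j\int_0^\infty R_j(h)^2\rd h\le\tfrac{2}{\pi}\int_0^\infty\sum_j|\tilde I_j(h)|\rd h\le\tfrac{2}{\pi}\int\rho\,\rd\bx=\tfrac{2}{\pi}$, so the inner-disk parts contribute at most $C\varepsilon^2/r$ in total.

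\noindent\textbf{Main obstacle.}
The delicate point is the control of flat slices whose outer radius $R_j$ is large, where two issues meet: in the outer-disk bound of Lemma \ref{lem_disk} the constant $\lambda$ is a lower bound for $W'$ over distances $\sim r+R_j$, which can be tiny because $W$ is weakly confining; and the inner-disk bound $\int\nabla W(\bx-\by)\cdot\frac{\bx}{|\bx|}\chi_{|\by|\le r_j}\rd\by\le Cr_j^2/r$ is elementary only when $r_j<r/2$ (then $|\bx-\by|\ge r/2$ on the disk, so {\bf (A2)} gives $W'\le C/r$), whereas for $r_j\gtrsim r$ the disk integral is of Newtonian order $\sim r$, not $\sim r_j^2/r$, and seeing this requires cancellation. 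The plan is, as in the proof of Theorem \ref{thm_erc}, to write the radial force of a circle $|\by|=s$ with $s>r$ as $\int_{-\pi}^\pi\big(\tfrac{W'(d_\theta)}{d_\theta}-\tfrac{a}{d_\theta^2}\big)(r-s\cos\theta)\rd\theta$ (the $2$D Newtonian kernel $d_\theta^{-2}$ integrates to $0$ for $s>r$; here $d_\theta=|\bx-\by|$ for $\bx=(r,0)^T$, $\by=(s\cos\theta,s\sin\theta)^T$), choose $a=W'(s)s$, estimate $|W'(d_\theta)d_\theta-W'(s)s|$ by $\int W'$ via the two-sided bound on $rW''/W'$ from {\bf (A1)} together with {\bf (A2)}, and integrate in $s$: this makes the core disk $\{|\by|\le r\}$ contribute between $0$ and $Cr$ and makes each outer shell $\{r<|\by|<R_j\}$ contribute a quantity controlled uniformly in $R_j$ (here the finiteness of $\lim_{r\to\infty}W(r)$, guaranteed by {\bf (A5)}, and the decay \eqref{rhosmall} of $\rho_\flat$ are used to win back the $\varepsilon^2$). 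Assembling the $\rho^*$-part, the outer-disk contributions and the inner-disk/shell contributions then gives the stated inequality.
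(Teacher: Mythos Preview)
Your overall strategy---layer-cake, Lemma \ref{lem_disk} on each super-level disk, discarding levels $h<\delta$ via \eqref{rhosmall}, and writing each flat annulus as outer disk minus inner disk---is exactly the paper's. The divergence is in the inner-disk error when $r_j\gtrsim r$, which you flag as the ``main obstacle'' and propose to attack with an ERC-style Newtonian comparison. This is unnecessary: the bound you want, $\big|\int\nabla W(\bx-\by)\cdot\tfrac{\bx}{|\bx|}\chi_{|\by|\le r_j}\rd\by\big|\le Cr_j^2/r$, follows directly from Lemma \ref{lem_circle}. That lemma (which you cite but only ``in spirit'') already gives the uniform Lipschitz bound $\big|\int\nabla W(\bx-\by)\,\delta(|\by|-s)\rd\by\big|\le C$ for \emph{every} circle, including $s>r$; integrating in $s\in[r/2,r_j]$ yields an inner-disk force $\le Cr_j$ whenever $r_j>r/2$, and in that regime $Cr_j\le 2Cr_j^2/r$. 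Combined with the elementary $Cs/r$ bound for $s\le r/2$ (which you already note), this gives $Cr_j^2/r$ for all $r_j$ without any cancellation argument. This is precisely what the paper does: it bounds the total inner-disk contribution by $\int_0^\infty\sum_j\big(\tfrac{\min\{r_j,r/2\}^2}{2r}+(r_j-\min\{r_j,r/2\})\big)\rd h$, checks in each of the two cases $r_j\lessgtr r/2$ that the summand is $\le\tfrac{C\varepsilon^2}{r}(R_j^2-r_j^2)$ by flatness, and sums to $\tfrac{C\varepsilon^2}{r}\int\mu_\flat\le\tfrac{C\varepsilon^2}{r}$.

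Your proposed route, by contrast, is both overcomplicated and under-specified. You claim the shell $\{r<|\by|<R_j\}$ (presumably you mean $r_j$) is ``controlled uniformly'' and then appeal to finiteness of $\lim_{r\to\infty}W(r)$ and to \eqref{rhosmall} to ``win back the $\varepsilon^2$''. But the assumptions do not force $\lim W<\infty$ ({\bf (A5)} is a subcritical-energy condition on the data, not a decay hypothesis on $W$), and it is not clear how a shell estimate that is merely uniform in $r_j$ would recover the factor $\varepsilon^2/r$ after summing over all levels $h$ and all flat $j$. So while your outline up to the inner-disk term is correct and matches the paper, the final step as written has a gap that is most cleanly closed by invoking \eqref{lem_circle_1} rather than re-deriving a comparison estimate.
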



\begin{proof}

Using the $h$-representation \eqref{rhoh1},
\begin{equation}\label{rhostfl}\begin{split}
& \int \nabla W(\bx-\by)\cdot \frac{\bx}{|\bx|}(\rho^*(\by) + \rho_\flat(\by)) \rd{\by} \\
= & \int_0^\infty\Big[ \int \nabla W(\bx-\by)\cdot \frac{\bx}{|\bx|}\chi_{|\by|\le R_0(h)}(\by)\rd{\by} + \sum_{j \text{ flat}}\int \nabla W(\bx-\by)\cdot \frac{\bx}{|\bx|}\chi_{r_j(h)\le |\by|\le R_j(h)}(\by)\rd{\by} \Big]\rd{h} \\
= & \int_0^\infty\Big[ \int \nabla W(\bx-\by)\cdot \frac{\bx}{|\bx|}\chi_{|\by|\le R_0(h)}(\by)\rd{\by} + \sum_{j \text{ flat}}\int \nabla W(\bx-\by)\cdot \frac{\bx}{|\bx|}\chi_{ |\by|\le R_j(h)}(\by)\rd{\by} \Big]\rd{h} \\
& -  \int_0^\infty \sum_{j \text{ flat}}\int \nabla W(\bx-\by)\cdot \frac{\bx}{|\bx|}\chi_{ |\by|\le r_j(h)}(\by)\rd{\by}\rd{h}. \\
\end{split}\end{equation}

In the last expression, the first term is the positive contribution from the potential generated by disks. Notice that by \eqref{rhosmall}, 
\begin{equation}
R_0(h) \le Ch^{-1/2},\quad R_j(h) \le Ch^{-1/2},\quad \forall j \text{ flat}.
\end{equation}
Then Lemma \ref{lem_disk}, applied to those $h\ge \delta$, implies that 
\begin{equation}\begin{split}
& \int_0^\infty\Big[ \int \nabla W(\bx-\by)\cdot \frac{\bx}{|\bx|}\chi_{|\by|\le R_0(h)}(\by)\rd{\by} + \sum_{j \text{ flat}}\int \nabla W(\bx-\by)\cdot \frac{\bx}{|\bx|}\chi_{ |\by|\le R_j(h)}(\by)\rd{\by} \Big]\rd{h} \\
\ge & c\lambda(r + \delta^{-1/2})\int_\delta^\infty \Big[\min\{R_0(h),r\}^2 + \sum_{j \text{ flat}}\min\{R_j(h),r\}^2\Big]\rd{h} \\
\ge  & c\lambda(r + \delta^{-1/2})\int_{|\by|\le r} \Big(\max\{\rho^*(\by)-\delta,0\} + \max\{ \rho_\flat(\by)-\delta,0\}\Big)\rd{\by}.
\end{split}\end{equation}

For the negative contributions in \eqref{rhostfl} from $\{ |\by|\le r_j(h)\}$, by \eqref{lem_circle_1} for $s>r/2$, together with a trivial estimate (which follows from {\bf (A2)})
\begin{equation}
\left|\int \nabla W(\bx-\by)\cdot \frac{\bx}{|\bx|}\delta( |\by|- s)\rd{\by}\right| \le C\frac{s}{r},\quad \forall s\le \frac{r}{2},
\end{equation}
we obtain
\begin{equation}\begin{split}
& \left|\int_0^\infty \sum_{j \text{ flat}}\int \nabla W(\bx-\by)\cdot \frac{\bx}{|\bx|}\chi_{ |\by|\le r_j(h)}(\by)\rd{\by}\rd{h}\right| \\
\le & \int_0^\infty \sum_{j \text{ flat}}\int_0^{r_j(h)}\left|\int \nabla W(\bx-\by)\cdot \frac{\bx}{|\bx|}\delta( |\by|- s)\rd{\by}\right|\rd{s}\rd{h} \\
\le & C\int_0^\infty \sum_{j \text{ flat}}\Big(\int_0^{\min\{r_j(h),r/2\}}\frac{s}{r}\rd{s} + (r_j(h)-\min\{r_j(h),r/2\})\Big)\rd{h} \\
= & C\int_0^\infty \sum_{j \text{ flat}}\Big(\frac{\min\{r_j(h),r/2\}^2}{2r} + (r_j(h)-\min\{r_j(h),r/2\})\Big)\rd{h}. \\
\end{split}\end{equation}
If $r_j(h)\le r/2$, then the flatness of the interval $I_j$ gives
\begin{equation}
\frac{\min\{r_j(h),r/2\}^2}{2r} + (r_j(h)-\min\{r_j(h),r/2\}) = \frac{r_j(h)^2}{2r} \le \frac{C\varepsilon^2}{r} (R_j(h)^2-r_j(h)^2).
\end{equation}
If $r_j(h)> r/2$, then we also have
\begin{equation}\begin{split}
& \frac{\min\{r_j(h),r/2\}^2}{2r} + (r_j(h)-\min\{r_j(h),r/2\}) = \frac{r}{8} + r_j(h)-\frac{r}{2} \le r_j(h) \\
\le &  \frac{2r_j(h)^2}{r} \le \frac{C\varepsilon^2}{r} (R_j(h)^2-r_j(h)^2). 
\end{split}\end{equation}
Therefore
\begin{equation}\begin{split}
& \left|\int_0^\infty \sum_{j \text{ flat}}\int \nabla W(\bx-\by)\cdot \frac{\bx}{|\bx|}\chi_{ |\by|\le r_j(h)}(\by)\rd{\by}\rd{h}\right| \\
\le &  \frac{C\varepsilon^2}{r}\int_0^\infty \sum_{j \text{ flat}}(R_j(h)^2-r_j(h)^2)\rd{h} 
=   \frac{C\varepsilon^2}{r}\int \mu_\flat(\by)\rd{\by} 
\le   \frac{C\varepsilon^2}{r}. \\
\end{split}\end{equation}

Combining everything together, we have
\begin{equation}\begin{split}
& \int \nabla W(\bx-\by)\cdot \frac{\bx}{|\bx|}(\rho^*(\by) + \rho_\flat(\by) ) \rd{\by} \\
\ge & c\lambda(r + \delta^{-1/2}) \int_{|\by|\le r} \Big(\max\{\rho^*(\by)-\delta,0\} + \max\{ \rho_\flat(\by)-\delta,0\}\Big)\rd{\by} - \frac{C\varepsilon^2}{r}. \\
\end{split}\end{equation}

\end{proof}

\begin{corollary}\label{cor_ctr}
Let $\cR_1$ be large enough such that Lemma \ref{lem_crho} holds with $c_\rho>0$, and $\cR_2>\cR_1$. Let $\varepsilon=\varepsilon(\cR_1,\cR_2,c_\rho)$ be small enough. Then
\begin{equation}
\int \nabla W(\bx-\by)\cdot \frac{\bx}{|\bx|}\Big(\rho^*(\by) + \rho_\flat(\by) + \rho_\sharp(\by)\chi_{|\by|\le \cR_1}(\by)\Big) \rd{\by} \ge c \lambda(\cR_2+C\cR_1),
\end{equation}
for any $\cR_1\le |\bx|\le \cR_2$.
\end{corollary}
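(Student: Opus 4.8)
The plan is to split the test density into the three pieces $\rho^*$, $\rho_\flat$, and $\rho_\sharp\chi_{|\by|\le\cR_1}$, bound the contribution of each to the centripetal force functional $g\mapsto\int\nabla W(\bx-\by)\cdot\frac{\bx}{|\bx|}\,g(\by)\rd\by$ separately, and then assemble the estimate using Lemma~\ref{lem_crho} and a dichotomy on which piece carries the guaranteed central mass $c_\rho$. For the $\rho^*+\rho_\flat$ piece I will invoke Lemma~\ref{lem_ctr} with the cut-off height $\delta:=c_\rho/(200\pi\cR_1^2)$, a fixed constant; then $2\delta\pi\cR_1^2=c_\rho/100$ and $\delta^{-1/2}=C_1\cR_1$ with $C_1=C_1(c_\rho)$, so, since $\cR_1\le|\bx|\le\cR_2$ and $\lambda$ is non-increasing, Lemma~\ref{lem_ctr} gives
\[
\int\nabla W(\bx-\by)\cdot\tfrac{\bx}{|\bx|}\bigl(\rho^*(\by)+\rho_\flat(\by)\bigr)\rd\by\ \ge\ c\,\lambda(\cR_2+C_1\cR_1)\Bigl(m_\flat^*-\tfrac{c_\rho}{100}\Bigr)-\tfrac{C\varepsilon^2}{\cR_1},
\]
where $m_\flat^*:=\int_{|\by|\le\cR_1}(\rho^*+\rho_\flat)\rd\by$, using $\int_{|\by|\le|\bx|}(\max\{\rho^*-\delta,0\}+\max\{\rho_\flat-\delta,0\})\rd\by\ge m_\flat^*-2\delta\pi\cR_1^2$. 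For the $\rho_\sharp\chi_{|\by|\le\cR_1}$ piece I decompose it into circles $\delta(|\by|-s)$, $0<s\le\cR_1\le|\bx|$: since $\bx$ lies on or outside each such circle, every circle pulls it inward, so this piece is $\ge 0$ unconditionally, and for the circles with $|\bx|-s$ bounded below by a fixed multiple of $\cR_1$ — in particular all of them when $|\bx|\ge 2\cR_1$ — Lemma~\ref{lem_circle} gives a lower bound $\gtrsim\lambda(\cR_2+C\cR_1)\,s$ times the circle's mass, hence $\gtrsim\lambda(\cR_2+C\cR_1)\int_{|\by|\le\cR_1}\rho_\sharp\rd\by$ after integrating in $s$ (at least when $|\bx|\ge2\cR_1$).

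With these two estimates the conclusion follows from Lemma~\ref{lem_crho}, which gives $m_\flat^*+\int_{|\by|\le\cR_1}\rho_\sharp\rd\by\ge c_\rho$, and a case split. If $m_\flat^*\ge c_\rho/2$, the displayed bound alone gives $\ge c\,\lambda(\cR_2+C_1\cR_1)(c_\rho/2-c_\rho/100)-C\varepsilon^2/\cR_1$ and the $\rho_\sharp$ piece only helps. If instead $\int_{|\by|\le\cR_1}\rho_\sharp\rd\by>c_\rho/2$, I keep only the trivial $-C\varepsilon^2/\cR_1$ from the $\rho^*+\rho_\flat$ piece and must extract the whole lower bound from the $\rho_\sharp\chi_{|\by|\le\cR_1}$ piece. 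In both cases, choosing $\varepsilon=\varepsilon(\cR_1,\cR_2,c_\rho)$ small enough that $C\varepsilon^2/\cR_1$ is at most half the leading term absorbs the error and gives the claim (with the constant $C$ in the statement equal to $C_1$).

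The delicate point — and, I expect, the main obstacle — is the second case when $|\bx|$ is close to $\cR_1$: then the sharp central mass may sit in a thin shell just inside the radius $|\bx|$, so the circles $\delta(|\by|-s)$ with $s$ near $|\bx|$ interact with $\bx$ at arbitrarily short range, where a uniform lower bound of $W'$ over the full distance range degenerates to $0$ and the clean lower bound from Lemma~\ref{lem_circle} is vacuous. To handle it I will split off the near-diagonal shell $\{\,|\bx|-\tfrac14\cR_1<s\le\cR_1\,\}$ and bound its contribution directly via the explicit angular integral for the circle force (as in the proof of Lemma~\ref{lem_circle}): the angular wedge on which $|\bx-\by|$ is genuinely small contributes only $O\bigl(\|\rho\|_{L^\infty}\cdot(\text{wedge width})\bigr)$ because {\bf (A2)} makes $W'(u)\,u$ integrable near $u=0$, while on the complement the distance is comparable to $\cR_1$, so $W'\gtrsim\cR_1^{-\alpha}$ by \eqref{lambda} and the leftover angular integral $\int\theta^{2-\alpha}\rd\theta$ converges at $\theta=0$ precisely because $\alpha<3$ in {\bf (A1)}. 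Together with the (non-diagonal) circles handled by Lemma~\ref{lem_circle}, this still produces an inward force $\gtrsim c_\rho\,\lambda(\cR_2+C\cR_1)$, which closes the second case. This near-diagonal estimate is where {\bf (A1)}--{\bf (A2)} genuinely enter; everything else reduces to the applications of Lemmas~\ref{lem_ctr}, \ref{lem_disk}, \ref{lem_circle}, \ref{lem_crho} above plus routine bookkeeping of constants.
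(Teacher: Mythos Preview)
Your overall strategy---split into the $\rho^*+\rho_\flat$ piece handled by Lemma~\ref{lem_ctr} and the $\rho_\sharp\chi_{|\by|\le\cR_1}$ piece handled by Lemma~\ref{lem_circle}, then invoke Lemma~\ref{lem_crho}---is exactly the paper's. But you have manufactured a difficulty that is not there, and the ensuing case split and ``near-diagonal'' analysis are unnecessary.

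The point you are missing is the meaning of $\lambda$ in \eqref{lem_circle_2}. That bound holds for every $|\bx|>R$, and the $\lambda$ appearing in it is a lower bound for $W'$ only over the distances actually used in the quantitative estimate---namely those coming from the far half of the circle, where $|\bx-\by|\ge|\bx|$. (Equivalently: restrict the angular integral to $|\theta|\ge\pi/2$, where $d_\theta\ge|\bx|$ and the integrand is positive; this is exactly how the $r>R$ case of Lemma~\ref{lem_disk} proceeds.) Hence in the present application, with $|\bx|\ge\cR_1$ and $0<s\le\cR_1$, all relevant distances lie in $[\cR_1,\cR_1+\cR_2]$, and $\lambda(\cR_1+\cR_2)$ is the correct value of $\lambda$ in \eqref{lem_circle_2} regardless of how close $s$ is to $|\bx|$. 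The short-range interactions you worry about contribute only nonnegatively and are simply discarded; there is no degeneracy, and no need to invoke $\alpha<3$ or {\bf (A2)} beyond what is already in Lemma~\ref{lem_circle}.

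Once this is understood, the dichotomy also evaporates. The paper just adds the two lower bounds: from \eqref{lem_circle_2} one gets $\ge c\lambda(\cR_1+\cR_2)\int_{|\by|\le\cR_1}\rho_\sharp(\by)\rd\by$, and from Lemma~\ref{lem_ctr} (with your choice of $\delta$) one gets $\ge c\lambda(\cR_2+\delta^{-1/2})\bigl[\int_{|\by|\le\cR_1}(\rho^*+\rho_\flat)\rd\by-2\pi\delta\cR_1^2\bigr]-C\varepsilon^2/\cR_1$. Summing and using Lemma~\ref{lem_crho} gives a bracket $\ge c_\rho-2\pi\delta\cR_1^2$; choosing $\delta=c_\rho/(4\pi\cR_1^2)$ (so $\delta^{-1/2}=C\cR_1$) and then $\varepsilon$ small absorbs the error term and yields the claim directly.
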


\begin{proof}
First notice that by \eqref{lem_circle_2},
\begin{equation}\begin{split}
& \int \nabla W(\bx-\by)\cdot \frac{\bx}{|\bx|} \rho_\sharp(\by)\chi_{|\by|\le \cR_1}(\by)) \rd{\by} =  \int_0^{\cR_1} \int \nabla W(\bx-\by)\cdot \frac{\bx}{|\bx|}\delta(|\by|-s)\rd{\by}\rho_\sharp(s)\rd{s} \\
\ge & c\lambda(\cR_1+\cR_2)\int_0^{\cR_1}s\rho_\sharp(s)\rd{s} 
=  c\lambda(\cR_1+\cR_2) \int_{|\by|\le \cR_1} \rho_\sharp(\by) \rd{\by}.
\end{split}\end{equation}

Therefore, by Lemma \ref{lem_ctr}, for small $\delta>0$,
\begin{equation}\begin{split}
& \int \nabla W(\bx-\by)\cdot \frac{\bx}{|\bx|}\Big(\rho^*(\by) + \rho_\flat(\by) + \rho_\sharp(\by)\chi_{|\by|\le \cR_1}(\by)\Big) \rd{\by} \\
\ge & c\lambda(\cR_2 + \delta^{-1/2})\left[\int_{|\by|\le \cR_1} \rho_\sharp(\by) \rd{\by}
 +\int_{|\by|\le r} \Big(\max\{\rho^*(\by)-\delta,0\} + \max\{ \rho_\flat(\by)-\delta,0\}\Big)\rd{\by}\right] - \frac{C\varepsilon^2}{\cR_1} \\
\ge & c\lambda(\cR_2 + \delta^{-1/2})\left[\int_{|\by|\le \cR_1} \rho_\sharp(\by) \rd{\by}
 +\int_{|\by|\le \cR_1} \Big(\max\{\rho^*(\by)-\delta,0\} + \max\{ \rho_\flat(\by)-\delta,0\}\Big)\rd{\by}\right] - \frac{C\varepsilon^2}{\cR_1} \\
\ge & c\lambda(\cR_2 + \delta^{-1/2})\left[\int_{|\by|\le \cR_1} \rho_\sharp(\by) \rd{\by}
 + \int_{|\by|\le \cR_1} \Big(\rho^*(\by)+\rho_\flat(\by)\Big)\rd{\by} - 2\pi \delta \cR_1^2\right] - \frac{C\varepsilon^2}{\cR_1} \\
\ge & c\lambda(\cR_2 + \delta^{-1/2})(c_\rho - 2\pi \delta \cR_1^2) - \frac{C\varepsilon^2}{\cR_1}. \\
\end{split}\end{equation}
We choose $\delta = \frac{c_\rho}{4\pi\cR_1^2}$ and then $\varepsilon \le c\sqrt{c_\rho \lambda(\cR_2 + \delta^{-1/2})\cR_1}$ and obtain the conclusion.

\end{proof}

\section{Proof of Theorem \ref{thm_finite}: CSS and local clustering curves}

In this section we prove Theorem \ref{thm_finite}. We will prove Proposition \ref{prop_finite1} and Proposition \ref{prop_finite2}, using the CSS curves and the local clustering curve, respectively.

Following \eqref{rhoh1}, $\rho(\bx)$ can be written as
\begin{equation}
\rho(\bx) = \int_0^\infty \sum_{j}\chi_{\tI_j(h)}(\bx) \rd{h},\quad \tI_j(h):= \{\bx: |\bx|\in I_j(h)\}.
\end{equation}
Notice that $\tI_j(h)$ can be decomposed as
\begin{equation}\begin{split}
 \tI_j =  &\tI_j^{(1)} \cup \tI_j^{(2)} \\
:= & \Big\{\bx: |x_2|<r_j,  \sqrt{r_j^2-x_2^2} \le |x_1| \le \sqrt{R_j^2-x_2^2} \Big\} \cup \Big\{\bx: r_j\le |x_2| \le R_j,   |x_1| \le \sqrt{R_j^2-x_2^2} \Big\},
\end{split}\end{equation}
and 
\begin{equation}
\chi_{\tI_j}(\bx) = \left\{\begin{split}
& \chi_{\sqrt{r_j^2-x_2^2} \le x_1 \le \sqrt{R_j^2-x_2^2}}(x_1) + \chi_{-\sqrt{R_j^2-x_2^2} \le x_1 \le -\sqrt{r_j^2-x_2^2}}(x_1),\quad |x_2|<r_j \\
& \chi_{|x_1| \le \sqrt{R_j^2-x_2^2}}(x_1),\quad r_j\le |x_2| \le R_j
\end{split}.\right.
\end{equation}

We will define curves of density distributions $\rho_t$, starting from $\rho=\rho(t_0,\cdot)$ as the solution to \eqref{eq0} at some time. In particular, the starting distribution $\rho$ is radially-symmetric, non-negative, having total mass 1, and satisfies the $L^\infty$ bound (Lemma \ref{lem_reg1}).

\subsection{CSS1 curve}

Define the CSS1 curve $\rho_t$ (as in~\cite{CHVY}) for small $t>0$ by\footnote{This definition is problematic at those $x_2$ with $r_j^2-x_2^2$ being too small, but this does not affect later estimates since we only care about arbitrarily small $t>0$. This issue will be ignored in the rest of this paper.}
\begin{equation}
\tI_{j,t}^{(1)} = \Big\{\bx: |x_2|<r_j,  \sqrt{r_j^2-x_2^2}-t \le |x_1| \le \sqrt{R_j^2-x_2^2}-t \Big\},\quad \forall h,\quad \forall j\ge 1,
\end{equation}
and $\tI_j^{(2)}$ stays unmoved. See Figure \ref{fig_pic47} (left) for illustration.

\begin{proposition}\label{prop_CSS1}
For any $\cR_3>0$, the CSS1 curve $\rho_t$ satisfies
\begin{equation}
\frac{\rd}{\rd{t}}\Big|_{t=0}E[\rho_t] \le - c\varepsilon^2\lambda(2\cR_3)\Big(\int_{|\bx|\le \cR_3} \mu_\sharp(\bx)\rd{\bx}\Big)^2.
\end{equation}
\end{proposition}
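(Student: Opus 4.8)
The plan is to split $E=\cS+\cI$ and show that, along the CSS1 curve, $\cS$ is stationary at $t=0$ while $\cI$ decreases at the stated rate. For the internal part: along CSS1 each level set $\{\rho_t\ge h\}$ is obtained from $\{\rho_0\ge h\}$ by sliding the two halves of every $\tI_j^{(1)}(h)$, $j\ge1$, horizontally towards $\{x_1=0\}$, while $\tI_0(h)$ and the caps $\tI_j^{(2)}(h)$ stay fixed; for small $t$ all these pieces remain pairwise disjoint (consecutive $I_j(h)$ are separated by a gap), so $|\{\rho_t\ge h\}|$ does not depend on $t$. Hence $\int\rho_t^m\rd\bx$ is constant and $\frac{\rd}{\rd t}\big|_{t=0}\cS[\rho_t]=0$, reducing the proposition to a lower bound for $-\frac{\rd}{\rd t}\big|_{t=0}\cI[\rho_t]$.

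Next I would expand $\cI[\rho_t]$ bilinearly into the blocks (right halves)$\times$(right halves), (left halves)$\times$(left halves), (moving halves)$\times$(fixed part), (right halves)$\times$(left halves), and (fixed)$\times$(fixed). The first, second and fifth are translation invariant, hence constant. In the third block every fixed slice (a slice of the disk $\tI_0$, or a cap $\tI_j^{(2)}$) is a $\{x_1=0\}$-centered interval, and in the fourth block the two halves approach one another, so by the monotonicity \eqref{lem_CSSb2_0} of Lemma \ref{lem_CSSb2}, applied slice by slice after recentering on the fixed/central member, every slice--slice interaction in the third and fourth blocks is non-increasing at $t=0$. Thus $\frac{\rd}{\rd t}\big|_{t=0}\cI[\rho_t]\le\frac{\rd}{\rd t}\big|_{t=0}\cI_{\mathrm{RL}}[\rho_t]\le 0$, where $\cI_{\mathrm{RL}}$ denotes the (right halves)--(left halves) part of $\cI$; and since each retained term has a sign I may discard all but a convenient subfamily and still get a valid upper bound.

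The quantitative step keeps, inside $\cI_{\mathrm{RL}}$, only the interaction between the right half of $\tI_j^{(1)}(h)$ and the left half of $\tI_k^{(1)}(h')$ over \emph{sharp} intervals with $r_j(h),r_k(h')<\cR_3$ (including $j=k$, $h=h'$); within that, only the rigidly translating sub-intervals obtained by truncating the halves at radius $\cR_3$, i.e.\ to $x_1$-length $\min(\sqrt{R_j^2-x_2^2},\cR_3)-\sqrt{r_j^2-x_2^2}$; and only the heights $|x_2|\le r_j(h)/\sqrt2$, $|y_2|\le r_k(h')/\sqrt2$. On this subfamily the estimate \eqref{lem_CSSb2_2} of Lemma \ref{lem_CSSb2} will apply after recentering (with a harmless factor-$2$ time reparametrisation, as the two halves move toward each other): the hypothesis $|x_2-y_2|\le\sqrt{r_j^2-x_2^2}+\sqrt{r_k^2-y_2^2}$ holds on the chosen heights since there $|x_2|\le\sqrt{r_j^2-x_2^2}$, and every distance involved is $\le C\cR_3$, on which $W'\ge\lambda(2\cR_3)>0$ --- here using \eqref{Winc} (a consequence of \textbf{(A1)}), which keeps $W'$ bounded below on a bounded set even near the origin. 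Integrating the pointwise rate $-c\lambda(2\cR_3)|S||S'|$ over the admissible heights, the pair $((j,h),(k,h'))$ contributes at least $c\lambda(2\cR_3)\tilde g_j(h)\tilde g_k(h')$ to $-\frac{\rd}{\rd t}\big|_{t=0}\cI$, where $\tilde g_j(h):=\int_{|x_2|\le r_j/\sqrt2}\big(\min(\sqrt{R_j^2-x_2^2},\cR_3)-\sqrt{r_j^2-x_2^2}\big)\rd x_2$. The key elementary estimate is that sharpness $r_j\ge\varepsilon R_j$ combined with $\min(R_j,\cR_3)\le R_j$ gives $r_j\ge\varepsilon\min(R_j,\cR_3)$, whence $\tilde g_j(h)\ge c\varepsilon\big(\min(R_j,\cR_3)^2-r_j^2\big)_+$. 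Summing the pairwise bounds over all retained $(j,h)$, $(k,h')$ produces the perfect square $\big(\int_0^\infty\sum_{j\ \mathrm{sharp}}\tilde g_j(h)\rd h\big)^2$, and since $\int_0^\infty\sum_{j\ \mathrm{sharp}}\big(\min(R_j,\cR_3)^2-r_j^2\big)_+\rd h=\frac1\pi\int_{|\bx|\le\cR_3}\mu_\sharp(\bx)\rd\bx$, this yields $\frac{\rd}{\rd t}\big|_{t=0}E[\rho_t]\le -c\varepsilon^2\lambda(2\cR_3)\big(\int_{|\bx|\le\cR_3}\mu_\sharp(\bx)\rd\bx\big)^2$, as claimed.

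The main obstacle is the choice of subfamily in the third step: one must retain moving pieces that are simultaneously (i) rigidly translating on the arbitrarily short available time scale, so that Lemma \ref{lem_CSSb2} applies verbatim; (ii) still ``$\varepsilon$-fat'', i.e.\ $\tilde g_j\gtrsim\varepsilon(\min(R_j,\cR_3)^2-r_j^2)$, so that the summation reproduces $(\int_{|\bx|\le\cR_3}\mu_\sharp)^2$ with exponent exactly $2$; and (iii) confined to scales $\lesssim\cR_3$ so that a single constant $\lambda(2\cR_3)$ governs all the forces --- weak confinement makes $W'$ arbitrarily small at large scales. Truncating the halves at radius $\cR_3$ (rather than at an $r_j$-dependent radius) is what reconciles (ii) and (iii), and the definition of ``sharp'' is used precisely here; for flat intervals the same truncation destroys $\varepsilon$-fatness, which is exactly why CSS1 controls $\mu_\sharp$ but not $\mu_\flat$ (cf.\ Figure \ref{fig_pic47}), the latter being handled instead by the local clustering curve.
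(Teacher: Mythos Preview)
Your argument is essentially the paper's own proof: $\cS$ is constant along CSS1 since the level-set areas are preserved; the bilinear expansion of $\cI$ leaves only the (moving)$\times$(fixed) and (right-half)$\times$(left-half) blocks nontrivial, both sign-favorable by \eqref{lem_CSSb2_0}; and the quantitative bound is extracted from the right--left block, restricted to sharp intervals, moderate heights, and radii $\le\cR_3$, via \eqref{lem_CSSb2_2}. The paper takes $|x_2|\le r_j/2$ and truncates $S,S'$ at $|\bx|\le\cR_3$ where you use $r_j/\sqrt2$ and $x_1\le\cR_3$, but these differences are cosmetic and lead to the same area estimate $\int|S|\,\rd x_2\ge c\varepsilon\,|\tI_j\cap\{|\bx|\le\cR_3\}|$.

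One correction: your appeal to \eqref{Winc} to bound $W'$ from \emph{below} ``even near the origin'' is backwards. Monotonicity of $W'(r)r^\alpha$ gives, for $r\le2\cR_3$, only the \emph{upper} bound $W'(r)\le W'(2\cR_3)(2\cR_3)^\alpha r^{-\alpha}$; it does not preclude $W'(r)\to0$ as $r\to0$, and indeed {\bf(A1)} allows $W'(r)\sim r^A$ there. The paper does not justify this point either --- it simply writes $\lambda(2\cR_3)$ as a placeholder in line with its stated convention that $\lambda$ denotes ``a positive lower bound of all possibly used values of $W'$''. This does not affect the downstream application (Proposition~\ref{prop_CSS1c} feeding into Proposition~\ref{prop_finite1}), where $\cR_3$ is fixed and only $\int_{\cR_2\le|\bx|\le\cR_3}\mu_\sharp$ is actually used, but your stated reason is not the right one.
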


It is important to notice that this estimate only gives energy dissipation from the sharp part $\mu_\sharp$, and becomes degenerate when $\varepsilon$ gets small.

\begin{proof}

The internal energy $\cS[\rho_t]$ is constant in $t$ because each $|\tI_{j,t}|$ is so, and (with the `$r_j^2-x_2^2$ being too small' issue ignored)
\begin{equation}
(m-1)\cS[\rho_t] = \int \rho_t(\bx)^m \rd{\bx} = \int_0^\infty \sum_{j} |\tI_{j,t}(h)| mh^{m-1} \rd{h}.
\end{equation}

We compute the interaction energy change along $\rho_t$:
\begin{equation}\begin{split}
\frac{\rd}{\rd{t}}\Big|_{t=0}\cI[\rho_t] = & \frac{1}{2}\int_0^\infty\int_0^\infty\sum_{j,j'} \frac{\rd}{\rd{t}}\Big|_{t=0}\cI[\chi_{\tI_{j,t}(h)},\chi_{\tI_{j',t}(h')}]\rd{h'}\rd{h}. \\
\end{split}\end{equation}
For fixed $h,h',j,j'$, 
\begin{equation}\label{CSS1est1}\begin{split}
& \frac{\rd}{\rd{t}}\Big|_{t=0}\cI[\chi_{\tI_{j,t}(h)},\chi_{\tI_{j',t}(h')}] \\
= & \frac{\rd}{\rd{t}}\Big|_{t=0}\int\int W(\bx-\by)\chi_{\tI_{j,t}(h)}(\bx)\chi_{\tI_{j',t}(h')}(\by)\rd{\by}\rd{\bx} \\
= & \frac{\rd}{\rd{t}}\Big|_{t=0}\int_{r_j\le |x_2| \le R_j}\int_{r_{j'}\le |y_2| \le R_{j'}} \cI_{x_2,y_2}[\chi_{|x_1| \le \sqrt{R_{j}^2-x_2^2}},\chi_{|y_1| \le \sqrt{R_{j'}^2-y_2^2}}]\rd{y_2}\rd{x_2} \\
& + \frac{\rd}{\rd{t}}\Big|_{t=0}\int_{|x_2|<r_j}\int_{r_{j'}\le |y_2| \le R_{j'}} \cI_{x_2,y_2}[\chi_{\sqrt{r_{j}^2-x_2^2}-t \le x_1 \le \sqrt{R_{j}^2-x_2^2}-t},\chi_{|y_1| \le \sqrt{R_{j'}^2-y_2^2}}]\rd{y_2}\rd{x_2} \\
& + \frac{\rd}{\rd{t}}\Big|_{t=0}\int_{|x_2|<r_j}\int_{|y_2|<r_{j'}} \cI_{x_2,y_2}[\chi_{\sqrt{r_{j}^2-x_2^2}-t \le x_1 \le \sqrt{R_{j}^2-x_2^2}-t},\chi_{\sqrt{r_{j'}^2-y_2^2}-t \le y_1 \le \sqrt{R_{j'}^2-y_2^2}-t}]\rd{y_2}\rd{x_2} \\
& + \frac{\rd}{\rd{t}}\Big|_{t=0}\int_{|x_2|<r_j}\int_{|y_2|<r_{j'}} \cI_{x_2,y_2}[\chi_{\sqrt{r_{j}^2-x_2^2}-t \le x_1 \le \sqrt{R_{j}^2-x_2^2}-t},\chi_{-\sqrt{R_{j'}^2-y_2^2}+t \le y_1 \le -\sqrt{r_{j'}^2-y_2^2}+t}]\rd{y_2}\rd{x_2} \\
& + \text{similar terms},
\end{split}\end{equation}
and 'similar terms' include those with $j,j'$ exchanged, or the signs of $x_1,y_1$ flipped. Notice that in the last expression of \eqref{CSS1est1}, the first term ($\tilde{I}_{j}^{(2)}$ with $\tilde{I}_{j'}^{(2)}$)  is zero because both intervals in $x_1$ and $y_1$ are not moving. The third term ($\tilde{I}_{j}^{(1)}$-right  with $\tilde{I}_{j'}^{(1)}$-right)  is zero because both intervals are moving with the same speed 1 towards left. The second ($\tilde{I}_{j}^{(1)}$-right with $\tilde{I}_{j'}^{(2)}$) and fourth ($\tilde{I}_{j}^{(1)}$-right with $\tilde{I}_{j'}^{(1)}$-left) terms are negative, by Lemma \ref{lem_CSSb2}.

To give quantitative estimate, we consider the fourth term in the last expression of \eqref{CSS1est1}, with $I_j(h)$ and $I_{j'}(h')$ being sharp. For fixed $x_2,y_2$ with $|x_2|\le r_j/2$ and $|y_2|\le r_{j'}/2$, the horizontal distance between closest endpoints of the two intervals appeared in $\cI_{x_2,y_2}$ is
\begin{equation}\begin{split}
 \sqrt{r_{j}^2-x_2^2} - \Big(- \sqrt{r_{j'}^2-y_2^2} \Big).
\ge  \frac{\sqrt{3}}{2} (r_j+r_{j'})
\end{split}\end{equation}
Combined with the vertical distance bound $|x_2-y_2| \le \frac{1}{2}(r_j+r_{j'})$, we can apply \eqref{lem_CSSb2_2} with 
\begin{equation}\begin{split}
& S=\Big[\sqrt{r_{j}^2-x_2^2}, \sqrt{R_{j}^2-x_2^2}\,\Big]\cap\Big\{x_1: \sqrt{x_1^2+x_2^2}\le \cR_3\Big\}, \\
& S'=\Big[-\sqrt{R_{j'}^2-y_2^2}, -\sqrt{r_{j'}^2-y_2^2}\,\Big]\cap\Big\{y_1: \sqrt{y_1^2+y_2^2}\le \cR_3\Big\},
\end{split}\end{equation}
and obtain
\begin{equation}\label{SSbound}\begin{split}
 \frac{\rd}{\rd{t}}\Big|_{t=0}\cI_{x_2,y_2}[\chi_{\sqrt{r_{j}^2-x_2^2}-t \le x_1 \le \sqrt{R_{j}^2-x_2^2}-t},\chi_{-\sqrt{R_{j'}^2-y_2^2}+t \le y_1 \le -\sqrt{r_{j'}^2-y_2^2}+t}] 
\le  - c\lambda(2\cR_3) \cdot |S|\cdot |S'|.
\end{split}\end{equation}

Now we give a lower bound of $|S|$. If $R_j< \cR_3$, then $S=\Big[\sqrt{r_{j}^2-x_2^2}, \sqrt{R_{j}^2-x_2^2}\,\Big]$. Then $|S|\ge R_j-r_j$ for any $|x_2|\le r_j/2$.  Integrating in $|x_2|\le r_j/2$ gives
\begin{equation}
\int_{|x_2|\le r_j/2} |S|\rd{x_2} \ge r_j(R_j-r_j) \ge c\varepsilon |\tilde{I}_j|,
\end{equation}
where we used $R_j\le r_j/\varepsilon$ (from the sharpness of $I_j$) and $|\tI_j|=|\{\bx:|\bx|\in I_j\}| = \pi(R_j^2-r_j^2) \le (1+\frac{1}{\varepsilon})\pi (R_j-r_j)r_j$. 

If $R_j \ge \cR_3$, then one can replace $R_j$ by $\cR_3$ and conduct the same estimate, and obtain
\begin{equation}
\int_{|x_2|\le r_j/2} |S|\rd{x_2} \ge r_j(\cR_3-r_j)_{\ge0} \ge c\epsilon |\tilde{I}_j\cap\{|\bx|\le \cR_3\}|,
\end{equation}
which works for both cases.

Therefore we integrate \eqref{SSbound} in $|x_2|\le r_j/2$ and $|y_2|\le r_{j'}/2$, and obtain
\begin{equation}\begin{split}
& \frac{\rd}{\rd{t}}\Big|_{t=0}\int_{|x_2|<r_j}\int_{|y_2|<r_{j'}} \cI_{x_2,y_2}[\chi_{\sqrt{r_{j}^2-x_2^2}-t \le x_1 \le \sqrt{R_{j}^2-x_2^2}-t},\chi_{-\sqrt{R_{j'}^2-y_2^2}+t \le y_1 \le -\sqrt{r_{j'}^2-y_2^2}+t}]\rd{y_2}\rd{x_2} \\
\le & - c\lambda(2\cR_3) \cdot \varepsilon^2 \cdot |\tI_j\cap\{|\bx|\le \cR_3\}| \cdot |\tI_j'\cap\{|\bx|\le \cR_3\}|. \\
\end{split}\end{equation}
Then summing in $j,j'$ and integrating in $h,h'$ gives the conclusion.

\end{proof}

\begin{proposition}\label{prop_CSS1c}
For any $\cR_3>0$,
\begin{equation}
\int_0^\infty \Big(\int_{|\bx|\le \cR_3} \mu_\sharp(t,\bx)\rd{\bx}\Big)^4  \rd{t} \le C\frac{1}{\varepsilon^4\lambda(2\cR_3)^2}.
\end{equation}
\end{proposition}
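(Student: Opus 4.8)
The plan is to combine the energy-dissipation estimate for the CSS1 curve (Proposition \ref{prop_CSS1}) with the $2$-Wasserstein gradient-flow comparison principle (Lemma \ref{lem_basic}), applied at each time slice, and then integrate in $t$ using that the total energy is bounded from below (Lemma \ref{lem_Em}).

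First I would fix $t_0\ge 0$, set $\rho_0=\rho(t_0,\cdot)$, and run the CSS1 curve $\rho_t$ from $\rho_0$. In this curve only the sub-slices $\tI^{(1)}_j(h)$ with $j\ge 1$ move, and each of them consists of crescent-shaped pieces rigidly translated at unit speed toward the center; hence the curve fits the template of Lemma \ref{lem_cost}, which bounds its $2$-Wasserstein cost by the total mass of the moving pieces:
\begin{equation}
\int|\bv_0|^2\rho_0\rd{\bx}\le \int_0^\infty\sum_{j\ge 1}|\tI^{(1)}_{j,0}(h)|\rd{h}\le \int_0^\infty\sum_j|\tI_j(h)|\rd{h}=\int\rho_0\rd{\bx}=1.
\end{equation}
Combined with Proposition \ref{prop_CSS1}, at every $t_0$ with $\int_{|\bx|\le\cR_3}\mu_\sharp(t_0,\bx)\rd{\bx}>0$ we then have $\frac{\rd}{\rd{t}}\big|_{t=0}E[\rho_t]<0$, so Lemma \ref{lem_basic} (applied at time $t_0$, the flow being autonomous) gives
\begin{equation}
\frac{\rd}{\rd{t}}\Big|_{t=t_0}E[\rho(t,\cdot)]\le -\Big(\frac{\rd}{\rd{t}}\Big|_{t=0}E[\rho_t]\Big)^2\le -c\,\varepsilon^4\lambda(2\cR_3)^2\Big(\int_{|\bx|\le\cR_3}\mu_\sharp(t_0,\bx)\rd{\bx}\Big)^4,
\end{equation}
and this inequality is trivially true at the times where the inner integral vanishes.

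Then I would integrate the pointwise bound over $t_0\in[0,T]$. Since $\rho(t,\cdot)$ is the $2$-Wasserstein gradient flow of $E$, the function $t\mapsto E[\rho(t,\cdot)]$ is non-increasing and absolutely continuous, so integrating and using Lemma \ref{lem_Em} yields
\begin{equation}
c\,\varepsilon^4\lambda(2\cR_3)^2\int_0^T\Big(\int_{|\bx|\le\cR_3}\mu_\sharp(t,\bx)\rd{\bx}\Big)^4\rd{t}\le E[\rho_{\ini}]-E[\rho(T,\cdot)]\le E[\rho_{\ini}]-E_-.
\end{equation}
Since $E[\rho_{\ini}]<\infty$ by {\bf (A4)} and {\bf (A2)}, letting $T\to\infty$ gives the claim with $C=(E[\rho_{\ini}]-E_-)/c$.

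The points that require a little care are routine: one must check that the moving pieces of $\tI^{(1)}_j$ are genuine unit-speed translations of pieces of $\rho_0$ so that Lemma \ref{lem_cost} applies (the degeneracy at those $x_2$ with $\sqrt{r_j^2-x_2^2}<t$ is irrelevant because only $t\to 0^+$ is used, as noted in the footnote to the definition of the CSS1 curve), and one must justify passing from the a.e.-in-$t$ dissipation inequality to the time integral, which is exactly the argument already used in \cite{S1D} and depends only on the absolute continuity of the energy along the gradient flow. I do not expect a genuine obstacle here: all the substantive work is contained in Proposition \ref{prop_CSS1}, which is already established.
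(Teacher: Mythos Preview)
Your proposal is correct and follows essentially the same approach as the paper: bound the CSS1 cost by $1$ via Lemma \ref{lem_cost}, combine Proposition \ref{prop_CSS1} with Lemma \ref{lem_basic} to obtain the pointwise dissipation inequality, and then integrate in time using the lower bound on $E$ from Lemma \ref{lem_Em}. The paper's proof is just a terser version of what you wrote.
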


\begin{proof}
Lemma \ref{lem_cost} implies that the cost of the CSS1 curve is bound by 1. Applying Lemma \ref{lem_basic} to the CSS1 curve with Proposition \ref{prop_CSS1}, we get
\begin{equation}
\frac{\rd}{\rd{t}}E[\rho(t,\cdot)] \le - c\varepsilon^4\lambda(2\cR_3)^2\Big(\int_{|\bx|\le \cR_3} \mu_\sharp(t,\bx)\rd{\bx}\Big)^4.
\end{equation}
Integrating in $t\in[0,\infty)$, the conclusion follows from the lower bound of $E$ (Lemma \ref{lem_Em}). 
\end{proof}

\subsection{CSS2 curve}

Fix $\cR_1\ge 1$ large enough, such that Lemma \ref{lem_crho} applies. 

Fix  $\cR_2>4\cR_1$. If $I_j(h)=[r_j,R_j],\,j\ge 1$ is sharp and contains $\cR_2$, then we \emph{split} $I_j$ into
\begin{equation}
I_j = I_{(j,1)}\cup I_{(j,2)} := [r_j,\cR_2]\cup [\cR_2,R_j].
\end{equation}
Notice that $I_{(j,1)}$ and $I_{(j,2)}$ are also sharp. In this subsection, all the indices $j$ will refer to this `after-split' version: $(j,1)$ and $(j,2)$ are different indices. Under this notation, all sharp intervals $I_j$ does not contain $\cR_2$ as an interior point.

\begin{lemma}\label{lem_rjst}
For fixed $I_j = [r_j,R_j]\subset (0,\cR_2]$ with $\frac{1}{2}(r_j+R_j) > \cR_1$, there exists a unique $0< r_j^*\le r_j$, such that for $|x_2|\le r_j$,
\begin{equation}\label{lem_rjst_1}
\frac{1}{2}\Big(\sqrt{r_j^2-x_2^2}+\sqrt{R_j^2-x_2^2}\Big)\in [\cR_1,\cR_2] \quad\Leftrightarrow\quad |x_2|\le r_j^*.
\end{equation}
\end{lemma}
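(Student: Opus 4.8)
The plan is to reduce the two-sided membership condition in \eqref{lem_rjst_1} to the monotonicity of a single scalar function of $|x_2|$. Writing $t=|x_2|$, I would introduce
$$
\psi(t):=\frac12\Big(\sqrt{r_j^2-t^2}+\sqrt{R_j^2-t^2}\,\Big),\qquad t\in[0,r_j],
$$
so that the left-hand side of \eqref{lem_rjst_1} reads exactly $\psi(t)\in[\cR_1,\cR_2]$. First I would note that $\psi$ is continuous on $[0,r_j]$ and strictly decreasing there, since each square-root summand is; and I would record the endpoint values $\psi(0)=\frac12(r_j+R_j)$ and $\psi(r_j)=\frac12\sqrt{R_j^2-r_j^2}$.

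The second step is to eliminate the upper constraint. Since $I_j\subset(0,\cR_2]$ forces $r_j\le R_j\le\cR_2$, we get $\psi(0)\le R_j\le\cR_2$, hence by monotonicity $\psi(t)\le\psi(0)\le\cR_2$ for every $t\in[0,r_j]$; so the bound $\cR_2$ is never active and \eqref{lem_rjst_1} is equivalent to the one-sided condition $\psi(t)\ge\cR_1$. Note also that the hypothesis $\frac12(r_j+R_j)>\cR_1$ is precisely $\psi(0)>\cR_1$.

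The third step is a case split on the sign of $\psi(r_j)-\cR_1$. If $\psi(r_j)\ge\cR_1$, then by monotonicity $\psi(t)\ge\cR_1$ on all of $[0,r_j]$, so \eqref{lem_rjst_1} holds for every $|x_2|\le r_j$ and $r_j^*=r_j$ works; moreover this choice is forced, because the equivalence then requires $t\le r_j^*$ for all $t\in[0,r_j]$, i.e.\ $r_j^*\ge r_j$. If instead $\psi(r_j)<\cR_1<\psi(0)$, then the intermediate value theorem together with strict monotonicity yields a unique $r_j^*\in(0,r_j)$ with $\psi(r_j^*)=\cR_1$, and monotonicity gives $\psi(t)\ge\cR_1\iff t\le r_j^*$, which is the claimed equivalence; uniqueness of $r_j^*$ in $(0,r_j]$ is immediate since it must be the right endpoint of the superlevel set $\{t:\psi(t)\ge\cR_1\}$, which is determined by $\psi$.

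I do not expect a genuine obstacle here: the only point needing a moment of care is verifying that the endpoint $\cR_2$ of the target interval is never the active constraint, so that the problem genuinely collapses to a one-sided threshold — this is exactly where $R_j\le\cR_2$ enters — after which everything is elementary monotonicity and the intermediate value theorem.
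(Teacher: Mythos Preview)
Your proof is correct and follows essentially the same approach as the paper: observe that the upper constraint $\cR_2$ is never active because $R_j\le\cR_2$, note that the function is (strictly) decreasing with value $\frac{1}{2}(r_j+R_j)>\cR_1$ at $x_2=0$, and conclude by monotonicity. Your version is simply more explicit about the case split and the uniqueness argument, which the paper leaves to the reader.
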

For notational convenience, $r_j^*$ will be understood as $-1$ for those $I_j$ with $\frac{1}{2}(r_j+R_j) < \cR_1$, which agrees with \eqref{lem_rjst_1}.

\begin{proof}
It is clear that $\frac{1}{2}\Big(\sqrt{r_j^2-x_2^2}+\sqrt{R_j^2-x_2^2}\Big)\le \cR_2$ always holds. Notice that $x_2\mapsto \frac{1}{2}\Big(\sqrt{r_j^2-x_2^2}+\sqrt{R_j^2-x_2^2}\Big)$ is a decreasing function on $[0,r_2]$, whose value is $\frac{1}{2}(r_j+R_j)  > \cR_1$ at $x_2=0$. Then the conclusion follows.
\end{proof}

Define the CSS2 curve $\rho_t$ by 
\begin{equation}\label{CSS2}\begin{split}
\tI_{j,t}^{(1)}\cap & (\mathbb{R}\times \{x_2\}) = \left\{\begin{split}
& \sqrt{r_j^2-x_2^2}-t \le |x_1| \le \sqrt{R_j^2-x_2^2}-t,\quad |x_2|\le r_j^*\\
& \tI_{j}^{(1)}\cap  (\mathbb{R}\times \{x_2\}),\quad \text{otherwise}
\end{split}\right. \\
&  \forall h,\quad\forall j\ge 1,\text{ sharp},\, R_j \le \cR_2,
\end{split}\end{equation}
where $r_j^*$ is given as in Lemma \ref{lem_rjst}, and everything else ($j=0$, $j\ge1$ flat, or $R_j> \cR_2$) stays unmoved. This definition does not cause collision of slices at the split point $|\bx|=\cR_2$ (i.e., between $I_{(j,1)}$ and $I_{(j,2)}$ in the original notation), because the condition $R_j \le \cR_2$ guarantees that any radial interval like $I_{(j,2)}$ does not move. See Figure \ref{fig_pic5} for illustration.

\begin{figure}
\begin{center}
  \includegraphics[width=.6\linewidth]{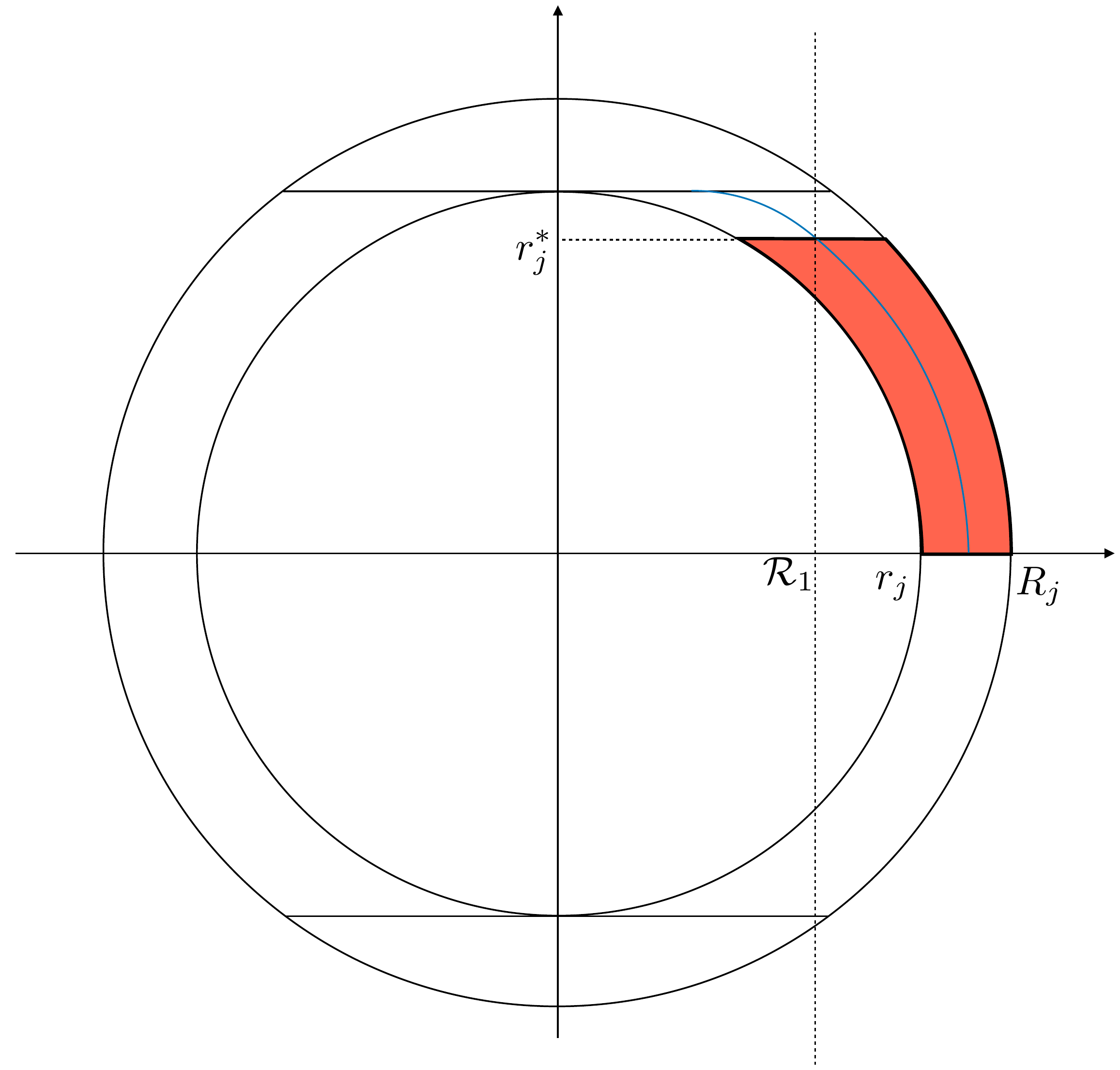}  
  \caption{The CSS2 curve, applied on a sharp $\tI_j$, with $R_j\le \cR_2$. $r_j^*$ is determined by the condition that the midpoint of a slice $\Big[\sqrt{r_j^2-x_2^2},\sqrt{R_j^2-x_2^2}\,\Big]$ (the blue  curve) is equal to $\cR_1$. Only the red part (and its symmetric counterparts) are moving (towards the center) in the CSS2 curve.}
\label{fig_pic5}
\end{center}
\end{figure}

\begin{proposition}\label{prop_CSS2}
Assume $\varepsilon=\varepsilon(\cR_1,\cR_2)$ is small enough. The CSS2 curve $\rho_t$ satisfies
\begin{equation}\label{prop_CSS2_1}\begin{split}
& \frac{\rd}{\rd{t}}\Big|_{t=0}E[\rho_t] \\
\le & - \Big(c-C\int_{\cR_2\le |\by| \le \cR_3}\mu_\sharp(\by)\rd{\by}\Big)\cdot\int \sum_{j\ge 1,\,\textnormal{sharp}, R_j\le \cR_2}\int_{|x_2|\le r_j^*}\Big(\sqrt{R_{j}^2-x_2^2}-\sqrt{r_{j}^2-x_2^2}\Big)\rd{x_2} \rd{h},
\end{split}\end{equation}
for some large $\cR_3=\cR_3(\cR_1,\cR_2)$, where $c$ and $C$ may depend on $\cR_1$ and $\cR_2$.
\end{proposition}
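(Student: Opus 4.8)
The plan is to split $E[\rho_t]=\cS[\rho_t]+\cI[\rho_t]$ and treat the two pieces separately. The internal energy is immediate: the curve \eqref{CSS2} only slides each horizontal slice $\{x_2\}\times[\sqrt{r_j^2-x_2^2},\sqrt{R_j^2-x_2^2}]$ (and its mirror image) rigidly in the $x_1$-direction, so $|\tI_{j,t}(h)|$ is independent of $t$ and $\cS[\rho_t]=\frac1{m-1}\int_0^\infty\sum_j|\tI_{j,t}(h)|\,mh^{m-1}\rd h$ is constant; the whole derivative comes from $\cI$. For $\cI$ I would expand, as in Proposition \ref{prop_CSS1}, $\frac{\rd}{\rd t}\big|_{t=0}\cI[\rho_t]=\frac12\int_0^\infty\int_0^\infty\sum_{j,j'}\frac{\rd}{\rd t}\big|_{t=0}\cI[\chi_{\tI_{j,t}(h)},\chi_{\tI_{j',t}(h')}]\rd h'\rd h$, break each $\tI_{j,t}(h)$ into its horizontal slices, and reduce every pairwise derivative to one-dimensional slice interactions $\cI_{x_2,y_2}[\cdot,\cdot]$ so that Lemmas \ref{lem_CSSb1} and \ref{lem_CSSb2} become applicable. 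Only the ``red'' slices move — those of $\tI_j^{(1)}$ with $j$ sharp, $R_j\le\cR_2$, and $|x_2|\le r_j^*$ — and they move toward the $x_1$-axis (right pieces leftward, mirror pieces rightward).

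The bookkeeping is organized around the splitting at $\cR_2$, which forces every sharp $\tI_j$ either to have $R_j\le\cR_2$ — so it splits into moving red slices, the cap $\tI_j^{(2)}$, and the slices of $\tI_j^{(1)}$ with $|x_2|>r_j^*$, whose $x_1$-midpoint is $<\cR_1$ by Lemma \ref{lem_rjst} — or to have $r_j\ge\cR_2$, in which case it is stationary and supported in $\{|\by|\ge\cR_2\}$. I would then bound $\frac{\rd}{\rd t}\big|_{t=0}\cI[\rho_t]$ by grouping the pairwise terms as: (a) a moving red slice against $\rho^*$, against a cap, against a low-midpoint slice, or against $\mu_\sharp\chi_{|\by|\le\cR_1}$ — each $\le0$; (b) a moving red slice against the flat part $\mu_\flat$ — total cost $\le C\varepsilon^2\cdot(\text{red mass})$; (c) moving-red against moving-red — $\le0$ (right pieces move in lockstep, right/left mirror pairs approach); (d) a moving red slice against $\mu_\sharp\chi_{|\by|>\cR_2}$ — total cost $\le\big(C\int_{\cR_2\le|\by|\le\cR_3}\mu_\sharp(\by)\rd\by+C\cR_3^{-1}\big)(\text{red mass})$ by Lemma \ref{lem_circle} and {\bf (A2)}. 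For (a) the idea is to pair each right slice with its mirror (so that $\rho^*$-slices, caps, and circles of $\mu_\sharp\chi_{|\by|\le\cR_1}$ become symmetric about $x_1=0$) and recenter, turning each term into an instance of \eqref{lem_CSSb1_1} or \eqref{lem_CSSb2_0}; the geometric input is that, by the definition of $r_j^*$, the $x_1$-center of every red slice is $\ge\cR_1$, hence to the right of (or equal to) the center of every cap, low-midpoint slice, or circle of radius $\le\cR_1$, so that the red motion is always ``toward'' these stationary pieces. For (b) the errors come only from the ``$-\chi_{|\by|\le r_j(h)}$'' parts of $\mu_\flat$, whose mass is $O(\varepsilon^2)$ by flatness.

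For the quantitative gain I would sharpen the $\rho^*+\mu_\flat+\mu_\sharp\chi_{|\by|\le\cR_1}$ part of group (a) on the right halves of the red slices: each red slice has $x_1$-center in $[\cR_1,\cR_2]$, so its right half lies in $\{\cR_1\le x_1\le\cR_2,\,|x_2|\le r_j^*\}\subset\{\cR_1\le|\bx|\le\sqrt2\cR_2\}$ with $|x_1|/|\bx|\ge\cR_1/(\sqrt2\cR_2)$; Corollary \ref{cor_ctr}, applied with $\cR_2$ replaced by $\sqrt2\cR_2$ (still $>\cR_1$) and $\varepsilon=\varepsilon(\cR_1,\cR_2)$ small, gives that $\rho^*+\mu_\flat+\mu_\sharp\chi_{|\by|\le\cR_1}$ generates an inward radial field of strength $\ge c\lambda$ there. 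Writing $\frac{\rd}{\rd t}\big|_{t=0}\cI[\rho_{\mathrm{red},t},\rho^*+\mu_\flat+\mu_\sharp\chi_{|\by|\le\cR_1}]$ as the slice integral $-\int_0^\infty\sum_j\int_{\mathrm{red}_j(h)}v'(|\bx|)\frac{|x_1|}{|\bx|}\rd\bx\rd h$ and retaining only the right halves (the left halves are $\le0$ by (a)) yields a contribution $\le-c\int\sum_j\int_{|x_2|\le r_j^*}\big(\sqrt{R_j^2-x_2^2}-\sqrt{r_j^2-x_2^2}\big)\rd x_2\rd h$. Adding (a)--(d), and choosing first $\varepsilon=\varepsilon(\cR_1,\cR_2)$ small and then $\cR_3=\cR_3(\cR_1,\cR_2)$ large so that the $C\varepsilon^2$ and $C\cR_3^{-1}$ losses are at most half of this gain, produces \eqref{prop_CSS2_1}. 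I expect the main obstacle to be claim (a): checking that after the mirror-pairing and recentering every red-vs-stationary slice interaction genuinely reduces to \eqref{lem_CSSb1_1} or \eqref{lem_CSSb2_0} — in particular handling overlapping slices and the point-pair structure of the $\mu_\sharp\chi_{|\by|\le\cR_1}$ circles — and, closely related, ensuring that the $\mu_\sharp$-loss stays confined to $[\cR_2,\cR_3]$ rather than $[\cR_1,\cR_3]$, which is exactly what splitting at $\cR_2$ (so that every sharp slice in $(\cR_1,\cR_2]$ is either moving red, a cap, or a low-midpoint slice) is designed to guarantee.
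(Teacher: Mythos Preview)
Your overall strategy --- constancy of $\cS$, slice decomposition of $\cI$, sign control via Lemmas \ref{lem_CSSb1}--\ref{lem_CSSb2}, and the far-field sharp loss via Lemma \ref{lem_circle} and {\bf (A2)} --- matches the paper's. The gap is in the quantitative-gain step. You invoke Corollary \ref{cor_ctr} on $\sigma=\rho^*+\mu_\flat+\mu_\sharp\chi_{|\by|\le\cR_1}$ to get a pointwise inward-force bound $v'_\sigma(r)\ge c\lambda$, then restrict to the right halves of the red slices (where indeed $|\bx|\ge\cR_1$, so the Corollary applies) and assert that the left halves contribute $\le 0$ ``by (a)''. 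This last step is not justified. Your item (a) is a slice-by-slice statement about the \emph{whole} red slice against whole stationary pieces; it does not give a pointwise sign for $v'_\sigma$ on the left half, nor does the left half alone (center $\kc'\ge\cR_1/2$, not $\ge\cR_1$) satisfy the geometric hypothesis needed to rerun (a) against pieces with center in $(\kc',\cR_1)$. On the left half one can have $|\bx|$ as small as $r_j\ge\varepsilon R_j\sim\varepsilon\cR_1$, and for such $\bx$ the term $v'_{\mu_\sharp\chi_{\le\cR_1}}(|\bx|)$ has no sign: circles of radius $s\in(|\bx|,\cR_1]$ are outside the range of \eqref{lem_circle_2}, and for general attractive $W$ a shell can push an interior point outward. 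Bounding these by \eqref{lem_circle_1} yields only an $O(1)$ error, far too large to be absorbed by the $c\lambda(C\cR_1)=c\cR_1^{-\alpha}$ gain for large $\cR_1$.

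The paper avoids this by \emph{not} feeding $\mu_\sharp$ into the pointwise-force argument. Its Step~1 applies Lemma \ref{lem_ctr} to $\rho^*+\mu_\flat$ alone, where one has $v'\ge-C\varepsilon^2/r$ at \emph{every} $r>0$, so the part of the red slice outside the right half contributes only an $O(\varepsilon)$ error (this is also why your (b) works --- the ``mass is $O(\varepsilon^2)$'' heuristic is not enough by itself; one needs the careful circle estimate inside Lemma \ref{lem_ctr}). The paper then extracts the $\mu_\sharp$ share of the gain separately, in its Step~2, by a direct slice computation: applying \eqref{lem_CSSb2_1} to the interaction of a red slice at $|x_2|\le r_j^*/4$ with the left-side pieces of each $\tI_{j'}$ at $|y_2|\le\cR_1/4$ produces a decay proportional to $\int_{|\by|\le\cR_2}\mu_\sharp$, which then combines with the Step-1 decay (proportional to $\int_{|\by|\le\cR_1}(\rho^*+\mu_\flat)$) via Lemma \ref{lem_crho}. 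Your shortcut through Corollary \ref{cor_ctr} works cleanly for the local clustering curve (Proposition \ref{prop_lcc}), where every moving point has $|\bx|\ge 2\cR_1$, but not here, precisely because the CSS2 red slices can dip below radius $\cR_1$.
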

Notice that the last integral in $h$ is exactly the total moving mass (up to a constant multiple).

We start by proving the following lemma concerning the area of the moving mass:
\begin{lemma}\label{lem_x2}
Fix $0<\kappa<1$. For any $0<r_1^*\le r_1<R_1$,
\begin{equation}\label{lem_x2_1}
\int_{|x_2|\le \kappa r_1^*} \Big(\sqrt{R_1^2-x_2^2}-\sqrt{r_1^2-x_2^2}\Big)\rd{x_2} \ge c_\kappa\int_{|x_2|\le r_1^*} \Big(\sqrt{R_1^2-x_2^2}-\sqrt{r_1^2-x_2^2}\Big)\rd{x_2}.
\end{equation}
\end{lemma}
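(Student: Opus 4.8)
The plan is to convert both integrals in \eqref{lem_x2_1} into integrals of $\arcsin$ over the radial range $[r_1,R_1]$, where a pointwise comparison of integrands becomes elementary. Write $g(x_2):=\sqrt{R_1^2-x_2^2}-\sqrt{r_1^2-x_2^2}$; since $g$ is even, \eqref{lem_x2_1} is equivalent to
\[
\int_0^{\kappa r_1^*}g(x_2)\rd{x_2}\ \ge\ c_\kappa\int_0^{r_1^*}g(x_2)\rd{x_2}.
\]

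The main step is the identity
\[
\int_0^a g(x_2)\rd{x_2}=\int_{r_1}^{R_1}u\arcsin\Big(\frac{a}{u}\Big)\rd{u},\qquad 0<a\le r_1<R_1,
\]
which I would obtain by writing $g(x_2)=\int_{r_1}^{R_1}\frac{u}{\sqrt{u^2-x_2^2}}\rd{u}$ (fundamental theorem of calculus in $u$; the integrand stays integrable up to $u=r_1$ even in the borderline case $x_2=a=r_1$), then applying Fubini together with $\int_0^a\frac{\rd{x_2}}{\sqrt{u^2-x_2^2}}=\arcsin(a/u)$. Applying this with $a=\kappa r_1^*$ and with $a=r_1^*$ reduces the claim to comparing the integrands $u\arcsin(\kappa r_1^*/u)$ and $u\arcsin(r_1^*/u)$ pointwise on $u\in[r_1,R_1]$.

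For that comparison I would use only the elementary bounds $\arcsin t\ge t$ and $\arcsin t\le\frac{\pi}{2}t$ on $[0,1]$: since $r_1^*\le r_1\le u$, both $\kappa r_1^*/u$ and $r_1^*/u$ lie in $[0,1]$, and
\[
\arcsin\Big(\frac{\kappa r_1^*}{u}\Big)\ \ge\ \frac{\kappa r_1^*}{u}\ \ge\ \frac{2\kappa}{\pi}\cdot\frac{\pi}{2}\cdot\frac{r_1^*}{u}\ \ge\ \frac{2\kappa}{\pi}\,\arcsin\Big(\frac{r_1^*}{u}\Big).
\]
Multiplying by $u\ge 0$ and integrating over $[r_1,R_1]$ yields \eqref{lem_x2_1} with $c_\kappa=\frac{2\kappa}{\pi}$. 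I do not expect a genuine obstacle here; the only points requiring care are the routine justification of the Fubini/FTC interchange (no singularity appears because $a\le r_1$ keeps $x_2$ away from the large end of the $u$-range) and resisting the temptation to use a spurious concavity of $\arcsin$ — $\arcsin$ is in fact convex on $[0,1]$, so the needed comparison must come from the linear sandwich $t\le\arcsin t\le\frac{\pi}{2}t$ rather than from a subadditivity estimate of the form $\arcsin(\kappa t)\ge\kappa\arcsin t$, which is false.
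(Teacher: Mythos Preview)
Your proof is correct and takes a genuinely different route from the paper's. The paper argues by cases: writing $g(x_2)=\dfrac{R_1^2-r_1^2}{\sqrt{R_1^2-x_2^2}+\sqrt{r_1^2-x_2^2}}$, it observes that $g$ is increasing with $g(0)=R_1-r_1$, then (i) if $r_1^*\le R_1/2$ it shows $g(x_2)\le C(R_1-r_1)$ on $[0,r_1^*]$, so the integrand is comparable throughout and the ratio of integrals is just the ratio of interval lengths; (ii) if $r_1^*>R_1/2$ it bounds the left side below by $2\kappa r_1^*(R_1-r_1)\ge \kappa R_1(R_1-r_1)$ and the right side above by the annulus area $CR_1(R_1-r_1)$.

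Your Fubini rewrite $\int_0^a g=\int_{r_1}^{R_1}u\arcsin(a/u)\rd u$ replaces that case split by a single pointwise comparison via the linear sandwich $t\le\arcsin t\le\frac{\pi}{2}t$, and even yields the explicit constant $c_\kappa=2\kappa/\pi$, which the paper's argument does not. The paper's approach, on the other hand, stays closer to the geometry (its Case~2 bound is essentially ``area of a thin strip versus area of the annulus'') and so transfers more readily to variants where an exact antiderivative is unavailable. Your caution about the direction of the inequality for $\arcsin(\kappa t)$ versus $\kappa\arcsin t$ is well placed: convexity of $\arcsin$ with $\arcsin(0)=0$ gives $\arcsin(\kappa t)\le\kappa\arcsin t$, so the sandwich is indeed the right tool.
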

Notice that the constant $c_\kappa$ does not depend on the sharpness $r_1/R_1$.

\begin{proof}

First notice that
\begin{equation}\begin{split}
\sqrt{R_1^2-x_2^2}-\sqrt{r_1^2-x_2^2} =  \frac{R_1^2-r_1^2}{\sqrt{R_1^2-x_2^2}+\sqrt{r_1^2-x_2^2}},
\end{split}\end{equation}
is an increasing function in $x_2$, for $0\le x_2\le r_1$. Thus
\begin{equation}\begin{split}
\sqrt{R_1^2-x_2^2}-\sqrt{r_1^2-x_2^2} \ge R_1-r_1.
\end{split}\end{equation}

If $r_1^* \le R_1/2$, then for any $0\le x_2\le r_1^*$,
\begin{equation}\begin{split}
\sqrt{R_1^2-x_2^2}-\sqrt{r_1^2-x_2^2}  \le & \frac{R_1^2-r_1^2}{\sqrt{R_1^2-\min\{R_1/2,r_1\}^2}+\sqrt{r_1^2-\min\{R_1/2,r_1\}^2}} \\
\le & C\frac{R_1^2-r_1^2}{R_1} \le C(R_1-r_1).
\end{split}\end{equation}
Then the conclusion follows since the  integrand in \eqref{lem_x2_1} is comparable for all possibly used $x_2$.

If $r_1^* > R_1/2$, then
\begin{equation}
\int_{|x_2|\le \kappa r_1^*} \Big(\sqrt{R_1^2-x_2^2}-\sqrt{r_1^2-x_2^2}\Big)\rd{x_2} \ge 2\kappa r_1^* (R_1-r_1) \ge \kappa R_1(R_1-r_1),
\end{equation}
and
\begin{equation}
\int_{|x_2|\le r_1^*} \Big(\sqrt{R_1^2-x_2^2}-\sqrt{r_1^2-x_2^2}\Big)\rd{x_2} \le C|\{\bx: r_1\le |\bx|\le R_1\}| \le CR_1(R_1-r_1).
\end{equation}
Then the conclusion follows.

\end{proof}

\begin{proof}[Proof of Proposition \ref{prop_CSS2}]

Similar to the CSS1 curve, the internal energy $\cS[\rho_t]$ is constant in $t$.

To estimate the interaction energy change along $\rho_t$,
\begin{equation}\label{CSS2est}\begin{split}
\frac{\rd}{\rd{t}}\Big|_{t=0}\cI[\rho_t] = & \frac{\rd}{\rd{t}}\Big|_{t=0}\cI[\rho, \rho_t] \\
= & \frac{\rd}{\rd{t}}\Big|_{t=0}\int \sum_{j\ge 1,\,\text{sharp}, R_j\le \cR_2} \cI[\rho^*(\by) + \rho_\flat(\by),\chi_{\tI_{j,t}(h)}] \rd{h} \\
& + \frac{1}{2}\frac{\rd}{\rd{t}}\Big|_{t=0}\int\int \sum_{j'\ge 1,\,\text{sharp}, R_{j'}\le \cR_2} \sum_{j\ge 1,\,\text{sharp}, R_j\le \cR_2} \cI[\chi_{\tI_{j',t}(h')},\chi_{\tI_{j,t}(h)}] \rd{h}\rd{h'} \\
& + \frac{\rd}{\rd{t}}\Big|_{t=0}\int \sum_{j\ge 1,\,\text{sharp}, R_j\le \cR_2} \cI[\mu_\sharp(\by)\chi_{|\by|\ge \cR_2}(\by),\chi_{\tI_{j,t}(h)}] \rd{h},
\end{split}\end{equation}
where the symmetry between $(h,j)$ and $(h',j')$ is used in the second term on the RHS. Now we  discuss the contribution from the three terms separately.

{\bf STEP 1}: radially-decreasing and flat parts: the first term on the RHS of \eqref{CSS2est}, for fixed $(h,j)$. 

First notice that this term is possibly nonzero only when 
\begin{equation}\label{CSS2step1}
(R_j+r_j)/2 \ge \cR_1,
\end{equation}
 since otherwise $\tI_{j,t}$ is constant in $t$.

\begin{equation}\begin{split}
& \frac{\rd}{\rd{t}}\Big|_{t=0}\cI[\rho^*(\by) + \rho_\flat(\by),\chi_{\tI_{j,t}}] \\
= & \frac{\rd}{\rd{t}}\Big|_{t=0}\int \int W(\bx-\by)(\rho^*(\by) + \rho_\flat(\by))\rd{\by} \cdot \chi_{\tI_{j,t}}(\bx)\rd{\bx} \\
= & 2\frac{\rd}{\rd{t}}\Big|_{t=0}\int_{|x_2|\le r_j^*}\int \int W(\bx-\by)(\rho^*(\by) + \rho_\flat(\by))\rd{\by} \cdot \chi_{\sqrt{r_j^2-x_2^2}-t \le x_1 \le \sqrt{R_j^2-x_2^2}-t}(x_1) \rd{x_1}\rd{x_2}  \\
= & -2\int_{|x_2|\le r_j^*}\int \int \nabla W(\bx-\by)\cdot \vec{e}_1(\rho^*(\by) + \rho_\flat(\by))\rd{\by} \cdot \chi_{\sqrt{r_j^2-x_2^2} \le x_1 \le \sqrt{R_j^2-x_2^2}}(x_1) \rd{x_1}\rd{x_2} , \\
\end{split}\end{equation}
where the second inequality follows from left-right symmetry. Here the integral domain $|x_2|\le r_j^*$   are exactly those moving slices, by \eqref{CSS2}.

Notice that the vector $\int \nabla W(\bx-\by)(\rho^*(\by) + \rho_\flat(\by))\rd{\by}$ is parallel to $\frac{\bx}{|\bx|}$ by radial symmetry. Combined with Lemma \ref{lem_ctr}, we have
\begin{equation}\begin{split}
& \int \nabla W(\bx-\by)\cdot \vec{e}_1(\rho^*(\by) + \rho_\flat(\by))\rd{\by} \\ \ge & \left[c\lambda(r + \delta^{-1/2})\Big( \int_{|\by|\le r} (\rho^*(\by)+\rho_\flat(\by))\rd{\by} - 2\pi \delta r^2\Big)_{\ge 0} - \frac{C\varepsilon^2}{r}\right]\cdot \frac{x_1}{r},
\end{split}\end{equation}
for any $\delta>0$ and $x_1>0$, where we denote $f_{\ge 0} := \max\{f,0\}$ for any quantity $f$. Therefore
\begin{equation}\begin{split}\label{CSS2est0}
& \frac{\rd}{\rd{t}}\Big|_{t=0}\cI[\rho^*(\by) + \rho_\flat(\by),\chi_{\tI_{j,t}}] \\
\le & -c\lambda(\cR_2 + \delta^{-1/2})\int_{|x_2|\le r_j^*} \int_{\sqrt{r_j^2-x_2^2}}^{\sqrt{R_j^2-x_2^2}}\Big( \int_{|\by|\le r} (\rho^*(\by)+\rho_\flat(\by))\rd{\by} - 2\pi \delta r^2\Big)_{\ge 0} \cdot \frac{x_1}{r}\rd{x_1}\rd{x_2}  \\
& + C\varepsilon^2\int_{|x_2|\le r_j^*} \int_{\sqrt{r_j^2-x_2^2}}^{\sqrt{R_j^2-x_2^2}} \frac{x_1}{r^2}\rd{x_1}\rd{x_2}.
\end{split}\end{equation}
Notice that for $|x_2|\le r_j/2$,
\begin{equation}
x_1\ge \sqrt{r_j^2-(r_j/2)^2} = \frac{\sqrt{3}}{2}r_j\quad \Rightarrow \quad \frac{x_1}{r} = \frac{x_1}{\sqrt{x_1^2+x_2^2}} \ge c.
\end{equation}
 Therefore, the first term on the RHS of \eqref{CSS2est0} can be estimated by
\begin{equation}\begin{split}
& \int_{|x_2|\le r_j^*} \int_{\sqrt{r_j^2-x_2^2}}^{\sqrt{R_j^2-x_2^2}}\Big( \int_{|\by|\le r} (\rho^*(\by)+\rho_\flat(\by))\rd{\by} - 2\pi \delta r^2\Big)_{\ge 0} \cdot \frac{x_1}{r}\rd{x_1}\rd{x_2}  \\
\ge & c\int_{|x_2|\le r_j^*/2} \int_{(\sqrt{r_j^2-x_2^2}+\sqrt{R_j^2-x_2^2})/2}^{\sqrt{R_j^2-x_2^2}}\Big( \int_{|\by|\le r} (\rho^*(\by)+\rho_\flat(\by))\rd{\by} - 2\pi \delta r^2\Big)_{\ge 0} \rd{x_1}\rd{x_2}  \\
\ge & c\Big( \int_{|\by|\le \cR_1} (\rho^*(\by)+\rho_\flat(\by))\rd{\by} - 2\pi \delta \cR_2^2\Big)\cdot \int_{|x_2|\le r_j^*/2}\Big(\sqrt{R_{j}^2-x_2^2}-\sqrt{r_{j}^2-x_2^2}\Big)\rd{x_2}  \\
\ge & c\Big( \int_{|\by|\le \cR_1} (\rho^*(\by)+\rho_\flat(\by))\rd{\by} - 2\pi \delta \cR_2^2\Big)\cdot \int_{|x_2|\le r_j^*} \Big(\sqrt{R_{j}^2-x_2^2}-\sqrt{r_{j}^2-x_2^2}\Big) \rd{x_2},  \\
\end{split}\end{equation}
using $r_j^*\le r_j$ in the first inequality, $r\ge \Big(\sqrt{r_j^2-x_2^2}+\sqrt{R_j^2-x_2^2}\Big)/2 \ge \cR_1$ and $r\le \cR_2$ in the second inequality, and Lemma \ref{lem_x2} (with $\kappa=1/2$) in the last inequality. The second term on the RHS of \eqref{CSS2est0} can be estimated by
\begin{equation}\begin{split}
& \varepsilon^2\int_{|x_2|\le r_j^*} \int_{\sqrt{r_j^2-x_2^2}}^{\sqrt{R_j^2-x_2^2}} \frac{x_1}{r^2}\rd{x_1}\rd{x_2} \\
\le &\varepsilon^2 \int_{|x_2|\le r_j^*} \int_{\sqrt{r_j^2-x_2^2}}^{\sqrt{R_j^2-x_2^2}} \frac{1}{r}\rd{x_1}\rd{x_2} \\
\le &\varepsilon^2 \frac{1}{r_j}\int_{|x_2|\le r_j^*} \Big(\sqrt{R_{j}^2-x_2^2}-\sqrt{r_{j}^2-x_2^2}\Big)\rd{x_2} \\
\le & \varepsilon\cR_1\int_{|x_2|\le r_j^*}\Big(\sqrt{R_{j}^2-x_2^2}-\sqrt{r_{j}^2-x_2^2}\Big)\rd{x_2}, \\
\end{split}\end{equation}
where the last inequality uses the flatness of $I_j$, and $R_j\ge \cR_1$ (from \eqref{CSS2step1}).

{\bf STEP 2}: sharp parts inside $\cR_2$: the second term on the RHS of \eqref{CSS2est}, for fixed $(h,j)$ and $(h',j')$.


Similar to the CSS1 analysis, 
\begin{equation}\label{CSS2est1}\begin{split}
& \frac{\rd}{\rd{t}}\Big|_{t=0}\cI[\chi_{\tI_{j,t}(h)},\chi_{\tI_{j',t}(h')}] \\
&=  \frac{\rd}{\rd{t}}\Big|_{t=0}\int_{|x_2|<r_j^*}\int_{r_{j'}\le |y_2| \le R_{j'}} \cI_{x_2,y_2}[\chi_{\sqrt{r_{j}^2-x_2^2}-t \le x_1 \le \sqrt{R_{j}^2-x_2^2}-t},\chi_{|y_1| \le \sqrt{R_{j'}^2-y_2^2}}]\rd{y_2}\rd{x_2} \\
& + \frac{\rd}{\rd{t}}\Big|_{t=0}\int_{|x_2|<r_j^*}\int_{r_{j'}^*\le |y_2| \le r_{j'}} \cI_{x_2,y_2}[\chi_{\sqrt{r_{j}^2-x_2^2}-t \le x_1 \le \sqrt{R_{j}^2-x_2^2}-t},\chi_{\sqrt{r_{j'}^2-y_2^2} \le y_1 \le \sqrt{R_{j'}^2-y_2^2}}]\rd{y_2}\rd{x_2} \\
& + \frac{\rd}{\rd{t}}\Big|_{t=0}\int_{|x_2|<r_j^*}\int_{r_{j'}^*\le |y_2| \le r_{j'}} \cI_{x_2,y_2}[\chi_{\sqrt{r_{j}^2-x_2^2}-t \le x_1 \le \sqrt{R_{j}^2-x_2^2}-t},\chi_{-\sqrt{R_{j'}^2-y_2^2} \le y_1 \le -\sqrt{r_{j'}^2-y_2^2}}]\rd{y_2}\rd{x_2} \\
& + \frac{\rd}{\rd{t}}\Big|_{t=0}\int_{|x_2|<r_j^*}\int_{|y_2|<r_{j'}^*} \cI_{x_2,y_2}[\chi_{\sqrt{r_{j}^2-x_2^2}-t \le x_1 \le \sqrt{R_{j}^2-x_2^2}-t},\chi_{\sqrt{r_{j'}^2-y_2^2}-t \le y_1 \le \sqrt{R_{j'}^2-y_2^2}-t}]\rd{y_2}\rd{x_2} \\
& + \frac{\rd}{\rd{t}}\Big|_{t=0}\int_{|x_2|<r_j^*}\int_{|y_2|<r_{j'}^*} \cI_{x_2,y_2}[\chi_{\sqrt{r_{j}^2-x_2^2}-t \le x_1 \le \sqrt{R_{j}^2-x_2^2}-t},\chi_{-\sqrt{R_{j'}^2-y_2^2}+t \le y_1 \le -\sqrt{r_{j'}^2-y_2^2}+t}]\rd{y_2}\rd{x_2} \\
& + \text{similar terms}.
\end{split}\end{equation}
See Figure \ref{fig_pic8} for illustration.

\begin{figure}
\begin{center}
  \includegraphics[width=.5\linewidth]{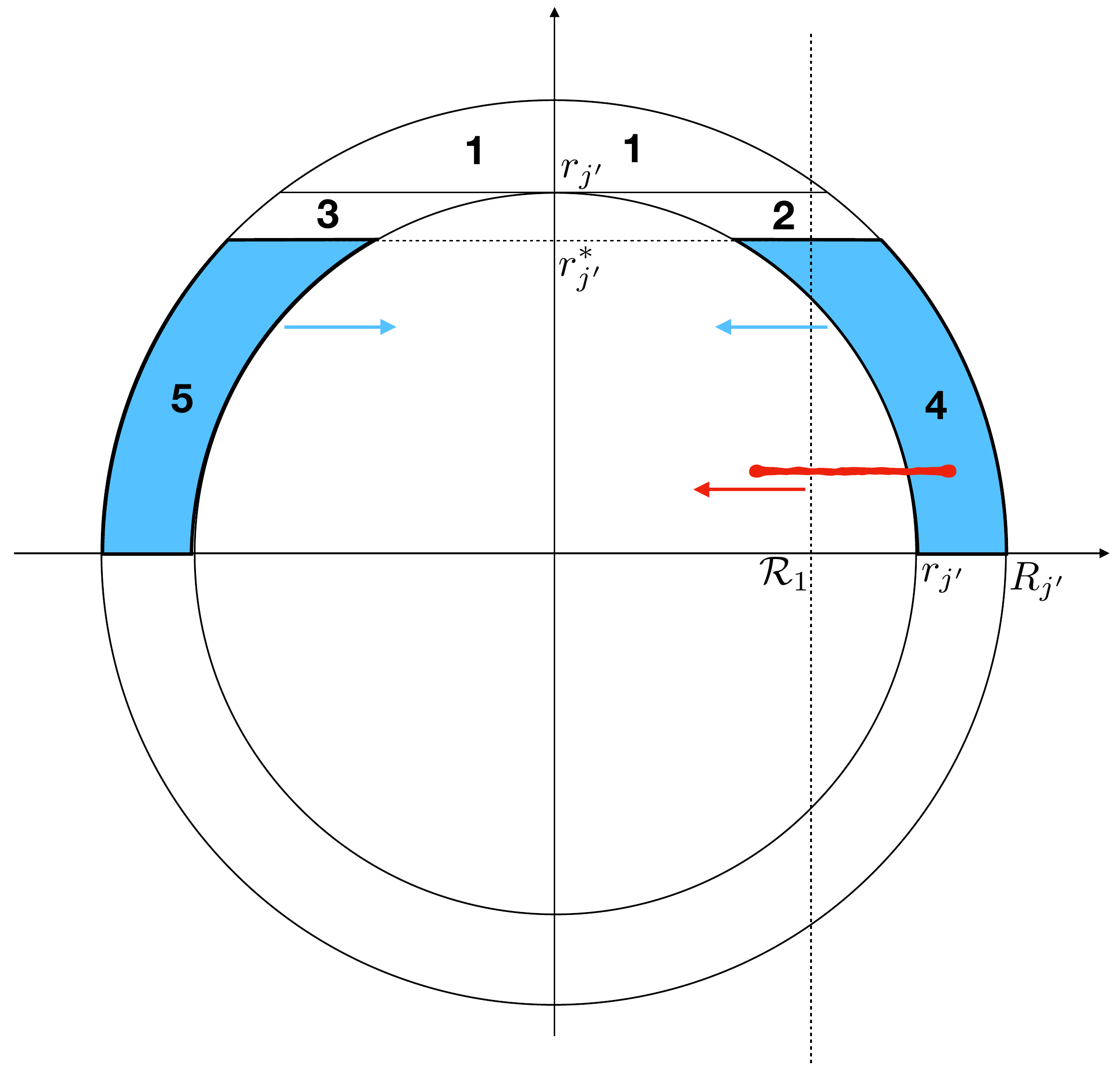}  
  \caption{The decomposition \eqref{CSS2est1}: when a (red) slice $x_1\in \Big[\sqrt{r_{j}^2-x_2^2}, \sqrt{R_{j}^2-x_2^2}\,\Big]$ is moving towards the center, the decay of the symmetric interaction energy against a sharp $\tI_{j'}$ with $R_{j'}\le \cR_2$. The five terms in \eqref{CSS2est1} correspond to the interaction energy between the red slice and the region 1 to 5 respectively. }
\label{fig_pic8}
\end{center}
\end{figure}

Every $\cI_{x_2,y_2}$ appeared above is non-positive, since the moving (towards left) interval $x_1\in \Big[\sqrt{r_{j}^2-x_2^2} , \sqrt{R_{j}^2-x_2^2}\,\Big]$ has center in $[\cR_1,\cR_2]$, and the $y_1$ interval either is  moving with the same speed (the fourth term, being zero) or has center in $(-\infty,\cR_1)$, staying or moving towards right.

In the first, third, fifth terms, the $y_1$ intervals has center $\kc \in (-\infty,0]$. If $|y_2| \le \cR_1/4$ and $|x_2| \le r_j/4$, then the horizontal difference of interval centers 
\begin{equation}\begin{split}
 \frac{1}{2}\Big(\sqrt{R_{j}^2-x_2^2}+\sqrt{r_{j}^2-x_2^2}\Big) - \kc 
\ge & \frac{1}{2}\Big(\sqrt{R_{j}^2-x_2^2}+\sqrt{r_{j}^2-x_2^2}\Big) 
\ge \max\{\sqrt{r_{j}^2-x_2^2},\cR_1\} \\
\ge & \max\{\frac{\sqrt{3}}{2} r_j,\cR_1\} 
\ge  \frac{\sqrt{3}}{2}\max\{ r_j,\cR_1\},
\end{split}\end{equation}
and the vertical difference
\begin{equation}
|x_2-y_2| \le \frac{1}{4}(\cR_1 + r_j) \le \frac{1}{2}\max\{ r_j,\cR_1\}.
\end{equation}
Therefore we can apply \eqref{lem_CSSb2_1} to obtain
\begin{equation}\begin{split}
& \frac{\rd}{\rd{t}}\Big|_{t=0}\cI_{x_2,y_2}[\chi_{\sqrt{r_{j}^2-x_2^2}-t \le x_1 
\le  \sqrt{R_{j}^2-x_2^2}-t},\chi_{|y_1| \le \sqrt{R_{j'}^2-y_2^2}}] \\ \le & -c \lambda(2\cR_2) \cdot  \min\Big\{\sqrt{R_{j'}^2-y_2^2}, \cR_1\Big\} \cdot \Big(\sqrt{R_{j}^2-x_2^2}-\sqrt{r_{j}^2-x_2^2}\Big),
\end{split}\end{equation}
for the first term in \eqref{CSS2est1}, and similar holds for the third and fifth terms. Combining them together and integrating in $x_2,y_2$, we obtain
\begin{equation}\begin{split}
& \frac{\rd}{\rd{t}}\Big|_{t=0}\cI[\chi_{\tI_{j,t}(h)},\chi_{\tI_{j',t}(h')}] \\
\le &  -c \lambda(2\cR_2) \cdot \int_{|y_2|\le \cR_1/4}\min\Big\{\sqrt{R_{j'}^2-y_2^2}-\sqrt{\max\{R_{j'}^2-y_2^2,0\}}, \cR_1\Big\}\rd{y_2} \\
& \cdot \int_{|x_2|\le \min\{r_j^*,r_j/4\}}\Big(\sqrt{R_{j}^2-x_2^2}-\sqrt{r_{j}^2-x_2^2}\Big) \rd{x_2} \\
\le &  -c \lambda(2\cR_2) \cdot |\tI_{j'}\cap \{|y_2|\le \cR_1/4\}| \cdot \int_{|x_2|\le r_j^*}\Big(\sqrt{R_{j}^2-x_2^2}-\sqrt{r_{j}^2-x_2^2}\Big) \rd{x_2},
\end{split}\end{equation}
by Lemma \ref{lem_x2} (with $\kappa=1/4$). Also notice the lower bound
\begin{equation}
 \Big|\tI_{j'}\cap \{|y_2|\le \cR_1/4\}\Big| \ge  \Big|\tI_{j'}\cap \{|\sin\theta| \le \cR_1/(4\cR_2)\}\Big| \ge c\frac{\cR_1}{\cR_2}|\tI_{j'}|,
\end{equation}
since the radial set $\tI_{j'}\subset\{\by: |\by|\le \cR_2\}$.

{\bf STEP 3}: sharp parts outside $\cR_2$: the third term on the RHS of \eqref{CSS2est}, for fixed $(h,j)$.

We use Lemma \ref{lem_circle} to obtain (noticing that $\cR_2\ge 1$ by assumption)
\begin{equation}\label{WR231}
\left| \int \nabla W(\bx-\by)\mu_\sharp(\by)\chi_{\cR_2\le |\by|\le\cR_3}(\by)\rd{\by} \right| \le C\int_{\cR_2}^{\cR_3}\mu_\sharp(r)\rd{r},
\end{equation}
where $\cR_3>\cR_2$ is to be determined. Also, 
\begin{equation}\label{WR232}
\left| \int \nabla W(\bx-\by)\mu_\sharp(\by)\chi_{|\by|\ge\cR_3}(\by)\rd{\by} \right| \le C\frac{1}{\cR_3-\cR_2}\int\mu_\sharp(\by)\chi_{|\by|\ge\cR_3}(\by)\rd{\by} \le C\frac{1}{\cR_3-\cR_2},
\end{equation}
for any $|\bx|\le \cR_2$, by {\bf (A2)}, since $|\bx-\by|\ge \cR_3-\cR_2$ for the LHS integral.

Therefore, since all the slices in $\tI_{j,t}$ are moving at speed at most 1,
\begin{equation}\begin{split}
& \left|\frac{\rd}{\rd{t}}\Big|_{t=0}\cI[\mu_\sharp(\by)\chi_{|\by|\ge\cR_2}(\by),\chi_{\tI_{j,t}}]\right| \\
\le & C\Big(\int_{\cR_2}^{\cR_3}\mu_\sharp(r)\rd{r} + \frac{1}{\cR_3-\cR_2} \Big) \int_{|x_2|\le r_j^*}\Big(\sqrt{R_{j}^2-x_2^2}-\sqrt{r_{j}^2-x_2^2}\Big)\rd{x_2} \\
\le & C\Big(\frac{1}{\cR_2}\int_{\cR_2\le |\by| \le \cR_3}\mu_\sharp(\by)\rd{\by} + \frac{1}{\cR_3-\cR_2} \Big) \int_{|x_2|\le r_j^*}\Big(\sqrt{R_{j}^2-x_2^2}-\sqrt{r_{j}^2-x_2^2}\Big)\rd{x_2}. \\
\end{split}\end{equation}

Combining the three steps together, summing in $j$ and integrating in $h$, we obtain
\begin{equation}\begin{split}
& \frac{\rd}{\rd{t}}\Big|_{t=0}\cI[\rho_t] \\
\le & \Big[-c\lambda(\cR_2 + \delta^{-1/2})\Big( \int_{|\by|\le \cR_1} (\rho^*(\by)+\mu_\flat(\by))\rd{\by} - 2\pi \delta \cR_2^2\Big) + C\varepsilon \cR_1 \\
& -c \lambda(2\cR_2)\frac{\cR_1}{\cR_2}\int_{|\by|\le \cR_2} \mu_\sharp(\by) \rd{\by} + C\Big(\frac{1}{\cR_2}\int_{\cR_2\le |\by| \le \cR_3}\mu_\sharp(\by)\rd{\by} + \frac{1}{\cR_3-\cR_2} \Big) \Big] \\
& \cdot \int \sum_{j\ge 1,\,\text{sharp}, R_j\le \cR_2}\int_{|x_2|\le r_j^*}\Big(\sqrt{R_{j}^2-x_2^2}-\sqrt{r_{j}^2-x_2^2}\Big)\rd{x_2} \rd{h}.
\end{split}\end{equation}
Notice that the last integral is exactly the total moving mass (up to a constant multiple). Finally we choose the parameters $\delta,\varepsilon,\cR_3$.  We first notice that 
\begin{equation}
\int_{|\by|\le \cR_1} (\rho^*(\by)+\mu_\flat(\by))\rd{\by} + \int_{|\by|\le \cR_2} \mu_\sharp(\by) \rd{\by} \ge \int_{|\by|\le \cR_1} \rho(\by)\rd{\by} \ge c_\rho,
\end{equation}
by Lemma \ref{lem_crho}. Therefore, by choosing $\delta=c\cR_1\cR_2^{-3}$ small enough, we may absorb the bad term $2\pi \delta \cR_2^2$. Then choosing $\varepsilon$ small enough and $\cR_3$ large enough, we may absorb the bad terms $C\varepsilon \cR_1$ and $C\frac{1}{\cR_3-\cR_2}$ respectively. Then the conclusion follows.

\end{proof}



\begin{proof}[Proof of Proposition \ref{prop_finite1}]

Define $\cR_3$ as in Proposition \ref{prop_CSS2}. In this proof we omit the dependence of constants on $\cR_1,\cR_2,\cR_3,\varepsilon$ for simplicity. Proposition \ref{prop_CSS1c} implies that for any $\epsilon>0$,
\begin{equation}
\Big|\Big\{t:\int_{\cR_2\le |\by| \le \cR_3}\mu_\sharp(t,\by)\rd{\by} \ge \epsilon\Big\}\Big| \le \frac{C}{\epsilon^4}.
\end{equation}
Take $\epsilon$ such that $C\epsilon \le c/2$ where $c,C$ are the constants appeared in \eqref{prop_CSS2_1}. Then, except a set $\cT\subset[0,\infty)$ with $|\cT|<\infty$, one has
\begin{equation}\label{CSS2E}
\frac{\rd}{\rd{t}}E[\rho(t,\cdot)] \le - c\cdot\int \sum_{j\ge 1,\,\textnormal{sharp}, R_j\le \cR_2}\int_{|x_2|\le r_j^*}\Big(\sqrt{R_{j}^2-x_2^2}-\sqrt{r_{j}^2-x_2^2}\Big)\rd{x_2} \rd{h},\quad \forall t\notin \cT,
\end{equation}
by applying Lemma \ref{lem_basic} with Proposition \ref{prop_CSS2}, in view of the fact that the cost of the CSS2 curve is controlled by the total moving mass
$\int \sum_{j\ge 1,\,\textnormal{sharp}, R_j\le \cR_2}\int_{|x_2|\le r_j^*}\Big(\sqrt{R_{j}^2-x_2^2}-\sqrt{r_{j}^2-x_2^2}\Big)\rd{x_2} \rd{h}$, by Lemma \ref{lem_cost}.

Notice that for fixed $j\ge 1,\,\textnormal{sharp}, R_j\le \cR_2$, if $R_j \ge 4\cR_1$, then
\begin{equation}
\frac{1}{2}\Big(\sqrt{R_{j}^2-x_2^2}+\sqrt{r_{j}^2-x_2^2}\Big) \ge \frac{1}{2}\sqrt{R_{j}^2-x_2^2} \ge \frac{1}{4}R_j \ge \cR_1,
\end{equation}
for any $|x_2|\le \min\{R_j/2,r_j\}$. Therefore by definition $r_j^*\ge \min\{R_j/2,r_j\} \ge r_j/2$. Therefore
\begin{equation}\begin{split}
& \int_{|x_2|\le r_j^*}\Big(\sqrt{R_{j}^2-x_2^2}-\sqrt{r_{j}^2-x_2^2}\Big)\rd{x_2} \rd{h} 
\ge  \int_{|x_2|\le r_j/2}\Big(\sqrt{R_{j}^2-x_2^2}-\sqrt{r_{j}^2-x_2^2}\Big)\rd{x_2} \rd{h} 
\ge  c|\tI_j|,
\end{split}\end{equation}
using the sharpness of $I_j$ (the last $c$ depends on $\varepsilon$). Combined with \eqref{CSS2E} and the lower bound of $E$ (Lemma \ref{lem_Em}), we obtain
\begin{equation}
\int_{t\notin\cT} \int_{4\cR_1\le |\bx|\le \cR_2}\mu_\sharp(t,\bx)\rd{\bx} \rd{t} \le C.
\end{equation}
The finiteness of the integral in $t\in \cT$ follows from the finiteness of $\cT$ and the uniform $L^\infty$ bound of $\rho$ (Lemma \ref{lem_reg1}).

\end{proof}

\subsection{The local clustering curve}

Fix $\cR_1>0$ large. Fix $2\cR_1\le \crr \le 4\cR_1$ to be determined. Define the \emph{local clustering curve} $\rho_t$ by the velocity field
\begin{equation}
\bv(\bx) = - \frac{|\bx|-\crr}{\cR_1} \cdot \chi_{\crr \le |\bx| \le 8\cR_1}(\bx)\cdot \frac{\bx}{|\bx|},
\end{equation}
i.e., $\rho_t$ satisfies
\begin{equation}
\partial_t \rho_t + \nabla\cdot (\rho_t \bv) = 0,
\end{equation}
with initial data $\rho_0=\rho$. $\rho_t$ is compressing the mass in $\{\crr \le |\bx| \le 8\cR_1\}$, and we have the lower bound
\begin{equation}\label{divv}
\nabla\cdot \bv = \frac{1}{\cR_1}(-2 + \frac{\crr}{|\bx|})\cdot \chi_{\crr \le |\bx| \le 8\cR_1}(\bx) + \frac{8\cR_1-\crr}{\cR_1}\delta(|\bx|-8\cR_1) \ge -\frac{2}{\cR_1}\cdot \chi_{\crr \le |\bx| \le 8\cR_1}(\bx).
\end{equation}
which means the compression effect is not too strong.

We recall the lemma of local clustering\footnote{The original lemma in \cite{S1D} is stated with $3R$ in the place of $4R$, but this current version can be proved in exactly the same way.} from~\cite{S1D}:
\begin{lemma}[Lemma of local clustering, Lemma 5.1 of~\cite{S1D}]\label{lem_r}
Let $R>0$, $\rho(x)$ be a decreasing non-negative function defined on $[R,4R]$, and $m>1$. Then there exists $r\in [R,2R]$ such that
\begin{equation}\label{lem_r_1}
\int_r^{4R} \rho(x)^m\rd{x} \le a\int_r^{4R}(x-r)\rho(x)\rd{x},\quad a = \frac{C_m}{R}\rho(R)^{m-1}.
\end{equation}
\end{lemma}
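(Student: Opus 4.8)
The plan is to reduce the claim to a one‑variable statement about the primitive of $\rho$ and then run an iteration that would force the tail mass of $\rho$ to decay impossibly fast.

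\textbf{Reduction.} Since $\rho$ is decreasing, for $x\ge r$ we have $\rho(x)^m=\rho(x)^{m-1}\rho(x)\le\rho(r)^{m-1}\rho(x)$, hence $\int_r^{4R}\rho^m\le\rho(r)^{m-1}\int_r^{4R}\rho$. Writing $\Phi(x):=\int_x^{4R}\rho(t)\rd t$, an integration by parts (the boundary terms vanish because $\Phi(4R)=0$ and $x-r$ vanishes at $x=r$) gives $\int_r^{4R}(x-r)\rho(x)\rd x=\int_r^{4R}\Phi(x)\rd x$. So it suffices to find $r\in[R,2R]$ with
\[
\Phi(r)\le\frac{C_m}{R}\Big(\frac{\rho(R)}{\rho(r)}\Big)^{m-1}\int_r^{4R}\Phi(x)\rd x .
\]
It is essential to keep the factor $(\rho(R)/\rho(r))^{m-1}\ge 1$: the inequality without it is false (take $\rho(x)=e^{-\lambda(x-R)}$ with $\lambda$ large), and this factor is exactly the gain obtained by moving $r$ past a sharp drop of $\rho$.

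\textbf{Iteration.} Suppose no such $r$ exists. Then for every $r\in[R,2R]$ one has $\int_r^{4R}\rho^m>\tfrac{C_m}{R}\rho(R)^{m-1}\int_r^{4R}(x-r)\rho$, and combined with $\int_r^{4R}\rho^m\le\rho(r)^{m-1}\Phi(r)$ this yields $\Phi(r)>\tfrac{C_m}{R}(\rho(R)/\rho(r))^{m-1}\int_r^{4R}\Phi$. Put $\ell(r):=\tfrac{2R}{C_m}(\rho(r)/\rho(R))^{m-1}$, so $0<\ell(r)\le 2R/C_m$; by Markov's inequality,
\[
\Phi\big(r+\ell(r)\big)=\int_{r+\ell(r)}^{4R}\rho\le\frac{1}{\ell(r)}\int_r^{4R}(x-r)\rho=\frac{1}{\ell(r)}\int_r^{4R}\Phi<\tfrac12\,\Phi(r).
\]
Iterating from $r_0:=R$, setting $r_{k+1}:=r_k+\ell(r_k)$ as long as $r_k\in[R,2R]$, produces points with $r_{k+1}-r_k\le 2R/C_m$ and $\Phi(r_k)\le 2^{-k}\Phi(R)$. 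If all $r_k$ stay in $[R,2R]$, the increasing bounded sequence converges to some $r_\infty\le 2R$ with $\Phi(r_\infty)=\lim_k\Phi(r_k)=0$, so $\rho\equiv 0$ a.e.\ on $[r_\infty,4R]$ and $\int_{r_\infty}^{4R}\rho^m=0$, contradicting the strict failure of the inequality at $r_\infty$.

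\textbf{The remaining case and the main obstacle.} Otherwise the iteration leaves $[R,2R]$ at some finite step $N$ (i.e.\ $r_N\le 2R<r_{N+1}$); since each step has length $\le 2R/C_m$ this forces $N\gtrsim C_m$, so $\Phi(r_N)\le 2^{-N}\Phi(R)$ is exponentially small in $C_m$, and from $\int_{r_N}^{r_{N+1}}\rho\ge\tfrac12\Phi(r_N)$ together with $\int_{r_N}^{r_{N+1}}\rho\le\ell(r_N)\rho(r_N)$ one gets $\Phi(r_N)\le\tfrac{4R}{C_m}\rho(r_N)^m/\rho(R)^{m-1}$, hence $\int_{r_N}^{4R}\rho^m\le\rho(r_N)^{m-1}\Phi(r_N)\le\tfrac{4R}{C_m}\rho(r_N)^{2m-1}/\rho(R)^{m-1}$. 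One then plays this against the assumed lower bound $\int_{r_N}^{4R}\rho^m>\tfrac{C_m}{R}\rho(R)^{m-1}\int_{r_N}^{4R}(x-r_N)\rho$ — using that $r_N$ is within $2R/C_m$ of $2R$, so a definite fraction of the remaining interval $[r_N,4R]$ (of length $\approx 2R$) lies far to the right of $r_N$ — to reach a contradiction once $C_m$ is fixed large enough (depending only on $m$). This last step is the delicate one: because $\Phi$ genuinely can decay rapidly on $[R,2R]$, the contradiction cannot come from the decay of $\Phi$ alone but must exploit the interplay between the $\rho^m$–term and the $\rho(R)^{m-1}$ normalization near $2R$; all the other steps are routine, and this is the only place that requires careful bookkeeping (exactly as in Lemma~5.1 of~\cite{S1D}).
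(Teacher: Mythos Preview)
The paper does not prove this lemma; it is simply recalled from \cite{S1D} (with a footnote noting that $3R$ there is replaced by $4R$ here), so there is no argument in the paper to compare against. What follows are comments on your proposal on its own merits.

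Your reduction and the halving iteration are sound and do real work: assuming failure at every $r\in[R,2R]$, the combination of $\int_r^{4R}\rho^m\le\rho(r)^{m-1}\Phi(r)$ with the Markov-type estimate yields $\Phi(r+\ell(r))<\tfrac12\Phi(r)$ with step $\ell(r)\le 2R/C_m$, and the case where the iterates remain in $[R,2R]$ is handled correctly (convergence forces $\Phi(r_\infty)=0$, contradicting strict failure there).

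The gap is in the ``remaining case''. The two inequalities you record,
\[
\int_{r_N}^{4R}\rho^m\le\frac{4R}{C_m}\,\frac{\rho(r_N)^{2m-1}}{\rho(R)^{m-1}}
\quad\text{and}\quad
\int_{r_N}^{4R}\rho^m>\frac{C_m}{R}\,\rho(R)^{m-1}\int_{r_N}^{4R}(x-r_N)\rho,
\]
do not contradict each other without a \emph{lower} bound on $\int_{r_N}^{4R}(x-r_N)\rho$ in terms of $\rho(r_N)$, and everything you have accumulated so far is an \emph{upper} bound on tail quantities ($\Phi(r_N)\le 2^{-N}\Phi(R)$, etc.). The remark that ``a definite fraction of $[r_N,4R]$ lies far to the right of $r_N$'' is true but does not help without some control of $\rho$ on that region, which you do not have: $\rho$ could drop sharply to zero just past $r_N$, making the first moment arbitrarily small relative to $\rho(r_N)^{2m-1}$. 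You flag this step as ``the delicate one'' requiring ``careful bookkeeping'', but what is missing is an additional idea---some mechanism that produces a lower bound for $\int_{r_N}^{4R}(x-r_N)\rho$, or a different argument altogether at the terminal step---not just bookkeeping. As written, the proof is incomplete precisely at the point you identify.
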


\begin{proposition}\label{prop_lcc}
Let $\cR_1$ be large enough, and $\varepsilon=\varepsilon(\cR_1,c_\rho)$ be small enough. Then there exists $\crr\in[2\cR_1,4\cR_1]$, such that the local clustering curve $\rho_t$ satisfies
\begin{equation}\label{prop_lcc_1}
\frac{\rd}{\rd{t}}\Big|_{t=0}E[\rho_t] \le - c \lambda(C\cR_1)\cdot \int_{\crr \le r \le 8\cR_1} (r-\crr)  \rho(r) \rd{r} + C\int_{2\cR_1\le |\by| \le 8\cR_1}\mu_\sharp(\by)\rd{\by},
\end{equation}
if  
\begin{equation}\label{prop_lcc_2}
\int_{2\cR_1\le |\by| \le \cR_3}\mu_\sharp(\by)\rd{\by}\le c \lambda(C\cR_1),
\end{equation}
for some large $\cR_3=\cR_3(\cR_1)$.
\end{proposition}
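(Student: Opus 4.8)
The plan is to differentiate $E[\rho_t]=\cS[\rho_t]+\cI[\rho_t]$ along the local clustering curve at $t=0$, bounding the interaction part by the main term of \eqref{prop_lcc_1} via the attraction lower bound of Corollary \ref{cor_ctr}, and the internal part via the mild‑compression estimate \eqref{divv} together with the local clustering lemma (Lemma \ref{lem_r}) and the decay \eqref{rhosmall}. From $\partial_t\rho_t+\nabla\cdot(\rho_t\bv)=0$ one gets, at $t=0$,
\[
\frac{\rd}{\rd{t}}\Big|_{t=0}\cS[\rho_t]=-\int \rho^m\,\nabla\cdot\bv\,\rd{\bx},\qquad
\frac{\rd}{\rd{t}}\Big|_{t=0}\cI[\rho_t]=\int\Big(\int\nabla W(\bx-\by)\rho(\by)\rd{\by}\Big)\cdot\bv(\bx)\,\rho(\bx)\rd{\bx}.
\]
Since $\bv(\bx)=-\tfrac{|\bx|-\crr}{\cR_1}\chi_{\crr\le|\bx|\le 8\cR_1}(\bx)\tfrac{\bx}{|\bx|}$ and $|\bx|$ is comparable to $\cR_1$ on its support, both reduce to one–dimensional radial integrals: by \eqref{divv} (the $\delta$–term in $\nabla\cdot\bv$ is favorable and may be discarded) the first is $\le C\int_{\crr}^{8\cR_1}\rho(r)^m\rd{r}$, while the second equals $-\tfrac{1}{\cR_1}\int_{\crr\le|\bx|\le 8\cR_1}(|\bx|-\crr)\,G(|\bx|)\,\rho(\bx)\rd{\bx}$, where $G(r'):=\int\nabla W(\bx-\by)\cdot\tfrac{\bx}{|\bx|}\rho(\by)\rd{\by}$ at $|\bx|=r'$.

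For the interaction term I would decompose $\rho=\big(\rho^*+\mu_\flat+\mu_\sharp\chi_{|\by|\le\cR_1}\big)+\mu_\sharp\chi_{\cR_1<|\by|<r'}+\mu_\sharp\chi_{|\by|>r'}$ and bound $G(r')$ from below for $\crr\le r'\le 8\cR_1$: by Corollary \ref{cor_ctr} with $\cR_2=8\cR_1$ the first piece contributes $\ge c\lambda(C\cR_1)$; the second contributes $\ge 0$ by \eqref{lem_circle_2} (each circle is strictly inside $|\bx|$); and the last is split over $[r',8\cR_1]$, $[8\cR_1,\cR_3]$ and $[\cR_3,\infty)$, estimated respectively by $C\int_{2\cR_1\le|\by|\le 8\cR_1}\mu_\sharp(\by)\rd{\by}$ using \eqref{lem_circle_1}, by $\tfrac{C}{\cR_1}\int_{2\cR_1\le|\by|\le\cR_3}\mu_\sharp(\by)\rd{\by}\le\tfrac{C}{\cR_1}c\lambda(C\cR_1)$ using \eqref{prop_lcc_2}, and by $\tfrac{C}{\cR_3-8\cR_1}$ using \textbf{(A2)} on $|\bx-\by|\ge\cR_3-8\cR_1$. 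Taking $\cR_1$ and then $\cR_3=\cR_3(\cR_1)$ large gives $G(r')\ge c\lambda(C\cR_1)-C\int_{2\cR_1\le|\by|\le 8\cR_1}\mu_\sharp$, and since $\int_{\crr}^{8\cR_1}(r-\crr)\rho(r)\rd{r}\le C$ (total mass $1$, $r\sim\cR_1$) we obtain
\[
\frac{\rd}{\rd{t}}\Big|_{t=0}\cI[\rho_t]\le -c\lambda(C\cR_1)\int_{\crr\le r\le 8\cR_1}(r-\crr)\rho(r)\rd{r}+C\int_{2\cR_1\le|\by|\le 8\cR_1}\mu_\sharp(\by)\rd{\by}.
\]
For the internal term, split $\rho^m\le 2^{m-1}\big((\rho^*+\mu_\flat)^m+\mu_\sharp^m\big)$; the $\mu_\sharp$ part is absorbed into the error via $\mu_\sharp^m\le\|\rho\|_{L^\infty}^{m-1}\mu_\sharp$ (Lemma \ref{lem_reg1}) and $\int_{\crr}^{8\cR_1}\mu_\sharp(r)\rd{r}\le C\int_{2\cR_1\le|\by|\le 8\cR_1}\mu_\sharp(\by)\rd{\by}$. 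The $(\rho^*+\mu_\flat)$ part is where $\crr$ is selected: applying Lemma \ref{lem_r} with $R=2\cR_1$ (so $4R=8\cR_1$, yielding $\crr\in[2\cR_1,4\cR_1]$) and using \eqref{rhosmall}, $(\rho^*+\mu_\flat)(r)\le Cr^{-2}\le C\cR_1^{-2}$ on $[2\cR_1,8\cR_1]$, bounds the constant of Lemma \ref{lem_r} by $a\le \tfrac{C_m}{2\cR_1}\big(C\cR_1^{-2}\big)^{m-1}=C\cR_1^{1-2m}$, whence
\[
\int_{\crr}^{8\cR_1}(\rho^*+\mu_\flat)^m\rd{r}\le C\cR_1^{1-2m}\int_{\crr}^{8\cR_1}(r-\crr)(\rho^*+\mu_\flat)\rd{r}\le C\cR_1^{1-2m}\int_{\crr}^{8\cR_1}(r-\crr)\rho(r)\rd{r}.
\]
By \textbf{(A3)}, $2m-1>\alpha$, so \eqref{lambda} gives $\cR_1^{1-2m}\ll\lambda(C\cR_1)\sim\cR_1^{-\alpha}$ for $\cR_1$ large, and this term is absorbed into the interaction decrease; adding the two estimates and choosing $\cR_1$ large, $\varepsilon$ small (so Corollary \ref{cor_ctr} applies) yields \eqref{prop_lcc_1}.

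The principal difficulty is exactly this internal/interaction balance: the compression raises $\cS$, and $\crr$ must be chosen so that the increase is dominated by $\lambda(C\cR_1)\int(r-\crr)\rho$, which is where the degeneracy estimate \eqref{rhosmall} of $\rho^*+\mu_\flat$ and the smallness $\cR_1^{1-2m}\ll\cR_1^{-\alpha}$ (assumption \textbf{(A3)}) are both essential. A secondary technical point is that $\mu_\flat$ is not radially decreasing, so Lemma \ref{lem_r} cannot be applied to $\rho^*+\mu_\flat$ verbatim; one applies it to the decreasing $\rho^*$ directly, and for $\mu_\flat$ one chooses $\crr\in[2\cR_1,4\cR_1]$ away from small neighbourhoods of the finitely many outer endpoints $R_j\in(\crr,8\cR_1)$ of flat intervals — the flatness $r_j<\varepsilon R_j$ then forces the portion of each such interval inside $[\crr,8\cR_1]$ to contribute to the weighted mass $\int(r-\crr)\mu_\flat$ in proportion to its internal energy, giving the same bound $\int_{\crr}^{8\cR_1}\mu_\flat^m\rd{r}\le C\cR_1^{1-2m}\int_{\crr}^{8\cR_1}(r-\crr)\mu_\flat\rd{r}$.
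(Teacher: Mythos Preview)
Your treatment of the interaction term and the overall strategy (balance $\cS$-increase against $\cI$-decrease, absorb via $\cR_1^{1-2m}\ll\cR_1^{-\alpha}$ from \textbf{(A3)}) is correct and matches the paper. The genuine gap is your handling of the non-monotonicity of $\rho^*+\mu_\flat$ when invoking Lemma~\ref{lem_r}. Your proposed fix does not work as written: you first pin down $\crr$ by applying Lemma~\ref{lem_r} to the decreasing function $\rho^*$, and then ask that the \emph{same} $\crr$ lie ``away from small neighbourhoods of the finitely many outer endpoints $R_j$'' of flat intervals. These two requirements are in conflict, since Lemma~\ref{lem_r} only asserts existence of one admissible $\crr$, not a set of positive measure. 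Moreover the endpoints $R_j(h)$ are $h$-dependent and there is no a priori reason their union over $h$ is a finite (or even nowhere dense) set in $[2\cR_1,4\cR_1]$; the flatness condition $r_j<\varepsilon R_j$ does not by itself prevent $R_j(h)$ from accumulating near any chosen $\crr$, and when $R_j$ is just above $\crr$ the ratio $\int_{\crr}^{R_j}\mu_\flat^m/\int_{\crr}^{R_j}(r-\crr)\mu_\flat$ blows up.

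The paper's remedy is to pass to the decreasing rearrangement $\rho^{**}$ of $(\rho^*+\mu_\flat)|_{[2\cR_1,8\cR_1]}$, apply Lemma~\ref{lem_r} to $\rho^{**}$ to select $\crr$, and then prove the comparison
\[
\int_{\crr}^{8\cR_1}(\rho^*+\mu_\flat)^m\,\rd r\;\le\;C\int_{\crr}^{8\cR_1}(\rho^{**})^m\,\rd r\;+\;C\,\frac{\rho^{**}(2\cR_1)^{m-1}}{\cR_1}\int_{\crr}^{8\cR_1}(r-\crr)\rho(r)\,\rd r.
\]
The key structural input (which your sketch misses) is that since $\varepsilon<1/4$, no flat interval $I_j$ can be contained in $[2\cR_1,8\cR_1]$; hence each $I_j$ meeting this range either contains $2\cR_1$ (and is then unchanged by rearrangement) or contains $8\cR_1$, in which case $|I_j\cap[\crr,8\cR_1]|\le 3\,|I_j\cap[6\cR_1,8\cR_1]|$ and the latter is controlled by the weighted mass $\int_{6\cR_1}^{8\cR_1}(r-\crr)\rho\,\rd r$. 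This is what replaces your endpoint-avoidance argument and makes the internal-energy estimate go through for the single $\crr$ produced by Lemma~\ref{lem_r}.
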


\begin{proof}

{\bf STEP 1}: estimate the decay of the interaction energy.

\begin{equation}\begin{split}
\frac{\rd}{\rd{t}}\Big|_{t=0} \cI[\rho_t] = & \cI[\rho, \partial_t |_{t=0}\rho_t] \\
= & - \int\int W(\bx-\by)\rho(\by)\rd{\by} \nabla\cdot(\rho(\bx) \bv(\bx))\rd{\bx} \\
= & \int_{\crr \le |\bx| \le 8\cR_1}\int \nabla W(\bx-\by)\rho(\by)\rd{\by}\rho(\bx) \bv(\bx)\rd{\bx}, \\
\end{split}\end{equation}
by noticing that $\bv(\bx)$ is zero except the range $\crr \le |\bx| \le 8\cR_1$. We decompose
\begin{equation}\begin{split}
\int \nabla W(\bx-\by)\rho(\by)\rd{\by} = & \int \nabla W(\bx-\by)\Big(\rho^*(\by) + \mu_\flat(\by) + \mu_\sharp(\by)\chi_{|\by|\le \crr}(\by) \Big) \rd{\by} \\
& + \int \nabla W(\bx-\by)\mu_\sharp(\by)\chi_{|\by|> \crr}(\by)\rd{\by}.
\end{split}\end{equation}

Corollary \ref{cor_ctr} (with $\cR_2=8\cR_1$) shows that, for $\cR_1$ large enough, for any $\crr \le |\bx| \le 8\cR_1$,
\begin{equation}
\int \nabla W(\bx-\by)\cdot \frac{\bx}{|\bx|}\Big(\rho^*(\by) + \rho_\flat(\by) + \rho_\sharp(\by)\chi_{|\by|\le \cR_1}(\by)\Big) \rd{\by} \ge c \lambda(C\cR_1),
\end{equation}
if $\varepsilon$ is small enough. Next, \eqref{WR231} and \eqref{WR232}, with $\cR_2$ replaced by $\crr\ge 2\cR_1$, implies that, for any $\crr \le |\bx| \le 8\cR_1$,
\begin{equation}
\left| \int \nabla W(\bx-\by)\mu_\sharp(\by)\chi_{[\crr,\infty)}(\by)\rd{\by} \right| \le C\int_{2\cR_1}^{\cR_3}\mu_\sharp(r)\rd{r} + C\frac{1}{\cR_3-8\cR_1}.
\end{equation}
We take $\cR_3$ large enough so that $C\frac{1}{\cR_3-8\cR_1} \le c \lambda(C\cR_1)/2$, and obtain
\begin{equation}
\int \nabla W(\bx-\by)\cdot \frac{\bx}{|\bx|}\rho(\by) \rd{\by} \ge c \lambda(C\cR_1) - C\int_{2\cR_1}^{\cR_3}\mu_\sharp(r)\rd{r}.
\end{equation}

Therefore, under the assumption \eqref{prop_lcc_2}, we have
\begin{equation}\label{lc_est1}\begin{split}
\frac{\rd}{\rd{t}}\Big|_{t=0} \cI[\rho_t] \le -c \lambda(C\cR_1) \int_{\crr \le |\bx| \le 8\cR_1} \frac{|\bx|-\crr}{\cR_1}  \rho(\bx) \rd{\bx}\le -c \lambda(C\cR_1) \int_{\crr \le r \le 8\cR_1} (r-\crr)  \rho(r) \rd{r}. \\
\end{split}\end{equation}

{\bf STEP 2}: estimate the increment of the interaction energy.

\begin{equation}\begin{split}
\frac{\rd}{\rd{t}}\Big|_{t=0} \cS[\rho_t] 
= & -m\int\rho(\bx)^{m-1} \nabla\cdot(\rho(\bx) \bv(\bx))\rd{\bx} 
=  m(m-1)\int\rho(\bx)^{m-1}\nabla\rho(\bx) \cdot \bv(\bx)\rd{\bx} \\
= & (m-1)\int\nabla(\rho(\bx)^{m}) \cdot \bv(\bx)\rd{\bx} 
=  -(m-1)\int\rho(\bx)^{m} \nabla \cdot \bv(\bx)\rd{\bx} \\
\le & \frac{2(m-1)}{\cR_1}\int_{\crr \le |\bx| \le 8\cR_1}\rho(\bx)^{m} \rd{\bx} 
\le  C\int_{\crr \le r \le 8\cR_1}\rho(r)^m \rd{r} \\
\le & C\int_{\crr \le r \le 8\cR_1}(\rho^*(r)+\mu_\flat(r))^m \rd{r} +  C\int_{2\cR_1 \le r \le 8\cR_1}\mu_\sharp(r) \rd{r}, \\
\end{split}\end{equation}
using \eqref{divv} in the first inequality, and the uniform bound of $\|\rho\|_{L^\infty}$ (from Lemma \ref{lem_reg1}) in the last inequality.

To estimate the first term in the last expression above, we define $\rho^{**}(r)$ as the non-increasing rearrangement of $\rho^*(r)+\mu_\flat(r)$ on $[2\cR_1,8\cR_1]$. See Figure \ref{fig_pic9} for illustration.

\begin{figure}
\begin{center}
  \includegraphics[width=.6\linewidth]{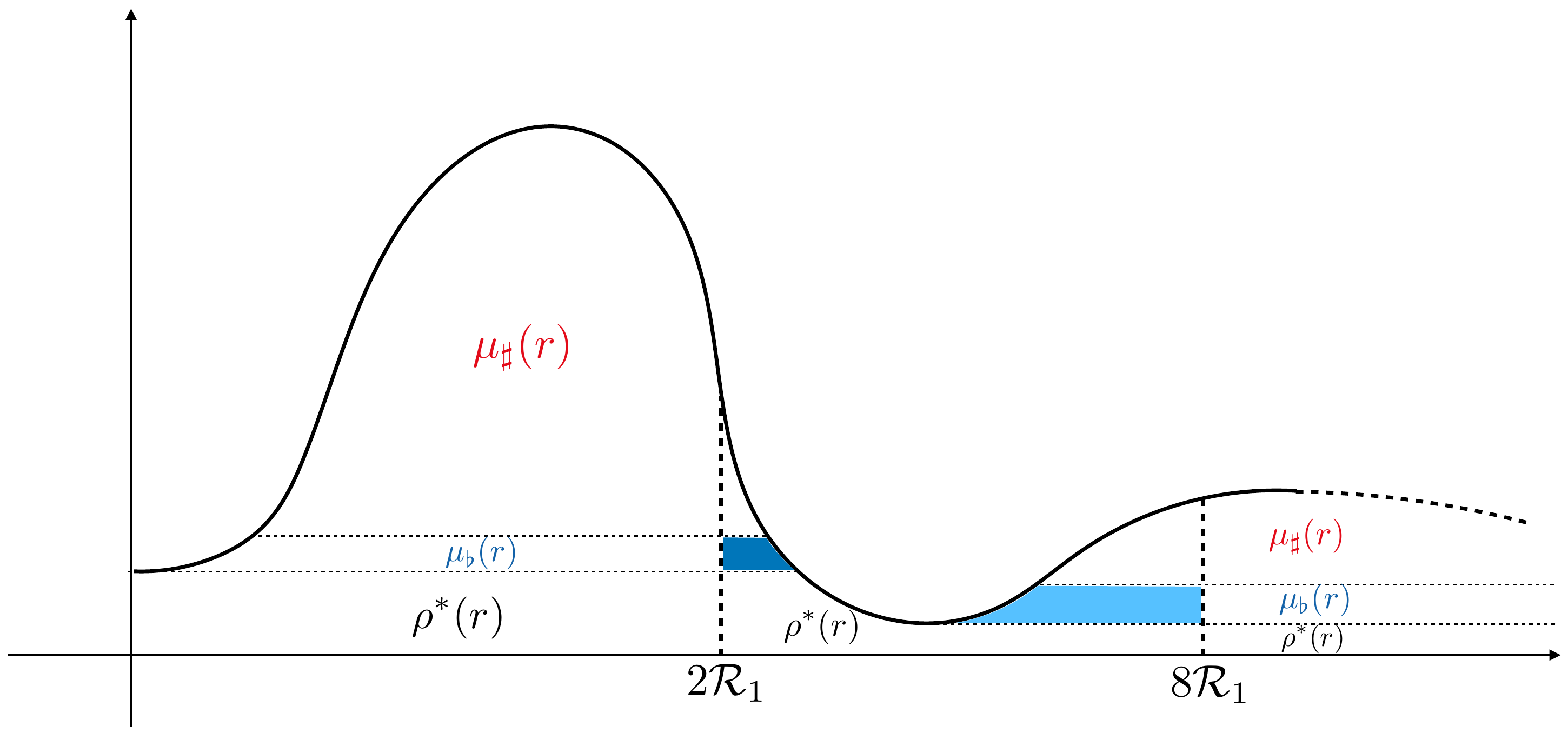}  
  \caption{The definition of $\rho^{**}$. In $\rho^*(r)+\mu_\flat(r)$, the right blue region is shifted towards left to obtain $\rho^{**}$. This change may make $\int_{\crr \le r \le 8\cR_1}\rho^{**}(r)^m \rd{r}$ smaller than $\int_{\crr \le r \le 8\cR_1}(\rho^*(r)+\mu_\flat(r))^m \rd{r}$ since some mass is shifted out of the interval $[\crr,8\cR_1]$, and this is compensated by the third term in \eqref{claim1}.}
\label{fig_pic9}
\end{center}
\end{figure}
 Then the RHS integral in \eqref{lc_est1} can be estimated as
\begin{equation}\begin{split}
\int_{\crr \le r \le 8\cR_1} (r-\crr)  \rho(r) \rd{r}  \ge & \int_{\crr \le r \le 8\cR_1} (r-\crr)  (\rho^*(r)+\mu_\flat(r)) \rd{r} \\
= & \int_{2\cR_1 \le r \le 8\cR_1} (r-\crr)\chi_{\crr \le r \le 8\cR_1}(r)  (\rho^*(r)+\mu_\flat(r)) \rd{r} \\
\ge & \int_{2\cR_1 \le r \le 8\cR_1} (r-\crr)\chi_{\crr \le r \le 8\cR_1}(r)  \rho^{**}(r) \rd{r} \\
= & \int_{\crr \le r \le 8\cR_1} (r-\crr)  \rho^{**}(r) \rd{r},
\end{split}\end{equation}
since $(r-\crr)\chi_{\crr \le r \le 8\cR_1}(r)$ is increasing on $[2\cR_1,8\cR_1]$. Also notice that
\begin{equation}\label{rhoss}
\rho^{**}(2\cR_1) = \sup_{2\cR_1\le r \le 8\cR_1}(\rho^*(r)+\mu_\flat(r)) \le \frac{C}{\cR_1^2},
\end{equation}
by \eqref{rhosmall}. 

Now we claim
\begin{equation}\label{claim1}\begin{split}
& \int_{\crr \le r \le 8\cR_1}(\rho^*(r)+\mu_\flat(r))^m \rd{r} \\ \le & C\Big(\int_{\crr \le r \le 8\cR_1}\rho^{**}(r)^m \rd{r} + \frac{\rho^{**}(2\cR_1)^{m-1}}{\cR_1}\int_{\crr \le r \le 8\cR_1} (r-\crr)  \rho(r) \rd{r}\Big),
\end{split}\end{equation}
if $\varepsilon<1/4$. To see this, we write
\begin{equation}\label{rhoss1}
(\rho^*(r)+\mu_\flat(r))\chi_{2\cR_1 \le r \le 8\cR_1}(r) = \int_0^\infty \Big(\chi_{I_0\cap[2\cR_1,8\cR_1]}(r) + \sum_{j\ge 1,\,\text{flat}} \chi_{I_j\cap[2\cR_1,8\cR_1]}(r)\Big)\rd{h}.
\end{equation}
Clearly $I_0\cap[2\cR_1,8\cR_1]$ stays the same under the non-increasing rearrangement $\rho^{**}$, see the left blue region in Figure \ref{fig_pic9}. For $j\ge 1,\,\text{flat}$, if $\varepsilon<1/4$, then $I_j$ cannot be a subset of $[2\cR_1,8\cR_1]$. We separate into two cases:
\begin{itemize}
\item $2\cR_1\in I_j$. In this case $I_j\cap[2\cR_1,8\cR_1]$ stays the same under the non-increasing rearrangement. 
\item $2\cR_1\notin I_j$. In this case $8\cR_1\in I_j$, and $I_j\cap[2\cR_1,8\cR_1]$ is shifted towards the center by the distance $\tau_j=r_j-R_{j-1}>0$, under the non-increasing rearrangement, see the right blue region in Figure \ref{fig_pic9}. 
\end{itemize}
Therefore
\begin{equation}\label{rhoss2}\begin{split}
\rho^{**}(r) = & \int_0^\infty \Big(\chi_{I_0\cap[2\cR_1,8\cR_1]}(r) + \sum_{j\ge 1,\,\text{flat},\,2\cR_1\in I_j} \chi_{I_j\cap[2\cR_1,8\cR_1]}(r) \\
& + \sum_{j\ge 1,\,\text{flat},\,2\cR_1\notin I_j} \chi_{(I_j-\tau_j)\cap[2\cR_1,8\cR_1]}(r)\Big)\rd{h}.
\end{split}\end{equation}

Notice that for any function $\psi(r) = \int_0^\infty \chi_{S(h)}(r)\rd{h}$ with $S(h) = \{r:\psi(r)\ge h\}$, one has
\begin{equation}
\int \psi(r)^m\rd{r} = \int_0^\infty |S(h)|\cdot mh^{m-1}\rd{h}.
\end{equation}
Therefore, in \eqref{rhoss1} and \eqref{rhoss2}, taking the intersection of every involved interval with $[\crr,8\cR_1]$ and integrating in $r$ gives
\begin{equation}\label{rhossm0}\begin{split}
\int_{\crr \le r \le 8\cR_1}(\rho^*(r)+\mu_\flat(r))^m\rd{r} = & \int_0^\infty \Big(|I_0\cap[\crr,8\cR_1]| + \sum_{j\ge 1,\,\text{flat}} |I_j\cap[\crr,8\cR_1]|\Big)\cdot mh^{m-1}\rd{h} ,
\end{split}\end{equation}
and
\begin{equation}\label{rhossm1}\begin{split}
\int_{\crr \le r \le 8\cR_1}\rho^{**}(r)^m \rd{r} = & \int_0^\infty \Big(|I_0\cap[\crr,8\cR_1]| + \sum_{j\ge 1,\,\text{flat},\,2\cR_1\in I_j} |I_j\cap[\crr,8\cR_1]| \\
& + \sum_{j\ge 1,\,\text{flat},\,2\cR_1\notin I_j} |(I_j-\tau_j)\cap[\crr,8\cR_1]|\Big)\cdot mh^{m-1}\rd{h}.
\end{split}\end{equation}
Also notice that
\begin{equation}\label{rhossm2}\begin{split}
& \frac{\rho^{**}(2\cR_1)^{m-1}}{\cR_1}\int_{\crr \le r \le 8\cR_1} (r-\crr)  \rho(r) \rd{r} \\
\ge & \rho^{**}(2\cR_1)^{m-1}\int_{6\cR_1 \le r \le 8\cR_1}   \rho(r) \rd{r} \\
\ge & \rho^{**}(2\cR_1)^{m-1}\int_{6\cR_1 \le r \le 8\cR_1}   (\rho^*(r)+\mu_\flat(r)) \rd{r} \\
\ge & \int_{6\cR_1 \le r \le 8\cR_1}   (\rho^*(r)+\mu_\flat(r))^m \rd{r} \\
= & \int_0^\infty \Big(|I_0\cap[6\cR_1,8\cR_1]| + \sum_{j\ge 1,\,\text{flat}} |I_j\cap[6\cR_1,8\cR_1]|\Big)\cdot mh^{m-1}\rd{h},
\end{split}\end{equation}
using $\crr\le 4\cR_1$ and the equality in \eqref{rhoss}. 

Therefore, to show \eqref{claim1}, the terms with $I_0$ and $I_j,\,j\ge 1,\,\text{flat},\,2\cR_1\in I_j$ in \eqref{rhossm0} can be controlled by  \eqref{rhossm1}, and  it suffices to show that for any $j\ge 1,\,\text{flat},\,2\cR_1\notin I_j$, there holds
\begin{equation}\label{rhossm3}
|I_j\cap[\crr,8\cR_1]| \le C|I_j\cap[6\cR_1,8\cR_1]|,
\end{equation}
and thus controlled by \eqref{rhossm2}. In fact, since $8\cR_1\in I_j$, we may separate into two cases:
\begin{itemize}
\item If $I_j\cap[\crr,8\cR_1]\subset [6\cR_1,8\cR_1]$, then \eqref{rhossm3} holds with $C=1$, since $|I_j\cap[\crr,8\cR_1]|\le |I_j|$ and $|I_j\cap[6\cR_1,8\cR_1]|=|I_j\cap[\crr,8\cR_1]|$.
\item Otherwise, we have $[6\cR_1,8\cR_1]\subset I_j$ since $8\cR_1\in I_j$. Therefore $|I_j\cap[6\cR_1,8\cR_1]|=2\cR_1$ and $|I_j\cap[\crr,8\cR_1]| \le 8\cR_1-\crr \le 6\cR_1$, and thus \eqref{rhossm3} holds with $C=3$.
\end{itemize} 
Therefore we have proved \eqref{claim1}.

Now we choose $\crr$ by Lemma \ref{lem_r} applied to $\rho^{**}(r)$ on $[2\cR_1,8\cR_1]$, and then
\begin{equation}
\int_{\crr \le r \le 8\cR_1}\rho^{**}(r)^m \rd{r} \le \frac{C}{\cR_1}\rho^{**}(2\cR_1)^{m-1}\int_{\crr \le r \le 8\cR_1} (r-\crr)  \rho^{**}(r) \rd{r} .
\end{equation}
Combining with \eqref{claim1}, we have
\begin{equation}
\int_{\crr \le r \le 8\cR_1}(\rho^*(r)+\mu_\flat(r))^m \rd{r} \le C\frac{\rho^{**}(2\cR_1)^{m-1}}{\cR_1}\int_{\crr \le r \le 8\cR_1} (r-\crr)  \rho(r) \rd{r}.
\end{equation}

{\bf STEP 3}: take $\cR_1$ large to finalize.

Combining STEP 1 and STEP 2, we have 
\begin{equation}\begin{split}
\frac{\rd}{\rd{t}}\Big|_{t=0} E[\rho_t] \le & -\Big(c \lambda(C\cR_1) - C\frac{\rho^{**}(2\cR_1)^{m-1}}{\cR_1}\Big)  \int_{\crr \le r \le 8\cR_1} (r-\crr)  \rho(r) \rd{r} \\ & + C\int_{2\cR_1 \le r \le 8\cR_1}\mu_\sharp(r) \rd{r},
\end{split}\end{equation}
under the assumption \eqref{prop_lcc_2}. By \eqref{lambda} (a consequence of {\bf (A1)}) and \eqref{rhoss},
\begin{equation}
\lambda(C\cR_1) = c\cR_1^{-\alpha},\quad \frac{\rho^{**}(2\cR_1)^{m-1}}{\cR_1} \le C\cR_1^{-(2m-1)}.
\end{equation}
Therefore, with {\bf (A3)}, we can choose $\cR_1$ large enough, so that $C\frac{\rho^{**}(2\cR_1)^{m-1}}{\cR_1} \le c \lambda(C\cR_1)/2$, and obtain the conclusion.

\end{proof}

\begin{proof}[Proof of Proposition \ref{prop_finite2}]

By applying \eqref{prop_lcc_1} to $\rho(t,\cdot)$ (the solution to \eqref{eq0}) for any fixed $t\ge 0$ and using \eqref{lambda} and Lemma \ref{lem_basic}, we obtain
\begin{equation}\label{R1E2}\begin{split}
& \frac{\rd}{\rd{t}}E[\rho(t,\cdot)] \\
\le & -\frac{\left[- c_1 \cR_1^{-\alpha} \int_{\crr(t) \le r \le 8\cR_1} (r-\crr(t))  \rho(t,r) \rd{r} + C_1\int_{2\cR_1\le |\by| \le 8\cR_1}\mu_\sharp(t,\by)\rd{\by}\right]^2}{\cR_1^{-2}\int_{\crr(t) \le r \le 8\cR_1} (r-\crr(t))^2  \rho(t,r) \rd{r}},  \\
\end{split}\end{equation}
provided that  the quantity inside the bracket is negative, and \eqref{prop_lcc_2} holds, i.e.,
\begin{equation}
\int_{2\cR_1\le |\by| \le \cR_3}\mu_\sharp(t,\by)\rd{\by}\le c_2\cR_1^{-\alpha}.
\end{equation}
Here $\crr(t)$ is determined by $\rho(t,\cdot)$, satisfying $2\cR_1\le \crr(t)\le 4\cR_1$. The denominator in \eqref{R1E2} is the cost of the local clustering curve, and $c_1,C_1,c_2$ are constants.  

Then we define the sets $\cT_{1,2,3}\subset [0,\infty)$ as follows:
\begin{itemize}
\item $\cT_1$ contains those $t$ with $\int_{2\cR_1\le |\by| \le 8\cR_1}\mu_\sharp(t,\by)\rd{\by} \ge c_2\cR_1^{-\alpha}$. Proposition \ref{prop_finite1} shows that $|\cT_1| <\infty$. It follows that
\begin{equation}
\int_{t\in \cT_1} \int_{\crr(t) \le r \le 8\cR_1} (r-\crr(t))  \rho(t,r) \rd{r}\rd{t} < \infty.
\end{equation}
\item $\cT_2$ contains those $t\notin \cT_1$ with 
\begin{equation}
\int_{\crr(t) \le r \le 8\cR_1} (r-\crr(t))  \rho(t,r) \rd{r} \le \frac{2C_1\cR_1^\alpha}{c_1}\int_{2\cR_1\le |\by| \le 8\cR_1}\mu_\sharp(t,\by)\rd{\by} .
\end{equation}
  It follows that
\begin{equation}
\int_{t\in \cT_2} \int_{\crr(t) \le r \le 8\cR_1} (r-\crr(t))  \rho(t,r) \rd{r}\rd{t} \le \frac{2C_1\cR_1^\alpha}{c_1}\int_{t\in \cT_2} \int_{2\cR_1\le |\by| \le 8\cR_1}\mu_\sharp(t,\by)\rd{\by}\rd{t} < \infty,
\end{equation}
where the second inequality uses Proposition \ref{prop_finite1}.
\item $\cT_3=[0,\infty)\backslash(\cT_1\cup\cT_2)$. It then follows that \eqref{prop_lcc_2} holds, and the quantity inside the bracket in \eqref{R1E2} is less than $- \frac{c_1 \cR_1^{-\alpha} }{2}\int_{\crr(t) \le r \le 8\cR_1} (r-\crr(t))  \rho(t,r) \rd{r}$, and then
\begin{equation}\begin{split}
\frac{\rd}{\rd{t}}E[\rho(t,\cdot)] 
\le & -c\cR_1^{-2\alpha}\frac{\left[  \int_{\crr(t) \le r \le 8\cR_1} (r-\crr(t))  \rho(t,r) \rd{r} \right]^2}{\cR_1^{-2}\int_{\crr(t) \le r \le 8\cR_1} (r-\crr(t))^2  \rho(t,r) \rd{r}} \\
\le & -c\cR_1^{-2\alpha+1} \int_{\crr(t) \le r \le 8\cR_1} (r-\crr(t))  \rho(t,r) \rd{r},  \\
\end{split}\end{equation}
for $t\in \cT_3$, using the fact that $0\le r-\crr(t) \le C\cR_1$ for any $\crr(t) \le r \le 8\cR_1$. Therefore
\begin{equation}
\int_{t\in \cT_3} \int_{\crr(t) \le r \le 8\cR_1} (r-\crr(t))  \rho(t,r) \rd{r}\rd{t} < \infty.
\end{equation}
\end{itemize}
Combining the three parts gives
\begin{equation}
\int_0^\infty \int_{\crr(t) \le r \le 8\cR_1} (r-\crr(t))  \rho(t,r) \rd{r}\rd{t} < \infty.
\end{equation}
Then the final conclusion follows, by noticing that 
\begin{equation}
\int_{\crr(t) \le r \le 8\cR_1} (r-\crr(t))  \rho(t,r) \rd{r} \ge c\cR_1\int_{7\cR_1 \le r \le 8\cR_1}   \rho(t,r) \rd{r} \ge c \int_{7\cR_1 \le |\bx| \le 8\cR_1}\rho(t,\bx)\rd{\bx},
\end{equation}
since $\crr(t) \le 4\cR_1$.

\end{proof}

\section*{Acknowledgement}

The author would like to thank Jos\'{e} Carrillo, Franca Hoffmann, Dave Levermore and Eitan Tadmor for helpful discussions. The author would also like to thank Jiuya Wang for giving a first proof of Proposition \ref{prop_meow}.

\bibliographystyle{plain}
\bibliography{radial_convergence_2D_bib}

\end{document}